\renewcommand{\epsilon}{{\varepsilon}}
\numberwithin{equation}{section}
\newtheorem{theorem}{Theorem}[section]
\newtheorem{lemma}[theorem]{Lemma}
\newtheorem{remark}[theorem]{Remark}
\newtheorem{definition}[theorem]{Definition}
\newtheorem{proposition}[theorem]{Proposition}
\newtheorem{corollary}[theorem]{Corollary}
\newcommand{\C}{\mathbb C}
\newcommand{\R}{\mathbb R}
\newcommand{\N}{\mathbb N}
\newcommand{\Z}{\mathbb Z}
\def\({\left(}
\def\){\right)}
\def\<{\left\langle}
\def\>{\right\rangle}
\def\O{\mathcal O}
\def\F{\mathcal F}
\def\K{\mathcal K}
\def\L{\mathcal L}
\def\EE{\mathcal E}
\def\eps{\varepsilon}
\def\M{\mathcal M}
\DeclareMathOperator{\RE}{Re}
\DeclareMathOperator{\IM}{Im}
\newcommand{\qtq}[1]{\quad\text{#1}\quad}
\begin{document}
	
	\title[Dynamics of the combined NLS with inverse-square potential]{Dynamics of the combined nonlinear Schr\"odinger equation with inverse-square potential}
	
	\author{Zuyu Ma}
	\address{Zuyu Ma
		\newline \indent The Graduate School of China Academy of Engineering Physics,
		Beijing 100088,\ P. R. China}
	\email{mazuyu23@gscaep.ac.cn}
	
\author{Yilin Song}
\address{Yilin Song
	\newline \indent The Graduate School of China Academy of Engineering Physics,
	Beijing 100088,\ P. R. China}
\email{zwqsylsj@163.com}

\author{Jiqiang Zheng}
\address{Jiqiang Zheng
\newline \indent Institute of Applied Physics and Computational Mathematics,
Beijing, 100088, China.
\newline\indent
National Key Laboratory of Computational Physics, Beijing 100088, China}
\email{zheng\_jiqiang@iapcm.ac.cn, zhengjiqiang@gmail.com}

	\begin{abstract} We consider the long-time dynamics of  focusing energy-critical Schr\"odinger equation perturbed by the $\dot{H}^\frac{1}{2}$-critical nonlinearity and with inverse-square potential(CNLS$_a$) in dimensions $d\in\{3,4,5\}$
		\begin{align}\label{NLS-ab}
			i\partial_tu-\mathcal{L}_au=-|u|^{\frac{4}{d-2}}u+|u|^{\frac{4}{d-1}}u, \quad (t,x)\in\R\times\R^d,\tag{CNLS$_a$}
		\end{align}
		 where $\mathcal{L}_a=-\Delta+a|x|^{-2}$ and the energy is below and equal to the threshold  $m_a$, which is given by the ground state $W_a$ satisfying $\mathcal{L}_aW_a=|W_a|^{\frac{4}{d-2}}W_a$.   When the energy is below the threshold, we utilize the concentration-compactness argument as well as the variatonal analysis to characterize the scattering and blow-up region. When the energy is equal to the threshold, we use the modulation analysis associated to the equation \eqref{NLS-ab} to classify the dynamics of $H_a^1$-solution. In both regimes of scattering results, we do not need the radial assumption in $d=4,5$. Our result generalize the scattering results of \cite{MXZ2013,MXZ-high,MiaoZhaoZheng-CVPDE} and \cite{AJZ}  in the setting of standard combined NLS.
	\end{abstract}
	
	\subjclass[2010]{35Q55}
	\keywords{ Nonlinear Schr\"odinger equation; inverse-square potential; threshold solution; scattering.}
	
	\maketitle
	
	\section{Introduction}
	\label{sec:intro}
	In this article, we investigate the global well-posedness and the asymptotic behavior of solutions to the focusing-defocusing  combined nonlinear Schr\"odinger equation(NLS) with an inverse-square potential:
	\begin{equation}\tag{CNLS$_{a}$}\label{NLS}
		\begin{cases}
			(i\partial_{t}-\L_{a})u=-|u|^{\frac{4}{d-2}}u+|u|^{\frac{4}{d-1}}u,\\
			u|_{t=0}=u_{0}\in H_a^{1}(\R^{d}),
		\end{cases} 
	\end{equation}
	where $u: \R\times\R^{d}\rightarrow \C$, and the operator
	$$
	\mathcal{L}_{a}=-\Delta+a|x|^{-2}
	$$
	is defined via the Friedrichs extension of the following quadratic form $Q(f)$ on $C^{\infty}_{c}(\R^{d}\backslash\left\{0\right\})$
	$$
	Q(f)=\int_{\R^d}|\nabla f|^2+\frac{a}{|x|^2}|f(x)|^2dx
	$$
	with  $a>-\tfrac{(d-2)^2}{4}$ and $d\geqslant3$, which  guarantees the positivity of $\L_a$ and the equivalence of Sobolev norms
	\begin{equation}\label{equiNorms}
		\|u\|_{\dot H^1} \sim \|u\|_{\dot H_a^1}:=\|\mathcal{L}_a^\frac{1}{2}u\|_{L^2}.
	\end{equation}
	from the sharp Hardy inequality. The operator $\mathcal{L}_a^\frac{1}{2}$ is defined via the standard functional calculus. By using the inner-product of $L^2$, we can rewrite the $\dot{H}_a^1$ as
	\begin{align*}
		\|u\|_{\dot{H}_a^1(\R^d)}^2=\|\mathcal{L}_a^\frac{1}{2}u\|_{L^2(\R^d)}^2=\langle\mathcal{L}_a^\frac{1}{2}u,\mathcal{L}_a^\frac{1}{2}u\rangle_{L^2}=Q(u).
	\end{align*}

	The equation \eqref{NLS} has two conserved quantities, namely, the mass and energy:
	\begin{align*}
		\mbox{Mass }&:M(u)=\int_{\R^{d}}|u|^{2}\,dx,\\
		\mbox{Energy }&: E_{a}(u)=\int_{\R^{d}}\left(\frac{1}{2}|\nabla u|^{2}+\frac{a}{2|x|^{2}}|u|^{2}
		+\frac{d-1}{2(d+1)}|u|^{\frac{2(d+1)}{d-1}}-\frac{d-2}{2d}|u|^{\frac{2d}{d-2}}\right)\,dx.
	\end{align*}
	By \eqref{equiNorms} and Sobolev embedding, we see that  $M(u_{0})$, $|E_{a}(u_{0})|<\infty$ if $u_{0}\in H_a^1$.
	
	An important model related to \eqref{NLS} is the focusing energy-critical NLS with inverse-square potential:
	\begin{align}\tag{NLS$_a$}\label{NLS-Potential}
		\begin{cases}
			i\partial_tu-\mathcal{L}_au=-|u|^{\frac{4}{d-2}}u, &(t,x)\in\Bbb{R}\times\R^d,\\
			u(0,x)=u_0\in\dot{H}_a^1(\R^d).
		\end{cases}
	\end{align}
	The solution to $\eqref{NLS-Potential}$ enjoys two conservation laws 
	\begin{gather*}
		M(u)=\int_{\R^d}|u|^2dx,\\
		E_a^c(u)=\frac{1}{2}\int_{\R^d}|\nabla u|^2dx+\frac{1}{2}\int_{\R^d}\frac{a|u|^2}{|x|^2}dx-\frac{d-2}{2d}\int_{\R^d}|u|^\frac{2d}{d-2}dx.
	\end{gather*}
	
	The equation $\eqref{NLS-Potential}$ has radial positive static solution $W_a(x)$, where $W_a$ is the solution to the following elliptic equation
	\begin{align}\label{W_a}
		\mathcal{L}_aW_a(x)=|W_a|^{\frac{4}{d-2}}W_a.
	\end{align}
	More specifically, a radial explicit solution of \eqref{W_a}  is given by
	\begin{equation}\label{E:Wa}
		W_a(x) :=    [d(d-2)\beta^2]^{\frac{d-2}{4}} \biggl[ \frac{ |x|^{\beta-1} }{ 1+|x|^{2\beta} }\biggr]^{\frac{d-2}{2}},
	\end{equation}
	where  $1\geqslant\beta>0$ and $a=(\frac{d-2}2)^2[\beta^2-1]$. We refer to this solution as the \emph{ground state}. 
	
	In particular, when $a=0$, the equation \eqref{NLS}  turns to the classical focusing-defocusing combined NLS: 
	\begin{align*}\label{Combine-NLS}
		\begin{cases}
			i\partial_tu+\Delta u=-|u|^{\frac{4}{d-2}}u+|u|^{\frac{4}{d-1}}u,&(t,x)\in\R\times\R^d,\\
			u(0,x)=u_0\in H^1(\R^d)\tag{CNLS}
		\end{cases}
	\end{align*}
	where its solution enjoys two conservation laws as
	\begin{align*}
		&  M_0(u)=\int_{\R^d}|u|^2dx\\
		& E_0(u)=\frac{1}{2}\int_{\R^d}|\nabla u|^2dx-\frac{d-2}{2d}\int_{\R^d}|u|^{\frac{2d}{d-2}}dx+\frac{d-1}{2(d+1)}\int_{\R^d}|u|^{\frac{2(d+1)}{d-1}}dx.
	\end{align*}
	
	The combined NLS can be understood as the perturbation of the standard focusing energy-critical NLS:
	\begin{align}\tag{NLS}\label{NLS-critical}
		\begin{cases}
			i\partial_tu+\Delta u=-|u|^{\frac{4}{d-2}}u, &(t,x)\in\Bbb{R}\times\R^d,\\
			u(0,x)=u_0\in\dot{H}^1(\R^d)
		\end{cases}
	\end{align}
	with associated mass and energy
	\begin{gather*}
		M(u)=\int_{\R^d}|u|^2dx,\\
		E_0^c(u)=\frac{1}{2}\int_{\R^d}|\nabla u|^2dx-\frac{d-2}{2d}\int_{\R^d}|u|^{\frac{2d}{d-2}}dx.
	\end{gather*}
	The equation $\eqref{NLS-critical}$ also has a radial positive static solution which satisfies 
	\begin{align*}
		-\Delta W_0=|W_0|^{\frac{4}{d-2}}W_0.
	\end{align*}
	The  explicit solution of \eqref{W_a}  is given by
	\begin{align*}
		W_0(x) :=    [d(d-2)]^{\frac{d-2}{4}} \biggl( \frac{ 1 }{ 1+|x|^{2} }\biggr)^{\frac{d-2}{2}}.
	\end{align*}
	We call $W_0$ as the \emph{ground state} related to \eqref{NLS-critical}. 
	
	\subsection{History}
	\subsubsection{Sub-threshold scattering/blow-up dichotomy}
	In this part, we will survey the related research in the study of the scattering/blow-up dichotomy  of the focusing energy-critical NLS with or without potential and the focusing-defocusing combined NLS below the energy threshold.

	For equation $\eqref{NLS-critical}$(i.e. \eqref{NLS-Potential} with $a=0$), the global well-posedness and scattering/blow-up dichotomy  below the energy threshold $E_0^c(W_0)$ is highly
	developed.  Kenig and Merle \cite{KenigMerle2006} developed the concentration-compactness argument \cite{Keraani1} to establish the scattering/blow-up dichotomy of radial solutions in dimension $d\in\{3,4,5\}$. Later, Killip and Visan \cite{KiiVisan2008} used the double-Duhamel trick to remove the radial assumption of scattering theory in higher dimensions $d\geqslant5$. However, their method fails when $d=3,4$ because of the weaker dispersive estimates. Inspired by the work on solving the mass-critical problem \cite{D2,D3,D4}, Dodson\cite{D5}  overcame the logarithmic loss in double-Duhamel argument and prove the scattering result for non-radial $\dot{H}^1$ initial data  in four dimension.   But, the global well-posedness and scattering for $\eqref{NLS-critical}$ for non-radial  $\dot{H}^1$ initial data in $\Bbb{R}^3$ is still  open. We summary  above to the following theorem(see also in Figure 1):
	\begin{theorem}[Scattering/blow-up dichotomy for $\eqref{NLS-critical}$,\cite{D5,KenigMerle2006,KiiVisan2008}]\label{thm-NLS-critical}Let $d\geqslant3$. Suppose that $u_0\in \dot{H}^1(\Bbb{R}^d)$ and $u_0$ is radial if $d=3$. Let $u$ be the corresponding maximal-lifespan solution to $\eqref{NLS-critical}$ with $u|_{t=0}=u_0$.
	\begin{enumerate}
	\item {\bf Scattering part:}
		If $u_0$ obeys 
		\begin{align}\label{condition1}
			E_0^c(u_0)<E_0^c(W_0),\quad \Vert u_0\Vert_{\dot{H}^1(\R^d)}<\Vert W_0\Vert_{\dot{H}^1(\R^d)},
		\end{align}
	 then $u$ is global. Moreover, $u\in L_{t,x}^{\frac{2(d+2)}{d-2}}(\Bbb{R}\times\R^{d})$ and scatters in $\dot{H}^1(\R^d)$ when $t\to\pm\infty$, i.e. there exist $u_\pm\in \dot{H}^1(\R^d)$ such that
	 \begin{align*}
	 	\lim_{t\to\pm\infty}\big\|u-e^{it\Delta}u_\pm\big\|_{\dot{H}^1(\R^d)}=0.
	 \end{align*} 
	\item {\bf Blow-up part:} If $u_0\in H^1(\R^d)$ is radial and  obeys 
		\begin{align}\label{equ:condlarge}
			E_0^c(u_0)<E_0^c(W_0),\quad \Vert u_0\Vert_{\dot{H}^1(\R^d)}>\Vert W_0\Vert_{\dot{H}^1(\R^d)},
		\end{align}
		then, the solution $u$ blows up in finite time in both time direction.
	\end{enumerate}
	\end{theorem}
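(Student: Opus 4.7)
The plan is to split the two parts. For the scattering claim (1) I would follow the Kenig--Merle concentration-compactness/rigidity scheme, and for the blow-up claim (2) I would use a (possibly truncated) virial argument in the spirit of Glassey and Ogawa--Tsutsumi.

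For part (1), I would first set up a sharp variational analysis built on the sharp Sobolev embedding realized by $W_0$: on the set defined by \eqref{condition1} the energy $E_0^c(u_0)$ is coercive in $\|u\|_{\dot H^1}^2$, and there exists $\delta>0$ with $\|u(t)\|_{\dot H^1}^2\le(1-\delta)\|W_0\|_{\dot H^1}^2$ throughout the maximal lifespan; in particular \eqref{condition1} is preserved by the flow. Combined with small-data Strichartz theory for \eqref{NLS-critical} and the stability theorem, this reduces global scattering to establishing a global bound on the $L^{2(d+2)/(d-2)}_{t,x}$ Strichartz norm. Arguing by contradiction, I would define $E_c$ as the infimum of $E_0^c(u_0)$ over initial data satisfying \eqref{condition1} whose solution does not scatter, apply a linear profile decomposition of Keraani / Bahouri--Gerard type in $\dot H^1$ to a minimizing sequence, and use nonlinear profile theory to extract a critical element $u_c$ whose orbit is precompact in $\dot H^1$ modulo the symmetries of the equation (scaling only under the $d=3$ radial assumption, scaling plus spatial translations in $d=4,5$).

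The rigidity step then rules out $u_c$. A localized virial identity on $V_R(t)=\int \chi(x/R)|x|^2|u(t,x)|^2\,dx$ with a smooth cutoff $\chi$, combined with the compactness of the orbit and the strict negativity of the Pohozaev functional obtained from the variational analysis, yields $V_R''(t)\le -c<0$ for $R$ sufficiently large, contradicting $|V_R'(t)|\lesssim R$. I expect the main obstacle to lie here in low dimensions: in $d=5$ the high-dimensional double-Duhamel estimate of Killip--Visan carries the non-radial case, in $d=4$ Dodson's refinement is needed to absorb a logarithmic loss in the double-Duhamel argument, and in $d=3$ only the radial rigidity is known, which is precisely why the theorem retains the radial hypothesis there.

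For part (2), the same variational analysis shows that \eqref{equ:condlarge} is preserved and yields a uniform lower bound
\[
\|\nabla u(t)\|_{L^2}^2-\|u(t)\|_{L^{2d/(d-2)}}^{2d/(d-2)} \le -c<0
\]
depending on the energy gap $E_0^c(W_0)-E_0^c(u_0)>0$. Since $u_0\in H^1$ is radial, one can then run a truncated virial computation on $V_\phi(t)=\int \phi(|x|)|u(t,x)|^2\,dx$ for a radial weight $\phi$ with $\phi''\le 2$ and $\phi(r)\sim r^2$ for $r\le R$. The standard virial identity together with the strict negativity above and a Strauss-type radial Sobolev estimate to control the tail errors produce $\partial_{tt}V_\phi(t)\le -c<0$ on the maximal lifespan; this forces the lifespan to be finite in both time directions, completing the blow-up claim.
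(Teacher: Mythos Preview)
The paper does not prove this theorem: it is stated in the ``History'' subsection as a summary of known results from Kenig--Merle, Killip--Visan, and Dodson, and is invoked later only as a black box (for instance in the embedding of nonlinear profiles in Proposition~\ref{embedding-nonlinear}). So there is no proof in the paper to compare against. Your outline is indeed the standard concentration-compactness/rigidity scheme of those references, and the overall architecture you describe for both parts is correct.

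That said, there is a genuine sign error in your description of the rigidity step for part~(1). For the critical element $u_c$ one has $\|u_c(t)\|_{\dot H^1}<\|W_0\|_{\dot H^1}$, and the variational analysis (sharp Sobolev embedding plus the energy trapping) gives that the Pohozaev functional
\[
\|\nabla u_c(t)\|_{L^2}^2-\|u_c(t)\|_{L^{2d/(d-2)}}^{2d/(d-2)}
\]
is strictly \emph{positive}, bounded below by a constant $c>0$; it is not strictly negative. Consequently the localized virial identity yields $V_R''(t)\ge c>0$ (after using precompactness to absorb the localization errors), not $V_R''(t)\le -c<0$. The contradiction then comes from integrating this against the bound $|V_R'(t)|\lesssim R$ over a long time interval, exactly as you say, but with the inequality reversed. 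The strict negativity you wrote is what occurs in the blow-up regime of part~(2), where your description is correct.
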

	
\begin{figure}
\begin{center}
 \begin{tikzpicture}[scale=1,domain=0:3]\label{figure1} 

\draw[->] (-0.2,0) -- (6.6,0) node[anchor=north] {$\eta^2$};
\draw[->] (0,-1) -- (0,3.5)  node[anchor=east] {$y$};

\draw (-0.2,0) node[anchor=north] {O}
       (2.4,0) node[anchor=north] {$1$}
       (0.8,0) node[anchor=north] {$\frac2d$}
       (5.2,0) node[anchor=north] {$\sqrt{\frac{d}{d-2}}$}
       (0, 2) node[anchor=east] {$\frac{E_0^c(u)}{E_0^c(W_0)}$}
        (0, 2.4) node[anchor=east] {$1$}
        (0.65,2.7) node[anchor=north] {A}
        (2.4,2.9) node[anchor=north] {B}
         (0.8,3.6) node[anchor=north] {C}
        (5.7,0.4) node[anchor=north] {D}
         (5.7,2.7) node[anchor=north] {E}
        (1,2.1) node[anchor=north] {{\small sca}}
        (4.8,2.1) node[anchor=north] {{\small blow-up}}
        (0.5,3.2) node[anchor=north] {{\small forbid}}
         (2.5,1.5) node[anchor=north] {{\small forbidden}};

\draw[thick] (0,0) -- (1.1,3.3) node[ right] {$y=\tfrac{d}2\eta^2$}
        (0.8,-0.03) -- (0.8,0.03)
        (2.4,-0.03) -- (2.4,0.03)
        (5.5,-0.03) -- (5.5,0.03)
        (-0.03, 2.4) -- (0.03, 2.4)
      (0.8,2.4)-- (5.5,2.4)
     (5.5,0) -- (5.5,3.5) ;



\draw[red,thick] (0,0) parabola bend (2.4,2.4) (5.8,-0.5)
        node[below right] {$y=\tfrac{d}2\eta^2-\tfrac{d-2}2\eta^6$};






\end{tikzpicture}
\end{center}
\caption[A plot]{ A plot of $\tfrac{E_0^c(u)}{E_0^c(W_0)}$ versus $\eta^2$ with $\eta(t)=\frac{\|u(t)\|_{\dot{H}^1}}{\|W_0\|_{\dot{H}^1}}$. The area to 
the left of line OAC and inside region OBD are excluded by 
$$\tfrac{d}2\eta^2-\tfrac{d-2}2\eta^6\leq \tfrac{E_0^c(u)}{E_0^c(W_0)}\leq \tfrac{d}2\eta^2,$$
from sharp Sobolev embedding.
The region inside OAB corresponds to \eqref{condition1} (solutions scatter). The region BDE corresponds to \eqref{equ:condlarge}
(blow-up in finite time).}

\end{figure}

	For the standard combined NLS, i.e. $\eqref{Combine-NLS}$, 
	 Miao-Xu-Zhao\cite{MXZ2013} first characterized the scattering and blow-up regions for focusing energy-critical NLS with $\dot{H}^\frac{1}{2}$ perturbation where the $H^1$-initial data is radial and   energy is below $E_0^c(W_0)$  by the variation method and concentration-compactness argument
	 in dimension three. Xu-Yang\cite{XuYang2016} extended the result to  any mass-supercritical and energy-subcritical perturbation, i.e. $\frac{4}{3}<p<5$.  Later, Miao-Xu-Zhao\cite{MXZ-high} and Miao-Zhao-Zheng\cite{MiaoZhaoZheng-CVPDE} gives the  scattering/blow-up dichotomy for general $\dot{H}^1$ initial values in four and higher dimensions. But the  scattering/blow-up dichotomy for general $\dot{H}^1$ initial values in three dimension is still open. In general, we have:
	\begin{theorem}[Scattering/blow-up dichotomy for for $\eqref{Combine-NLS}$,\cite{MXZ2013,MXZ-high,MiaoZhaoZheng-CVPDE,XuYang2016}]\label{thm-Combine-NLS}Let $d\geqslant3$. Suppose that $u_0\in H^1(\Bbb{R}^d)$ and $u_0$ is radial when $d=3$. Let $u$ be the corresponding maximal-lifespan solution to $\eqref{NLS-critical}$ with $u|_{t=0}=u_0$.
	\begin{enumerate}
	\item {\bf Scattering part:}
		If $u_0$ obeys 
		\begin{align}
			E_0(u_0)<E_0^c(W_0),\quad \Vert u_0\Vert_{\dot{H}^1(\R^d)}<\Vert W_0\Vert_{\dot{H}^1(\R^d)},\label{condition1}
		\end{align}
		then $u$ is global. Moreover, $u\in L_{t,x}^{\frac{2(d+2)}{d-2}}(\Bbb{R}\times\R^{d})$ and  scatters in $\dot{H}^1(\R^d)$ when $t\to\pm\infty$, i.e. there exist $u_\pm\in {H}^1(\R^d)$ such that
		\begin{align*}
			\lim_{t\to\pm\infty}\big\|u-e^{it\Delta}u_\pm\big\|_{{H}^1(\R^d)}=0.
		\end{align*} 
		\item {\bf Blow-up part:} If $u_0$ is radial and  obeys 
		\begin{align}\label{equ:condlarge2}
			E_0^c(u_0)<E_0^c(W_0),\quad \Vert u_0\Vert_{\dot{H}^1(\R^d)}>\Vert W_0\Vert_{\dot{H}^1(\R^d)},
		\end{align}
		then, the solution $u$ blows up in finite time in both time direction.
	\end{enumerate}
	\end{theorem}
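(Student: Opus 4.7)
My plan is to follow the Kenig--Merle concentration-compactness/rigidity roadmap for the scattering half and to use a localized virial identity for the blow-up half, adapting both to the subcritical $\dot H^{1/2}$-critical perturbation $|u|^{4/(d-1)}u$. The preliminary ingredient is \emph{variational trapping}: using the sharp Sobolev inequality with best constant $\|W_0\|_{\dot H^1}^{-2/(d-2)}$ and discarding the nonnegative subcritical term from $E_0$, I would obtain
\begin{align*}
E_0(u)\;\geq\;\tfrac{1}{2}\|u\|_{\dot H^1}^2 - \tfrac{d-2}{2d}\,\|W_0\|_{\dot H^1}^{-4/(d-2)}\,\|u\|_{\dot H^1}^{2d/(d-2)}.
\end{align*}
A standard continuity/bootstrap argument then preserves both sign conditions of \eqref{condition1} along the flow, with a uniform gap $\|u(t)\|_{\dot H^1}\leq (1-\delta)\|W_0\|_{\dot H^1}$ in the scattering regime (respectively $\|u(t)\|_{\dot H^1}^2\geq (1+\delta)\|W_0\|_{\dot H^1}^2$ in the blow-up regime). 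Together with mass conservation, the scattering-side trapping yields a uniform $H^1$ bound, hence global existence.

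\emph{Scattering via concentration-compactness.} Local well-posedness, small-data scattering, and $H^1$-stability theory for \eqref{Combine-NLS} are standard (the subcritical term is gentle in Strichartz spaces). Assuming for contradiction that $H^1$-scattering fails somewhere below threshold, there is an infimal failing energy $E_c<E_0^c(W_0)$. I would apply an $H^1$ linear profile decomposition to a minimizing sequence; because the subcritical term breaks scale invariance, a bubble with frequency scale $\lambda_n\to\infty$ gives a limiting nonlinear profile solving the pure critical equation \eqref{NLS-critical} — which scatters by Theorem \ref{thm-NLS-critical} — while bounded-scale bubbles yield genuine nonlinear profiles for \eqref{Combine-NLS}. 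A Palais--Smale extraction then produces a minimal almost periodic (modulo translations $x(t)$, with $x(t)\equiv 0$ in the radial $d=3$ case) non-scattering solution $u_c$. To eliminate $u_c$, I would deploy a truncated virial: for $V_R(t)=\int \chi(x/R)|x|^2|u(t,x)|^2\,dx$,
\begin{align*}
V_R''(t) = 8\!\int\!|\nabla u|^2\,dx - 8\!\int\!|u|^{2d/(d-2)}\,dx + \tfrac{8(d-1)}{d+1}\!\int\!|u|^{2(d+1)/(d-1)}\,dx + \mathrm{err}(R,t),
\end{align*}
and the sub-threshold trapping supplies the Pohozaev-type coercivity $\int(|\nabla u|^2-|u|^{2d/(d-2)})\,dx\geq c_\delta\|u\|_{\dot H^1}^2$ on the critical contribution, the subcritical one being nonnegative. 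Integrating this on long time intervals against the almost periodicity of $u_c$ forces $u_c\equiv 0$, contradicting $E_c>0$. Removing the radial assumption in $d=4,5$ needs the interaction Morawetz estimate of \cite{KiiVisan2008,D5} to control the spatial center $x(t)$.

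\emph{Blow-up and main obstacle.} For part (2), radiality permits reusing $V_R$: using $\|u(t)\|_{\dot H^1}^2\geq (1+\delta)\|W_0\|_{\dot H^1}^2$, $E_0^c(u_0)<E_0^c(W_0)$, and nonnegativity of the subcritical term, I would get $V_R''(t)\leq -c<0$ for $R$ sufficiently large, whereupon the standard convexity argument forces the maximal existence interval to be finite (applying the same argument in reverse time gives blow-up in both directions). The most delicate part of the whole scheme is the scattering concentration-compactness: one must handle profile decompositions across the two incompatible scaling regimes of the critical and subcritical nonlinearities, identify the correct limiting equation in each frequency window, and run nonlinear profile approximations with uniformly small error. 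This is precisely where the dimensional restrictions enter, since the non-radial pure-critical scattering result cited via Theorem \ref{thm-NLS-critical} is available only for $d\geq 4$ (and requires Dodson's refinement in $d=4$), explaining the radial hypothesis imposed when $d=3$.
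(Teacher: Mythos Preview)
This theorem is quoted from \cite{MXZ2013,MXZ-high,MiaoZhaoZheng-CVPDE,XuYang2016} and is not proved in the present paper, so there is no in-paper argument to compare against; your Kenig--Merle outline is indeed the route those references take, and it matches the scheme the paper itself runs for its potential-perturbed analogue (Theorem~\ref{sub-threshold}, Sections~\ref{sec:VarationalGNI}, \ref{sec:blowup-sub}, \ref{sec:minimal-blowup}--\ref{sec:preclude-sub}).

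Two small corrections are worth recording. First, the subcritical coefficient in the virial identity is $\tfrac{8d}{d+1}$, not $\tfrac{8(d-1)}{d+1}$ (compare \eqref{scaling deriv:special} with $a=0$). Second, the non-radial rigidity in $d\geq 4$ does not go through the interaction Morawetz/double-Duhamel machinery of \cite{KiiVisan2008,D5}; that machinery is needed for the pure critical problem in $\dot H^1$, whereas here the minimal non-scattering solution lies in $H^1$ thanks to mass conservation, so one zeroes the conserved momentum by a Galilean boost and then obtains $|x(t)|=o(t)$ from a truncated center-of-mass computation, which suffices for the localized-virial contradiction. Finally, note that the cited references (and this paper in Section~\ref{sec:VarationalGNI}) organize the variational trapping via the Ibrahim--Masmoudi--Nakanishi scaling-derivative functional $K_0(\varphi)=\partial_\lambda|_{\lambda=0}E_0(e^{d\lambda}\varphi(e^{2\lambda}\cdot))$ and the constrained minimum $m_0=\inf\{E_0:K_0=0\}=E_0^c(W_0)$, which delivers coercivity of the full virial quantity $K_0(u)$ directly (Lemma~\ref{uniform bound}); your formulation instead isolates the critical Pohozaev piece and treats the defocusing subcritical contribution as a nonnegative bonus. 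Both are equivalent in this focusing-critical/defocusing-subcritical setting, but the $K$-functional packaging is what the references actually use and is more robust when the sign structure of the perturbation is less favorable.
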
 
	
	Next, we recall the result of the energy-critical NLS with inverse-square potential, i.e. \eqref{NLS-Potential} with $a<0$. 
	Killip-Visan-Miao-Zhang-Zheng\cite{KillipMiaVisanZhangZheng} first studied the global well-posedness and scattering of defocusing energy-critical NLS with inverse-square potential
	\begin{align}\label{DNLS-potential}
		\begin{cases}
			i\partial_tu-\mathcal{L}_a u=|u|^{4}u, &(t,x)\in\Bbb{R}\times\R^3,\\
			u(0,x)=u_0\in\dot{H}_a^1(\R^3)
		\end{cases}  \tag{DNLS$_a$}
	\end{align} 
	by  developing the linear profile decomposition in $\dot{H}_a^1$ and  using Kenig-Merle's concentration-compactness argument.  They also carry out the variational analysis needed to treat the equation $\eqref{NLS-Potential}$. Moreover, they give the proof of finite-time blow-up when energy below that of the ground state $W_a$ and $\|u_0\|_{\dot{H}_a^1}>\|W_a\|_{\dot{H}_a^1}$.
	For the focusing energy-critical NLS with inverse-square potential, i.e. \eqref{NLS-Potential},  Yang\cite{YANG2020124006} proved the global well-posedness and scattering for Cauchy problem $\eqref{NLS-Potential}$ with energy below that of ground state and $\|u_0\|_{\dot{H}_a^1}<\|W_a\|_{\dot{H}_a^1}$ when $3\leqslant d\leqslant6$.\footnote[1]{When $d\geq7$, the stability lemma is difficult to establish due to the lack of the $L^\infty-L^1$ dispersive estimate. We also remark that it is possible to derive the stablity lemma by using the $L^p-L^{p'}$ dispersive estimate with $p>1$, see Miao-Su-Zheng\cite{Miao-Su-Zheng} for  details. This is also the unique reason why we restrict the dimension  $d\in\{3,4,5\}$.  } It is worth to note that if $d=3$, they also assume $u_0$ is radial.\footnote[2]{When $d=3$, the scattering result of the \eqref{NLS-critical} for general initial data is
		still open. This is the unique reason why he restrict the radial assumption in three-dimension.} 
	We summarize Yang's works in the following theorem:
	\begin{theorem}[Global well-posedness and scattering for $\eqref{NLS-Potential}$,\cite{KillipMiaVisanZhangZheng,YANG2020124006}]\label{thm-NLS-Potential}Let $3\leqslant d\leqslant6$ and $0>a>-\frac{(d-2)^2}{4}+\left(\frac{d-2}{d+2}\right)^2$. Suppose that $u_0\in \dot{H}_a^1(\Bbb{R}^d)$ and $u_0$ is radial when $d=3$.  Let $u$ be the corresponding maximal-lifespan solution to $\eqref{NLS-Potential}$ with $u|_{t=0}=u_0$.
		If $u_0$ obeys 
		\begin{align}
			E_a^c(u_0)<E_a^c(W_a),\quad \Vert u_0\Vert_{\dot{H}_a^1(\R^d)}<\Vert W_a\Vert_{\dot{H}_a^1(\R^d)},\label{condition1}
		\end{align}
	then $u$ is global. Moreover, $u\in L_{t,x}^{\frac{2(d+2)}{d-2}}(\Bbb{R}\times\R^{d})$ and  scatters in $\dot{H}_a^1(\R^d)$ when $t\to\pm\infty$, i.e. there exist $u_\pm\in \dot{H}_a^1(\R^d)$ such that
	\begin{align*}
		\lim_{t\to\pm\infty}\big\|u-e^{-it\mathcal{L}_a}u_\pm\big\|_{\dot{H}_a^1(\R^d)}=0.
	\end{align*} 
	\end{theorem}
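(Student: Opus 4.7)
\smallskip

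\noindent\textbf{Proof proposal.} The plan is to follow the Kenig--Merle concentration-compactness/rigidity roadmap, adapted to the inverse-square setting.

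First, I would carry out the variational analysis. Using the equivalence $\dot H^1\sim \dot H_a^1$ from \eqref{equiNorms} together with the fact that $W_a$ is a Sobolev extremizer in $\dot H_a^1$ (being the least-energy positive solution of $\L_aW_a=W_a^{(d+2)/(d-2)}$), I would establish the trapping dichotomy: the sub-threshold set $\{u\in\dot H_a^1:E_a^c(u)<E_a^c(W_a),\ \|u\|_{\dot H_a^1}<\|W_a\|_{\dot H_a^1}\}$ is invariant under the flow, and on this set one has uniform coercivity $\|u(t)\|_{\dot H_a^1}^2\leq(1-\delta)\|W_a\|_{\dot H_a^1}^2$ and a positive lower bound for the functional $\|u(t)\|_{\dot H_a^1}^2-\|u(t)\|_{L^{2d/(d-2)}}^{2d/(d-2)}$, both with $\delta$ depending only on $E_a^c(W_a)-E_a^c(u_0)$.

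Second, I would develop the local theory in $\dot H_a^1$. The Strichartz estimates for $e^{-it\L_a}$, built on the pointwise dispersive bound $\|e^{-it\L_a}\|_{L^1\to L^\infty}\lesssim|t|^{-d/2}$, yield local well-posedness, small-data scattering, and a long-time stability lemma in Strichartz spaces. The upper bound $d\leq 6$ and the lower bound on $a$ enter precisely at this point: they guarantee the range of Lebesgue exponents for which the above dispersive estimate, hence the requisite stability lemma, is available.

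Third, I would run the concentration-compactness step. Assuming scattering fails, define the critical threshold $E_c\leq E_a^c(W_a)$ as the infimum of energies for which some sub-threshold data yields a non-scattering solution, and consider a minimizing sequence of data $u_{n,0}$. I would apply a linear profile decomposition for $e^{-it\L_a}$ in $\dot H_a^1$ which produces two kinds of profiles: \emph{$\L_a$-profiles} centered at the origin and propagated by $e^{-it\L_a}$, and \emph{escaping profiles} of the form $e^{it\Delta}\phi$ translated by $x_n$ with $|x_n|\to\infty$, reflecting the fact that $\L_a\to-\Delta$ away from the origin. Nonlinear embedding of each profile, the non-potential scattering Theorem~\ref{thm-NLS-critical} for the escaping profiles (which forces the radiality assumption when $d=3$), together with the minimality of $E_c$ and the stability lemma, collapse the decomposition to a single $\L_a$-profile. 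A Palais--Smale argument then produces a critical element $u_c$ at energy $E_c$ whose orbit is pre-compact in $\dot H_a^1$ modulo scaling $N(t)$; the translation parameter is absent because the potential breaks translation invariance. Finally, the rigidity step is carried out by a truncated virial/Morawetz identity with weight $\chi_R(x)|x|^2$: coercivity from the variational step gives a strictly positive lower bound on the second time-derivative, while almost-periodicity bounds the virial uniformly, forcing $u_c\equiv 0$ and contradicting non-scattering.

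The main obstacle is the third step: constructing the linear profile decomposition that reconciles the $\L_a$-dynamics at the origin with the free $-\Delta$-dynamics at spatial infinity, and coupling it with a stability lemma robust enough to absorb the two profile regimes. This is exactly where $d\in\{3,4,5,6\}$ and the lower bound on $a$ are unavoidable, and where the open non-radial three-dimensional scattering for \eqref{NLS-critical} forces the radial assumption when $d=3$.
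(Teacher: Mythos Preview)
This theorem is not proved in the paper: it is quoted in the History subsection as a summary of the results of Killip--Miao--Visan--Zhang--Zheng and Yang, with the proof deferred entirely to the references \cite{KillipMiaVisanZhangZheng,YANG2020124006}. There is therefore no ``paper's own proof'' to compare against.

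That said, your outline is an accurate description of the strategy those references carry out: the Kenig--Merle concentration-compactness/rigidity scheme, with the $\dot H_a^1$ profile decomposition of \cite{KillipMiaVisanZhangZheng} handling the two regimes (profiles centered at the origin propagated by $e^{-it\L_a}$, and profiles escaping to infinity approximated by $e^{it\Delta}$), and the non-radial $d=3$ obstruction inherited from the open status of Theorem~\ref{thm-NLS-critical}. One small correction: the restriction $d\le 6$ and the lower bound on $a$ are used to ensure the equivalence of Sobolev norms (Lemma~\ref{EquiSobolev}) on the Lebesgue exponents appearing in the stability lemma, rather than to secure an $L^1\to L^\infty$ dispersive estimate directly; the Strichartz estimates themselves (Lemma~\ref{STRICH}) hold for all $a>-\tfrac{(d-2)^2}{4}$.
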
  
	\begin{remark}
		In fact, Yang\cite{YANG2020124006} also treated the case of $a>0$ in \cite{YANG2020124006}. But the energy threshold is $E_0^c(W_0)$.
	\end{remark}

	\subsubsection{Dynamic classification of threshold solution}
	Regarding the threshold dynamics, i.e. the energy of $u_0$ is equal to that of ground state. Duckyaerts-Merle \cite{DuyckaMerle2009} classified all possible behaviors for $\eqref{NLS-critical}$ with radial $\dot{H}^1$ data in $d=3,4,5$. Besides scattering and blowing up, the authors also constructed two special solutions $W^\pm$ that are the ``heteroclinic orbits" connecting the ground state to blowing up/scattering. It is worth mentioning that Su and Zhao \cite{Zhao} gave the sub-threshold dynamics classification with general $\dot{H}^1$ initial values for $d\geq 5$. We summary their results into the following two theorems.
	\begin{theorem}[Existence of heteroclinic orbits for $\eqref{NLS-critical}$,\cite{DuyckaMerle2009,LiZhang09}]
		\label{th.exist}
		Let $d\geqslant3$. There exist radial solutions $W_0^-$ with maximal life span $(-\infty,+\infty)$ and $W_0^+$ with maximal life span $(T_+(W_0^+),+\infty)$ of \eqref{NLS-critical}   such that
		\begin{gather}
			\label{ex.energy}
			E_0^c(W_0)=E_0^c(W_0^+)=E_0^c(W_0^-),\lim_{t\rightarrow +\infty} W_0^{\pm}(t)=W_0 \text{ in } \dot{H}^1,\\
			\label{ex.sub}
			\big\|W_0^{-}\big\|_{\dot{H}^1}<\|W_0\|_{\dot{H}^1},\quad   \|W_0^-\|_{L_{t,x}^\frac{2(d+2)}{d-2}((-\infty,0])}<\infty,\\
			\label{ex.super}
			\big\|W_0^{+}\big\|_{\dot{H}^1}>\|W_0\|_{\dot{H}^1},\text{ and, if }d\geqslant5,\;T_-(W_0^+)<+\infty.
		\end{gather}
	\end{theorem}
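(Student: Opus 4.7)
The plan is to adapt the Duyckaerts--Merle construction. The first step is a spectral analysis of the linearization of \eqref{NLS-critical} at $W_0$: writing $u=W_0+h$ and splitting $h$ into real and imaginary parts, the generator of the linear flow takes the form $J\mathcal{H}$ with
\begin{equation*}
J=\begin{pmatrix}0&1\\-1&0\end{pmatrix},\quad \mathcal{H}=\begin{pmatrix}\mathcal{L}_{+}&0\\ 0&\mathcal{L}_{-}\end{pmatrix},\quad \mathcal{L}_{+}:=-\Delta-\tfrac{d+2}{d-2}W_0^{4/(d-2)},\quad \mathcal{L}_{-}:=-\Delta-W_0^{4/(d-2)}.
\end{equation*}
On radial functions one shows that $\mathcal{L}_{+}$ has a single simple negative eigenvalue, that $\mathcal{L}_{-}\geq 0$ with $\ker\mathcal{L}_{-}=\mathrm{span}\{W_0\}$, and that a Perron--Frobenius/ODE argument produces exactly one pair of real eigenvalues $\pm e_0$ of $J\mathcal{H}$ with smooth, radial, exponentially decaying eigenfunctions $\mathcal{Y}_{\pm}$.

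The second step is to construct high-order formal approximate solutions
\begin{equation*}
V^{a,k}(t,x):=W_0(x)+\sum_{j=1}^{k}a^{j}e^{-j e_0 t}Z_{j}(x),\qquad Z_1=\mathcal{Y}_{+},
\end{equation*}
where each $Z_j$ for $j\geq 2$ is obtained by inverting $(je_0 - J\mathcal{H})$ against a source polynomial in $Z_1,\dots,Z_{j-1}$ coming from expanding the nonlinearity; invertibility holds because $j e_0$ lies off the spectrum of $J\mathcal{H}$ for $j\geq 2$. The residual of $V^{a,k}$ in \eqref{NLS-critical} then decays in $\dot H^1$ like $e^{-(k+1)e_0 t}$. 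A Strichartz-based contraction in an exponentially weighted space, e.g.
\begin{equation*}
\sup_{t\geq T}e^{(k+\tfrac12)e_0 t}\bigl(\|h(t)\|_{\dot H^1}+\|h\|_{L_{t,x}^{2(d+2)/(d-2)}([t,\infty)\times\R^d)}\bigr),
\end{equation*}
with $k$ large and $T$ large, produces a genuine solution $W_0^{a}=V^{a,k}+h$ of \eqref{NLS-critical} satisfying $W_0^{a}(t)\to W_0$ in $\dot H^1$ as $t\to+\infty$. Conservation of energy and the decay of the residual yield \eqref{ex.energy}, while the sign of $a=\pm 1$ dictates whether $\|W_0^{a}(t)\|_{\dot H^1}$ is eventually below or above $\|W_0\|_{\dot H^1}$: by the eigenrelations $\mathcal{Y}_{+}$ is orthogonal to $W_0$ in $\dot H^1$, so the first variation of the $\dot H^1$ norm along the deformation vanishes and the second variation has a sign determined by $e_0$.

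The third step is backward extension. For $W_0^{-}$, the strict inequality $\|W_0^{-}(t)\|_{\dot H^1}<\|W_0\|_{\dot H^1}$, known for $t$ near $+\infty$, is trapped on the full maximal interval by energy conservation combined with sharp Sobolev embedding (the forbidden region OAC in Figure 1 cannot be crossed because that would force $E_0^c(W_0^-)>E_0^c(W_0)$); applying Theorem \ref{thm-NLS-critical} in reversed time then yields globality and $\|W_0^-\|_{L_{t,x}^{2(d+2)/(d-2)}((-\infty,0])}<\infty$, giving \eqref{ex.sub}. For $W_0^{+}$ the same variational trapping gives $\|W_0^{+}(t)\|_{\dot H^1}>\|W_0\|_{\dot H^1}$; when $d\geq 5$ one has $W_0\in L^2(\R^d)$, so $W_0^{+}$ lies in $L^2$ by the smallness of the perturbation and the classical Glassey virial identity $\tfrac{d^2}{dt^2}\int|x|^2|W_0^{+}|^2\,dx$ becomes strictly negative in the super-threshold regime (using that $E_0^c(W_0^{+})=E_0^c(W_0)$ and $\|W_0^{+}\|_{\dot H^1}>\|W_0\|_{\dot H^1}$), forcing $T_-(W_0^{+})<+\infty$ and completing \eqref{ex.super}.

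The main obstacle is the spectral step: showing that on radial functions $J\mathcal{H}$ admits precisely one pair of real eigenvalues $\pm e_0$ with eigenfunctions of sufficient regularity and decay, and assembling the coercivity/orthogonality decomposition that prevents resonances in both the formal expansion and the weighted contraction. The backward blow-up argument for $W_0^{+}$ in $d\geq 5$ is also delicate because the baseline data live only at the $\dot H^1$ level, so one must carefully propagate the $L^2$ information through the perturbative regime.
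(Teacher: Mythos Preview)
The paper does not give its own proof of this statement; it is quoted from \cite{DuyckaMerle2009,LiZhang09} as background. Your construction outline (spectral pair $\pm e_0$ for $J\mathcal{H}$, formal expansion $V^{a,k}$, weighted Strichartz fixed point) is indeed the Duyckaerts--Merle scheme and correctly produces solutions $W_0^{\pm}$ with $W_0^{\pm}(t)\to W_0$ in $\dot H^1$ as $t\to+\infty$ and $E_0^c(W_0^\pm)=E_0^c(W_0)$.

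Your backward-in-time arguments, however, contain genuine gaps. For \eqref{ex.sub} you invoke Theorem~\ref{thm-NLS-critical}, but that result requires the \emph{strict} inequality $E_0^c(u_0)<E_0^c(W_0)$, whereas $E_0^c(W_0^-)=E_0^c(W_0)$; the sub-threshold theorem simply does not apply. In \cite{DuyckaMerle2009} both the global existence and the backward scattering of $W_0^-$ are obtained only \emph{after} the full threshold classification (the analogue of Theorem~\ref{th.classif}) has been established, not from sub-threshold theory. For \eqref{ex.super} the naive Glassey argument does not close at the threshold either: the virial identity gives
\[
\tfrac{d^2}{dt^2}\int_{\R^d}|x|^2|W_0^+|^2\,dx=-\tfrac{16}{d-2}\bigl(\|W_0^+(t)\|_{\dot H^1}^2-\|W_0\|_{\dot H^1}^2\bigr),
\]
which is negative but tends to $0$ as $t\to+\infty$, so there is no uniform concavity to force finite-time blow-up. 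Moreover, for $d\in\{5,6\}$ one has $|x|W_0\notin L^2(\R^d)$, so the unlocalized virial is not even finite along the orbit. The actual argument in \cite{DuyckaMerle2009} uses a \emph{localized} virial combined with the exponential instability of $W_0$ (which forces $\delta(t)=\|W_0^+(t)\|_{\dot H^1}^2-\|W_0\|_{\dot H^1}^2$ to grow as $t$ decreases), or again defers to the threshold classification; compare the analogous machinery in Section~\ref{sec:blowup-threshold} here.

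A smaller point: your sign-determination step is not right as stated. The pairing $(W_0,\RE\mathcal{Y}_+)_{\dot H^1}$ does \emph{not} vanish; in \cite{DuyckaMerle2009} the sign of $\|W_0^a(t)\|_{\dot H^1}-\|W_0\|_{\dot H^1}$ for large $t$ is read off precisely from this nonzero first-order term, with the sign of $a$ selecting $W_0^-$ versus $W_0^+$.
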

	\begin{theorem}[Threshold dynamics of $\eqref{NLS-critical}$, \cite{DuyckaMerle2009,LiZhang09,Zhao}]
		\label{th.classif}
		Let $d\geqslant3$ and $u_0\in \dot{H}^1(\R^d)$  such that 
		\begin{equation}
			\label{threshold}
			E_0^c(u_0)=E_0^c(W_0)
		\end{equation}
		Suppose that $u$ is the solution of \eqref{NLS-critical} with initial condition $u_0$ 
		and $I$ is the maximal lifespan of $u$. Then the followings hold:
		\begin{enumerate}
			\item \label{theorem.sub} If $ \| u_0\|_{\dot{H}^1}<\| W_0\|_{\dot{H}^1}$ and  $u_0$ is radial  when $d=3,4$ then $I=\R$. Furthermore,  either 
			$$ u(t,x)\in\bigg\{\frac{e^{i\theta_0}}{\lambda_0^{(d-2)/2}}W_0^-\Big(\frac{t_0+t}{\lambda_0^2},\frac{x-x_0}{\lambda_0}\Big) ,\quad \frac{e^{i\theta_0}}{\lambda_0^{(d-2)/2}}\overline{W_0^-}\Big(\frac{t_0-t}{\lambda_0^2},\frac{x-x_0}{\lambda_0}\Big)\bigg\}$$
			for some parameters $\theta_{0}\in\Bbb{S}^1$, $t_0\in\R$, $x_0\in \R^d$ and $\lambda_0\in\R\setminus\{0\}$,
			or $u$  scatters in $\dot{H}^1(\R^d)$ when $t\to\pm\infty$, i.e. there exist $u_\pm\in \dot{H}^1(\R^d)$ such that
			\begin{align*}
				\lim_{t\to\pm\infty}\big\|u-e^{it\Delta}u_\pm\big\|_{\dot{H}^1(\R^d)}=0.
			\end{align*} 
			\item \label{theorem.crit} If $ \| u_0\|_{\dot{H}^1}=\| W_0\|_{\dot{H}^1}$ then $$ u(t,x)=\frac{e^{i\theta_0}}{\lambda_0^{(d-2)/2}}W_0\Big(\frac{x-x_0}{\lambda_0}\Big)$$
			for some parameters $\theta_{0}\in\Bbb{S}^1$,  $x_0\in \R^d$ and $\lambda_0\in\R\setminus\{0\}$,
			\item \label{theorem.super} If $ \| u_0\|_{\dot{H}^1}>\| W_0\|_{\dot{H}^1}$, and $u_0\in L^2$ is radial, then either 
			$$ u(t,x)\in\bigg\{\frac{e^{i\theta_0}}{\lambda_0^{(d-2)/2}}W_0^+\Big(\frac{t_0+t}{\lambda_0^2},\frac{x}{\lambda_0}\Big) ,\quad \frac{e^{i\theta_0}}{\lambda_0^{(d-2)/2}}\overline{W_0^+}\Big(\frac{t_0-t}{\lambda_0^2},\frac{x}{\lambda_0}\Big)\bigg\}$$
			for some parameters $\theta_{0}\in\Bbb{S}^1$, $t_0\in\R$ and $\lambda_0\in\R\setminus\{0\}$,
			or $I$ is finite.
		\end{enumerate}
	\end{theorem}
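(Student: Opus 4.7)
The plan is to split into the three regimes $\|u_0\|_{\dot H^1}\lessgtr\|W_0\|_{\dot H^1}$ and treat each by a combination of variational analysis, spectral/modulation analysis around $W_0$, and a concentration--compactness/rigidity scheme of Kenig--Merle type.

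For the equality case (\ref{theorem.crit}), I would first observe that $E_0^c(u_0)=E_0^c(W_0)$ together with $\|u_0\|_{\dot H^1}=\|W_0\|_{\dot H^1}$ forces
\[
\int_{\R^d}|u_0|^{2d/(d-2)}\,dx=\int_{\R^d}|W_0|^{2d/(d-2)}\,dx,
\]
so $u_0$ is an extremizer of the sharp Sobolev inequality. By the Aubin--Talenti classification of such extremizers, $u_0$ must coincide with $W_0$ after applying the symmetries $e^{i\theta_0}$, translation $x_0$ and scaling $\lambda_0$. Uniqueness of the Cauchy problem then propagates the formula for $u(t,x)$ to all times in the maximal lifespan.

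For the sub-threshold case (\ref{theorem.sub}), I would first use the variational trapping underlying Theorem \ref{thm-NLS-critical}: at the threshold one still retains $\|u(t)\|_{\dot H^1}\le\|W_0\|_{\dot H^1}$, so that the kinetic energy is coercively controlled by $E_0^c$ and the solution is global in $\dot H^1$. If $u$ fails to scatter in at least one time direction, a profile decomposition argument (Keraani--Kenig--Merle) combined with the sub-threshold scattering theorem should produce a \emph{minimal} non-scattering threshold solution whose trajectory is precompact in $\dot H^1$ modulo the symmetry group
\[
u(t,x)\mapsto e^{i\theta(t)}\lambda(t)^{-(d-2)/2}u\bigl(t,(x-x(t))/\lambda(t)\bigr).
\]
Next I would perform modulation analysis around $W_0$: decompose
\[
u(t)=e^{i\theta(t)}\lambda(t)^{-(d-2)/2}\bigl[W_0+h(t)\bigr]\bigl((x-x(t))/\lambda(t)\bigr),
\]
impose orthogonality conditions that uniquely fix $(\theta,x,\lambda)$, and derive differential inequalities for the modulation parameters using the spectral decomposition of the linearized operator $L_+$ around $W_0$. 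The one-dimensional negative eigenspace of $L_+$ produces an exponential instability that, combined with precompactness, should force $h(t)\to 0$ exponentially as $t\to+\infty$ up to symmetries. Invoking the uniqueness part of the construction of $W_0^-$ (a fixed point argument on the stable manifold associated with $L_+$, contained in Theorem \ref{th.exist}) one then identifies $u$ with $W_0^-$ up to the symmetry group.

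For the super-threshold case (\ref{theorem.super}), the variational trapping yields $\|u(t)\|_{\dot H^1}>\|W_0\|_{\dot H^1}$ throughout the lifespan. Using $u_0\in L^2$ radial and the Glassey virial identity, if $u$ fails to be global in a given time direction then it must blow up in finite time there; if instead $u$ is global in some time direction, the compactness-modulo-symmetries argument plus the same modulation analysis around $W_0$ (now exploiting the \emph{unstable} side of the spectrum of $L_+$) identifies that global direction with $W_0^+$ up to symmetries. The main obstacle will be the rigidity step in both cases (\ref{theorem.sub}) and (\ref{theorem.super}): one must extract sufficiently strong exponential decay of $h(t)$ despite the zero modes of $L_+$ generated by the symmetry group, which requires carefully chosen orthogonality conditions and a bootstrap of the coercivity of $L_+$ on the orthogonal complement through the conservation of $E_0^c$. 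The radial assumption in $d=3,4$ in case (\ref{theorem.sub}) and the $L^2$ assumption in case (\ref{theorem.super}) are forced respectively by the open status of sub-threshold scattering for non-radial $\dot H^1$ data in low dimensions and by the need to legitimately apply the virial identity.
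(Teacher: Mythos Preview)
The theorem you are addressing is not proved in this paper: it is quoted in the History subsection as a known result from Duyckaerts--Merle \cite{DuyckaMerle2009}, Li--Zhang \cite{LiZhang09}, and Su--Zhao \cite{Zhao}, with no proof supplied. So there is nothing in the paper to compare your proposal against.

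That said, your outline is broadly faithful to the strategy of the cited references, and in fact the present paper adapts that same machinery (modulation analysis, localized virial, concentration--compactness) to the inverse-square-potential variant. Two points worth sharpening. First, in case~(\ref{theorem.sub}) your sketch jumps from ``precompact non-scattering threshold solution'' directly to ``$h(t)\to 0$ exponentially via the negative eigendirection.'' In the actual Duyckaerts--Merle argument the intermediate rigidity step is more delicate: one first uses a localized virial identity to show $\int_0^\infty \delta(t)\,dt<\infty$ (hence $\delta(t_n)\to 0$ along a sequence), then bootstraps via the modulation estimates $|\mu'/\mu|\lesssim \mu^2\delta$ to control the scaling parameter, and only then obtains exponential convergence to $W_0$. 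The exponential rate is not a direct consequence of precompactness plus the negative mode; it requires the Gronwall-type argument on $\int\delta$. Second, for the uniqueness step identifying $u$ with $W_0^\pm$, the ``fixed point on the stable manifold'' you invoke is indeed the mechanism, but the construction in Theorem~\ref{th.exist} and its uniqueness clause require improved spatial decay of $h(t)$ (better than $\dot H^1$), which is obtained by a separate bootstrap using the exponential-in-time decay together with the explicit Green's function of the linearized operator. Your proposal does not mention this spatial-decay upgrade, and without it the fixed-point argument does not close.
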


	For the threshold dynamics of  equation \eqref{NLS-Potential}, Yang-Zeng-Zhang\cite{KYang-SIAM} developed the modulation analysis near the ground state $W_a$ and then using the concentration-compactness argument to classificate the dynamics  with radial data in $d=3$ and non-radial data in $d=4,5$. 
	Now we invoke the result of \cite{KYang-SIAM} as the following theorem:\begin{theorem}[Existence of heteroclinic orbits for $\eqref{NLS-Potential}$,\cite{KYang-SIAM}]
		\label{th.exist}
		Let $d\in\{3,4,5\}$ and $0>a>-\frac{(d-2)^2}{4}+\left(\frac{2(d-2)}{d+2}\right)^2$. There exist radial solutions $W_a^-$ with maximal life span $(-\infty,+\infty)$ and $W_a^+$ with maximal life span $(T_+(W_a^+),+\infty)$ of \eqref{NLS-Potential}   such that
		\begin{gather}
			\label{threshold-a}
			E_a^c(W_a)=E_a^c(W_a^\pm),\quad\lim_{t\rightarrow +\infty} W_a^{\pm}(t)=W_a \text{ in } \dot{H}_a^1(\R^d),\\
			\label{ex.sub-a}
			\big\|W_a^{-}\big\|_{\dot{H}_a^1}<\|W_a\|_{\dot{H}_a^1},\quad   \|W_a^-\|_{L_{t,x}^\frac{2(d+2)}{d-2}((-\infty,0])}<\infty,\\
			\label{ex.super}
			\big\|W_a^{+}\big\|_{\dot{H}_a^1}>\|W_a\|_{\dot{H}_a^1},\text{ and, if  }d=5,\;T_-(W_a^+)<+\infty.
		\end{gather}
	\end{theorem}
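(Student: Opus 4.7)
The plan is to follow the Duyckaerts--Merle construction of heteroclinic orbits, adapted to the inverse-square potential by Yang--Zeng--Zhang: linearize \eqref{NLS-Potential} around $W_a$, isolate an unstable mode of the linearized flow in the radial sector, build approximate solutions of the form $W_a+\sum_j A_j e^{-je_0 t}$, promote them to exact solutions via a contraction on a weighted space, and finally analyze their global behavior using the sub-/super-threshold results that have already been established. The role of the spectral assumption $a>-\frac{(d-2)^2}{4}+\bigl(\frac{2(d-2)}{d+2}\bigr)^2$ is to guarantee that the linearized Schr\"odinger operator still enjoys the coercivity and spectral gap needed to mimic the $a=0$ analysis in $d\in\{3,4,5\}$.

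First I would write $u=W_a+h$ and derive the linearized equation. Splitting $h$ into real and imaginary parts produces a non-self-adjoint matrix operator whose components are $\mathcal{L}_+^a=\mathcal{L}_a-\tfrac{d+2}{d-2}W_a^{4/(d-2)}$ and $\mathcal{L}_-^a=\mathcal{L}_a-W_a^{4/(d-2)}$. The first analytical step is the spectral analysis of this linearized operator. Using the explicit form \eqref{E:Wa} of $W_a$, the sharp Hardy inequality together with the scaling invariance $\Lambda W_a\in \ker \mathcal{L}_+^a$, and a Perron--Frobenius-type argument applied in the radial sector, one obtains a single pair of simple real eigenvalues $\pm e_0$ with $e_0>0$ and radial eigenfunctions $\mathcal{Y}_\pm$ that decay exponentially; all other spectrum lies on the imaginary axis and is separated from $\pm e_0$. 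This is the analogue, in the presence of the inverse-square potential, of the key spectral lemma in \cite{DuyckaMerle2009}, and the restriction on $a$ is exactly what keeps the relevant quadratic form coercive on the orthogonal complement of the ground-state directions.

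Next I would construct approximate heteroclinic orbits. Set
\begin{equation*}
U_k^\pm(t,x)=W_a(x)\pm e^{-e_0 t}\mathcal{Y}_+(x)+\sum_{j=2}^{k}e^{-je_0 t}A_j^\pm(x),
\end{equation*}
where the correctors $A_j^\pm$ are obtained recursively by solving $(\mathcal{L}-je_0)A_j^\pm=F_j$, with $F_j$ the coefficient of $e^{-je_0 t}$ in the formal Taylor expansion of the nonlinearity. Solvability is guaranteed because $je_0\notin\sigma(\mathcal{L})$ for $j\ge 2$, and the $A_j^\pm$ inherit radial symmetry and exponential spatial decay. For $k$ large enough relative to the critical Strichartz exponents, $U_k^\pm$ solves \eqref{NLS-Potential} up to a source that decays like $e^{-(k+1)e_0 t}$ in the dual Strichartz norm on $[T_0,+\infty)$. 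The step I expect to be the main obstacle is then upgrading $U_k^\pm$ to honest solutions: I would seek $W_a^\pm=U_k^\pm+w^\pm$ with $w^\pm(t)\to 0$ faster than the last retained term, via a fixed-point argument on a space weighted by $e^{(k+1)e_0 t}$ on $[T_0,+\infty)$. The contraction relies on the full Strichartz package for $e^{-it\mathcal{L}_a}$ and on the stability lemma developed in \cite{KillipMiaVisanZhangZheng,YANG2020124006}; it is precisely this ingredient that forces the dimensional restriction $d\in\{3,4,5\}$, since the $L^\infty\!-\!L^1$ dispersive estimate for $\mathcal{L}_a$ fails in higher dimensions. Uniqueness of $W_a^\pm$ modulo the natural symmetries follows from the one-dimensionality of the unstable eigenspace via a Gronwall-type comparison between two candidate solutions.

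Finally I would pin down which sign yields the sub-threshold orbit and which yields the super-threshold one, and then read off the global behavior. A direct expansion gives
\begin{equation*}
\|W_a^\pm(t)\|_{\dot H_a^1}^2-\|W_a\|_{\dot H_a^1}^2=\pm 2e^{-e_0 t}\langle \mathcal{L}_a^{1/2}W_a,\mathcal{L}_a^{1/2}\mathcal{Y}_+\rangle_{L^2}+O(e^{-2e_0 t}),
\end{equation*}
and after verifying that the inner product is nonzero one sign is strictly below $\|W_a\|_{\dot H_a^1}$ while the other is strictly above. Since $E_a^c(W_a^\pm)=E_a^c(W_a)$ is preserved by the construction, Theorem \ref{thm-NLS-Potential} applied to $W_a^-$ gives global existence and scattering, which yields both the bound $\|W_a^-\|_{\dot H_a^1}<\|W_a\|_{\dot H_a^1}$ and the finite Strichartz norm on $(-\infty,0]$ claimed in \eqref{ex.sub-a}. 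For $W_a^+$ the super-threshold sign is propagated by continuity in the lifespan, and in $d=5$ the additional $L^2$-integrability provided by the construction together with the localized virial estimate from \cite{KillipMiaVisanZhangZheng} forces $T_-(W_a^+)<+\infty$, completing the theorem.
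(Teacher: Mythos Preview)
The paper does not prove this theorem at all: it is quoted, with citation to \cite{KYang-SIAM}, in the historical survey of Section~\ref{sec:intro}, and no argument is supplied anywhere in the body of the paper. So there is no ``paper's own proof'' to compare against; your sketch is essentially a reconstruction of the Duyckaerts--Merle scheme as adapted in the cited reference, and at that level of outline it is broadly correct.

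That said, there is one genuine slip in your final paragraph. You write that ``Theorem~\ref{thm-NLS-Potential} applied to $W_a^-$ gives global existence and scattering,'' but Theorem~\ref{thm-NLS-Potential} requires the strict sub-threshold condition $E_a^c(u_0)<E_a^c(W_a)$, whereas $W_a^-$ sits exactly at the threshold $E_a^c(W_a^-)=E_a^c(W_a)$. The backward-in-time scattering of $W_a^-$ (and hence the finiteness of the $L_{t,x}^{\frac{2(d+2)}{d-2}}$ norm on $(-\infty,0]$) is not a consequence of the sub-threshold theory; in \cite{DuyckaMerle2009} and \cite{KYang-SIAM} it is obtained only after the full threshold classification (compactness of non-scattering threshold solutions, modulation analysis, and exponential convergence to the orbit of $W_a$) has been established, by showing that a threshold solution which fails to scatter backward must coincide with $W_a^-$ up to symmetries and then ruling out a second non-scattering direction. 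You should not invoke the sub-threshold result here.
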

	\begin{theorem}[Threshold dynamics of $\eqref{NLS-Potential}$, \cite{KYang-SIAM}]
		\label{th.classif}
		Let $d\in\{3,4,5\}$ and $0>a>-\frac{(d-2)^2}{4}+\left(\frac{2(d-2)}{d+2}\right)^2$. Suppose that $u_0\in \dot{H}^1_a(\R^d)$  such that 
		\begin{equation}
			\label{threshold-a}
			E_a^c(u_0)=E_a^c(W_a)
		\end{equation}
		We assume that $u$ is the solution of \eqref{NLS-Potential} with initial data $u_0$ 
		and $I$ is the maximal lifespan of $u$. Then the following holds:
		\begin{enumerate}
			\item \label{theorem-sub-a} If $ \| u_0\|_{\dot{H}_a^1(\R^d)}< \|W_a\|_{\dot{H}_a^1(\R^d)}$ and  $u_0$ is radial  when $d=3$, then $I=\R$. Furthermore,  either 
			$$ u(t,x)\in\bigg\{\frac{e^{i\theta_0}}{\lambda_0^{(d-2)/2}}W_a^-\Big(\frac{t_0+t}{\lambda_0^2},\frac{x}{\lambda_0}\Big) ,\quad \frac{e^{i\theta_0}}{\lambda_0^{(d-2)/2}}\overline{W_a^-}\Big(\frac{t_0-t}{\lambda_0^2},\frac{x}{\lambda_0}\Big)\bigg\}$$
			for some parameters $\theta_{0}\in\Bbb{S}^1$, $t_0\in\R$,  and $\lambda_0\in\R\setminus\{0\}$,
			or $u$  scatters in $\dot{H}^1(\R^d)$ when $t\to\pm\infty$, i.e. there exist $u_\pm\in \dot{H}_a^1(\R^d)$ such that
			\begin{align*}
				\lim_{t\to\pm\infty}\big\|u-e^{-it\mathcal{L}_a}u_\pm\big\|_{\dot{H}_a^1(\R^d)}=0.
			\end{align*}
			\item \label{theorem-crit-a} If $ \| u_0\|_{\dot{H}_a^1(\R^d)}= \|W_a\|_{\dot{H}_a^1(\R^d)}$, then $$ u(t,x)=\frac{e^{i\theta_0}}{\lambda_0^{(d-2)/2}}W_a\Big(\frac{x}{\lambda_0}\Big)$$
			for some parameters $\theta_{0}\in\Bbb{S}^1$ and $\lambda_0\in\R\setminus\{0\}$.
			\item \label{theorem.super} If $\| u_0\|_{\dot{H}_a^1(\R^d)}> \|W_a\|_{\dot{H}_a^1(\R^d)}$, and $u_0\in L^2$ is radial, then either 
			$$ u(t,x)\in\bigg\{\frac{e^{i\theta_0}}{\lambda_0^{(d-2)/2}}W_a^+\Big(\frac{t_0+t}{\lambda_0^2},\frac{x}{\lambda_0}\Big) ,\quad \frac{e^{i\theta_0}}{\lambda_0^{(d-2)/2}}\overline{W_a^+}\Big(\frac{t_0-t}{\lambda_0^2},\frac{x}{\lambda_0}\Big)\bigg\}$$
			for some parameters $\theta_{0}\in\Bbb{S}^1$, $t_0\in\R$ and $\lambda_0\in\R\setminus\{0\}$,
			or $I$ is finite.
		\end{enumerate}
	\end{theorem}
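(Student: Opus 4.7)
The plan is to adapt the Duyckaerts-Merle scheme to the $\dot H_a^1$-setting, combining the existence of the heteroclinic orbits $W_a^\pm$ (Theorem \ref{th.exist}), the sub-threshold dichotomy for \eqref{NLS-Potential} (Theorem \ref{thm-NLS-Potential}), and a modulation/virial analysis in a neighborhood of the ground state. Case (2) is the shortest: the equality $E_a^c(u_0)=E_a^c(W_a)$ together with $\|u_0\|_{\dot H_a^1}=\|W_a\|_{\dot H_a^1}$ forces $u_0$ to saturate the sharp $\dot H_a^1\hookrightarrow L^{2d/(d-2)}$ embedding, whose extremizers are exactly the rescaled $W_a$'s (no translations or boosts, since those symmetries are broken by the potential); uniqueness of the Cauchy problem then propagates the form $u_0(x)=e^{i\theta_0}\lambda_0^{-(d-2)/2}W_a(x/\lambda_0)$ to all times.

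For cases (1) and (3) I would parametrize any solution that stays near the ground state orbit as
\[
u(t,x) = e^{i\theta(t)}\,\lambda(t)^{(d-2)/2}\bigl[W_a+\eta(t,\cdot)\bigr]\!\bigl(\lambda(t)x\bigr),
\]
with $(\theta(t),\lambda(t))$ chosen so that $\eta(t)$ is $\dot H_a^1$-orthogonal to $iW_a$ and to $\Lambda W_a:=\tfrac{d-2}{2}W_a+x\cdot\nabla W_a$. Coercivity of the linearized energy form on this codimension-two subspace, combined with the expansion $E_a^c(u)-E_a^c(W_a)=\tfrac12\langle L\eta,\eta\rangle + O(\|\eta\|_{\dot H_a^1}^3)$ and the threshold identity, gives
\[
\|\eta(t)\|_{\dot H_a^1}\lesssim \delta(t):=\bigl|\,\|u(t)\|_{\dot H_a^1}^2-\|W_a\|_{\dot H_a^1}^2\,\bigr|.
\]
A localized virial identity applied to the modulated decomposition yields an ODE controlling $\lambda'/\lambda$ and a sign-definite lower bound for $\int_I\delta(t)\,dt$. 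Either $\delta\in L^1(I)$, in which case $\lambda(t)$ has a limit and the resulting compact-up-to-symmetries orbit asymptotic to $W_a$ must, by uniqueness of asymptotic profiles along the one-dimensional unstable direction of $iJL$ at $W_a$, coincide with $W_a^\pm$ up to the admissible symmetry group; or $\delta\notin L^1(I)$, in which case $\|u(t)\|_{\dot H_a^1}$ exits any neighborhood of $\|W_a\|_{\dot H_a^1}$. In the latter sub-case one invokes, for (1), the sub-threshold scattering theorem at the escape time, and, for (3), the standard localized-virial convexity argument, using $u_0\in L^2$ to drive the second moment to zero in finite time.

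The three delicate points I expect to have to confront are: (a) the spectral analysis of $L_+:=\mathcal L_a-\tfrac{d+2}{d-2}W_a^{4/(d-2)}$ on $\dot H_a^1$, where one needs a simple negative eigenvalue and a kernel reduced to $\mathrm{span}(\Lambda W_a)$ (the restriction $a>-\tfrac{(d-2)^2}{4}+(\tfrac{2(d-2)}{d+2})^2$ secures this via the explicit form \eqref{E:Wa}); (b) the localized Morawetz/virial estimate in the non-radial cases $d=4,5$, where one must commute the virial weight against $|x|^{-2}$ and absorb the singular error by the smallness of $\eta$---this is likely to be the main technical obstruction; and (c) the concentration-compactness step producing the compact orbit, which relies on the $\dot H_a^1$-profile decomposition of Killip-Miao-Visan-Zhang-Zheng and a careful analysis of whether profile centers remain bounded or escape to infinity (in the latter scenario the potential is asymptotically absent and one falls back on the $a=0$ theory, which one must reconcile with the threshold energy $E_a^c(W_a)$).
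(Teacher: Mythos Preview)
This theorem is not proved in the present paper: it is quoted verbatim from Yang--Zeng--Zhang \cite{KYang-SIAM} and used as a black box (for instance in the proof of Lemma~\ref{DeltaZero}). So there is no ``paper's own proof'' to compare against; the relevant comparison is with \cite{KYang-SIAM}, which indeed follows the Duyckaerts--Merle scheme you describe.

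That said, your outline contains a genuine gap in the dichotomy step for case~(1). You write that if $\delta\notin L^1(I)$ then $\|u(t)\|_{\dot H_a^1}$ exits a neighborhood of $\|W_a\|_{\dot H_a^1}$ and ``one invokes the sub-threshold scattering theorem at the escape time.'' This does not work: the energy $E_a^c(u(t))=E_a^c(W_a)$ is conserved and remains exactly at threshold for all time, so Theorem~\ref{thm-NLS-Potential} is never applicable, no matter how far $\|u(t)\|_{\dot H_a^1}$ drifts from $\|W_a\|_{\dot H_a^1}$. The actual argument proceeds in the opposite order. One first assumes $u$ does not scatter forward and shows, via the $\dot H_a^1$-profile decomposition at threshold (this is where point~(c) of your outline enters, and it is not optional), that $\{u(t)\}$ is pre-compact in $\dot H_a^1$ modulo scaling. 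Only \emph{then} does the localized virial estimate on the compact orbit force $\int^\infty\delta(t)\,dt<\infty$, and a Gronwall argument upgrades this to exponential decay $\delta(t)\lesssim e^{-ct}$. The identification with $W_a^-$ then follows from a contraction argument on the stable manifold of the linearized operator, not merely from ``uniqueness of asymptotic profiles along the unstable direction.'' Your sketch of case~(2) and the modulation set-up are fine, but the logical flow for~(1) and~(3) needs to be: non-scattering $\Rightarrow$ compactness $\Rightarrow$ $\delta\in L^1$ $\Rightarrow$ exponential convergence to $W_a$ $\Rightarrow$ rigidity, not the branching you propose.
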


	For the threshold dynamics of the energy-critical focusing-defocusing perturbed equation, i.e. $E_0(u_0)=E_0^c(W_0)$, Ardila-Murphy-Zheng \cite{AJZ} solved the scattering/blow-up dichotomy for radial $H^1$ initial data in three dimension. Compared to the energy-critical threshold problem, the heteroclinic orbits no longer exist. However, as pointed out by \cite{DuyckaMerle2009}, if one considers the non-homogeneous space $H^1$, heteroclinic orbits do not exist naturally. 
	\begin{theorem}[Threshold dynamics of $\eqref{Combine-NLS}$,\cite{AJZ}]\label{Th2} 
		 Let  $u_{0}\in H^{1}(\R^{3})$ be radial. Suppose that $u$ is the corresponding maximal-lifespan solution to \eqref{Combine-NLS} with $u|_{t=0}=u_0$.
		\begin{enumerate}
			\item If $u_{0}$ obeys
			\begin{equation*}
				E_0(u_{0})=E_0^{c}(W_0) \quad \text{and} \quad \Vert u_0\Vert_{\dot{H}^1(\R^3)}<\Vert W_0\Vert_{\dot{H}^1(\R^3)},
			\end{equation*}
			 then $u$ is global, $u\in L_{t,x}^{10}(\R\times\R^3)$, and  scatters in ${H}^1(\R^3)$ when $t\to\pm\infty$, i.e. there exist $u_\pm\in \dot{H}^1(\R^3)$ such that
			 \begin{align*}
			 	\lim_{t\to\pm\infty}\big\|u-e^{it\Delta}u_\pm\big\|_{{H}^1(\R^3)}=0.
			 \end{align*}
			\item If $u_{0}$ is radial and obeys
			\begin{equation*}\label{Blow-up}
				E_0(u_{0})=E_0^{c}(W_0) \quad \text{and} \quad \Vert u_0\Vert_{\dot{H}^1(\R^3)}>\Vert W_0\Vert_{\dot{H}^1(\R^3)},
			\end{equation*}
			then $u$ blows up in both time directions in finite time.
		\end{enumerate}
	\end{theorem}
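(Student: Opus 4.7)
The observation that organizes both parts of the theorem is the decomposition
\begin{equation*}
E_0(u_0) = E_0^c(u_0) + \tfrac{1}{4}\|u_0\|_{L^4(\R^3)}^4,
\end{equation*}
so the threshold hypothesis $E_0(u_0) = E_0^c(W_0)$ already forces $E_0^c(u_0) < E_0^c(W_0)$ strictly (since $u_0\not\equiv 0$). The Kenig--Merle variational analysis for the energy-critical functional then propagates along the flow the sign of $K(u(t)):=\|\nabla u(t)\|^2-\|u(t)\|_{L^6}^6$ and the side of $\|u(t)\|_{\dot H^1}$ relative to $\|W_0\|_{\dot H^1}$. In particular, in case (1) this already yields global existence together with $\|u(t)\|_{\dot H^1}<\|W_0\|_{\dot H^1}$ for all $t$, and in case (2) it gives $K(u(t))<0$ throughout the lifespan.

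\textbf{Scattering.} My plan is to run a Kenig--Merle concentration-compactness / rigidity scheme at the $H^1$-level for \eqref{Combine-NLS}. Stability theory for the combined equation is standard because the $\dot H^{1/2}$-critical perturbation is subcritical relative to $H^1$-Strichartz, and a radial linear $H^1$-profile decomposition is available in three dimensions. Assuming scattering fails, one extracts a minimal non-scattering radial solution $u_c$ whose orbit is pre-compact in $H^1(\R^3)$ modulo phase (the combined equation has no global scaling symmetry, so there is no scaling parameter to quotient by). Compactness of the orbit forces $\inf_t\|u_c(t)\|_{L^4}>0$: if $\|u_c(t_n)\|_{L^4}\to 0$, then by compactness $u_c(t_n)\to 0$ in $H^1$, contradicting the conservation of $E_0(u_c)=E_0^c(W_0)>0$. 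This $L^4$-lower bound converts the energy identity above into a uniform strict gap $E_0^c(u_c(t))\leq E_0^c(W_0)-\eta$ for some $\eta>0$, from which the standard sub-threshold coercivity $K(u_c(t))\geq \delta\|\nabla u_c(t)\|^2$ follows. A radial Lin--Strauss Morawetz estimate applied to $u_c$ then yields the contradiction.

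\textbf{Blow-up.} Under $\|u_0\|_{\dot H^1}>\|W_0\|_{\dot H^1}$, mass conservation combined with the rigidity of the sharp Sobolev inequality (noting that $W_0\notin L^2(\R^3)$, so $u(t)$ cannot approximate a rescaling of $W_0$) provides a uniform lower bound $\|u(t)\|_{L^4}\geq c(M(u_0))>0$, hence a uniform strict sub-threshold energy-critical gap $E_0^c(u(t))\leq E_0^c(W_0)-\eta$. The sharp super-threshold variational analysis then delivers quantitative gaps $\|\nabla u(t)\|^2-\|W_0\|_{\dot H^1}^2\geq\mu$ and $\|u(t)\|_{L^6}^6-\|\nabla u(t)\|^2\geq\mu'$ with $\mu,\mu'>0$. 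Using energy conservation to eliminate the defocusing $L^4$ term, the Glassey--Pohozaev identity for \eqref{Combine-NLS} in $d=3$ reduces to
\begin{equation*}
\frac{d^2}{dt^2}\int_{\R^3}|x|^2|u(t,x)|^2\,dx \;=\; 4\bigl(6E_0^c(W_0)-\|\nabla u(t)\|^2-\|u(t)\|_{L^6}^6\bigr),
\end{equation*}
and since Pohozaev gives $6E_0^c(W_0)=2\|W_0\|_{\dot H^1}^2$, the right-hand side is bounded above by $-c<0$ uniformly. Because radial $u_0\in H^1(\R^3)$ need not satisfy $|x|u_0\in L^2$, I would implement this with a smooth radial cut-off $\chi_R$ equal to $|x|^2$ on $\{|x|\leq R\}$, and control the cut-off errors via the radial Sobolev embedding $\|u\|_{L^\infty(|x|>R)}\lesssim R^{-1}\|u\|_{H^1}$ by choosing $R$ sufficiently large. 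The standard Glassey convexity argument then forces the truncated virial to reach zero in finite time in both time directions, giving finite-time blow-up.

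\textbf{Main obstacle.} The hardest step is the profile classification inside the concentration-compactness argument of the scattering part: one must rule out a minimal element that concentrates into a rescaled heteroclinic-orbit profile $W_0^-$ of Theorem~\ref{th.exist}. The structural input that makes this possible is precisely the remark after Theorem~\ref{Th2} in the excerpt, namely that $W_0\notin L^2(\R^3)$ in three dimensions, so a heteroclinic orbit cannot be accommodated in the $H^1$-mass budget of the profile decomposition; without this ingredient the $H^1$-threshold problem for \eqref{Combine-NLS} would be essentially as involved as the $\dot H^1$-threshold theory of \eqref{NLS-critical} carried out by Duyckaerts--Merle.
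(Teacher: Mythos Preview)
Your high-level structure is right and you correctly identify $W_0\notin L^2(\R^3)$ as the key structural input, but both parts share the same gap: you assume you can avoid the regime where $u(t)$ approaches the rescaled ground-state manifold, and this is precisely what requires work.

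In the scattering part, your assertion that the minimal element has orbit pre-compact in $H^1$ \emph{without} a scaling parameter is not justified. The profile decomposition (Theorem~\ref{LinearProfi}) allows a single profile with $\lambda_n\to 0$; in that regime the nonlinear profile is governed by \eqref{NLS-critical}, and at the threshold $E_0^c(\phi)=E_0^c(W_0)$ it may well be $W_0^-$. Your ``mass-budget'' argument does not exclude this: a concentrating profile $\phi_n=\lambda_n^{-1/2}(P_{\geq\lambda_n^\theta}\phi)(\cdot/\lambda_n)$ carries mass $\lesssim\lambda_n^{2(1-\theta)}\to 0$ regardless of whether $\phi\in L^2$, so the $H^1$-bound on $u(\tau_n)$ places no constraint on $\phi$. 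Accordingly the paper (following \cite{AJZ}) does not attempt to rule out concentration at the profile stage; it extracts compactness in $\dot H^1$ \emph{modulo} a scaling parameter $\lambda(t)$ (Proposition~\ref{Compacness11}) and precludes afterwards.

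In the blow-up part, the uniform bound $\|u(t)\|_{L^4}\geq c>0$ is exactly the hard step. If $\|u(t_n)\|_{L^4}\to 0$ then $\delta(t_n)\to 0$ and modulation gives $u(t_n)\approx e^{i\theta_n}\mu_n^{1/2}W_0(\mu_n\cdot)$ in $\dot H^1$. Mass conservation rules out $\mu_n\to 0$ (Proposition~\ref{Yinfinity}), but \emph{not} $\mu_n\to\infty$: $\dot H^1$-closeness to a highly concentrated $W_0$ is compatible with bounded $L^2$-mass, and $\|\mu_n^{1/2}W_0(\mu_n\cdot)\|_{L^4}^4=\mu_n^{-1}\|W_0\|_{L^4}^4\to 0$ is consistent with the hypothesis. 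Hence your virial right-hand side need not stay uniformly negative.

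What is missing in both parts is the modulation analysis of Section~\ref{sec:Modulation} (Proposition~\ref{Modilation11}), which supplies the quantitative links $\mu(t)^{-1}\lesssim\delta(t)^{c}$ and $\|u(t)\|_{L^{4}}^{4}\lesssim\delta(t)^2$; this is where $W_0\notin L^2$ actually enters (Lemma~\ref{BoundEx}). The paper then feeds this into a localized virial whose error terms are themselves estimated via the modulation decomposition when $\delta(t)$ is small (Lemmas~\ref{LemmaB22} and~\ref{Lemma111}), obtaining integral control of $\delta(t)$ and a contradiction. Your Morawetz/virial arguments would go through once one has a uniformly strict sub-threshold reduction in $t$, but producing that reduction \emph{is} the content of the threshold theorem.
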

	\begin{remark}
We expect that Theorem \ref{Th2} holds in higher-dimensions $d\geqslant4$ without radial assumption. We shall study this issue in the future work.  
	\end{remark}

	In this paper, we  extend Theorem \ref{thm-Combine-NLS} and \ref{Th2} to the case of inverse-square potential. Compared to  these theorems(cf. Theorem \ref{thm-Combine-NLS} and \ref{Th2}), the new ingredient  in our paper is that the additional potential breaks down the translation-invariance with respect to the spatial variable. It needs more delicate analysis in the construction of nonlinear profile decomposition.

	\subsection{Main results and discussion}
	In this section, we present our main results and give the outline of the proof. 
	
	Our first result is to give the scattering/blow-up dichotomy under the energy threshold, which extend Theorem \ref{thm-Combine-NLS}($a=0$) to the case with inverse-square potential. We state it as the following theorem.
	
	\begin{theorem}[Sub-threshold solution]\label{sub-threshold} Let $d\in\{3,4,5\}$ and $-\tfrac{(d-2)^2}{4}+\big(\tfrac{d-2}{d+2}\big)^2<a<0$. Suppose that $u_0\in H_a^1(\Bbb{R}^d)$ and let $u$ be the corresponding maximal-lifespan solution to $\eqref{NLS}$ with $u|_{t=0}=u_0$.
		\begin{enumerate}
			\item[$(i)$] {\bf Scattering part:} If $u_0$ obeys 
			\begin{align}
				E_a(u_0)<E_a^c(W_a),\quad \Vert u_0\Vert_{\dot{H}_a^1(\R^d)}<\Vert W_a\Vert_{\dot{H}_a^1(\R^d)},\label{condition1}
			\end{align}
			and we further assume that $u_0$ is radial when $d=3$,
		then $u$ is global. Moreover, $u\in L_{t,x}^{\frac{2(d+2)}{d-2}}(\Bbb{R}\times\R^{d})$ and scatters in ${H}^1(\R^d)$ when $t\to\pm\infty$, i.e. there exist $u_\pm\in \dot{H}_a^1(\R^d)$ such that
		\begin{align*}
			\lim_{t\to\pm\infty}\big\|u-e^{-it\mathcal{L}_a}u_\pm\big\|_{{H}_a^1(\R^d)}=0.
		\end{align*}
			\item[$(ii)$] {\bf Blow-up part:} If $u_0$ is radial and obeys 
			\begin{align}
				E_a(u_0)<E_a^c(W_a),\quad\|u_0\|_{\dot{H}_a^1}>\|W_a\|_{\dot{H}_a^1},\label{condition2}
			\end{align}
			then $u$ blow up in both time directions in finite time.
		\end{enumerate}
	\end{theorem}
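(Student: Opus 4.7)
The plan is to follow the concentration-compactness/rigidity scheme of Kenig--Merle, combined with variational analysis in the spirit of \cite{KillipMiaVisanZhangZheng, YANG2020124006, MXZ2013}. For part $(i)$, I would first establish the variational characterization: under \eqref{condition1}, the set $\{v\in H_a^1:E_a^c(v)<E_a^c(W_a),\,\|v\|_{\dot H_a^1}<\|W_a\|_{\dot H_a^1}\}$ is invariant under the flow. This uses the sharp Sobolev inequality $\|f\|_{L^{2d/(d-2)}}\leq C_a\|f\|_{\dot H_a^1}$ attained by $W_a$ together with continuity of $t\mapsto\|u(t)\|_{\dot H_a^1}$ and the fact that the $\dot H^{1/2}$-critical perturbation $|u|^{4/(d-1)}u$ contributes a positive, lower-order term to $E_a$ (controlled via Gagliardo--Nirenberg on $L^{2(d+1)/(d-1)}$ between $L^2$ and $\dot H_a^1$). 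The same argument gives coercivity of the energy-critical part of $E_a$, hence a uniform a priori bound $\|u(t)\|_{\dot H_a^1}\leq\eta\|W_a\|_{\dot H_a^1}$ with $\eta<1$, which, together with mass conservation, yields a global $H_a^1$-bound and hence global existence.

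For scattering, I would argue by contradiction via the Kenig--Merle inductive scheme: the failure of scattering below $E_a^c(W_a)$ produces a critical energy level $\mathcal{E}_c$ and a minimal non-scattering solution $u_c$ at level $\mathcal{E}_c$ whose orbit is precompact in $H_a^1$ modulo the natural symmetries. Extracting such $u_c$ requires a linear profile decomposition in $H_a^1$ adapted to $\mathcal{L}_a$; this is available from \cite{KillipMiaVisanZhangZheng} and dichotomizes the profiles according to whether their spatial centers $x_n$ remain bounded (evolved by $e^{-it\mathcal{L}_a}$) or escape to infinity (evolved by $e^{it\Delta}$, since $a|x+x_n|^{-2}\to 0$ pointwise). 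Nonlinear profiles are then constructed case by case: bounded-center profiles at the subcritical scale fall under the sub-threshold scattering for \eqref{NLS-Potential} (Theorem \ref{thm-NLS-Potential}), escaping-center profiles fall under the sub-threshold scattering for \eqref{Combine-NLS} (Theorem \ref{thm-Combine-NLS}, whose $d=3$ radial restriction is compatible since under the assumed radial symmetry escaping centers are forbidden), and profiles concentrating at scales finer than the critical scale reduce to the $\dot H^1$-critical problem since the $\dot H^{1/2}$-perturbation vanishes in that limit. A stability lemma for the perturbed Schr\"odinger operator built on the usual Strichartz estimates for $e^{-it\mathcal{L}_a}$ (as in \cite{KillipMiaVisanZhangZheng, YANG2020124006}) allows us to glue these nonlinear profiles into an approximate solution of \eqref{NLS}.

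The rigidity step excludes the minimal solution $u_c$. Since the orbit is precompact modulo scaling (translations are forbidden by the precompactness of profiles centered near the origin, because $\mathcal{L}_a$ breaks translation invariance), a truncated virial identity of the form
\begin{equation*}
\frac{d^2}{dt^2}\int_{\R^d}\chi_R(x)|u|^2\,dx=8K_a(u)+\text{error}(R),
\end{equation*}
where $K_a$ is the Pohozaev functional for $\mathcal{L}_a$ and $\chi_R$ is a smooth radial cutoff, together with the coercivity $K_a(u(t))\geq\delta>0$ derived from the sub-threshold conditions, yields a contradiction once $R$ is taken large (the error being controlled uniformly by compactness of the orbit and the radial/non-radial Sobolev--Hardy estimates of \cite{KillipMiaVisanZhangZheng}).

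For part $(ii)$, with $u_0$ radial and $\|u_0\|_{\dot H_a^1}>\|W_a\|_{\dot H_a^1}$, the analogous variational argument gives $\|u(t)\|_{\dot H_a^1}>\|W_a\|_{\dot H_a^1}$ and $K_a(u(t))\leq-\delta<0$ throughout the lifespan. A radial truncated virial identity of the same type, with the $\dot H^{1/2}$-subcritical term controlled via radial Strauss embedding as a positive, lower-order perturbation, then gives $\frac{d^2}{dt^2}\int\chi_R(x)|u|^2\,dx\leq-4\delta$, forcing the second moment to become negative in finite time, which implies blow-up in both time directions. The main obstacle throughout is the nonlinear profile construction at the escaping-center regime: embedding a profile of \eqref{Combine-NLS} as an approximate solution of \eqref{NLS} requires sharp control of $a|x|^{-2}$ applied to the translated profile in $\dot H^{-1}$-type norms, and a careful interplay between the two nonlinearities at different scales, which is where the restriction $d\in\{3,4,5\}$ becomes essential through the stability estimates.
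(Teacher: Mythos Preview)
Your proposal is correct and follows essentially the same concentration--compactness/rigidity scheme as the paper, including the three-scenario nonlinear profile embedding (Proposition~\ref{embedding-nonlinear}: $\lambda_n\to0,\,x_n\equiv0\Rightarrow$ \eqref{NLS-Potential}; $\lambda_n\equiv1,\,|x_n|\to\infty\Rightarrow$ \eqref{Combine-NLS}; $\lambda_n\to0,\,|x_n/\lambda_n|\to\infty\Rightarrow$ \eqref{NLS-critical}) and the localized virial arguments for both rigidity and blow-up. One refinement worth noting: in the sub-threshold case the paper obtains precompactness of $\{u_{C_*}(t):t\in I\}$ in $\dot H_a^1$ \emph{without} any scaling modulation (Theorem~\ref{CompacSolution}), because all scenarios with $\lambda_n\to0$ or $|x_n|\to\infty$ are eliminated in the Palais--Smale step via Proposition~\ref{embedding-nonlinear}; the scaling parameter $\lambda(t)$ only enters in the threshold case (Proposition~\ref{Compacness11}).
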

	Our second result is to  give the scattering/blow-up dichotomy at the energy threshold, which extend  Theorem \ref{Th2}($a=0$) to the case with inverse-square potential.  We state it as the following theorem.
	\begin{theorem}[Threshold solution]\label{Threshold}
		Let $d\in\{3,4,5\}$ and $-\frac{(d-2)^2}{4}+\left(\frac{2(d-2)}{d+2}\right)^2<a<0$. Suppose that $u_0\in H_a^1(\R^d)$ and let $u$ be the corresponding maximal-lifespan solution to $\eqref{NLS}$ with $u|_{t=0}=u_0$.
		\begin{enumerate}
			\item[$(i)$] If $u_0$ obeys 
			\begin{align}\label{energy-threshold}
				E_a(u_0)=E_a^c(W_a),\quad\Vert u_0\Vert_{\dot{H}_a^1}<\Vert W_a\Vert_{\dot{H}_a^1},
			\end{align}
			and we further assume that $u_0$ is radial when $d=3$,   then $u$ is global. Moreover, $u\in L_{t,x}^{\frac{2(d+2)}{d-2}}(\Bbb{R}\times\R^{d})$  and scatters in ${H}^1(\R^d)$ when $t\to\pm\infty$, i.e. there exist $u_\pm\in \dot{H}_a^1(\R^d)$ such that
			\begin{align*}
				\lim_{t\to\pm\infty}\big\|u-e^{-it\mathcal{L}_a}u_\pm\big\|_{{H}_a^1(\R^d)}=0.
			\end{align*}
			\item[$(ii)$] If $u_0$ is radial and obeys 
			\begin{align}\label{blow-up-thre}
				E_a(u_0)=E_a^c(W_a),\quad\Vert u_0\Vert_{\dot{H}_a^1}>\Vert W_a\Vert_{\dot{H}_a^1},
			\end{align}
			then $u$ blows up in both time direction in finite time.
		\end{enumerate}
	\end{theorem}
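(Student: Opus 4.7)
The plan is to handle the scattering and blow-up parts separately, both building on Theorem \ref{sub-threshold} together with the new ingredient of the identity
\[
E_a(u)=E_a^c(u)+\frac{d-1}{2(d+1)}\|u\|_{L^{2(d+1)/(d-1)}}^{2(d+1)/(d-1)},
\]
which implies $E_a^c(u(t))\leq E_a^c(W_a)$ with strict inequality unless $u\equiv 0$. This is the mechanism by which the defocusing $\dot H^{1/2}$-critical perturbation rules out heteroclinic orbits (cf.\ the remark in \cite{AJZ}) and simplifies the threshold dynamics compared to the pure critical problem of Theorem \ref{th.classif}.

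For part (i), combining the above strict inequality with the hypothesis $\|u_0\|_{\dot H_a^1}<\|W_a\|_{\dot H_a^1}$ and a continuity-in-time argument places $u(t)$ strictly inside the sub-threshold region of the pure critical energy at every instant. More precisely, I would show that there exists $\delta=\delta(u_0)>0$ with
\[
\|u(t)\|_{\dot H_a^1}^2-\|u(t)\|_{L^{2d/(d-2)}}^{2d/(d-2)}\geq \delta \quad\text{for all } t,
\]
either by a direct variational argument or by applying Theorem \ref{sub-threshold} on a suitably truncated portion of energy. Global existence is then immediate, and scattering follows from the concentration-compactness scheme developed for $\mathcal{L}_a$ in \cite{KillipMiaVisanZhangZheng,YANG2020124006}: assuming scattering fails, extract a minimal non-scattering solution $u_c$ with precompact orbit modulo the symmetries compatible with $\mathcal{L}_a$ (phase, scaling, and in $d\geq 4$ the $\mathcal{L}_a$-adapted spatial translations). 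The novel step at threshold is that $u_c(t_n)$ could a priori resemble a rescaled ground state; however, such a scenario would force $\|u_c(t_n)\|_{\dot H_a^1}\to\|W_a\|_{\dot H_a^1}$, which combined with $E_a(u_c)=E_a^c(W_a)$ and the strictly positive subcritical term contradicts the coercivity bound above. A standard rigidity/truncated virial argument then closes the remaining case.

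For part (ii), the same variational identity yields the coercivity gap $\|u(t)\|_{\dot H_a^1}^2-\|u(t)\|_{L^{2d/(d-2)}}^{2d/(d-2)}\leq -\eta<0$ throughout the maximal lifespan. Since $u_0$ is radial $H_a^1$ but not necessarily in $\Sigma:=\{u\in H^1:\,xu\in L^2\}$, I would invoke the localized virial identity with a radial weight $\phi_R(|x|)$ approximating $|x|^2/2$ in the style of Ogawa--Tsutsumi. The defocusing subcritical contribution, together with the singular potential $a|x|^{-2}$, produces error terms that can be absorbed using radial Strauss-type decay estimates and the conservation of mass $M(u_0)$, for $R$ sufficiently large. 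The resulting convexity inequality $\partial_t^2\int_{\R^d}\phi_R|u|^2\,dx\leq -c<0$ then yields finite-time blow-up in both time directions.

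The main obstacle I anticipate is the rigidity step in part (i). The sub-threshold rigidity arguments of \cite{YANG2020124006} exploit the strict inequality $E_a^c(u_c)<E_a^c(W_a)$; at threshold, this inequality degenerates along putative minimizing sequences, and the extra information must be extracted from the defocusing subcritical term to close the compactness argument. Additionally, the lack of spatial translation invariance in the presence of $\mathcal{L}_a$ forces the minimal element to be centered at the origin modulo the $\mathcal{L}_a$-compatible symmetries, which requires the refined linear profile decomposition adapted to the $H_a^1$ framework with the subcritical nonlinearity tracked along the profiles. The quantitative restriction $a>-\frac{(d-2)^2}{4}+\big(\frac{2(d-2)}{d+2}\big)^2$ should enter precisely here to ensure that all relevant Strichartz and stability estimates for $\mathcal{L}_a$ are available on the full range of exponents used in the profile decomposition.
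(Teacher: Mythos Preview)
Your proposal has a genuine gap in both parts: the uniform coercivity bounds you claim do not hold at threshold. For part (i), the assertion that there exists $\delta=\delta(u_0)>0$ with $G[u(t)]:=\|u(t)\|_{\dot H_a^1}^2-\|u(t)\|_{L^{2d/(d-2)}}^{2d/(d-2)}\geq \delta$ for all $t$ is false. Using $E_a(u)=E_a^c(W_a)$ one computes
\[
8G[u(t)]=\tfrac{16}{d-2}\,\delta(t)-\tfrac{8d(d-1)}{(d+1)(d-2)}\|u(t)\|_{L^{(2d+2)/(d-1)}}^{(2d+2)/(d-1)},
\]
where $\delta(t)=\bigl|\|W_a\|_{\dot H_a^1}^2-\|u(t)\|_{\dot H_a^1}^2\bigr|$. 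For the putative minimal non-scattering solution the paper shows (Proposition~\ref{BoundedUN}) that there exist $t_n\to\infty$ with $\delta(t_n)\to 0$, and then $G[u(t_n)]\to 0$ as well; so your ``direct variational argument'' cannot produce a uniform lower bound. Likewise, for part (ii) your claimed gap $G[u(t)]\leq -\eta<0$ fails: in the sub-threshold case Lemma~\ref{uniform bound} gives $K_a(u(t))\leq -\bar\mu(m_a-E_a(u))$, but at $E_a(u)=m_a$ this is vacuous, and indeed Corollary~\ref{delace} shows $\delta(t_n)\to 0$ along a sequence if $u$ were forward-global. Consequently your convexity inequality $\partial_t^2 V_R\leq -c<0$ cannot be established.

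The missing ingredient is the modulation analysis around $W_a$ (Proposition~\ref{Modilation11}), which the paper develops precisely to handle the regime $\delta(t)\ll 1$. Writing $u(t)=e^{i\theta(t)}[g(t)+\mu(t)^{(d-2)/2}W_a(\mu(t)\cdot)]$, one obtains the quantitative bounds $\|u(t)\|_{L^{(2d+2)/(d-1)}}^{(2d+2)/(d-1)}\lesssim \delta(t)^2$ and $\mu(t)^{-1}\lesssim \delta(t)^{c(\beta,d)}$, the latter using mass conservation in an essential way. These replace your uniform coercivity: the localized virial now yields $\tfrac{d}{dt}I_R[u]\gtrless \pm C\delta(t)$ (with sign depending on the regime), and the argument is closed by combining a Gronwall-type bound on $\int\delta(s)\,ds$ with the control of the scaling parameter $\lambda(t)$ through $\delta(t)$ (Lemmas~\ref{DeltaZero}, \ref{Lemma111}, Proposition~\ref{Spatialcenter} for part (i); Lemmas~\ref{LemmaB22}--\ref{NeD} for part (ii)). Note also that the stronger restriction $a>-\tfrac{(d-2)^2}{4}+\bigl(\tfrac{2(d-2)}{d+2}\bigr)^2$ enters not for Strichartz/stability purposes as you suggest, but to ensure the regularity of $W_a$ needed for the linearized spectral analysis underlying the modulation (cf.\ \cite{KYang-SIAM}).
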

	\begin{remark}
	Compared to  Theorem \ref{sub-threshold}, we further  need that $-\left(\frac{d-2}{2}\right)^2+\left(\frac{2(d-2)}{d+2}\right)^2<a<0$ in Theorem \ref{Threshold} where such  restriction  is to ensure better regularity of the ground state $W_a(x)$.  We refer to \cite{KYang-SIAM} for details.
	\end{remark}  
	
	In the remainder of this introduction, we outline the structure of the paper and the arguments used to prove Theorem \ref{sub-threshold} and \ref{Threshold}.
	
	In Section \ref{S1:preli}, we  revisit some  harmonic analysis tools associated to the operator $\mathcal{L}_a$, including the followings: the equivalence of Sobolev spaces and the Littlewood-Paley theory along with Bernstein inequalities; Strichartz estimates; Kato's local smoothing estimates. Next, we study the local theory  for  equation $\eqref{NLS}$, which contains the local well-posedness, stability lemma, small-data scattering and the persistence of regularity.  We also show the linear profile decomposition for bounded sequence $\{f_n\}\subset H_a^1(\R^d)$. We end up this section by establishing the local virial identity, which is important in both proofs of Theorem \ref{sub-threshold} and \ref{Threshold}.
	
	In Section \ref{sec:VarationalGNI}, we first collect the sharp Sobolev embedding associated to $\mathcal{L}_a$. By using this sharp inequality, we establish the energy trapping lemma, which is crucial in the proof of Theorem \ref{sub-threshold}. We  associate the energy threshold $E_a^c(W_a)$ to a following minimization problem(see $\eqref{minimization}$ for details). The scattering and blow-up regimes can be rewritten to the two regions associated to this minimization problem. Then we establish the energy trapping lemma for these two regions. 
	
	In Section \ref{sec:Modulation}, we give the modulation analysis around the ground state $W_a$, which is essential in the proof of Theorem \ref{Threshold}(1). We emphasize that the mass conservation plays an important role in establishing the  modulation analysis. 
	
	In Section \ref{sec:blowup-sub} and Section \ref{sec:blowup-threshold}, we give the proof of the blow up part in Theorem \ref{sub-threshold} and \ref{Threshold} throughout the local virial argument. For the threshold case, we need the additional modulation analysis. These tools are already established in  previous sections.
	
	In Section~\ref{sec:minimal-blowup},  we construct solutions to \eqref{NLS} associated to profiles $\phi_n$ living either at small length scales (i.e. in the regime $\lambda_n\to 0$) or far from the origin relative to their length scale (i.e. in the regime $|\tfrac{x_{n}}{\lambda_{n}}|\to \infty$) or both. These solutions are also called the nonlinear profiles.  The  challenge lies in the fact that the translation and scaling symmetries in \eqref{NLS} are broken by the potential and the  nonlinearity, respectively.  In particular, we must consider several limiting regimes and use approximation by the suitable underlying model in each case.   Using the nonlinear profiles, we can prove that the solution $u(t)$ is pre-compact in $H_a^1(\R^d)$. In Section \ref{sec:preclude-sub}, we preclude the possibility of minimal blow-up solution. We use the local virial argument to preclude the infinite-time blow-up and the continuous method to preclude the possibility of finite-time blow-up.  
	
	In Section~\ref{sec:threshold-solution}, we demonstrate that the failure of Theorem \ref{Threshold}(i) implies the existence of a non-scattering solution $u$ to $\eqref{NLS}$ satisfying $\eqref{SN}$ such that 
	\begin{align*}
		\bigg\{\lambda(t)^{-\frac{d-2}{2}}u\left(t,\frac{x}{\lambda(t)}\right):t\in[0,T_{max})\bigg\}\mbox{ is pre-compact in }\dot{H}_a^1(\R^d),
	\end{align*}
	where $\lambda(t)\in \R\setminus\{0\}$ is the scaling parameter.
	In Section~\ref{sec:preclude-threshold}, we preclude the possibility of finite-time blow-up and infinite-time blow-up separately. 
	In the case of finite-time blowup, we find that the solution must move to arbitrarily small spatial scales as one approaches the finite blowup time via the local virial identity.  In this regime, we are able to prove that the $L^2$ norm of the solution is identically zero, which leads to a contradiction.  
	
	In the case of infinite-time blowup, we briefly give the sketch of the proof here. Using the standard localized virial argument, we can find a sequence of times along which the solution approaches the orbit of the ground state.  This is made precise using the quantity 
	\[
	\delta(t) = \big| \| W_a\|_{\dot{H}_a^1}^2 - \| u(t)\|_{\dot{H}_a^1}^2\big|. 
	\]
	We can further show that if $\delta(t_n)\to 0$ for some $t_n\to \infty$, then we have $\lambda(t_n)\rightarrow\infty$, which implies that the solution is in small spacial scales.  However, the modulation analysis indicates that the difference in spatial scale may be controlled by the integral of $\delta(t)$.  We  control such time integrals by the value of $\delta(\cdot)$ by using the  virial estimates.  It follows that the solution cannot  move to arbitrarily small scales, and hence we obtain a contradiction.   

	\subsection{Notations}
	We write $A\lesssim B$ or $A=O(B)$ when $A\leq CB$ for some $C>0$. If  $A \lesssim B \lesssim A$, we write $A\sim B$.  We also write $A\ll1$ in the sense that there exists $\varepsilon>0$ sufficiently small such that $A<\varepsilon$.
	\subsection*{Acknowledgements}

	\section{Preliminaries}\label{S1:preli} 
	In this section, we collect the harmonic analysis tools associated with  $\mathcal{L}_a$, the local well-posedness/stablity result, the linear profile decomposition of the bounded sequence in ${H}_a^1(\R^d)$ and local virial identities.
	
	For a function 
	$u:I\times \R^{d}\rightarrow \C$ we use the notation
	\[ 
	\|  u \|_{L_{t}^{q}L^{r}_{x}(I\times \R^{d})}=\|  \|u(t) \|_{L^{r}_{x}(\R^{d})}  \|_{L^{q}_{t}(I)}
	\]
	with $1\leq q\leq r\leq\infty$. When $q=r$ we abbreviate $L_{t}^{q}L^{r}_{x}$ by $L_{t,x}^{q}$.
	
	We define the Sobolev spaces associated with $\L_{a}$ via
	\[
	\|f\|_{\dot{H}^{s, r}_{a}(\R^{d})}=\|(\L_{a})^{\frac{s}{2}}f\|_{L^{r}_{x}(\R^{d})}
	\quad \text{and}\quad 
	\|f\|_{{H}^{s,r}_{a}(\R^{d})}=\|(1+\L_{a})^{\frac{s}{2}}f\|_{L^{r}_{x}(\R^{d})}.
	\]
	We abbreviate $\dot{H}^{s}_{a}(\R^{d})=\dot{H}^{s,2}_{a}(\R^{d})$ and ${H}^{s}_{a}(\R^{d})={H}^{s,2}_{a}(\R^{d})$.
	
	Given a function $u\in L^2(\Bbb{R}^d)$, we defined the mass quantity
	\[
	M(u)=\int_{\R^{d}}|u|^{2}\,dx.
	\] 
	
	If $u\in \dot{H}_a^1(\Bbb{R}^d)$, we define four energy quantities 
	\begin{align}
		&E_0(u)=\int_{\R^{d}}\bigg(\frac{1}{2}|\nabla u|^{2}
		+\frac{d-1}{2d+2}|u|^{\frac{2d+2}{d-1}}-\frac{d-2}{2d}|u|^{\frac{2d}{d-2}}\bigg)\,dx,\label{E_0}\\
		&E_{a}(u)=\int_{\R^{d}}\bigg(\frac{1}{2}|\nabla u|^{2}+\frac{a}{2|x|^{2}}|u|^{2}
		+\frac{d-1}{2d+2}|u|^{\frac{2d+2}{d-1}}-\frac{d-2}{2d}|u|^{\frac{2d}{d-2}}\bigg)\,dx,\label{E_a}\\
		&E_a^c(u)=\int_{\R^{d}}\bigg(\frac{1}{2}|\nabla u|^{2}+\frac{a}{2|x|^{2}}|u|^{2}-\frac{d-2}{2d}|u|^{\frac{2d}{d-2}}\bigg)\,dx,\label{E_a^c}\\
		&E_0^c(u)=\int_{\R^{d}}\bigg(\frac{1}{2}|\nabla u|^{2}-\frac{d-2}{2d}|u|^{\frac{2d}{d-2}}\bigg)\,dx.\label{E_0^c}
	\end{align}
	As mentioned in the introduction, $\eqref{E_a^c}$ and $\eqref{E_0^c}$ are conserved energies for focusing energy-critical problem $\eqref{NLS-Potential}$ and $\eqref{NLS-critical}$ while  $\eqref{E_0}$ and $\eqref{E_a}$ are conserved energies for the focusing-defocusing NLS with combined-power nonlinearity $\eqref{Combine-NLS}$ and $\eqref{NLS}$.

	We summarize the equivalence of Sobolev spaces in the following lemma, which was first proved in \cite{KMVZZ2018}.
	\begin{lemma}\label{EquiSobolev}
		Fix $d\geq 3$, $a\geq -\( \frac{d-2}{2}\)^{2}$ and $0<s<2$. We let 
		\begin{align*}
			\sigma=\frac{d-2}{2}-\big[\big(\frac{d-2}{2}\big)^2+a\big]^\frac{1}{2}.
		\end{align*} If $1<p<\infty$ satisfies 
		$\frac{\sigma+s}{d}<\frac{1}{p}<\min\left\{1, \frac{d-\sigma}{d}\right\}$, then 
		\[
		\|(-\Delta)^{\frac{s}{2}}f\|_{L_{x}^{p}}\lesssim_{d,p,s}\|\L_{a}^{\frac{s}{2}}f\|_{L_{x}^{p}}
		\quad \text{for $f\in C^{\infty}_{c}(\R^{d}\setminus\left\{{0}\right\})$}.
		\]
		If $\max\left\{\frac{s}{d}, \frac{\sigma}{d}\right\}<\frac{1}{p}<\min\left\{1, \frac{d-\sigma}{d}\right\}$, then 
		\[
		\|\L_{a}^{\frac{s}{2}}f\|_{L_{x}^{p}}\lesssim_{d,p,s}\|(-\Delta)^{\frac{s}{2}}f\|_{L_{x}^{p}}
		\quad \text{for  $f\in C^{\infty}_{c}(\R^{d}\setminus\left\{{0}\right\})$}.
		\]
		Especially, if $a>-\frac{(d-2)^2}{4}+\left(\frac{d-2}{d+2}\right)^2$, then 
		\[
		\|\L_{a}^{\frac{1}{2}}f\|_{L_{x}^{p}}\sim\|(-\Delta)^{\frac{1}{2}}f\|_{L_{x}^{p}}
		\quad \text{for all  $\tfrac{2d}{d+2}\leq p\leq \tfrac{2d(d+2)}{d^2+4}$}.
		\]
	\end{lemma}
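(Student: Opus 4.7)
The plan is to adapt the heat-kernel/Mellin-transform comparison method. The central input is the Gaussian upper bound for the semigroup $e^{-t\L_a}$ (proved by Liskevich--Sobol and Milman--Semenov), which accounts for the singularity of the potential at the origin:
\begin{equation*}
	0\leq e^{-t\L_a}(x,y)\lesssim \Bigl(1\vee\tfrac{\sqrt{t}}{|x|}\Bigr)^{\sigma}\Bigl(1\vee\tfrac{\sqrt{t}}{|y|}\Bigr)^{\sigma} t^{-d/2}\,e^{-|x-y|^{2}/(ct)},
\end{equation*}
where $\sigma=\frac{d-2}{2}-\bigl[\bigl(\frac{d-2}{2}\bigr)^{2}+a\bigr]^{1/2}$. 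For $a\geq 0$ one has $\sigma\leq 0$ so the extra weights are harmless, while for $a<0$ the factor $(\sqrt{t}/|x|)^{\sigma}$ records exactly the boundary singularity produced by the potential.

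First, I would use the subordination identity
\begin{equation*}
	\L_a^{-s/2}f=\tfrac{1}{\Gamma(s/2)}\int_0^{\infty} e^{-t\L_a}f\,t^{s/2}\tfrac{dt}{t},\qquad 0<s<2,
\end{equation*}
and the analogous formula for $(-\Delta)^{-s/2}$, to deduce a pointwise bound on the Schwartz kernel of $\L_a^{-s/2}$. Away from $\{x=0\}\cup\{y=0\}$ this kernel reduces to the Riesz-type kernel $|x-y|^{-(d-s)}$; near the origin it picks up the extra factor $(|x-y|/|x|)^{\sigma}$, and symmetrically in $y$.

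Second, for the forward inequality, write $(-\Delta)^{s/2}f=Tg$ with $g=\L_a^{s/2}f$ and $T=(-\Delta)^{s/2}\L_a^{-s/2}$. Inserting the kernel bound into this composition shows that $T$ behaves like a Calder\'on--Zygmund operator of order zero away from the origin but carries weights $|x|^{-\sigma}$ and $|y|^{\sigma}$ near it. Its $L^{p}$-boundedness follows from a Muckenhoupt weighted-norm argument: $|x|^{-\sigma p}\in A_p(\R^d)$ iff $\frac{\sigma}{d}<\frac{1}{p}<\frac{d-\sigma}{d}$, to which one adds the scaling condition $\frac{\sigma+s}{d}<\frac{1}{p}$ that comes from absorbing the factor $t^{s/2}$ in the Mellin integral. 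The reverse inequality is handled symmetrically, starting from the free Gaussian kernel and analyzing $S=\L_a^{s/2}(-\Delta)^{-s/2}$; this time $\L_a^{s/2}$ contributes the sharper constraint $\frac{s}{d}<\frac{1}{p}$ while the weight condition stays the same.

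The special equivalence at $s=1$ under the strengthened hypothesis $a>-\frac{(d-2)^{2}}{4}+\bigl(\frac{d-2}{d+2}\bigr)^{2}$ then follows by intersecting the two admissible ranges of $p$ from the forward and reverse inequalities; the hypothesis on $a$ is precisely what guarantees that this intersection contains $[\tfrac{2d}{d+2},\tfrac{2d(d+2)}{d^{2}+4}]$. The principal technical obstacle is the sharp kernel analysis: $T$ and $S$ are \emph{not} genuine Calder\'on--Zygmund operators at $\{x=0\}$, so one must carefully split the integration region according to the relative sizes of $|x|$, $|y|$, and $|x-y|$ and use the Gaussian off-diagonal decay in the heat bound to control the contribution of the singular weights; extracting the sharp endpoint conditions on $1/p$ is the delicate part.
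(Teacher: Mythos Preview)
The paper does not prove this lemma; it simply records the statement and attributes it to \cite{KMVZZ2018} (Killip--Miao--Visan--Zhang--Zheng, \emph{Math.\ Z.}\ 2018). Your outline is in fact a faithful sketch of the strategy carried out in that reference: the Gaussian heat-kernel bound with the $(\sqrt{t}/|x|)^{\sigma}$ weights is exactly the input used there, and the fractional powers are realized via the subordination formula you wrote down, leading to a comparison of the Riesz-type kernels of $\L_a^{-s/2}$ and $(-\Delta)^{-s/2}$.

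One small correction of emphasis: the actual argument in \cite{KMVZZ2018} is not organized as a Muckenhoupt-$A_p$ weighted CZO argument. Instead they split the kernel of $\L_a^{-s/2}$ into regions according to the relative sizes of $|x|$, $|y|$, and $|x-y|$ (as you anticipate in your last paragraph) and handle each piece directly via Schur's test or the weighted Hardy--Littlewood--Sobolev inequality. The range restrictions on $1/p$ emerge from those estimates rather than from an $A_p$ condition per se, although the numerology is of course consistent with what you wrote. Your verification of the special case $s=1$---that the hypothesis $a>-\tfrac{(d-2)^2}{4}+\bigl(\tfrac{d-2}{d+2}\bigr)^2$ is exactly what forces $\sigma<\tfrac{d(d-2)}{2(d+2)}$, so that the intersection of the two $p$-ranges contains $[\tfrac{2d}{d+2},\tfrac{2d(d+2)}{d^2+4}]$---is correct and is the intended deduction.
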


	We will also need the  Littlewood--Paley theory adapted to $\mathcal{L}_a$.  Let $\phi\in C^{\infty}_{0}(\R^{d})$ be a smooth positive radial function satisfying
	$\phi(x)=1$ if $|x|\leq 1$  and $\phi(x)=0$ if $|x|\geq \frac{11}{10}$. For $N\in 2^{\Z}$, we define 
	\[
	\phi_{N}(x):=\phi(x/N)  \quad \text{and}\quad      \psi_{N}(x)=\phi_{N}(x)-\phi_{N/2}(x).
	\]
	We define the Littlewood-Paley projector as
	\begin{align*}
		f_{\leq N}&:=P^{a}_{\leq N}f:=\phi_{N}(\sqrt{\L_{a}})f, \quad f_ {N}:=P^{a}_{N}f:=\psi_{N}(\sqrt{\L_{a}})f,\\
		&\quad \text{and}\quad f_{> N}:=P^{a}_{> N}f:=(I-P^{a}_{\leq N})f
	\end{align*}
	
	The Littlewood-Paley projectors  satisfy the following Bernstein estimates.
	\begin{lemma}[Bernstein inequalities, \cite{KMVZZ2018}]\label{BIN}
		Let $s\in \R$. For $q_{0}<q\leq r<q_0^\prime:=\frac{\sigma}{d}$ and $f: \R^{d}\to \C$ we have
		\begin{align*}
			\| P^{a}_{N}f   \|_{L_{x}^{r}}&\lesssim N^{\frac{d}{q}-\frac{d}{r}}\| P^{a}_{N}f   \|_{L_{x}^{q}},\\
			\| P^{a}_{\geq N}f   \|_{L_{x}^{r}}&\lesssim N^{-s}\| \L_{a}^{\frac{s}{2}} P^{a}_{\geq N}f   \|_{L_{x}^{r}},\\
			N^{s}\| P^{a}_{N}f   \|_{L_{x}^{r}}&\sim \|\L^{\frac{s}{2}}_{a} P^{a}_{N}f   \|_{L_{x}^{r}}.
		\end{align*}
	\end{lemma}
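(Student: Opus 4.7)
The plan is to derive all three bounds from Gaussian heat kernel estimates for $e^{-t\mathcal{L}_a}$, together with a Mikhlin--H\"ormander type spectral multiplier theorem adapted to $\mathcal{L}_a$. Both ingredients were developed in \cite{KMVZZ2018}, and the admissible range $q_0<p<q_0'$ is precisely the range in which the pointwise bound $|e^{-t\mathcal{L}_a}(x,y)|\lesssim t^{-d/2}e^{-c|x-y|^2/t}$ and the $L^p$-boundedness of the Riesz transform $\mathcal{L}_a^{1/2}(-\Delta)^{-1/2}$ persist in the presence of the inverse-square potential.

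For the first estimate, I would represent $P_N^a=\psi_N(\sqrt{\mathcal{L}_a})$ via the heat semigroup: writing $\psi_N(\lambda)=\int_0^\infty e^{-t\lambda^2}\,d\mu_N(t)$ for a signed measure $\mu_N$ essentially concentrated at $t\sim N^{-2}$, and combining with the Gaussian upper bound, one obtains a pointwise kernel estimate of the form $|K_N(x,y)|\lesssim N^d(1+N|x-y|)^{-M}$ for arbitrary $M$, valid throughout the admissible range. Young's convolution inequality then yields $\|P_N^af\|_{L^r}\lesssim N^{d/q-d/r}\|P_N^af\|_{L^q}$ for $q\leq r$ within the range.

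For the second and third estimates, the strategy is to realize both sides as spectral multipliers of $\sqrt{\mathcal{L}_a}$ and invoke the multiplier theorem. For the equivalence in the third estimate, the $\lesssim$ direction uses the symbol $m(\lambda)=\psi_N(\lambda)(\lambda/N)^s$, while $\gtrsim$ uses $m(\lambda)=\widetilde\psi_N(\lambda)(\lambda/N)^{-s}$ (where $\widetilde\psi_N$ is a fattened version of $\psi_N$ so that $\widetilde\psi_N\psi_N=\psi_N$); both symbols have derivatives obeying Mikhlin bounds uniformly in $N$ after the natural rescaling $\lambda\mapsto\lambda/N$, and hence define $L^r$ bounded multipliers in the admissible range with operator norm $O(1)$. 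The second inequality is obtained analogously by writing $P_{\geq N}^a f=N^{-s}\,m(\sqrt{\mathcal{L}_a})\bigl[\mathcal{L}_a^{s/2}P_{\geq N}^af\bigr]$ with $m(\lambda)=(\lambda/N)^{-s}\chi_{\{\lambda\geq N\}}$ smoothly extended; again this is a bounded multiplier on $L^r$.

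The main obstacle is confined to controlling the spectral multipliers outside the usual $L^2$ framework. The restriction $q_0<q\leq r<q_0'$ is sharp here, because the symbol boundedness of operators of the form $m(\sqrt{\mathcal{L}_a})$ on $L^p$ is limited by the $L^p$-mapping properties of the Riesz transform associated with $\mathcal{L}_a$, which in turn reflects the singular behavior of $|x|^{-2}$ at the origin. Once one accepts the multiplier theorem and heat kernel bounds from \cite{KMVZZ2018} in this range, the three Bernstein estimates follow essentially by the scheme above.
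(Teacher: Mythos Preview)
The paper does not give a proof of this lemma; it is simply quoted as a known result from \cite{KMVZZ2018}, and the text moves directly on to Strichartz estimates. So there is no proof in the paper to compare against.

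Your sketch is a reasonable outline of how the result is obtained in the cited reference: the Gaussian heat-kernel bounds for $e^{-t\mathcal L_a}$ in the restricted range $q_0<p<q_0'$ yield good pointwise kernel bounds for $\psi_N(\sqrt{\mathcal L_a})$, from which the $L^q\to L^r$ Bernstein inequality follows by Young/Schur, and the remaining two statements reduce to $L^r$-boundedness of Mikhlin--H\"ormander multipliers of $\sqrt{\mathcal L_a}$, again valid only in that restricted range. One small point: for the second inequality you want to be slightly careful with the cutoff at $\lambda=N$, since $\chi_{\{\lambda\ge N\}}$ is not smooth; the clean way is to write $P^a_{\ge N}=\sum_{M\ge N}P^a_M$, apply the third estimate termwise, and sum, rather than trying to treat $(\lambda/N)^{-s}\chi_{\{\lambda\ge N\}}$ directly as a Mikhlin multiplier.
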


	
	We have the following global-in-time Strichartz estimates.
	\begin{lemma}[Strichartz estimates, \cite{BPSTZ}]\label{STRICH}
		Fix $a>-\frac{(d-2)^2}{4}$. Then the solution $u$ of $(i\partial_{t}-\L_{a}) u=F$ on an interval $t_0\in I$
		satisfying
		\[\| u  \|_{L_{t}^{q}L^{r}_{x}(I\times \R^{d})} \lesssim \| u(t_{0})\|_{L^{2}_{x}(\R^{d})}+ \| F\|_{L_{t}^{\tilde{q}'}L^{\tilde{r}'}_{x}(I\times \R^{d})}, \]
		where $2\leq \tilde{q},\tilde{r}\leq\infty$ with 
		$\frac{2}{q}+\frac{d}{r}=\frac{2}{\tilde{q}}+\frac{d}{\tilde{r}}=\frac{d}{2}$.
	\end{lemma}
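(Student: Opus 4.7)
The plan is to reduce the Strichartz estimate to two endpoint inputs, namely the $L^2$ isometry property of $e^{-it\mathcal{L}_a}$ and a pointwise dispersive bound, and then invoke the abstract Keel--Tao machinery. Self-adjointness of $\mathcal{L}_a$ on the Friedrichs domain (guaranteed by $a>-(d-2)^2/4$ together with sharp Hardy) yields via the spectral theorem the energy identity
\[
\|e^{-it\mathcal{L}_a}f\|_{L^2_x}=\|f\|_{L^2_x}, \qquad t\in\R,
\]
so that $L^2\to L^2$ boundedness of the propagator is immediate and the only genuinely analytical content is the dispersive bound.

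The main step is to establish the pointwise dispersive estimate
\[
\bigl\|e^{-it\mathcal{L}_a}\bigr\|_{L^1_x\to L^\infty_x}\lesssim |t|^{-d/2}, \qquad t\neq 0,
\]
uniformly on the whole range $a>-(d-2)^2/4$. The natural approach is to decompose $L^2(\R^d)$ into spherical-harmonic sectors: on the $k$-th sector, $\mathcal{L}_a$ acts as a radial Schr\"odinger operator with effective potential $(\nu_k^2-1/4)/r^2$, where $\nu_k=\sqrt{(k+\tfrac{d-2}{2})^2+a}$, and its propagator admits an explicit Hankel-transform representation in terms of Bessel functions $J_{\nu_k}$. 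The task then reduces to producing uniform-in-$k$ oscillatory bounds for kernels of the form $\int_0^\infty e^{-it\rho^2}J_{\nu_k}(r\rho)J_{\nu_k}(r'\rho)\rho\,d\rho$ and summing against the dimensional degeneracy of spherical harmonics. This is the hard part, since one must control the Bessel kernels uniformly as $\nu_k\downarrow 0$ (the regime where the potential is closest to its critical strength) without losing the free $|t|^{-d/2}$ decay; here we cite the pointwise estimates of \cite{BPSTZ}.

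With energy conservation and dispersive decay in hand, the abstract Keel--Tao theorem delivers the homogeneous estimate
\[
\bigl\|e^{-it\mathcal{L}_a}f\bigr\|_{L_t^q L_x^r(\R\times\R^d)} \lesssim \|f\|_{L^2_x}
\]
for every admissible pair $(q,r)$ with $\tfrac{2}{q}+\tfrac{d}{r}=\tfrac{d}{2}$, $2\leq q,r\leq\infty$, excluding the two-dimensional endpoint (not relevant here since $d\geq 3$). For the inhomogeneous part, write Duhamel
\[
u(t)=e^{-i(t-t_0)\mathcal{L}_a}u(t_0)-i\int_{t_0}^{t}e^{-i(t-s)\mathcal{L}_a}F(s)\,ds,
\]
apply the homogeneous estimate to the first term, and apply the bilinear Keel--Tao bound combined with the Christ--Kiselev lemma to upgrade the full-line bound on $\int_\R e^{-i(t-s)\mathcal{L}_a}F(s)\,ds$ to the retarded integral (this step requires $q>\tilde q'$ strictly, with the diagonal endpoint handled directly by Keel--Tao). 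Restricting everything to the time interval $I$ and combining the two contributions yields the stated inequality with the full range of $(\tilde q,\tilde r)$. The only step specific to the inverse-square potential is the dispersive bound; the rest is abstract and identical to the free case.
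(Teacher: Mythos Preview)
The paper does not supply a proof; it simply cites \cite{BPSTZ} (with the double endpoint $q=\tilde q=2$ attributed separately to Bouclet--Mizutani \cite{Mizutani} in the remark that follows). Your reduction to Keel--Tao is the natural template, but the dispersive input you invoke is false in the range $-(d-2)^2/4<a<0$, which is precisely the regime this paper works in. For $a<0$ the Schr\"odinger kernel $e^{-it\mathcal{L}_a}(x,y)$ is unbounded: by scaling it equals $|t|^{-d/2}K(x/\sqrt{|t|},y/\sqrt{|t|})$, and $K$ carries the same $|x|^{-\sigma}|y|^{-\sigma}$ singularity at the origin (with $\sigma=\tfrac{d-2}{2}-\sqrt{(\tfrac{d-2}{2})^2+a}>0$) that appears in the heat-kernel bounds of \cite{KMVZZ2018}. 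The paper itself flags this obstruction in the footnote after Theorem~\ref{thm-NLS-Potential}, referring to the ``lack of the $L^\infty$--$L^1$ dispersive estimate.'' A further symptom: if the full $L^1\to L^\infty$ bound held, Keel--Tao would deliver the double endpoint automatically, yet that case required the separate argument of \cite{Mizutani} via resolvent estimates.

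The Hankel-transform decomposition you set up is indeed the starting point of \cite{BPSTZ}, but the estimate they extract from it is not a global $L^1\to L^\infty$ bound. On the $k$-th spherical-harmonic sector the radial propagator behaves like a free Schr\"odinger flow in effective dimension $2\nu_k+2$, and one obtains sectorwise Strichartz (equivalently, weighted dispersive) estimates uniform in $k$, which are then reassembled using orthogonality in the angular variable. The delicate uniformity as $\nu_0\downarrow 0$ is handled at the level of the mixed $L^q_tL^r_x$ norms rather than the $L^\infty_x$ norm, and this is exactly where your route through the unweighted pointwise bound breaks down.
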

	\begin{remark}
		Bouclet-Mizutani\cite{Mizutani} proved the double end-point case, i.e. $q=\tilde{q}=2$.
	\end{remark}
	We also collect the following Kato's local smoothing result for the linear Schr\"odinger group $e^{-it\L_{a}}$, see \cite{KillipMiaVisanZhangZheng} for the  proof.
	\begin{lemma}[Local smoothing,\cite{KillipMiaVisanZhangZheng}]\label{LocalSmoothing}
		Let $a>-\left(\frac{d-2}{4}\right)^2$ and $w=e^{-it\mathcal{L}_a}w_0$, then we have
		\begin{align}
			&    \int_{\mathbb{R}}\int_{\mathbb{R}^d}\frac{|\nabla w(t,x)|^2}{R\langle R^{-1}x\rangle^3}+\frac{|w(t,x)|^2}{R|x|^2} dx dt\lesssim\|w_0\|_2\|\nabla w_0\|_2+R^{-1}\|w_0\|_2^2,\label{local1}\\&\int_{\mathbb{R}}\int_{|x-z|\leq R}\frac{1}{R}|\nabla w(t,x)|^{2} dx dt\lesssim\|w_{0}\|_{2}\|\nabla w_{0}\|_{2}+R^{-1}\|w_{0}\|_{2}^{2}\label{local2}
		\end{align}
		uniformly for $z\in\Bbb{R}^d$.
	\end{lemma}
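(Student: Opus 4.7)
The plan is to deduce both estimates from a Morawetz/virial identity adapted to $\mathcal{L}_a$, with two different choices of weight. Let $w(t)=e^{-it\mathcal{L}_a}w_0$ and, for a smooth real weight $\varphi:\R^d\to\R$, define the Morawetz action
\[
M_\varphi(t)=2\,\mathrm{Im}\int_{\R^d}\bar w(t,x)\,\nabla\varphi(x)\cdot\nabla w(t,x)\,dx.
\]
A direct computation using $i\partial_t w=\mathcal{L}_a w$ and $\nabla(|x|^{-2})=-2x/|x|^4$ yields
\[
\partial_t M_\varphi(t)=\int_{\R^d}\Bigl[4\,\partial_j\partial_k\varphi\,\mathrm{Re}(\overline{\partial_j w}\,\partial_k w)-(\Delta^2\varphi)|w|^2+\tfrac{4a\,\nabla\varphi\cdot x}{|x|^4}|w|^2\Bigr]\,dx.
\]
Integrating in $t$ over $\R$ and using the Cauchy--Schwarz bound $|M_\varphi(t)|\lesssim\|\nabla\varphi\|_\infty\|w(t)\|_2\|\nabla w(t)\|_2$, together with $L^2$-conservation and the equivalence $\|\nabla w(t)\|_2\sim\|w(t)\|_{\dot H^1_a}\sim\|w_0\|_{\dot H^1_a}\sim\|\nabla w_0\|_2$ from \eqref{equiNorms} (which requires $a>-\tfrac{(d-2)^2}{4}$), produces the master inequality
\[
\int_\R\!\!\int_{\R^d}\bigl[4\,\partial_j\partial_k\varphi\,\mathrm{Re}(\overline{\partial_j w}\,\partial_k w)-(\Delta^2\varphi)|w|^2+\tfrac{4a\,\nabla\varphi\cdot x}{|x|^4}|w|^2\bigr]\,dx\,dt\lesssim\|\nabla\varphi\|_\infty\|w_0\|_2\|\nabla w_0\|_2.
\]

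For \eqref{local1}, the plan is to take the radial weight $\varphi(x)=R\,\langle x/R\rangle\cdot R$ (i.e.\ $\varphi(r)=R^2\sqrt{1+r^2/R^2}$, possibly with an additive constant). A direct computation gives $\nabla\varphi$ bounded, $\varphi'(r)/r=1/\langle r/R\rangle$, and Hessian
\[
\partial_j\partial_k\varphi\gtrsim \frac{1}{R\langle r/R\rangle^3}\,\delta_{jk}\qquad\text{and}\qquad -\Delta^2\varphi\gtrsim \frac{1}{R^3\langle r/R\rangle^5},
\]
while the potential term becomes $4a\,\varphi'(r)/r^3\,|w|^2=\tfrac{4a}{R|x|^2\langle r/R\rangle}|w|^2$. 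When $a\geq 0$ this has the correct sign and one reads off \eqref{local1} directly. When $-\tfrac{(d-2)^2}{4}<a<0$, the negative potential contribution is absorbed into the positive gradient contribution via the sharp Hardy inequality: choose the weight so that the Hessian term dominates $a$ times the Hardy weight on the matching scale, and use that the positivity of $\mathcal{L}_a$ implies $\int|\nabla w|^2+a|w|^2/|x|^2\geq c\int|\nabla w|^2$. The term $-\Delta^2\varphi\,|w|^2$ is lower order and produces the $R^{-1}\|w_0\|_2^2$ contribution after integrating.

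For \eqref{local2}, the plan is to use the translated weight $\varphi_z(x)=R\sqrt{1+|x-z|^2/R^2}$, whose Hessian is positive definite with $\partial_j\partial_k\varphi_z\gtrsim R^{-1}\delta_{jk}$ on $|x-z|\leq R$. The main obstacle is the potential contribution $\tfrac{4a\,\nabla\varphi_z\cdot x}{|x|^4}|w|^2$, which is no longer radial and may have the wrong sign. The remedy is twofold: first, $|\nabla\varphi_z\cdot x|\lesssim|x|$, so this term is pointwise bounded by $|a|\,|w|^2/|x|^2$, which by Hardy is integrable in space against $|\nabla w|^2$; second, integrating in $t$ and applying the already-proved \eqref{local1} (whose left-hand side controls $\int\!\!\int|w|^2/(R|x|^2)\,dx\,dt$) lets us dominate the translated potential term by $R^{-1}(\|w_0\|_2\|\nabla w_0\|_2+R^{-1}\|w_0\|_2^2)$. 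The expected main obstacle is precisely keeping track of the translated potential term uniformly in $z$; the $z$-uniform Hardy bound above resolves it, and the resulting gradient term $\int\!\!\int_{|x-z|\leq R}R^{-1}|\nabla w|^2\,dx\,dt$ is bounded by the same right-hand side, establishing \eqref{local2}.
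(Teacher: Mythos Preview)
The paper does not supply its own proof of this lemma; it simply refers the reader to \cite{KillipMiaVisanZhangZheng}. Your Morawetz/virial approach with the weight $\varphi(x)\sim R\langle x/R\rangle$ (and its translate for \eqref{local2}) is precisely the method used in that reference, so the outline is correct and aligned with the cited source.

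Two details deserve more care than your sketch gives them. First, for $a<0$ the absorption of the potential term is not a consequence of the unweighted coercivity $\int|\nabla w|^2+a|x|^{-2}|w|^2\gtrsim\int|\nabla w|^2$: the Hessian of $\varphi$ puts \emph{different} weights on the radial and angular parts of $\nabla w$ (namely $\varphi''=\langle r/R\rangle^{-3}$ and $\varphi'/r=\langle r/R\rangle^{-1}$), and it is the angular piece $\tfrac{\varphi'}{r}\bigl(\tfrac{|\slashed\nabla w|^2}{r^2}+\tfrac{a}{r^2}|w|^2\bigr)$ that actually pairs with the Hardy term; one then invokes Hardy on spheres (or equivalently the positivity of $-\Delta_{\mathbb{S}^{d-1}}+a+\tfrac{(d-2)^2}{4}$) to keep this nonnegative. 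Second, for this choice of $\varphi$ one has $-\Delta^2\varphi\geq 0$, so it is a favorable term on the left-hand side rather than a source of the $R^{-1}\|w_0\|_2^2$ loss you describe; that extra term on the right is harmless slack here. Your bootstrapping of \eqref{local1} to handle the off-center potential contribution in \eqref{local2} is the right idea and matches the argument in \cite{KillipMiaVisanZhangZheng}.
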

	Finally, we have the following estimate of the linear flow $e^{-it\mathcal{L}_a}f$ on
	compact domains.  This estimate is used  to control the error term in an approximate solution in the construction of minimal blowup solution(see \eqref{Bound33}). 
	\begin{lemma}\label{Kato}
		Let $a>-\frac{(d-2)^2}{4}+\left(\frac{d-2}{d+2}\right)^2$ and $\phi\in \dot{H}^{1}_{a}(\R^{d})$, then we have
		\begin{align*}
			\|\nabla e^{-it\mathcal{L}_{a}} \phi\|_{L_{t,x}^2([\tau-T,\tau+T]\times\{|x-z|\leq R\})}^{2}&\lesssim T^{\frac{d^2+8d-4}{2(d+2)(3d-2)}}R^\frac{2d(d-2)}{(3d-2)(d+2)}\|\phi\|_{\dot{H}_{a}^{1}}^{\frac{2d}{3d-2}}\|e^{-it\mathcal{L}_{a}}\phi\|_{L_{t,x}^\frac{2(d+2)}{d-2}(\R\times\R^d)}^\frac{d-2}{3d-2}\\
			&\hspace{2ex}+T^{\frac{d^2+8d-4}{8(d-1)(d+2)}}R^\frac{3d^2-4d-4}{4(d-1)(d+2)}\|\phi\|_{\dot{H}_{a}^{1}}^{\frac{3d-2}{4(d-1)}}\|e^{-it\mathcal{L}_{a}}\phi\|_{L_{t,x}^\frac{2(d+2)}{d-2}(\R\times\R^d)}^\frac{d-2}{4(d-1)}.
		\end{align*}
		uniformly in $\phi$ and the parameters $R$, $T>0$, $z\in \R^{d}$ and $\tau\in\R$ and $R,T>0$.
	\end{lemma}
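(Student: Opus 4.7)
The plan is to prove Lemma \ref{Kato} by frequency-localizing $\phi$ and balancing two different estimates for the resulting pieces: Kato's local smoothing estimate \eqref{local2} to control the high-frequency part (which lives at size $\|\phi\|_{\dot H_a^1}=A$) and an H\"older/Bernstein bound driven by the endpoint Strichartz norm $B:=\|e^{-it\mathcal{L}_a}\phi\|_{L^{2(d+2)/(d-2)}_{t,x}}$ to control the low-frequency part. The two distinct summands on the right-hand side of the lemma should correspond to balancing the low-frequency contribution against the two competing terms $RN^{-1}A^2$ and $N^{-2}A^2$ in Kato's smoothing bound.

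Concretely, I would first split $\phi=P^a_{\leq N}\phi+P^a_{>N}\phi$ with $N>0$ to be optimized at the end. For the high-frequency piece, applying \eqref{local2} to $P^a_{>N}\phi$ gives
\begin{align*}
\|\nabla e^{-it\mathcal{L}_a}P^a_{>N}\phi\|_{L^2_{t,x}(K)}^2
\lesssim R\|P^a_{>N}\phi\|_{L^2}\|\nabla P^a_{>N}\phi\|_{L^2}+\|P^a_{>N}\phi\|_{L^2}^2,
\end{align*}
with $K=[\tau-T,\tau+T]\times\{|x-z|\leq R\}$. Bernstein (Lemma \ref{BIN}) yields $\|P^a_{>N}\phi\|_{L^2}\lesssim N^{-1}A$ and $\|\nabla P^a_{>N}\phi\|_{L^2}\lesssim A$, producing a bound of the form $(RN^{-1}+N^{-2})A^2$.

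For the low-frequency piece, H\"older's inequality on the finite-measure set $K$ passes from $L^2_{t,x}$ to the endpoint Strichartz space, picking up a factor $|K|^{2/(d+2)}\sim (TR^d)^{2/(d+2)}$:
\begin{align*}
\|\nabla e^{-it\mathcal{L}_a}P^a_{\leq N}\phi\|_{L^2_{t,x}(K)}
\lesssim (TR^d)^{2/(d+2)}\|\nabla e^{-it\mathcal{L}_a}P^a_{\leq N}\phi\|_{L^{2(d+2)/(d-2)}_{t,x}}.
\end{align*}
Converting $\nabla$ to $\mathcal{L}_a^{1/2}$ via the Sobolev-norm equivalence (Lemma \ref{EquiSobolev}) and then invoking Bernstein to strip the frequency bounds the right-hand side by a constant multiple of $(TR^d)^{2/(d+2)}\,N\,B$. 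Summing the two squared contributions,
\begin{align*}
\|\nabla e^{-it\mathcal{L}_a}\phi\|_{L^2_{t,x}(K)}^2
\lesssim (TR^d)^{4/(d+2)}N^2B^2+(RN^{-1}+N^{-2})A^2,
\end{align*}
and choosing $N$ to balance the low-frequency term against $RN^{-1}A^2$ or $N^{-2}A^2$ separately yields the two summands in the statement.

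The main obstacle is ensuring that the Sobolev-norm equivalence $\|\nabla\cdot\|_{L^p}\sim\|\mathcal{L}_a^{1/2}\cdot\|_{L^p}$ and the Bernstein estimate of Lemma \ref{BIN} both apply at the Strichartz exponent $p=\tfrac{2(d+2)}{d-2}$; this is precisely why the hypothesis $a>-\tfrac{(d-2)^2}{4}+\bigl(\tfrac{d-2}{d+2}\bigr)^2$ is imposed. A secondary complication is the algebraic bookkeeping in the final optimization, which converts the clean-looking powers of $N$ above into the rational exponents $\tfrac{2d}{3d-2}$, $\tfrac{d-2}{3d-2}$ (for the first summand) and $\tfrac{3d-2}{4(d-1)}$, $\tfrac{d-2}{4(d-1)}$ (for the second) appearing in the statement; tracking the fractional powers of $T$ and $R$ through H\"older, Bernstein, and the balancing condition $N^3\sim R A^2(TR^d)^{-4/(d+2)}B^{-2}$ (resp.\ $N^4\sim A^2(TR^d)^{-4/(d+2)}B^{-2}$) is the delicate part of the argument.
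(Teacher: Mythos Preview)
Your high-frequency treatment via \eqref{local2} and Bernstein is exactly right and matches the paper. The gap is in the low-frequency part: the step ``converting $\nabla$ to $\mathcal{L}_a^{1/2}$ via Lemma~\ref{EquiSobolev}'' at the exponent $p=\tfrac{2(d+2)}{d-2}$ is not justified. Under the hypothesis $a>-\tfrac{(d-2)^2}{4}+\bigl(\tfrac{d-2}{d+2}\bigr)^2$, Lemma~\ref{EquiSobolev} only gives $\|\nabla f\|_{L^p}\sim\|\mathcal{L}_a^{1/2}f\|_{L^p}$ on the range $\tfrac{2d}{d+2}\le p\le \tfrac{2d(d+2)}{d^2+4}$, and $\tfrac{2(d+2)}{d-2}$ lies well outside this range (for $d=3$ the upper endpoint is $30/13$ while you need $p=10$). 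More precisely, the bound $\|(-\Delta)^{1/2}f\|_{L^p}\lesssim\|\mathcal{L}_a^{1/2}f\|_{L^p}$ requires $\tfrac{\sigma+1}{d}<\tfrac{1}{p}$, i.e.\ $\sigma<\tfrac{d^2-4d-4}{2(d+2)}$; for $d\in\{3,4\}$ the right side is negative, so no negative $a$ works, and for $d=5$ the assumed lower bound on $a$ is far from sufficient. Since $\nabla$ is not a function of $\mathcal{L}_a$, you also cannot bypass this via Bernstein alone.

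The paper avoids this by H\"older-ing separately in $t$ and $x$ so that the spatial exponent lands exactly at the endpoint $r=\tfrac{2d(d+2)}{d^2+4}$ of the equivalence range (with the time exponent chosen so that $(q,r)$ interpolates between the diagonal Strichartz pair and $(\infty,2)$). At that exponent one may legitimately pass to $\mathcal{L}_a^{1/2}$, then use Bernstein to lower the power to $\mathcal{L}_a^{(d+2)/(4d)}$, and finally interpolate
\[
\|\mathcal{L}_a^{\frac{d+2}{4d}}g\|_{L_t^qL_x^r}\lesssim \|g\|_{L_{t,x}^{\frac{2(d+2)}{d-2}}}^{\frac{d-2}{2d}}\|\mathcal{L}_a^{1/2}g\|_{L_t^\infty L_x^2}^{\frac{d+2}{2d}},
\]
so that the full derivative sits only at $L^2$, where the equivalence is automatic. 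This produces a low-frequency bound with $N^{(d-2)/(2d)}$ rather than your $N$, and the subsequent optimization then gives different exponents from the ones your balancing would yield.
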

	\begin{proof}
		We write $$e^{-it\mathcal{L}_a}\phi=e^{-it\mathcal{L}_a}(P_{\leq N}^a\phi+P_{>N}^a\phi).$$
		First, we estimate the low frequency part. By the equivalence of Sobolev norms,  H\"older's inequality,  Bernstein's inequality and Strichartz estimate, we have
		\begin{align*}
			&\hspace{3ex}	\big\|\nabla e^{-it\mathcal{L}_a}P_{\leq N}^{a}\phi\big\|_{L_{t, x}^{2}([\tau-T,\tau+T]\times\{|x-z|\leq R\})} \\
			&\lesssim T^{\frac{d^2+8d-4}{4d(d+2)}}R^{\frac{d-2}{d+2}}\left\|\nabla e^{-it\mathcal{L}_{a}}P_{\leq N}^{a}\phi\right\|_{L_{t}^{\frac{4d(d+2)}{(d-2)^2}}L_{x}^{\frac{2d(d+2)}{d^2+4}}(\mathbb{R}\times\mathbb{R}^{d})} \\
			&\lesssim T^{\frac{d^2+8d-4}{4d(d+2)}}R^{\frac{d-2}{d+2}}N^{\frac{d-2}{2d}}\|(\mathcal{L}_a)^{\frac{d+2}{4d}}e^{-it\mathcal{L}_a}P_{\leq N}^a\phi\|_{L_t^{\frac{4d(d+2)}{(d-2)^2}}L_x^{\frac{2d(d+2)}{d^2+4}}(\mathbb{R}\times\mathbb{R}^d)} \\
			&\lesssim T^{\frac{d^2+8d-4}{4d(d+2)}}R^{\frac{d-2}{d+2}}N^{\frac{d-2}{2d}}\|e^{-it\mathcal{L}_{a}}\phi\|_{L_{t,x}^{\frac{2(d+2)}{d-2}}(\mathbb{R}\times\mathbb{R}^{d})}^{\frac{d-2}{2d}}\|(\mathcal{L}_{a})^{\frac{1}{2}}e^{-it\mathcal{L}_{a}}\phi\|_{L_{t}^{\infty}L_{x}^{2}(\mathbb{R}\times\mathbb{R}^{d})}^{\frac{d+2}{2d}} \\
			&\lesssim T^{\frac{d^2+8d-4}{4d(d+2)}}R^{\frac{d-2}{d+2}}N^{\frac{d-2}{2d}}\|e^{-it\mathcal{L}_{a}}\phi\|_{L_{t,x}^{\frac{2(d+2)}{d-2}}(\mathbb{R}\times\mathbb{R}^{d})}^{\frac{d-2}{2d}}\|\phi\|_{\dot{H}^{1}(\mathbb{R}^{d})}^{\frac{d+2}{2d}}.
		\end{align*}
		For the high frequency,  the local smoothing estimate $\eqref{local2}$ and Bernstein's inequality imply 
		\begin{align*}        
			\|\nabla e^{-it\mathcal{L}_{a}}P_{>N}^{a} \phi\|_{L_{t,x}^2([\tau-T,\tau+T]\times\{|x-z|\leq R\}}^{2}& \lesssim R\|P_{>N}^{a}\phi\|_{L_{x}^{2}}\|\nabla P_{>N}^{a}\phi\|_{L_{x}^{2}}+\|P_{>N}^{a}\phi\|_{L_{x}^{2}}^{2} \\
			&\lesssim(RN^{-1}+N^{-2})\|\phi\|_{\dot{H}_{a}^{1}}^{2}.
		\end{align*}
		Then optimizing the choice of $N$, we have
		\begin{align*}
			&\hspace{5ex}\|\nabla e^{-it\mathcal{L}_{a}} \phi\|_{L_{t,x}^2([\tau-T,\tau+T]\times\{|x-z|\leq R\})}^{2}\\
			&\lesssim T^{\frac{d^2+8d-4}{2(d+2)(3d-2)}}R^\frac{2d(d-2)}{(3d-2)(d+2)}\|\phi\|_{\dot{H}_{a}^{1}}^{\frac{2d}{3d-2}}\|e^{-it\mathcal{L}_{a}}\phi\|_{L_{t,x}^\frac{2(d+2)}{d-2}(\R\times\R^d)}^\frac{d-2}{3d-2}\\
			&\hspace{2ex}+T^{\frac{d^2+8d-4}{8(d-1)(d+2)}}R^\frac{3d^2-4d-4}{4(d-1)(d+2)}\|\phi\|_{\dot{H}_{a}^{1}}^{\frac{3d-2}{4(d-1)}}\|e^{-it\mathcal{L}_{a}}\phi\|_{L_{t,x}^\frac{2(d+2)}{d-2}(\R\times\R^d)}^\frac{d-2}{4(d-1)}.
		\end{align*}
	\end{proof}

	In the rest of the paper, we shall use the notation:
	\[
	S^{s}_{a}(I)=L^{q}_{t}H^{s,r}_{a}\cap C^0_{t}H^{s}_{a}(I\times\R^{d})
	\quad \text{and}\quad 
	\dot{S}^{s}_{a}(I)=L^{q}_{t}\dot{H}^{s,r}_{a}\cap C_t^0\dot{H}^{s}_{a}(I\times\R^{d}).
	\]
	where $s\in\{0,1\}$ and $(q,r)$ satisfies
	\begin{align*}
		\frac{2}{q}=d\bigg(\frac{1}{2}-\frac{1}{r}\bigg),\quad2\leqslant q,r\leqslant\infty.
	\end{align*}
	\subsection{Local well-posedness and stability theory}\label{Sec:smalldata}

	In this subsection, we present the local well-posedness and stability theory for \eqref{NLS} in the space $H_a^1$.  First, we give the local  well-posedness result. When $a=0$, we refer to Tao-Visan-Zhang\cite{Tao-Visan-Zhang} for $d\geqslant3$. When $a\neq0$, we refer to Ardila-Murphy\cite{Ardila-Murphy} for the three-dimensional case. 
	
	Before presenting the local well-posedness, we first give some definitions of the function space which will be used in  the proof of local theory.
	\begin{definition}[Function Spaces] We define the following spaces
		\begin{align*}
			\dot{X}_a^0(I)=L_{t}^{\gamma_1}L_x^{\rho_1}(I\times\R^d)\cap  L_t^{\frac{2(d+2)}{d-2}}L_x^{\frac{2d(d+2)}{d^2+4}}(I\times\R^d)\cap L_{t,x}^{\frac{2(d+2)}{d}}(I\times\R^d),
		\end{align*}
		where $\gamma_1=\frac{4(p_1+2)}{p_1(d-2)}$ and $\rho_1=\frac{d(p_1+2)}{d+p_1}$ and $p_1=\frac{d+3}{d-1}$.
		\begin{align*}
			\dot{X}_a^1(I)=\left\{u:\sqrt{\mathcal{L}_a}u\in\dot{X}_a^0(I)\right\},\quad X_a^1(I)=\dot{X}_a^0(I)\cap\dot{X}_a^1(I).
		\end{align*}
	\end{definition}
	Now we are in position to give the local well-posedness result. The proof is very close to Tao-Visan-Zhang\cite{Tao-Visan-Zhang}. The one thing we need to be cautious is that in the inverse-square setting, we must ensure that the exponent is in the range of Lemma   \ref{EquiSobolev}. The $L_{t,x}^{\frac{2(d+2)}{d}}$ norm can be interpolated by $L_{t,x}^{\frac{2(d+2)}{d-2}}$ and $L_t^\infty L_x^2$. Then using Sobolev embedding, we can bound $L_{t,x}^{\frac{2(d+2)}{d-2}}$ by $L_t^{\frac{2(d+2)}{d-2}}\dot{H}_a^{1,\frac{2d(d+2)}{d^2+4}}$, which is the endpoint of Lemma \ref{EquiSobolev}.
	\begin{theorem}[Local well-posedness]\label{Th1}
		Given $a>-\left(\frac{d-2}{2}\right)^2+\left(\frac{d-2}{d+2}\right)^2$ and $u_{0}\in H_{a}^{1}(\R^{d})$. Then for every $T>0$, there exists $\eta=\eta(T)$ such that if 
		\begin{align}\label{local}
			\big\|e^{it\mathcal{L}_a}u_0\big\|_{X_a^1([-T,T])}\leqslant\eta,
		\end{align}
		then $\eqref{NLS}$ admits a unique strong $H_a^1$-solution  on time interval $[-T,T]$. Let $(T_{min},T_{max})$ be the maximal lifespan of $u$. Then $u\in \dot{S}_a^1(I\times\R^d)$ for every compact time interval $I\subset(T_{min},T_{max})$ and the following properties hold
		\begin{enumerate}
			\item If $T_{max}<+\infty$, then
			\begin{align*}
				\big\|u\big\|_{{X}_a^1((0,T_{max}))}=+\infty.
			\end{align*}
			Similarly, if $T_{min}>-\infty$, then
			\begin{align*}
				\big\|u\big\|_{{X}_a^1((T_{min},0))}=+\infty.
			\end{align*}
			\item If $T_{max}=\infty$ and $T_{min}=-\infty$ with
			\begin{align*}
				\Vert u\Vert_{X_a^1(\Bbb{R})}<\infty,
			\end{align*}
			then $u$ scatters in $H_a^1(\R^d)$.
		\end{enumerate}
	\end{theorem}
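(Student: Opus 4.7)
The plan is to run a standard contraction mapping argument for the Duhamel map
\[
\Phi(u)(t) = e^{-it\L_a}u_0 - i\int_0^t e^{-i(t-s)\L_a}\bigl(-|u|^{\frac{4}{d-2}}u + |u|^{\frac{4}{d-1}}u\bigr)(s)\,ds
\]
on a small ball in $X_a^1([-T,T])$. The novelty relative to \cite{Tao-Visan-Zhang} is purely technical: one must check that every Lebesgue pair appearing in the nonlinear estimates stays in the range where Lemma \ref{EquiSobolev} lets us trade $\sqrt{\L_a}$ for $\sqrt{-\Delta}$, which is exactly why the hypothesis $a>-\bigl(\tfrac{d-2}{2}\bigr)^2 + \bigl(\tfrac{d-2}{d+2}\bigr)^2$ appears.

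First I would use the Strichartz estimates (Lemma \ref{STRICH}) to bound both $e^{-it\L_a}u_0$ and the Duhamel term in $X_a^1$ by $\|u_0\|_{H_a^1}$ plus dual-Strichartz norms of the nonlinearity. For the energy-critical piece $|u|^{\frac{4}{d-2}}u$, after passing from $\sqrt{\L_a}$ to $\sqrt{-\Delta}$ via Lemma \ref{EquiSobolev} (this is where the endpoint exponent $\tfrac{2d(d+2)}{d^2+4}$ in Lemma \ref{EquiSobolev} is needed), the fractional chain rule gives
\[
\bigl\|\sqrt{\L_a}\bigl(|u|^{\frac{4}{d-2}}u\bigr)\bigr\|_{L_t^{2}L_x^{\frac{2d}{d+2}}} \lesssim \|u\|_{L_{t,x}^{\frac{2(d+2)}{d-2}}}^{\frac{4}{d-2}}\bigl\|\sqrt{\L_a}u\bigr\|_{L_t^{\frac{2(d+2)}{d}}L_x^{\frac{2d(d+2)}{d^2+4}}},
\]
and the $L_{t,x}^{2(d+2)/(d-2)}$ factor is interpolated between $L_t^\infty L_x^2$ and the Strichartz norm controlled by Sobolev embedding. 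For the $\dot H^{1/2}$-critical piece $|u|^{\frac{4}{d-1}}u$, I would estimate it in the dual of the $(\gamma_1,\rho_1)$ Strichartz pair built into $\dot X_a^0$; here $p_1=\tfrac{d+3}{d-1}$ is mass-supercritical and energy-subcritical, so the same strategy (fractional chain rule plus interpolation) yields bounds with a $T^\theta$ factor for some $\theta>0$, which is where the dependence $\eta=\eta(T)$ enters. Combining these with the smallness hypothesis \eqref{local} closes the contraction and yields uniqueness on the small ball; a standard persistence argument upgrades to uniqueness in $C_t^0 H_a^1$.

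For the two tail properties, the blow-up criterion follows from the usual continuation argument: if $\|u\|_{X_a^1((0,T_{\max}))}<\infty$ while $T_{\max}<\infty$, I split $(0,T_{\max})$ into finitely many subintervals on which the $X_a^1$ norm is below $\eta$, apply the fixed-point theorem starting from a time $t_0$ close to $T_{\max}$, and obtain a solution past $T_{\max}$, a contradiction. For scattering under $\|u\|_{X_a^1(\R)}<\infty$, I would define $u_\pm := u_0 + i\int_0^{\pm\infty}e^{is\L_a}\bigl(-|u|^{\frac{4}{d-2}}u + |u|^{\frac{4}{d-1}}u\bigr)(s)\,ds$ and use the same nonlinear estimates to show the tail integrals are Cauchy in $H_a^1$. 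The main obstacle is purely bookkeeping: verifying at each step that the Lebesgue exponents one introduces are admissible for both the Strichartz estimate and the equivalence-of-Sobolev-norms lemma, since these two ranges only overlap precisely for $a$ in the stated range.
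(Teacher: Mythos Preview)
Your approach is essentially identical to the paper's, which simply refers to Tao--Visan--Zhang and flags that the only new issue is keeping all Lebesgue exponents within the range of Lemma~\ref{EquiSobolev}, with the critical endpoint being $\tfrac{2d(d+2)}{d^2+4}$. One small slip: in your displayed estimate for the energy-critical piece the time exponent on the $\sqrt{\L_a}u$ factor should be $\tfrac{2(d+2)}{d-2}$ (the admissible partner of $\tfrac{2d(d+2)}{d^2+4}$), not $\tfrac{2(d+2)}{d}$; with that corrected, the H\"older arithmetic closes and the pair lies in $\dot X_a^1$ as required.
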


	To obtain global well-posedness in $H_a^1$, we will need a following results establishing scattering in $H_a^1$ for sufficiently small initial data, along with a persistence of regularity result and a stability result.  All of these are analogues of results in \cite[Section~6]{KillipOhPoVi2017}.  As the proofs rely primarily on Strichartz estimates, which are readily available in the setting of the inverse-square potential, we omit them here. 
	
	\begin{proposition}[Small data scattering]\label{SDC}
		Let $d\in\{3,4,5\}$, $a>-\left(\frac{d-2}{2}\right)^2+\left(\frac{d-2}{d+2}\right)^2$ and $u_{0}\in H_{a}^{1}(\R^{d})$. There exists $\delta>0$ such that if $\|u_0\|_{\dot{H}_a^1}<\delta$, then the corresponding solution $u$ of \eqref{NLS} is global and scatters, with
		\[
		\|u\|_{L^{\frac{2(d+2)}{d-2}}_{t,x}(\R\times\R^{d})} \leqslant C({M(u_0)}) \|\sqrt{\L_{a}}u_{0}\|_{L^{2}(\R^{d})}.
		\]
	\end{proposition}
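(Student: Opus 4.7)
The plan is a standard contraction-mapping argument in the Strichartz-type space $X_a^1(\R)$ applied to the Duhamel map
\begin{equation*}
\Phi(u)(t)= e^{-it\mathcal{L}_a}u_0 -i\int_0^t e^{-i(t-s)\mathcal{L}_a}\bigl[-|u|^{4/(d-2)}u+|u|^{4/(d-1)}u\bigr](s)\,ds.
\end{equation*}
The Strichartz estimates of Lemma \ref{STRICH}, together with the equivalence of Sobolev norms from Lemma \ref{EquiSobolev}, immediately give the linear bound $\|e^{-it\mathcal{L}_a}u_0\|_{X_a^1(\R)}\lesssim \|u_0\|_{H_a^1}$. Moreover, by the Sobolev embedding associated to the exponent $\tfrac{2d(d+2)}{d^2+4}$ (and again Lemma \ref{EquiSobolev}), the $L_{t,x}^{2(d+2)/(d-2)}$-norm of any function is controlled by its $\dot{X}_a^1$-norm. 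The hypothesis on $a$ is precisely what is required in Lemma \ref{EquiSobolev} in order to exchange $\mathcal{L}_a^{1/2}$ for a standard derivative at all the relevant exponents.

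Next I would estimate the two nonlinear terms in dual Strichartz norms. The energy-critical term $|u|^{4/(d-2)}u$ is handled by the admissible pair $\bigl(\tfrac{2(d+2)}{d-2},\tfrac{2d(d+2)}{d^2+4}\bigr)$ via the fractional chain rule, producing a bound of the form $\|u\|_{\dot{X}_a^1}^{1+4/(d-2)}$. The intercritical term $|u|^{4/(d-1)}u$ is $\dot{H}^{1/2}$-critical; it is controlled by pairing the $\dot{H}^{1/2}$-admissible Strichartz space $L_t^{\gamma_1}L_x^{\rho_1}$ (whose defining power $p_1=(d+3)/(d-1)$ matches the nonlinearity) with the mass-critical Strichartz space $L_{t,x}^{2(d+2)/d}$, both of which appear in $\dot{X}_a^0$. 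Since $4/(d-1)$ is sub-scaling with respect to $\dot{H}^1$, this step must interpolate between the conserved mass $\|u\|_{L_t^\infty L_x^2}\leq M(u_0)^{1/2}$ and higher regularity, which produces a constant $C(M(u_0))$ multiplying a power $\|u\|_{X_a^1}^{1+4/(d-1)}$. Analogous Lipschitz estimates for $\Phi(u)-\Phi(v)$ follow from the same H\"older/Strichartz scheme.

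Combining these bounds gives
\begin{equation*}
\|\Phi(u)\|_{X_a^1(\R)}\leq C\|u_0\|_{H_a^1}+C\|u\|_{X_a^1}^{1+4/(d-2)}+C(M(u_0))\,\|u\|_{X_a^1}^{1+4/(d-1)},
\end{equation*}
so restricting to a ball of radius $R\sim \|u_0\|_{H_a^1}$ and imposing $\|u_0\|_{\dot{H}_a^1}<\delta(M(u_0))$ makes the nonlinear contributions negligible and renders $\Phi$ a contraction on that ball. Its unique fixed point is the desired global solution, and the stated $L_{t,x}^{2(d+2)/(d-2)}$-bound is read off from the $\dot{X}_a^1$-bound via the embedding above. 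Scattering in $H_a^1$ follows from the finiteness of the global Strichartz norm by the standard argument showing $e^{it\mathcal{L}_a}u(t)$ is Cauchy in $H_a^1$ as $t\to\pm\infty$. The principal obstacle is really the intercritical nonlinearity: being sub-critical with respect to $\dot{H}^1$, it cannot be estimated by $\dot{H}^1$-norms alone, and the necessary interpolation with the conserved mass is precisely what forces the scattering constant to depend on $M(u_0)$.
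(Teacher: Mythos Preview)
Your proposal is correct and follows precisely the approach the paper intends: the paper omits the proof of this proposition entirely, stating only that it is an analogue of the corresponding result in \cite{KillipOhPoVi2017} whose proof ``rel[ies] primarily on Strichartz estimates, which are readily available in the setting of the inverse-square potential.'' Your contraction-mapping argument in $X_a^1(\R)$, with the energy-critical piece estimated via the pair $\bigl(\tfrac{2(d+2)}{d-2},\tfrac{2d(d+2)}{d^2+4}\bigr)$ and the $\dot H^{1/2}$-critical piece handled by interpolation against the conserved mass (forcing the dependence $C=C(M(u_0))$ and $\delta=\delta(M(u_0))$), is exactly that standard argument.
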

	\begin{lemma}[Persistence of regularity]\label{PRegularity}
		Suppose that $u: \R\times \R^{d}\rightarrow \C$ is a solution to \eqref{NLS} such that 
		$S(u):=\|u\|_{L^{\frac{2(d+2)}{d-2}}_{t,x}(\R\times\R^{d})}<\infty$. 
		Then for $t_{0}\in \R$ we have
		\begin{align}
			\|u\|_{\dot{S}_{a}^{0}(I )}&\leq C(S(u), M(u_{0}))\|u(t_{0})\|_{L^{2}_{x}(\R^{d})}, \label{L^2}\\
			\|u\|_{\dot{S}_a^1(I)}
			&\leq C(S(u), M(u_{0}))\|u(t_{0})\|_{\dot{H}^{1}_{a}(\R^{d})}.\label{H^1}
		\end{align}
	\end{lemma}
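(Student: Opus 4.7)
The strategy follows the standard partition-and-bootstrap argument for persistence of regularity, along the lines of \cite{KillipOhPoVi2017}, adapted to the combined nonlinearity and the inverse-square potential. Fix a small $\eta>0$ to be chosen. Since $S(u)<\infty$, I split $\R$ into $J=J(S(u),\eta)$ closed subintervals $\{I_j\}_{j=1}^J$ on each of which
\[
\|u\|_{L^{\frac{2(d+2)}{d-2}}_{t,x}(I_j\times\R^d)}\leq \eta.
\]
Both \eqref{L^2} and \eqref{H^1} then reduce to a single-interval bound: given $u(t_0)$ at one endpoint of a given $I_j$, control $\|u\|_{\dot S_a^s(I_j)}$ by $\|u(t_0)\|_{\dot H^s_a}$ for $s=0,1$; chaining these estimates across $\{I_j\}$ gives the stated $C(S(u),M(u_0))$-type constants after finitely many iterations.

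On a fixed $I_j$ I apply Lemma~\ref{STRICH} to the Duhamel formula
\[
u(t)=e^{-i(t-t_0)\mathcal{L}_a}u(t_0)+i\int_{t_0}^t e^{-i(t-s)\mathcal{L}_a}\bigl(|u|^{\frac{4}{d-2}}u-|u|^{\frac{4}{d-1}}u\bigr)(s)\,ds.
\]
The energy-critical piece is controlled with the Strichartz pair $(q,r)=\bigl(\tfrac{2(d+2)}{d-2},\tfrac{2d(d+2)}{d^2+4}\bigr)$ and H\"older in spacetime, which produces the prefactor $\|u\|_{L^{2(d+2)/(d-2)}_{t,x}(I_j)}^{4/(d-2)}\leq \eta^{4/(d-2)}$ that is absorbed on the left. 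The $\dot H^{1/2}$-critical perturbation is treated through the sharp Strichartz pair $(\gamma_1,\rho_1)$ built into the definition of $X_a^1$, combined with interpolation against the conserved mass $\|u\|_{L^\infty_tL^2_x}\leq M(u_0)^{1/2}$; the same small-factor absorption closes the bootstrap provided $\eta$ is small depending on $M(u_0)$. This yields \eqref{L^2}.

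For \eqref{H^1} I apply $\sqrt{\mathcal{L}_a}$ to the Duhamel formula and repeat the argument. The new ingredient is a fractional chain rule for $\sqrt{\mathcal{L}_a}\bigl(|u|^{p-1}u\bigr)$; here the hypothesis $a>-\tfrac{(d-2)^2}{4}+\bigl(\tfrac{d-2}{d+2}\bigr)^2$ is exactly what Lemma~\ref{EquiSobolev} requires for the norm equivalence $\|\sqrt{\mathcal{L}_a}f\|_{L^{r'}}\sim\|\sqrt{-\Delta}f\|_{L^{r'}}$ at the dual Strichartz endpoint. Once this equivalence is available, I replace $\sqrt{\mathcal{L}_a}$ by $\sqrt{-\Delta}$, apply the classical fractional chain rule to each nonlinearity, and then convert back by the equivalence again; the remainder of the bootstrap is identical to the one used for \eqref{L^2}.

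The main technical obstacle is not any individual inequality but the exponent bookkeeping: every Lebesgue exponent produced by distributing $\sqrt{\mathcal{L}_a}$ across $|u|^{4/(d-2)}u$ and $|u|^{4/(d-1)}u$ must lie in the admissible range of Lemma~\ref{EquiSobolev}. The energy-critical term sits at the upper boundary of this range, which is precisely why the hypothesis on $a$ in the lemma is sharp; the $\dot H^{1/2}$-critical perturbation then lands in the interior of the admissible range and is handled by interpolation between the critical endpoint and the mass-side endpoint $p=2d/(d+2)$.
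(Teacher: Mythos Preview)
Your proposal is correct and follows essentially the same route as the paper: partition the time axis according to the scattering norm, apply Strichartz to the Duhamel formula, and close a bootstrap on each subinterval using H\"older and the equivalence of Sobolev spaces (Lemma~\ref{EquiSobolev}) to handle $\sqrt{\mathcal{L}_a}$ on the nonlinearities. The only cosmetic difference is the specific exponent pair chosen for the $\dot H^{1/2}$-critical term: the paper places the perturbation in the dual endpoint $L_t^2L_x^{2d/(d+2)}$ and splits the undifferentiated factors via $L_t^\infty L_x^2$ and $L_{t,x}^{2(d+2)/(d-2)}$ rather than via the $(\gamma_1,\rho_1)$ pair, but either bookkeeping closes the same loop.
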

	\begin{proof}
		Indeed, first note that by Strichartz and H\"older
		\begin{equation}\label{RegulComp}
			\begin{split}
				\|u\|_{S_{a}^{0}(I)}& \lesssim \|  u(t_{0}) \|_{L^{2}_{x}}+\| u \|^{\frac{4}{d-2}}_{L^{\frac{2(d+2)}{d-2}}_{t, x}}\| u \|_{L_{t}^{2}L^{\frac{2d}{d-2}}_{x}}
				+\| u \|_{L_{t}^{\infty}L^{2}_{x}}^\frac{2}{d-1}\| u \|_{L^{\frac{2(d+2)}{d-2}}_{t, x}}^\frac{2}{d-1}\| u \|_{L_{t}^{2}L^{\frac{2d}{d-2}}_{x}}\\
				& \lesssim \|  u(t_{0}) \|_{L^{2}_{x}}+(\| u \|^{\frac{4}{d-2}}_{L^{\frac{2(d+2)}{d-2}}_{t, x}}+ [M(u)]^{\frac{1}{d-1}}\| u \|^{\frac{4}{d-2}}_{L^{\frac{2(d+2)}{d-2}}_{t, x}})
				\|u\|_{S_{a}^{0}(I)}
			\end{split}
		\end{equation}
		for any interval $ t_{0}\in I$, where space-time norms are over $I\times \R^{d}$. Since $\|u\|_{L^{\frac{2(d+2)}{d-2}}_{t,x}}<\infty$,  a standard bootstrap argument implies estimate $\eqref{L^2}$. 
		Similarly, we can obtain that
		\begin{align*}
			& \big\Vert \sqrt{\mathcal{L}_a} (|u|^\frac{4}{d-1}u)\big\Vert_{L_t^{\tfrac{d^2+d+2}{2(d-1)(d+2)}}L_x^{\tfrac{d^3+3d^2-12}{2d(d-1)(d+2)}}}\lesssim\Vert u\Vert_{L_t^{\frac{2(d+2)}{d-2}}\dot{H}_a^{1,\frac{2d(d+2)}{d^2+4}}}\Vert u\Vert_{L_t^{\frac{2(d+2)}{d}}L_x^{\frac{2d(d+2)}{d^2-d-2}}}^{\frac{4}{d-1}}\\
			&\lesssim\Vert u\Vert_{L_t^{\frac{2(d+2)}{d-2}}\dot{H}_a^{1,\frac{2d(d+2)}{d^2+4}}}\Vert u\Vert_{L_t^{2}L_x^{\frac{2d}{d-2}}}^{\frac{2}{d-1}}\Vert u\Vert_{L_{t,x}^{\frac{2(d+2)}{d-2}}}^{\frac{2}{d-1}}
		\end{align*}
		and
		\begin{align*}
			& \big\Vert \sqrt{\mathcal{L}_a} (|u|^\frac{4}{d-2}u)\big\Vert_{L_t^2L_x^{\frac{2d}{d+2}}}\lesssim\Vert u\Vert_{L_{t,x}^{\frac{2(d+2)}{d-2}}}^{\frac{4}{d-2}}\Vert u\Vert_{L_{t}^{\frac{2(d+2)}{d-2}}\dot{H}_a^{1,\frac{2d(d+2)}{d^2+4}}}.
		\end{align*}
		By the Strichartz estimates and H\"older's inequality, we obtain $\eqref{H^1}$.
	\end{proof}
	
	Using the standard bootstrap argument as in \cite{Ardila-Murphy}, we have the following stability result:
	\begin{lemma}[Stability]\label{StabilityNLS}
		Let $d\in\{3,4,5\}$. Fix $a>-\left(\frac{d-2}{2}\right)^2+\left(\frac{d-2}{d+2}\right)^2$. Let $I\subset \R$ be a time interval containing $t_{0}$ and  let $\tilde{u}$ satisfy
		\[ 
		(i\partial_{t}-\L_{a}) \tilde{u}=-|\tilde{u}|^{\frac{4}{d-2}}\tilde{u}+|\tilde{u}|^{\frac{4}{d-1}}\tilde{u}+e, \quad 
		\tilde{u}(t_{0})=\tilde{u}_{0}
		\]
		on $I\times\R^d$ for some $e:I\times\R^{d}\rightarrow \C$. Assume the conditions
		\[
		\| \tilde{u}  \|_{L_{t}^{\infty}H^{1}_{a}(I\times\R^{d})}\leq E\qtq{and} 
		\| \tilde{u}  \|_{L_{t,x}^{\frac{2(d+2)}{d-2}}(I\times\R^{d})}+	\| \tilde{u}  \|_{L_{t,x}^{\frac{2(d+2)}{d-1}}(I\times\R^{d})}\leq L
		\]
		for some $E$, $L>0$. Let $t_{0}\in I$ and $u_{0}\in H_{a}^{1}(\R^{d})$ such that $\|u_{0}\|_{L_{x}^{2}}\leq M$
		for some  positive constant $M$.  
		Assume also the smallness conditions 
		\[
		\|u_{0}-\tilde{u}_{0} \|_{\dot{H}^{1}_{a}}\leq \epsilon\qtq{and}	\|\sqrt{\L_{a}} e  \|_{N(I)}\leq \epsilon,
		\]
		for some $0<\epsilon<\epsilon_{0}=\epsilon_{0}(\mbox{A,L,M})>0$, where
		\[
		N(I)=L^{1}_{t}L^{2}_{x}(I\times \R^{d})+L_{t,x}^{\frac{2(d+2)}{d+4}}(I\times \R^{d})+L_t^2L_x^\frac{2d}{d+2}(I\times\R^d).
		\]
		Then there exists a unique global solution $u$ to Cauchy problem \eqref{NLS} with initial data $u_{0}$ at the time $t=t_{0}$ satisfying
		\[
		\|u-\tilde{u}\|_{\dot{S}_{a}^{1}(I)}\leq C(E,L,M)\epsilon\qtq{and}	\|u\|_{\dot{S}_{a}^{1}(I)}\leq C(E,L,M).
		\]
	\end{lemma}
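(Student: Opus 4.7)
The approach is the standard Strichartz bootstrap for perturbed NLS, carried out in the Sobolev space $\dot H^1_a$ and adapted to the combined nonlinearity $F(u)=-|u|^{\frac{4}{d-2}}u+|u|^{\frac{4}{d-1}}u$. Set $w:=u-\tilde u$. Then $w$ satisfies
\begin{equation*}
(i\partial_t-\L_a)w = \bigl[F(\tilde u+w)-F(\tilde u)\bigr]-e,\qquad w(t_0)=u_0-\tilde u_0,
\end{equation*}
and by Duhamel we can estimate $w$ in $\dot S_a^1(I)$ via Lemma~\ref{STRICH}. The first step is to partition $I$ into finitely many consecutive subintervals $I_1,\dots,I_J$ (with $J=J(E,L,\epsilon_0)$) on each of which
\begin{equation*}
\| \tilde u\|_{L_{t,x}^{\frac{2(d+2)}{d-2}}(I_j\times\R^d)}+\| \tilde u\|_{L_{t,x}^{\frac{2(d+2)}{d-1}}(I_j\times\R^d)}\leq \eta
\end{equation*}
for a small $\eta=\eta(E,L,M)$ to be chosen. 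On each $I_j$ we will close a bootstrap for $\|w\|_{\dot S_a^1(I_j)}$, then concatenate.

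On a single subinterval, by Strichartz and the triangle inequality we bound
\begin{equation*}
\|w\|_{\dot S_a^1(I_j)}\lesssim \|w(t_j)\|_{\dot H_a^1}+\bigl\|\sqrt{\L_a}\bigl[F(\tilde u+w)-F(\tilde u)\bigr]\bigr\|_{N(I_j)}+\|\sqrt{\L_a}e\|_{N(I_j)}.
\end{equation*}
The task is to estimate the nonlinear difference. For the energy-critical piece $|u|^{\frac{4}{d-2}}u$ we use the pointwise bound $|F_1(u)-F_1(\tilde u)|\lesssim (|u|^{\frac{4}{d-2}}+|\tilde u|^{\frac{4}{d-2}})|w|$ together with the fractional chain rule and Hölder to control the output in $L_t^2L_x^{\frac{2d}{d+2}}$, distributing norms as $\|u\|^{\frac{4}{d-2}}_{L_{t,x}^{\frac{2(d+2)}{d-2}}}\|\sqrt{\L_a}w\|_{L_t^{\frac{2(d+2)}{d-2}}L_x^{\frac{2d(d+2)}{d^2+4}}}$, invoking Lemma~\ref{EquiSobolev} to pass between $\sqrt{\L_a}$ and $(-\Delta)^{1/2}$ in the allowed range (the hypothesis $a>-(\tfrac{d-2}{2})^2+(\tfrac{d-2}{d+2})^2$ is used exactly here). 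For the mass-supercritical/energy-subcritical piece $|u|^{\frac{4}{d-1}}u$, we interpolate as in the persistence of regularity Lemma~\ref{PRegularity}, placing the output in the $L_t^{\frac{d^2+d+2}{2(d-1)(d+2)}}L_x^{\frac{d^3+3d^2-12}{2d(d-1)(d+2)}}$ dual Strichartz norm and using the mass $M(u)\leq M+O(\epsilon)$ (which follows because the $L^2$ mass of $w$ is controlled through the $L_t^\infty L_x^2$ piece of $\dot S_a^0$; here we gain a small power of $M$, absorbing it into $C(E,L,M)$).

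Combining these estimates yields a control of the form
\begin{equation*}
\|w\|_{\dot S_a^1(I_j)}\leq C\|w(t_j)\|_{\dot H_a^1}+C\epsilon+C(E,L,M)\bigl[\eta^{\frac{4}{d-2}}+\eta^{\frac{4}{d-1}}\bigr]\|w\|_{\dot S_a^1(I_j)}+\text{(higher-order in }\|w\|_{\dot S_a^1}\text{)}.
\end{equation*}
Choosing $\eta$ small so that $C(E,L,M)\bigl[\eta^{\frac{4}{d-2}}+\eta^{\frac{4}{d-1}}\bigr]\leq \tfrac12$, a standard continuity argument (valid because $\|w\|_{\dot S_a^1}$ is continuous in the endpoint of $I_j$ and vanishes at $t=t_j$ for $w(t_j)=0$; for the first subinterval it is $O(\epsilon)$) yields $\|w\|_{\dot S_a^1(I_j)}\leq 2C\|w(t_j)\|_{\dot H_a^1}+2C\epsilon$. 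Iterating across the $J$ subintervals compounds constants to produce the claimed bound $\|w\|_{\dot S_a^1(I)}\leq C(E,L,M)\epsilon$, and then the bound on $\|u\|_{\dot S_a^1(I)}$ follows from the a priori control of $\tilde u$ and the triangle inequality.

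\textbf{Main obstacle.} The most delicate step is handling $\sqrt{\L_a}$ applied to the combined nonlinearity. The potential destroys the chain rule for $\sqrt{\L_a}$, so one must first trade $\sqrt{\L_a}$ for $\sqrt{-\Delta}$ via Lemma~\ref{EquiSobolev}; this is only legal on the Lebesgue spaces indicated in that lemma, which is why the dimension restriction $d\in\{3,4,5\}$ and the lower bound $a>-(\tfrac{d-2}{2})^2+(\tfrac{d-2}{d+2})^2$ enter precisely at this point. The subcritical term $|u|^{\frac{4}{d-1}}u$ additionally forces us to use the mass bound $M$ at low frequencies (via interpolation between $L_t^\infty L^2_x$ and $L_{t,x}^{\frac{2(d+2)}{d-2}}$), which is why the hypothesis $\|u_0\|_{L^2}\leq M$ is essential and why the $L_{t,x}^{\frac{2(d+2)}{d-1}}$ norm of $\tilde u$ is required in the assumptions; this interpolation is what ultimately determines the structure of the space $N(I)$ appearing in the smallness condition on $e$.
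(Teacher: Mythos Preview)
Your outline is correct and follows precisely the standard Strichartz bootstrap that the paper invokes (the paper itself does not give a proof, merely citing the argument of Ardila--Murphy \cite{Ardila-Murphy}); the partition of $I$ into subintervals on which $\tilde u$ has small scattering norm, the use of Lemma~\ref{EquiSobolev} to transfer $\sqrt{\L_a}$ to $\nabla$ on the nonlinearity, and the interpolation involving $M$ for the subcritical piece are exactly the ingredients needed. One small clarification: the bound $\|u(t)\|_{L^2}\leq M$ follows directly from mass conservation for \eqref{NLS} and the hypothesis $\|u_0\|_{L^2}\leq M$, so there is no need to control $\|w\|_{L^2}$ separately or to write $M+O(\epsilon)$.
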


		\subsection{Linear profile decomposition}\label{Sec:LinearProfile}
	
	In this subsection, we give the linear  profile decomposition associated to the propagator $e^{-it\L_a}$.  In fact, the result follows by combining the techniques of \cite{KillipMiaVisanZhangZheng,YANG2020124006}, which described the bubble decomposition of the Schr\"odinger group $e^{-it\mathcal{L}_a}f$ where $f\in \dot{H}_a^1(\R^d)$, with those of \cite{KillipOhPoVi2017,Luo2021}, which developed the bubble decomposition of the Schr\"odinger group without potential $e^{it\Delta}f$  with initial data $f\in H^1(\R^d)$.  Thus, we will focus on stating the main results and providing suitable references to the analogous results in the references just mentioned. 
	
	First, given a sequence $\left\{x_{n}\right\}\subset \R^{d}$, we define
	\begin{equation}\label{DefOperator}
		\L^{n}_{a}:=-\Delta+\tfrac{a}{|x+x_{n}|^{2}}
		\quad \text{and}\quad 
		\L^{\infty}_{a}:=
		\begin{cases} 
			-\Delta+\tfrac{a}{|x+x_{\infty}|^{2}} & \text{if $x_{n}\to x_{\infty}\in \R^{d}$},\\
			-\Delta & \text{if $|x_{n}|\to \infty$}.
		\end{cases} 
	\end{equation}
	In particular, $\L_{a}[\phi(x-x_{n})]=[\L^{n}_{a}\phi](x-x_{n})$, and for any $x_{n}\in \R^{d}$ and $N_{n}>0$,
	\[
	N^{\frac{d-2}{2}}_{n} e^{-it\L_{a}}[\phi(N_{n}x-x_{n})]=N^{\frac{d-2}{2}}_{n}[e^{-iN^{2}_{n}t\L^{n}_{a}}\phi](N_{n}x-y_{n}).
	\]
	
	To establish the linear profile decomposition associated to $e^{-it\mathcal{L}_a}$, we need the following results related to the convergence of the operator $\mathcal{L}_a^n$ to $\mathcal{L}_a^\infty$, which was initially proved in \cite{KillipMiaVisanZhangZheng, KillipMurphyVisanZheng2017}: 
	\begin{lemma}\label{ConverOpera}
		Fix $a>-\frac{(d-2)^2}{4}$. 
		\begin{itemize}
			\item If $t_{n}\to t_{\infty}\in \R$ and $\left\{x_{n}\right\}\subset \R^{d}$ satisfies $x_{n}\to x_{\infty}$ or $|x_{n}|\to \infty$, then
			\begin{align}\label{Conver11}
				&\lim_{n\to\infty}\|\L^{n}_{a}\psi-\L^{\infty}_{a}\psi\|_{\dot{H}^{-1}_{x}}=0 \quad \text{for all $\psi\in \dot{H}^{1}_{x}$},\\
				\label{Conver22}
				&\lim_{n\to\infty}\|(e^{-it_{n}\L^{n}_{a}}-e^{-it_{\infty}\L^{\infty}_{a}})\psi\|_{\dot{H}^{-1}_{x}}=0
				\quad \text{for all $\psi\in \dot{H}^{-1}_{x}$},\\
				\label{Conver33}
				&\lim_{n\to\infty}\|[\sqrt{\L^{n}_{a}}-\sqrt{\L^{\infty}_{a}}]\psi\|_{L^{2}_{x}}=0
				\quad \text{for all $\psi\in \dot{H}^{1}_{x}$}.
			\end{align}
			If $\frac{2}{q}+\frac{d}{r}=\frac{d}{2}$ with $2<q\leq \infty$, then we have
			\begin{equation}\label{Conver44}
				\lim_{n\to\infty}\|(e^{-it\L^{n}_{a}}-e^{-it\L^{\infty}_{a}})\psi\|_{L^{q}_{t}L^{r}_{x}(\R\times\R^{d})}=0
				\quad \text{for all $\psi\in L^{2}_{x}$}.
			\end{equation}
			Finally, if $x_{\infty}\neq 0$, then for any $t>0$,
			\begin{equation}\label{Conver55}
				\lim_{n\to\infty}\|(e^{-it\L^{n}_{a}}-e^{-it\L^{\infty}_{a}})\delta_{0}\|_{H^{-1}}=0.
			\end{equation}
			\item Given $\psi\in \dot{H}^{1}_{x}$, $t_{n}\to \pm\infty$ and any sequence 
			$\left\{x_{n}\right\}\subset \R^{d}$, we have
			\begin{equation}\label{DecayingES1}
				\lim_{n\to\infty}\|e^{-it_{n}\L^{n}_{a}}\psi\|_{L^{\frac{2d}{d-2}}_{x}}=0.
			\end{equation}
			Moreover, if $\psi\in {H}^{1}_{x}$, then
			\begin{equation}\label{DecayingES2}
				\lim_{n\to\infty}\|e^{-it_{n}\L^{n}_{a}}\psi\|_{L^{\frac{2d+2}{d-1}}_{x}}=0.
			\end{equation}
			\item Finally, fix $a>-\left(\frac{d-2}{4}\right)^2$. Then for any sequence $\{x_n\}$,
			\begin{equation}\label{Convercritical}
				\lim_{n\to\infty}\|(e^{-it\L^{n}_{a}}-e^{-it\L^{\infty}_{a}})\psi\|_{L^{\frac{2(d+2)}{d-2}}_{t,x}(\R\times\R^{d})}=0
				\quad \text{for all $\psi\in \dot{H}^{1}_{x}$}.
			\end{equation}
		\end{itemize}
	\end{lemma}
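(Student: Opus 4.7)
The plan is to treat the three clusters of statements separately and to reduce each one to an operator convergence result plus a density/approximation argument.

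First I would establish the core operator convergence \eqref{Conver11}. Writing $\mathcal{L}_a^n - \mathcal{L}_a^\infty = a\bigl[|x+x_n|^{-2} - |x+x_\infty|^{-2}\bigr]$ in the case $x_n\to x_\infty$, and $\mathcal{L}_a^n - \mathcal{L}_a^\infty = a|x+x_n|^{-2}$ in the case $|x_n|\to\infty$, the $\dot H^{-1}$ convergence follows from the sharp Hardy inequality together with dominated convergence: in the first case the pointwise difference tends to zero and is controlled by an $L^\infty+|x|^{-2}$ majorant; in the second case the operator $a|x+x_n|^{-2}$ weakly tends to zero tested against any $\dot H^1$ function. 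This is exactly the argument used in \cite{KillipMiaVisanZhangZheng,KillipMurphyVisanZheng2017}, and no modification is needed here.

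Next I would deduce the semigroup statements \eqref{Conver22}, \eqref{Conver44}, and \eqref{Conver55} from \eqref{Conver11} via Duhamel. Writing
\begin{align*}
e^{-it_n\mathcal{L}_a^n}\psi - e^{-it_\infty\mathcal{L}_a^\infty}\psi
  = (e^{-it_n\mathcal{L}_a^n}-e^{-it_n\mathcal{L}_a^\infty})\psi
  + (e^{-it_n\mathcal{L}_a^\infty}-e^{-it_\infty\mathcal{L}_a^\infty})\psi,
\end{align*}
the second difference vanishes by strong continuity of the limiting group, and the first can be rewritten as
\begin{align*}
-i\int_0^{t_n} e^{-i(t_n-s)\mathcal{L}_a^n}(\mathcal{L}_a^n-\mathcal{L}_a^\infty)e^{-is\mathcal{L}_a^\infty}\psi\, ds,
\end{align*}
so that the $\dot H^{-1}$ norm is bounded by $\int_0^{t_n}\|(\mathcal{L}_a^n-\mathcal{L}_a^\infty)e^{-is\mathcal{L}_a^\infty}\psi\|_{\dot H^{-1}}ds$ and we conclude by dominated convergence using \eqref{Conver11}. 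For \eqref{Conver44} I combine the same Duhamel representation with the global Strichartz estimates from Lemma \ref{STRICH} (applied uniformly to $\mathcal{L}_a^n$ since the coupling constant is the same), reducing to $L^2_x$ data where the convergence follows by density from the pointwise version just obtained. Statement \eqref{Conver55} is the same argument applied to $\psi=\delta_0$ after using local smoothing to place $(\mathcal{L}_a^n-\mathcal{L}_a^\infty)e^{-is\mathcal{L}_a^\infty}\delta_0$ in an integrable space-time norm, which is the only delicate point since $\delta_0\in H^{-1}$ only in $d=3$; the assumption $x_\infty\ne 0$ is used precisely to separate the singularities of the potential and the data.

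For the square-root convergence \eqref{Conver33} I would use the standard Balakrishnan formula $\sqrt{\mathcal{L}}\psi = \pi^{-1}\int_0^\infty \lambda^{-1/2}\mathcal{L}(\mathcal{L}+\lambda)^{-1}\psi\, d\lambda$, and appeal to resolvent convergence $(\mathcal{L}_a^n+\lambda)^{-1}\to(\mathcal{L}_a^\infty+\lambda)^{-1}$ in the strong operator topology on $L^2$—this itself follows from \eqref{Conver11} together with uniform ellipticity. The integral over $\lambda$ is handled by dominated convergence using the uniform bound $\|\mathcal{L}(\mathcal{L}+\lambda)^{-1}\psi\|_{L^2}\le \min(\|\mathcal{L}\psi\|_{L^2},\lambda^{-1}\|\psi\|_{L^2})$.

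Finally, for the dispersive decays \eqref{DecayingES1}, \eqref{DecayingES2}, and the critical Strichartz convergence \eqref{Convercritical}, I would argue by density. For a test function $\varphi\in C_c^\infty(\R^d\setminus\{0\})$ the pointwise dispersive estimate for $e^{-it\mathcal{L}_a^n}$ (uniform in $n$ after translating by $x_n$) gives $\|e^{-it_n\mathcal{L}_a^n}\varphi\|_{L^r_x}\lesssim |t_n|^{-d(1/2-1/r)}\|\varphi\|_{L^{r'}}\to 0$ for any $r>2$, which yields \eqref{DecayingES1} after approximating $\psi\in\dot H^1$ by $\varphi$ in the appropriate Sobolev norm and using Sobolev embedding. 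The estimate \eqref{DecayingES2} is the genuinely new input needed for the combined nonlinearity, and it follows by interpolating \eqref{DecayingES1} with the mass conservation bound $\|e^{-it_n\mathcal{L}_a^n}\psi\|_{L^2}=\|\psi\|_{L^2}$, provided $\psi\in H^1$ so that the $L^2$ norm controls a piece of the interpolant. The expected main obstacle is precisely this interpolation in the presence of the potential: one must verify that the $L^{2(d+1)/(d-1)}$ norm lies in an interpolation scale where $e^{-it\mathcal{L}_a^n}$ obeys uniform bounds, which requires Lemma \ref{EquiSobolev} and the equivalence of Sobolev norms. Statement \eqref{Convercritical} then follows from \eqref{Conver44} at the endpoint Strichartz pair together with Lemma \ref{EquiSobolev} to transfer the $\dot H^1_{a,n}$ norm to the $n$-independent $\dot H^1$ norm.
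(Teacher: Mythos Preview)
The paper does not give its own proof of this lemma; it simply cites \cite{KillipMiaVisanZhangZheng, KillipMurphyVisanZheng2017} and moves on. Your sketch is essentially the argument found in those references: Hardy plus dominated convergence for \eqref{Conver11}, Duhamel plus Strichartz for the propagator statements, the Balakrishnan integral for the square root, and a density-plus-dispersive (or Strichartz) argument for the decay statements, with \eqref{DecayingES2} obtained by interpolating \eqref{DecayingES1} against mass conservation.

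One minor correction: your remark that ``$\delta_0\in H^{-1}$ only in $d=3$'' is not right---$\delta_0\notin H^{-1}(\R^d)$ for any $d\ge 2$. Statement \eqref{Conver55} is about the \emph{difference} of the two propagated delta functions, which lies in $H^{-1}$ for $t>0$ thanks to cancellation of the leading singularities; the hypothesis $x_\infty\neq 0$ ensures the two potentials agree near the spatial origin where the kernel singularity sits, so the difference picks up extra decay. This is handled in \cite{KillipMurphyVisanZheng2017} by writing the difference via Duhamel and estimating $(\L_a^n-\L_a^\infty)e^{-is\L_a^\infty}\delta_0$ directly using the explicit heat/Schr\"odinger kernel asymptotics rather than appealing to membership of $\delta_0$ in a Sobolev space. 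Aside from this point your outline is sound and matches the cited literature.
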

	
	The linear profile decomposition is stated as follows: 
	
	\begin{theorem}[Linear profile decomposition]\label{LinearProfi}Let $d\geqslant3$ and $a>-\frac{(d-2)^2}{4}$.
		Suppose that $\left\{f_{n}\right\}$ is a bounded sequence in $H^{1}(\R^{d})$. Then, up to subsequence, there exist
		$J^{*}\in \left\{0,1,2,\ldots\right\}\cup\left\{\infty\right\}$, non-zero profiles 
		$\{\phi^{j}\}^{J^{*}}_{j=1}\subset \dot{H}^{1}(\R^{d})$ and parameters
		\[
		\left\{\lambda^{j}_{n}\right\}_{n\in \N}\subset (0,1],\quad 
		\left\{t^{j}_{n}\right\}_{n\in \N}\subset \R
		\quad \text{and}\quad 
		\left\{x^{j}_{n}\right\}_{n\in \N}\subset \R^{d}
		\]
		so that for each finite $1\leq J\leq J^{*}$, we have the decomposition
		\begin{equation}\label{Dcom}
			f_{n}=\sum^{J}_{j=1}\phi_{n}^{j}+W^{J}_{n},
		\end{equation}
		where
		\begin{equation}\label{fucti}
			\phi_{n}^{j}(x):=
			\begin{cases} 
				[e^{-it^{j}_{n}\L^{n_{j}}_{a}}\phi^{j}](x-x^{j}_{n}), &\mbox{if $\lambda^{j}_{n}\equiv 1$},\\
				(\lambda^{j}_{n})^{-\frac{d-2}{2}}[e^{-it^{j}_{n}\L^{n_{j}}_{a}} P^{a}_{\geq (\lambda^{j}_{n})^{\theta}}\phi^{j}]\( \frac{x-x^{j}_{n}}{\lambda^{j}_{n}}\),
				&\mbox{if $\lambda^{j}_{n}\rightarrow 0$},
			\end{cases}
		\end{equation}
		for some $0<\theta<1$ $($ with $\L^{n_{j}}_{a}$ as in \eqref{DefOperator} corresponding to sequence 
		$\{\tfrac{x^{j}_{n}}{\lambda^{j}_{n}}\})$, satisfying
		\begin{itemize}
			\item $\lambda^{j}_{n}\equiv 1$ or $\lambda^{j}_{n}\rightarrow 0$ and $t^{j}_{n}\equiv 0$ or $t^{j}_{n}\rightarrow\pm\infty$,
			\item if $\lambda^{j}_{n}\equiv 1$ then $\left\{\phi^{j}\right\}^{J^{*}}_{j=1}\subset L_{x}^{2}(\R^{d})$
		\end{itemize}
		for each $j$.  Furthermore, we have:
		\begin{itemize}[leftmargin=5mm]
			\item Smallness of the reminder: 
			\begin{equation}\label{Reminder}
				\lim_{J\to J^{*}}\limsup_{n\rightarrow\infty}\|e^{-it\L_{a}}W^{J}_{n}\|_{L^{\frac{2(d+2)}{d-2}}_{t,x}(\R\times\R^{d})}=0.
			\end{equation}
			\item Weak convergence property:
			\begin{equation}\label{WeakConver}
				e^{it^{j}_{n}\L_{a}}[(\lambda^{j}_{n})^{\frac{1}{2}}W^{J}_{n}(\lambda^{j}_{n}x+x^{j}_{n})]\rightharpoonup 0\quad \mbox{in}\,\,
				\dot{H}^{1}_{a}, \quad \mbox{for all $1\leq j\leq J$.}
			\end{equation}
			\item Asymptotic orthogonality: for all $1\leq j\neq k\leq J^{*}$
			\begin{equation}\label{Ortho}
				\lim_{n\rightarrow \infty}\left[ \frac{\lambda^{j}_{n}}{\lambda^{k}_{n}}+\frac{\lambda^{k}_{n}}{\lambda^{j}_{n}} 
				+\frac{|x^{j}_{n}-x^{k}_{n}|^{2}}{\lambda^{j}_{n}\lambda^{k}_{n}}+
				\frac{|t^{j}_{n}(\lambda^{j}_{n})^{2}-t^{k}_{n}(\lambda^{k}_{n})^{2}|}{\lambda^{j}_{n}\lambda^{k}_{n}}\right]=\infty.
			\end{equation}		
			\item Asymptotic Pythagorean expansions:
			\begin{align}\label{MassEx}
				&\sup_{J}\lim_{n\to\infty}\big[M(f_{n})-\sum^{J}_{j=1}M(\phi^{j}_{n})-M(W^{J}_{n})\big]=0,\\\label{EnergyEx}
				&\sup_{J}\lim_{n\to\infty}\big[E_{a}(f_{n})-\sum^{J}_{j=1}E_{a}(\phi^{j}_{n})-E_{a}(W^{J}_{n})\big]=0.
			\end{align}
		\end{itemize}
	\end{theorem}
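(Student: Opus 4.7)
The plan is to adapt and combine two existing linear profile decomposition frameworks: the $\dot{H}_a^1$-decomposition developed in \cite{KillipMiaVisanZhangZheng, YANG2020124006} (which handles profiles concentrating at scales $\lambda_n^j \to 0$) and the $H^1$-decomposition developed in \cite{KillipOhPoVi2017, Luo2021} (which handles the unit-scale profiles $\lambda_n^j \equiv 1$ carrying $L^2$ mass). The backbone is an inductive scheme driven by an inverse Strichartz principle: whenever the Strichartz norm $\|e^{-it\L_a} W_n^{J}\|_{L_{t,x}^{2(d+2)/(d-2)}}$ exceeds $\epsilon > 0$, a refined Strichartz inequality (proved via Littlewood-Paley decomposition and the Bernstein inequalities of Lemma \ref{BIN}) produces a frequency scale $N_n^{J+1}$ and a space-time center $(t_n^{J+1}, x_n^{J+1})$ at which the propagated function concentrates. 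Setting $\lambda_n^{J+1} = 1/N_n^{J+1}$, one extracts a weak limit $\phi^{J+1}$ after unwinding the symmetries, with quantitative bound $\|\phi^{J+1}\|_{\dot{H}_a^1} \gtrsim \epsilon^{\alpha}$ for some $\alpha > 0$.

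First I would establish the dichotomy for a single extraction: along a subsequence, either $N_n^{J+1}$ stays bounded and we normalize to $\lambda_n^{J+1} \equiv 1$, in which case the boundedness of $\{f_n\}$ in $H^1$ forces $\phi^{J+1} \in L^2$ and one may similarly arrange $t_n^{J+1} \equiv 0$ or $t_n^{J+1} \to \pm\infty$ (after time translation); or $N_n^{J+1} \to \infty$ and $\lambda_n^{J+1} \to 0$, in which case the profile extraction is carried out in $\dot{H}_a^1$ and one inserts the frequency cutoff $P^a_{\geq (\lambda_n^{J+1})^\theta}$ with $\theta \in (0,1)$. This cutoff serves a dual role: it places the rescaled profile in the range of validity of Lemma \ref{EquiSobolev} between the $\L_a$- and $(-\Delta)$-Sobolev spaces, and it ensures the rescaled profile has vanishing $L^2$ mass in the limit (which is essential so that the concentrating profiles do not contribute to \eqref{MassEx}). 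The passage between $\L_a^n$ and $\L_a^\infty$ required to justify weak convergence and to define $\phi_n^j$ meaningfully is handled by the operator convergence statements \eqref{Conver11}--\eqref{Convercritical} from Lemma \ref{ConverOpera}.

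Next, I would iterate. Weak convergence of the rescaled, time-translated residue in $\dot{H}_a^1$ yields
\begin{equation*}
    \|W_n^J\|_{\dot{H}_a^1}^2 = \|f_n\|_{\dot{H}_a^1}^2 - \sum_{j=1}^J \|\phi^j\|_{\dot{H}_a^1}^2 + o_n(1),
\end{equation*}
so the uniform upper bound on $\|f_n\|_{H^1}$ together with the quantitative lower bound on each $\|\phi^j\|_{\dot{H}_a^1}$ forces the Strichartz norm of $e^{-it\L_a}W_n^J$ to tend to zero as $J \to J^*$, giving \eqref{Reminder}. Asymptotic orthogonality \eqref{Ortho} is then a consequence of uniqueness of weak limits: if two parameter sets were not orthogonal, a change of variables would realize one profile as a weak limit of residues associated to the other, contradicting the weak convergence property \eqref{WeakConver}. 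The mass Pythagorean \eqref{MassEx} follows because (i) concentrating profiles satisfy $\|\phi_n^j\|_{L^2} = \lambda_n^j \|P^a_{\geq (\lambda_n^j)^\theta} \phi^j\|_{L^2} \to 0$ in view of the Bernstein inequality, and (ii) the unit-scale profiles satisfy standard $L^2$-orthogonality by the Riesz–Fréchet criterion applied to weak limits.

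The main obstacle, and where the argument departs from the classical Keraani decomposition, is the energy Pythagorean expansion \eqref{EnergyEx}. The $\dot{H}_a^1$-component follows from the two-regime weak convergence and the operator convergence \eqref{Conver33}. However, the two nonlinear terms $|u|^{2d/(d-2)}$ and $|u|^{2(d+1)/(d-1)}$ require a Brezis-Lieb-type refinement applied to the specific family of symmetries here, and one must check that cross terms vanish in the limit under the orthogonality \eqref{Ortho}. For the energy-critical term this is standard in the $\dot{H}_a^1$ setting; for the subcritical $\dot{H}^{1/2}$ term one must additionally invoke the decay estimates \eqref{DecayingES1}--\eqref{DecayingES2} to kill contributions from profiles with $t_n^j \to \pm\infty$, exploiting that $\phi^j \in L^2$ whenever $\lambda_n^j \equiv 1$. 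Once these $L^p$-decoupling estimates are in hand for both nonlinearities, combining them with the quadratic decomposition of $\|\cdot\|_{\dot{H}_a^1}$ yields \eqref{EnergyEx}, completing the proof.
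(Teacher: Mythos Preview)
Your proposal is correct and follows essentially the same approach as the paper. The paper's own proof is in fact only a brief sketch: it states the refined Strichartz estimate (Lemma~\ref{RefinedStrichartz}), the inverse Strichartz inequality (Proposition~\ref{InverseSI}), and the scaling-parameter/decoupling lemma (Lemma~\ref{ScalingPara}), then defers to the ``standard argument as in \cite{Luo2021}'' to conclude---so your outline, which explicitly spells out the inductive extraction, the $\lambda_n\equiv 1$ versus $\lambda_n\to 0$ dichotomy, the role of the frequency cutoff $P^a_{\geq(\lambda_n)^\theta}$, the operator-convergence lemma, and the Brezis--Lieb/decay-estimate treatment of the two nonlinear terms in \eqref{EnergyEx}, is a faithful and more detailed rendering of exactly the same strategy.
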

	\begin{remark}
		We notice that the parameters $x_n^j\equiv0$ if $\{f_n\}\subset\dot{H}_a^1(\R^d)$ are radial functions. 
	\end{remark}
	In order to prove Theorem \ref{LinearProfi}, 
	we  need the  refined Strichartz estimate and inverse Strichartz estimate. 
	
	The refined Strichartz estimates for linear Schr\"odinger flow $e^{-it\mathcal{L}_a}f$ was first proved in  \cite{KillipMiaVisanZhangZheng} for $d=3$ and then generalized to the higher dimensions by \cite{YANG2020124006}.
	
	\begin{lemma}[Refined Strichartz]\label{RefinedStrichartz}
		Let $d\geqslant3$ and $a>-\frac{(d-2)^2}{4}+\left(\frac{d-2}{d+2}\right)^2$. For $f\in \dot{H}^{1}_{a}(\R^{d})$ we have
		\[
		\| e^{-it\L_{a}}f\|_{L^{\frac{2(d+2)}{d-2}}_{t,x}(\R\times\R^{d})}\lesssim
		\|  f \|^{\frac{d-2}{d+2}}_{\dot{H}^{1}_{a}(\R^{d})}\sup_{N\in 2^{\Z}}
		\| e^{-it\L_{a}}f_{N}\|^{\frac{4}{d+2}}_{L^{\frac{2(d+2)}{d-2}}_{t,x}(\R\times\R^{d})}.
		\]
	\end{lemma}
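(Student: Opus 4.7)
The plan is to adapt the Bourgain--Keraani bilinear-interpolation argument to the perturbed flow $e^{-it\mathcal{L}_{a}}$. First, I would Littlewood--Paley decompose $f=\sum_{N}P_{N}^{a}f$ and set $u_{N}:=e^{-it\mathcal{L}_{a}}P_{N}^{a}f$, so that $u:=e^{-it\mathcal{L}_{a}}f=\sum_{N}u_{N}$. Since $q:=\tfrac{2(d+2)}{d-2}\geq 2$ for every $d\geq 3$, the $L^{q/2}_{t,x}$-boundedness of the $\mathcal{L}_{a}$ square function (which follows from the Mikhlin-type multiplier theorem for $\mathcal{L}_{a}$ proved in \cite{KMVZZ2018} together with the Bernstein bounds of Lemma \ref{BIN}) reduces the target estimate to controlling the bilinear sum
$$\bigl\|u\bigr\|_{L^{q}_{t,x}(\R\times\R^{d})}^{2}\lesssim\sum_{N_{1}\leq N_{2}}\bigl\|u_{N_{1}}u_{N_{2}}\bigr\|_{L^{q/2}_{t,x}(\R\times\R^{d})}.$$

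Next, I would invoke a bilinear Strichartz estimate for the inverse-square flow (available in \cite{KillipMiaVisanZhangZheng} for $d=3$ and in \cite{YANG2020124006} for $d\in\{4,5\}$) of the schematic form
$$\bigl\|u_{N_{1}}u_{N_{2}}\bigr\|_{L^{2}_{t,x}}\lesssim\Bigl(\tfrac{N_{1}}{N_{2}}\Bigr)^{\alpha_{0}}\|\sqrt{\mathcal{L}_{a}}P_{N_{1}}^{a}f\|_{L^{2}}\|\sqrt{\mathcal{L}_{a}}P_{N_{2}}^{a}f\|_{L^{2}},\qquad N_{1}\leq N_{2},$$
for some $\alpha_{0}>0$. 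Interpolating this $L^{2}_{t,x}$-bound against the trivial H\"older estimate $\|u_{N_{1}}u_{N_{2}}\|_{L^{q}_{t,x}}\leq\|u_{N_{1}}\|_{L^{q}_{t,x}}\|u_{N_{2}}\|_{L^{q}_{t,x}}$ at the weight $\theta=\tfrac{d-2}{d+2}$ yields, for some $\alpha>0$,
$$\bigl\|u_{N_{1}}u_{N_{2}}\bigr\|_{L^{q/2}_{t,x}}\lesssim\Bigl(\tfrac{N_{1}}{N_{2}}\Bigr)^{\alpha}\bigl(\|P_{N_{1}}^{a}f\|_{\dot{H}^{1}_{a}}\|P_{N_{2}}^{a}f\|_{\dot{H}^{1}_{a}}\bigr)^{\theta}\bigl(\|u_{N_{1}}\|_{L^{q}_{t,x}}\|u_{N_{2}}\|_{L^{q}_{t,x}}\bigr)^{1-\theta}.$$

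Then I would factor out $\sup_{N}\|u_{N}\|_{L^{q}_{t,x}}^{2(1-\theta)}$ from the bilinear sum, apply the Schur test (made effective by the geometric factor $(N_{1}/N_{2})^{\alpha}$) together with the square-function characterization of $\dot{H}^{1}_{a}$ to bound the remaining frequency sum by $\|f\|_{\dot{H}^{1}_{a}}^{2\theta}$. Taking square roots converts the overall exponents $2\theta$ and $2(1-\theta)$ into the advertised $\theta=\tfrac{d-2}{d+2}$ and $1-\theta=\tfrac{4}{d+2}$.

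The main obstacle is the bilinear Strichartz estimate for $e^{-it\mathcal{L}_{a}}$ itself: its derivation in the inverse-square setting combines the Kato local smoothing bounds of Lemma \ref{LocalSmoothing} with harmonic-analytic tools adapted to $\mathcal{L}_{a}$, and the spectral hypothesis $a>-\tfrac{(d-2)^{2}}{4}+\bigl(\tfrac{d-2}{d+2}\bigr)^{2}$ is precisely what is needed so that Lemma \ref{EquiSobolev} applies at the $L^{p}$ exponents entering the interpolation, thereby permitting a clean reduction of the $\mathcal{L}_{a}$-bilinear estimate to its free counterpart.
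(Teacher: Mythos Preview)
The paper does not prove this lemma; it simply cites \cite{KillipMiaVisanZhangZheng} for $d=3$ and \cite{YANG2020124006} for higher dimensions. Your outline is the standard Bourgain--Keraani strategy used in those references (Littlewood--Paley decompose, reduce to a bilinear sum, exploit off-diagonal decay, sum via Schur/Cauchy--Schwarz), so in spirit you match the cited proofs.

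Two technical points in your sketch are off, however. First, the ``trivial H\"older estimate'' $\|u_{N_1}u_{N_2}\|_{L^{q}_{t,x}}\le\|u_{N_1}\|_{L^{q}_{t,x}}\|u_{N_2}\|_{L^{q}_{t,x}}$ is false; you presumably mean $L^{q/2}$ on the left, but then interpolating an $L^{2}$ bound against an $L^{q/2}$ bound to land in $L^{q/2}$ forces $\theta=0$ and yields no gain. In the actual argument one either pairs the bilinear $L^{2}$ bound with a \emph{higher} integrability endpoint obtained via Bernstein (so that genuine interpolation lands in $L^{q/2}$ with positive weight on the bilinear piece), or bypasses bilinear Strichartz altogether and obtains the off-diagonal factor $(N_{1}/N_{2})^{\alpha}$ directly from Bernstein plus an asymmetric H\"older splitting in $L^{q/2}$, which is the route taken in \cite{KillipMiaVisanZhangZheng}. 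Second, your schematic bilinear estimate $\|u_{N_1}u_{N_2}\|_{L^{2}_{t,x}}\lesssim (N_{1}/N_{2})^{\alpha_{0}}\|P_{N_1}^{a}f\|_{\dot{H}^{1}_{a}}\|P_{N_2}^{a}f\|_{\dot{H}^{1}_{a}}$ is dimensionally inconsistent: under $f\mapsto\lambda^{(d-2)/2}f(\lambda\,\cdot)$ the left side scales like $\lambda^{(6-d)/2}$ while the right side is scale-invariant, so additional explicit powers of $N_{1},N_{2}$ must appear. These are repairable details, but the interpolation step as written does not go through.
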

	
	Using the similar argument as in \cite{KillipOhPoVi2017,Luo2022} and Lemma \ref{RefinedStrichartz}, we can  prove the following inverse Strichartz inequality which implies that there is only one bubble concentration.
	\begin{proposition}[Inverse Strichartz inequality]\label{InverseSI}
		Let $a>-\left(\frac{d-2}{2}\right)^2+\left(\frac{d-2}{d+2}\right)^2$. Let $\left\{f_{n}\right\}_{n\in \N}$ be a sequence such that
		\[
		\limsup_{n\to\infty}\|  f_{n} \|_{{H}^{1}_{a}}=A<\infty
		\quad \text{and}\quad 
		\liminf_{n\to\infty}\| e^{-it\L_{a}}f\|_{L^{\frac{2(d+2)}{d-2}}_{t,x}(\R\times\R^{d})}=\epsilon>0.
		\]
		Then, after passing to a subsequence in $n$, there exist $\phi\in \dot{H}^{1}_{a}$,
		\[
		\left\{\lambda_{n}\right\}_{n\in \N}\subset (0,\infty), \quad\left\{t_{n}\right\}_{n\in \N}\subset \R,
		\quad \left\{x_{n}\right\}_{n\in \N}\subset \R^{d}
		\]
		such that the following statements hold:
		\begin{enumerate}[label=\rm{(\roman*)}]
			\item $\lambda_{n}\to \lambda_{\infty}\in [0, \infty)$, and if $\lambda_{\infty}>0$ then
			$\phi\in{H}^{1}_{a}$.
			\item Weak convergence property:
			\begin{equation}\label{weakprofile}
				\lambda^{\frac{d-2}{2}}_{n}( e^{-it\L_{a}}f_{n})(\lambda_{n}x+x_{n})\rightharpoonup \phi(x)
				\quad \text{weakly in}\quad 
				\begin{cases} 
					H_a^{1}(\R^{d}), \quad \text{if $\lambda_{\infty}>0$} \\
					\dot{H}_a^{1}(\R^{d}), \quad \text{if $\lambda_{\infty}=0$}.
				\end{cases} 
			\end{equation}
			\item Decoupling of norms:
			\begin{align}\label{ConverH1}
				&	\lim_{n\to\infty}\left\{\|f_{n}\|^{2}_{\dot{H}^{1}_{a}}-\|f_{n}-\phi_{n}\|^{2}_{\dot{H}^{1}_{a}}\right\}
				\gtrsim_{\epsilon,A} 1\\\label{ConverLp}
				&\lim_{n\to\infty}\left\{\|f_{n}\|^{2}_{L_{x}^{2}}-\|f_{n}-\phi_{n}\|^{2}_{L_{x}^{2}}-\|\phi_{n}\|^{2}_{L_{x}^{2}}\right\}=0,
			\end{align}
			where {\small
				\[
				\phi_{n}(x):=\begin{cases} 
					\lambda^{-\frac{d-2}{2}}_{n}e^{-it_{n}\L_{a}}\left[\phi\(\frac{x-x_{n}}{\lambda_{n}}\)\right] 
					& \text{if $\lambda_{\infty}>0$}, \\
					\lambda^{-\frac{d-2}{2}}_{n}e^{-it_{n}\L_{a}}\left[(P^{a}_{\geq \lambda^{\theta}_{n}}\phi)\(\frac{x-x_{n}}{\lambda_{n}}\)\right]                 
					& \text{if $\lambda_{\infty}=0$},
				\end{cases} 
				\]
				with $0<\theta<1$.
			}
			\item We may choose the parameters $\left\{\lambda_{n}\right\}_{n\in \N}$, $\left\{t_{n}\right\}_{n\in \N}$  and $\left\{x_{n}\right\}_{n\in \N}$ such that either 
			$\frac{t_{n}}{\lambda^{2}_{n}}\to \pm \infty$ or $t_{n}\equiv 0$ and either $\frac{|x_{n}|}{\lambda_{n}}\to  \infty$ 
			or $x_{n}\equiv 0$.
		\end{enumerate}
	\end{proposition}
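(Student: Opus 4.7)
The plan is to extract a single concentration bubble by combining the refined Strichartz estimate (Lemma \ref{RefinedStrichartz}) with a Bernstein-type interpolation step, then pass to a weak limit after rescaling. The key subtlety is that the potential $a|x|^{-2}$ breaks translation invariance, so the operator governing the weak limit depends on the drift sequence $x_n/\lambda_n$ through the operators $\mathcal{L}^n_a$ and $\mathcal{L}^\infty_a$ from \eqref{DefOperator}.

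First, apply Lemma \ref{RefinedStrichartz} to the bounded sequence $\{f_n\}\subset H^1_a$: the lower bound $\|e^{-it\mathcal{L}_a}f_n\|_{L^{2(d+2)/(d-2)}_{t,x}}\gtrsim \epsilon$ yields a frequency scale $N_n\in 2^{\Z}$ such that
\begin{align*}
\|e^{-it\mathcal{L}_a}P^a_{N_n}f_n\|_{L^{2(d+2)/(d-2)}_{t,x}(\R\times\R^d)}\gtrsim_{\epsilon,A} 1.
\end{align*}
Interpolating this norm against the $L^\infty_{t,x}$ bound for the single frequency-localized piece (controlled through Bernstein's inequality and Strichartz) transfers the norm concentration to a pointwise concentration: there exist $t_n\in\R$ and $x_n\in\R^d$ with
\begin{align*}
N_n^{-(d-2)/2}\big|e^{-it_n\mathcal{L}_a}P^a_{N_n}f_n(x_n)\big|\gtrsim_{\epsilon,A} 1.
\end{align*}
Set $\lambda_n := N_n^{-1}$. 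The uniform $L^2$ bound on $\{f_n\}$ combined with Bernstein forces $\lambda_n$ to be bounded above, and along a subsequence $\lambda_n\to \lambda_\infty\in[0,\infty)$.

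Next, consider the rescaled sequence
\begin{align*}
g_n(x) := \lambda_n^{(d-2)/2}(e^{-it_n\mathcal{L}_a}f_n)(\lambda_n x + x_n),
\end{align*}
which is uniformly bounded in $\dot{H}^1$ by unitarity of $e^{-it\mathcal{L}_a}$ on $\dot{H}^1_a$ together with the change of variables; when $\lambda_\infty>0$ it is also $L^2$-bounded. By Banach--Alaoglu, along a subsequence $g_n\rightharpoonup \phi$ in $H^1_a$ (if $\lambda_\infty>0$) or in $\dot{H}^1_a$ (if $\lambda_\infty=0$, in which case we insert the frequency projection $P^a_{\geq \lambda_n^\theta}$ to obtain a profile with the required integrability). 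Testing $g_n$ against a fixed Schwartz bump at the origin whose dual pairing reproduces the pointwise concentration above, and then replacing $\mathcal{L}^n_a$ by $\mathcal{L}^\infty_a$ via Lemma \ref{ConverOpera}, I obtain the quantitative lower bound $\|\phi\|_{\dot{H}^1_a}\gtrsim_{\epsilon,A} 1$; in particular $\phi\not\equiv 0$.

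For the decoupling identities, set $r_n := f_n - \phi_n$. Expanding $\|f_n\|^2_{\dot{H}^1_a} = \|\phi_n+r_n\|^2_{\dot{H}^1_a}$, the weak convergence of $g_n$ to $\phi$ causes the cross-term to vanish, yielding \eqref{ConverH1} together with the aforementioned lower bound. The identity \eqref{ConverLp} follows similarly from weak $L^2$ convergence when $\lambda_\infty>0$, while if $\lambda_\infty=0$ the $L^2$ mass of $\phi_n$ vanishes in the limit by Bernstein applied to $P^a_{\geq\lambda_n^\theta}\phi$, so the identity is trivial in that regime. Finally, for (iv), pass to subsequences so that $t_n/\lambda_n^2\to t_\ast\in[-\infty,\infty]$ and $|x_n|/\lambda_n\to r_\ast\in[0,\infty]$; whenever a limit is finite, absorb the corresponding symmetry into $\phi$ by the replacement $\phi\mapsto e^{-it_\ast\mathcal{L}^\infty_a}\phi$ or by a translation (justified by the strong convergence statements in Lemma \ref{ConverOpera}), and reset $t_n\equiv 0$ or $x_n\equiv 0$.

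The main obstacle is verifying non-triviality of $\phi$ with a quantitative lower bound in the shrinking-scale regime $\lambda_\infty=0$, where the rescaling is not $L^2$-isometric and one must work purely with homogeneous norms; the frequency projection $P^a_{\geq\lambda_n^\theta}$ and the careful choice of test function are essential here. More fundamentally, the lack of translation invariance requires tracking the drift $x_n/\lambda_n$ and identifying whether it converges to a finite point or escapes to infinity, which determines the limiting operator $\mathcal{L}^\infty_a$ through \eqref{DefOperator}. The strong convergence estimates in Lemma \ref{ConverOpera} are precisely what allow the weak-testing manipulations to go through, mirroring but refining the arguments of \cite{KillipOhPoVi2017,Luo2021} for the translation-invariant case.
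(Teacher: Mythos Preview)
Your proposal is correct and follows essentially the same route the paper indicates: the paper does not give a detailed proof but simply appeals to the refined Strichartz estimate (Lemma~\ref{RefinedStrichartz}) together with the arguments of \cite{KillipOhPoVi2017,Luo2022}, and your sketch reproduces precisely that scheme---frequency localization via refined Strichartz, pointwise concentration via Bernstein/interpolation, weak extraction after rescaling, and parameter normalization using Lemma~\ref{ConverOpera}. Your identification of the two new ingredients (tracking the drift $x_n/\lambda_n$ to determine $\mathcal{L}^\infty_a$, and inserting the projector $P^a_{\geq\lambda_n^\theta}$ in the $\lambda_\infty=0$ regime) is exactly what distinguishes this setting from the potential-free case in the cited references.
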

	
	Arguing as in  \cite[Proposition 3.7]{KillipMiaVisanZhangZheng},  \cite[Lemma 7.4]{KillipOhPoVi2017} and \cite[Lemma 3.6]{Luo2021}, we also have the following: 
	\begin{lemma}\label{ScalingPara}
		Under the hypotheses of Proposition \ref{InverseSI}, we have:
		\begin{enumerate}[label=\rm{(\roman*)}]
			\item Passing to subsequence, we may assume that either $\lambda_{n}\equiv1$ or $\lambda_{n}\to0$.
			\item \begin{align}\label{ConverL6}
				&\lim_{n\to\infty}\left\{\|f_{n}\|^{\frac{2d}{d-2}}_{L_{x}^{\frac{2d}{d-2}}}-\|f_{n}-\phi_{n}\|^{\frac{2d}{d-2}}_{L_{x}^{\frac{2d}{d-2}}}-\|\phi_{n}\|^{\frac{2d}{d-2}}_{L_{x}^{\frac{2d}{d-2}}}\right\}=0,\\\label{ConverL4}
				&\lim_{n\to\infty}\left\{\|f_{n}\|^{\frac{2d+2}{d-1}}_{L_{x}^{\frac{2d+2}{d-1}}}-\|f_{n}-\phi_{n}\|^{\frac{2d+2}{d-1}}_{L_{x}^{\frac{2d+2}{d-1}}}-\|\phi_{n}\|^{\frac{2d+2}{d-1}}_{L_{x}^{\frac{2d+2}{d-1}}}\right\}=0.
			\end{align}
		\end{enumerate}
	\end{lemma}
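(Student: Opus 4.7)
The plan is to combine the information coming out of Proposition~\ref{InverseSI}, namely $\lambda_n\to\lambda_\infty\in[0,\infty)$ and the weak convergence \eqref{weakprofile}, with the dispersive decay estimates \eqref{DecayingES1}--\eqref{DecayingES2} and the Brezis--Lieb lemma, following closely the strategy of \cite[Proposition~3.7]{KillipMiaVisanZhangZheng} and \cite[Lemma~7.4]{KillipOhPoVi2017}.

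For part (i), if $\lambda_\infty=0$ we pass to a subsequence and we are done. If $\lambda_\infty>0$, I would use the fact that the rescaling map $\lambda\mapsto \lambda^{-(d-2)/2}g(\cdot/\lambda)$ is strongly continuous on $H_a^1$ for $\lambda$ in a compact subset of $(0,\infty)$; hence replacing $\lambda_n$ by $\lambda_\infty$ in the definition of $\phi_n$ changes the decoupling statements only by $o(1)$, and a further rescaling by $\lambda_\infty$ (which may be absorbed into a redefinition of $\phi$) reduces us to $\lambda_n\equiv1$.

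For part (ii), I would split into cases according to the behavior of $t_n$ and $\lambda_n$. \emph{Case A: $t_n\to\pm\infty$.} Since $\phi\in H_a^1$ in the case $\lambda_n\equiv1$, estimates \eqref{DecayingES1} and \eqref{DecayingES2} immediately give $\|\phi_n\|_{L^{2d/(d-2)}}\to0$ and $\|\phi_n\|_{L^{(2d+2)/(d-1)}}\to0$. \emph{Case B: $\lambda_n\to0$ and $t_n\equiv 0$.} A direct scaling computation shows $\|\phi_n\|_{L^{(2d+2)/(d-1)}}\sim\lambda_n^{1/(d+1)}\|\phi\|_{L^{(2d+2)/(d-1)}}\to0$, so the $L^{(2d+2)/(d-1)}$ decoupling is automatic. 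In both sub-cases where $\|\phi_n\|_{L^p}\to 0$, the elementary inequality $\bigl||a|^p-|a-b|^p\bigr|\lesssim |b|(|a|^{p-1}+|b|^{p-1})$ together with H\"older's inequality and uniform boundedness of $\|f_n\|_{L^p}$ completes the decoupling. The remaining regimes ($\lambda_n\equiv 1,\ t_n\equiv 0$, any $x_n$; and $\lambda_n\to 0,\ t_n\equiv0$ for the $L^{2d/(d-2)}$ norm) are handled by Brezis--Lieb after transferring to the rescaled-translated frame where \eqref{weakprofile} holds, extracting a.e.\ convergence on compact sets via Rellich--Kondrachov, and then passing to the limit.

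The main obstacle is precisely this last regime when $|x_n|/\lambda_n\to\infty$ together with the frequency truncation $P^a_{\geq\lambda_n^\theta}$ built into the definition of $\phi_n$. To handle it one uses Lemma~\ref{ConverOpera}, in particular \eqref{Conver33}--\eqref{Conver44}, to replace $\mathcal{L}_a^n$ by $\mathcal{L}_a^\infty$ and absorb the projector $P^a_{\geq\lambda_n^\theta}$ into an $o(1)$ error in $\dot H^1$ (hence in $L^{2d/(d-2)}$ by Sobolev), so that Brezis--Lieb applies to the clean rescaled sequence. Once the a.e.\ convergence and Brezis--Lieb decoupling are in place in the rescaled frame, undoing the unitary rescaling--translation transfers the decoupling back to the original frame, completing the proof.
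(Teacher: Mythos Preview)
Your approach is correct and matches the paper's: the paper does not give a self-contained proof but simply writes ``Arguing as in \cite[Proposition~3.7]{KillipMiaVisanZhangZheng}, \cite[Lemma~7.4]{KillipOhPoVi2017} and \cite[Lemma~3.6]{Luo2021}'', which is exactly the strategy you sketch (dichotomize on $\lambda_\infty$, use dispersive decay for diverging times, scaling for $\lambda_n\to 0$, and Brezis--Lieb in the remaining static regimes after passing to the translated-rescaled frame).

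One minor inaccuracy worth flagging: in your Case~B you write $\|\phi_n\|_{L^{(2d+2)/(d-1)}}\sim \lambda_n^{1/(d+1)}\|\phi\|_{L^{(2d+2)/(d-1)}}$, but when $\lambda_n\to 0$ the profile $\phi$ is only known to lie in $\dot H^1$, not in $L^{(2d+2)/(d-1)}$. The fix is immediate: the definition of $\phi_n$ carries the projector $P^a_{\geq \lambda_n^\theta}$, and Bernstein plus interpolation between $L^2$ and $L^{2d/(d-2)}$ give $\|P^a_{\geq\lambda_n^\theta}\phi\|_{L^{(2d+2)/(d-1)}}\lesssim \lambda_n^{-\theta/(d+1)}$, so $\|\phi_n\|_{L^{(2d+2)/(d-1)}}\lesssim \lambda_n^{(1-\theta)/(d+1)}\to 0$. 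The same interpolation also covers the regime $\lambda_n\to 0$ with $t_n/\lambda_n^2\to\pm\infty$, which your case split does not treat explicitly.
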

	
	Using the standard argument as in \cite{Luo2021} along with Proposition \ref{InverseSI} and Lemma \ref{ScalingPara}, one can finish the proof of linear profile decomposition, i.e. Theorem \ref{LinearProfi}.
	
	
	\subsection{Local virial argument}\label{virialargument}
	In this subsection, we give the computation of the local  virial identity. Let $\phi$ be a radial, smooth function which satisfy
	\[
	\phi(x)=
	\begin{cases}
		|x|^{2},& \quad |x|\leq 1\\
		0,& \quad |x|\geq 2,
	\end{cases}
	\quad \text{with}\quad 
	|\partial^{\alpha}\phi(x)|\lesssim |x|^{2-|\alpha|}
	\]
	for all multi-indices $\alpha$. For a given $R>1$, we define
	\begin{equation}\label{WR}
		w_{R}(x)=R^{2}\phi\(\tfrac{x}{R}\)
	\end{equation}
	and introduce the localized virial quantity
	\begin{gather}\label{V_R}
		V_R[u]=\int_{\R^d}w_R|u|^2dx,\\
		I_{R}[u]:=\frac{d}{dt}V_R[u]=2\IM\int_{\R^{d}} \nabla w_{R} \cdot\nabla u \,\overline{u} \,dx.
	\end{gather}
	
	By a direct calculation, we can deduce the following virial identity:
	\begin{lemma}\label{VirialIden}
		Let $R\in [1, \infty)$. Assume that $u$ solves \eqref{NLS}. Then we have
		\begin{equation}\label{LocalVirial}
			\tfrac{d}{d t}I_{R}[u]=F_{R}[u],
		\end{equation}
		where
		\begin{align*}
			\frac{d}{dt} I_R[u]
			&= \; 4  \int_{\R^d} w_R''(r) \big|\nabla u(t,x)\big|^2+\frac{ax}{|x|^4}\cdot\nabla w_R|u|^2 \;dx -\int_{\R^d} (\Delta^2w_R)(x) |u(t,x)|^2 \; dx  \\
			& \hspace{2ex} - \frac4d \int_{\R^d} (\Delta w_R)  |u(t,x)|^\frac{2d}{d-2}\; dx +\frac{4}{d+1}
			\int_{\R^d} (\Delta w_R)  |u(t,x)|^\frac{2d+2}{d-1}\; dx\\
			&:=F_R^c[u]+\frac{4}{d+1}\int_{\R^d} (\Delta w_R)  |u(t,x)|^\frac{2d+2}{d-1}\; dx.
		\end{align*}
	\end{lemma}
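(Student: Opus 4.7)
This is a standard Morawetz/virial computation adapted to the inverse-square potential and the combined nonlinearity. The plan is first to derive the formula for $I_R[u]$ from $V_R[u]$, and then to differentiate once more using the equation.

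Since both the inverse-square potential term and the gauge-invariant nonlinearity are real when multiplied by $\bar u$, pairing \eqref{NLS} with $\bar u$ and taking imaginary parts gives the continuity equation $\partial_t|u|^2 = -2\Div(\IM(\bar u\,\nabla u))$. Multiplying by $w_R$ and integrating by parts (with no boundary contribution since $w_R$ is compactly supported) yields the expression for $I_R[u]$ recorded in \eqref{V_R}. For the second derivative, I would rewrite \eqref{NLS} as $i\partial_t u + \Delta u = V u + F(u)$ with $V(x)=\tfrac{a}{|x|^2}$ and gauge-invariant nonlinearity $F(u)=f'(|u|^2)u$ for $f(s)=-\tfrac{d-2}{d}s^{d/(d-2)}+\tfrac{d-1}{d+1}s^{(d+1)/(d-1)}$, differentiate $2\IM\int \nabla w_R\cdot\bar u\,\nabla u\,dx$ in $t$, and substitute $\partial_t u$. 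Carefully moving all derivatives onto $w_R$ via integration by parts separates the result into three standard contributions: the kinetic part coming from $\Delta u$ produces $4\int \partial_j\partial_k w_R\,\RE(\partial_j\bar u\,\partial_k u)\,dx - \int (\Delta^2 w_R)|u|^2\,dx$, which after using the radial structure of $w_R$ and the fact that $w_R(x)=|x|^2$ on $\{|x|\leq R\}$ collapses to the stated $4\int w_R''(r)|\nabla u|^2\,dx$ (modulo harmless corrections from the annulus $\{R\leq|x|\leq 2R\}$); the potential $Vu$ contributes $-2\int \nabla V\cdot\nabla w_R\,|u|^2\,dx = 4\int \tfrac{ax}{|x|^4}\cdot\nabla w_R\,|u|^2\,dx$ since $\nabla V=-\tfrac{2ax}{|x|^4}$; and each power nonlinearity $\mu|u|^{p-1}u$ produces the standard $\tfrac{2\mu(p-1)}{p+1}\int \Delta w_R\,|u|^{p+1}\,dx$, which with $(\mu,p)=(-1,\tfrac{d+2}{d-2})$ and $(+1,\tfrac{d+3}{d-1})$ gives exactly the coefficients $-\tfrac{4}{d}$ and $+\tfrac{4}{d+1}$ displayed in \eqref{LocalVirial}.

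The main technical point is justifying these integration by parts for solutions of regularity only $H_a^1$ in the presence of the singular potential at the origin. The natural strategy is to first carry out the calculation for smooth solutions built from regular approximating data (available via the persistence of regularity in Lemma \ref{PRegularity}), and then pass to the limit for general $H_a^1$-data using the stability result in Lemma \ref{StabilityNLS} together with the equivalence of Sobolev norms from Lemma \ref{EquiSobolev}. The compact support of $w_R$ removes any concern at spatial infinity, and the bound $|\partial^\alpha w_R|\lesssim R^{2-|\alpha|}$ ensures that each resulting integral is absolutely convergent for $H_a^1$-solutions.
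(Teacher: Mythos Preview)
Your proposal is correct and follows exactly the standard direct computation the paper has in mind; the paper itself does not give a detailed proof but simply states ``By a direct calculation, we can deduce the following virial identity,'' so your outline in fact supplies more detail than the original. One small remark: the kinetic term in the statement is written as $4\int w_R''(r)|\nabla u|^2$, but the honest output of the computation is $4\RE\int \partial_{jk}w_R\,\partial_j\bar u\,\partial_k u$, which the paper uses in this full Hessian form later (e.g.\ in \eqref{Decomp55} and \eqref{FF11}); the $w_R''(r)$ notation is a common shorthand that agrees on $\{|x|\le R\}$ where $w_R=|x|^2$, and your parenthetical ``modulo harmless corrections from the annulus'' captures this.
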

	When $R=\infty$, we denote $F^{c}_{\infty}[u]=8G[u]$,
	where
	\[
	G[f]:=\| f\|_{\dot{H}_a^1}^2-\|f\|^{\frac{2d}{d-2}}_{L_{x}^{\frac{2d}{d-2}}}.
	\]
	\begin{lemma}\label{Virialzero}
		Fix $R\in [1, \infty]$, $\theta\in \R$ and $\lambda >0$. Then we have
		\[
		F^{c}_{R}[e^{i\theta}\lambda^{\frac{d-2}{2}}W_a(\lambda\cdot)]=0.
		\]
	\end{lemma}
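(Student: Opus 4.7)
The plan is to recognize $F^c_R[u]$ as precisely the virial derivative $\tfrac{d}{dt}I_R[u]$ associated to the \emph{pure} energy-critical problem $(i\partial_t-\mathcal{L}_a)v=-|v|^{4/(d-2)}v$, and then exploit the fact that $e^{i\theta}\lambda^{(d-2)/2}W_a(\lambda\cdot)$ is a time-independent solution of that equation. Indeed, inspection of the derivation in Lemma \ref{VirialIden} shows that the subcritical nonlinearity $|u|^{4/(d-1)}u$ contributes \emph{only} the last term $\tfrac{4}{d+1}\int(\Delta w_R)|u|^{(2d+2)/(d-1)}\,dx$, which is exactly what was split off from $F^c_R[u]$; thus the four terms defining $F^c_R[u]$ are precisely the virial derivative for the pure critical NLS$_a$. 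These terms are time-local functionals depending only on $|v|$, $|\nabla v|$, and $\nabla w_R$, so the identity holds pointwise in $t$ on sufficiently regular solutions.

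Next, I would verify that $v(t,x):=e^{i\theta}\lambda^{(d-2)/2}W_a(\lambda x)$ is a time-independent solution of the pure critical equation. The phase $e^{i\theta}$ is a constant, so $\partial_t v=0$ and $e^{i\theta}$ factors out of both $\mathcal{L}_a v$ and $|v|^{4/(d-2)}v$. The operator $\mathcal{L}_a=-\Delta+a|x|^{-2}$ is invariant under the scaling $f\mapsto\lambda^{(d-2)/2}f(\lambda\cdot)$ since both $-\Delta$ and $a|x|^{-2}$ scale homogeneously of degree $2$, and the nonlinearity $|\cdot|^{4/(d-2)}\cdot$ is energy-critical with exactly the matching scaling. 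Combined with the ground-state identity $\mathcal{L}_a W_a=|W_a|^{4/(d-2)}W_a$, this yields $\mathcal{L}_a v=|v|^{4/(d-2)}v$; hence $v$ solves $(i\partial_t-\mathcal{L}_a)v=-|v|^{4/(d-2)}v$.

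Since $|v|$ is independent of $t$, we have $I_R[v]\equiv\text{const}$, so $0=\tfrac{d}{dt}I_R[v]=F^c_R[v]$, which is the desired identity for $R\in[1,\infty)$. For the limiting case $R=\infty$, in which $F^c_\infty[v]=8\,G[v]$, the claim reduces to $G[W_a]=\|W_a\|_{\dot H^1_a}^2-\|W_a\|_{L^{2d/(d-2)}}^{2d/(d-2)}=0$, which follows directly by pairing the ground-state equation with $\overline{W_a}$ and integrating by parts. Integrability of every term is automatic: for finite $R$, $w_R$, $\nabla w_R$, $\Delta w_R$, $\Delta^2 w_R$ are bounded with compact support, while for $R=\infty$ the decay of $W_a$ apparent from \eqref{E:Wa} makes the unweighted integrals $\|W_a\|_{\dot H^1_a}^2$ and $\|W_a\|_{L^{2d/(d-2)}}^{2d/(d-2)}$ finite.

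There is no serious obstacle; the one computational check worth carrying out carefully is that the rescaling $W_a\mapsto\lambda^{(d-2)/2}W_a(\lambda\cdot)$ genuinely preserves the ground-state equation, which pins down why $F^c_R$ vanishes simultaneously at every $\lambda>0$. An alternative, equivalent route would be a direct Pohozaev-type computation: pair $\mathcal{L}_a v-|v|^{4/(d-2)}v=0$ with the virial multiplier $\overline{\nabla w_R\cdot\nabla v+\tfrac12(\Delta w_R)v}$, take real parts, and integrate by parts, which reconstructs the four terms of $F^c_R[v]$; but the time-derivative argument above is cleaner since Lemma \ref{VirialIden} has already done this bookkeeping.
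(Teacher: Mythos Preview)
Your proof is correct and follows essentially the same approach as the paper: recognize $F^c_R$ as the virial derivative for the pure energy-critical equation \eqref{NLS-Potential}, observe that $e^{i\theta}\lambda^{(d-2)/2}W_a(\lambda\cdot)$ is a static solution of that equation, and handle $R=\infty$ via the Pohozaev identity \eqref{PoQ}. The only cosmetic difference is that the paper notes $I_R[e^{i\theta}\lambda^{(d-2)/2}W_a(\lambda\cdot)]=0$ outright (since $W_a$ is real-valued and $I_R$ takes an imaginary part), whereas you argue $I_R[v]$ is constant in time because $v$ is time-independent; both yield $\tfrac{d}{dt}I_R=0$ and hence the result.
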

	\begin{proof}
		If $R=\infty$, then by a change of variable and \eqref{PoQ} we get
		\[
		F^c_{\infty}[e^{i\theta}\lambda^{\frac{d-2}{2}}W_a(\lambda\cdot)]
		=8G[e^{i\theta}\lambda^{\frac{d-2}{2}}W_a(\lambda\cdot)]
		=8G[W_a]=0.
		\]
		Now assume $R\in [1, \infty)$. Fix $\theta\in \R$ and $\lambda >0$. Since 
		\[
		u(t,x)=e^{i\theta}\lambda^{\frac{d-2}{2}}W_a(\lambda x)
		\]
		is a solution to \eqref{NLS-Potential} and $I_{R}[e^{i\theta}\lambda^{\frac{d-2}{2}}W_a(\lambda\cdot)]=0$, Lemma~\ref{VirialIden} implies
		\[
		F^{c}_{R}[e^{i\theta}\lambda^{\frac{d-2}{2}}W_a(\lambda\cdot)]=0.
		\]
	\end{proof}

	Combining Lemmas~\ref{VirialIden} and \ref{Virialzero} yields the following, which we will use to incorporate the modulation analysis into the virial analysis. 
	
	\begin{lemma}\label{VirialModulate}
		Consider $R\in [1, \infty]$, $\chi: I\to \R$, $\theta: I\to \R$ and
		$\lambda: I\to \R$. If $u$ is a solution  to \eqref{NLS}, then we have
		\begin{align}
			\tfrac{d}{d t}I_{R}[u]=&F^{c}_{\infty}[u(t)]\nonumber\\
			&+F_{R}[u(t)]-F^{c}_{\infty}[u(t)] \label{Modu11}\\ 
			&-\chi(t)\big\{F^{c}_{R}[e^{i\theta(t)}\lambda(t)^{\frac{d-2}{2}}W_a(\lambda(t)\cdot)]-F^{c}_{\infty}[e^{i\theta(t)}\lambda(t)^{\frac{d-2}{2}}W_a(\lambda(t)\cdot)]\big\}.\label{Modu22}
		\end{align}
	\end{lemma}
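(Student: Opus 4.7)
The plan is to combine the two preceding lemmas in a purely algebraic way. First, I would apply Lemma~\ref{VirialIden} directly to the solution $u$ of \eqref{NLS} to obtain the unconditional identity
\[
\tfrac{d}{dt} I_R[u(t)] = F_R[u(t)].
\]
Then I would trivially rewrite the right-hand side as
\[
F_R[u(t)] = F^c_\infty[u(t)] + \bigl(F_R[u(t)] - F^c_\infty[u(t)]\bigr),
\]
which already gives the sum of the first line and \eqref{Modu11} in the statement.

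Next I would verify that the contribution \eqref{Modu22} vanishes identically for \emph{any} choice of multiplier $\chi(t)$ and modulation parameters $\theta(t),\lambda(t)$. This is a direct consequence of Lemma~\ref{Virialzero}: applied first with the given finite $R$ and second with $R=\infty$, it yields
\[
F^c_R\bigl[e^{i\theta(t)}\lambda(t)^{\frac{d-2}{2}}W_a(\lambda(t)\cdot)\bigr] = 0
\quad\text{and}\quad
F^c_\infty\bigl[e^{i\theta(t)}\lambda(t)^{\frac{d-2}{2}}W_a(\lambda(t)\cdot)\bigr] = 0
\]
for every $t \in I$. Consequently the entire quantity multiplied by $\chi(t)$ in \eqref{Modu22} is zero pointwise in $t$, regardless of how $\chi,\theta,\lambda$ are chosen, and subtracting it from $F_R[u(t)]$ leaves the right-hand side unchanged.

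There is essentially no obstacle here; the content of the lemma is a convenient bookkeeping reformulation rather than a new estimate. The only small point to double-check is that $F^c_R$ and $F^c_\infty$, as defined just after Lemma~\ref{VirialIden}, are consistent with the normalization appearing in Lemma~\ref{Virialzero}, so that the two applications of the latter give genuine zeros and not constants depending on $R$. Once that matching is in place, combining the trivial additive/subtractive identities finishes the proof. The usefulness of writing the formula in this redundant manner will only become apparent later: in the modulation regime one takes $\chi(t)$ to localize to times where $u(t)$ is close to the orbit $\{e^{i\theta}\lambda^{(d-2)/2}W_a(\lambda\cdot)\}$, so that the bracket in \eqref{Modu22} can then be Taylor-expanded around $W_a$ and used to cancel leading order terms hidden inside \eqref{Modu11}.
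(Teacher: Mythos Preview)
Your proposal is correct and follows exactly the approach the paper takes: the lemma is stated as an immediate consequence of combining Lemmas~\ref{VirialIden} and \ref{Virialzero}, and your argument spells out precisely that combination.
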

\section{Variational analysis}
\label{sec:VarationalGNI}
In this section, we give some basic properties for the ground states $W_a(x)$ and $W_0(x)$ satisfying the following elliptic equations
\begin{gather*}
	\mathcal{L}_aW_a=|W_a|^{\frac{4}{d-2}}W_a,\\
	-\Delta W_0=|W_0|^{\frac{4}{d-2}}W_0.
\end{gather*}
To simplify the notation, we assume that $d\geq3$ and $0>a>-\left(\frac{d-2}{2}\right)^2$ in this section.

In the double-power setting, we still use the ground states $W_a(x)$ and $W_0(x)$ as the threshold for scattering. 
\begin{definition}  We define $1\geqslant\beta>0$ via $a=(\frac{d-2}2)^2[\beta^2-1]$, or equivalently, $\sigma=\frac{d-2}2(1-\beta)$.  We then define the \emph{ground state soliton}  by
	\begin{equation}\label{E:Wa}
		W_a(x) :=    [d(d-2)\beta^2]^{\frac{d-2}{4}} \biggl[ \frac{ |x|^{\beta-1} }{ 1+|x|^{2\beta} }\biggr]^{\frac{d-2}{2}}.
	\end{equation}
\end{definition}

By the direct calculation, we have
$$
\mathcal{L}_a W_a = |W_a|^\frac{4}{d-2} W_a. 
$$
Then using the  Euler's Beta integral  formula(cf. (1.1.20) in \cite{AAR}),  we can show that
\begin{equation}\label{WaPoho}
	\|W_a\|_{\dot H^{1}_a(\R^d)}^2 = \int_{\R^d} |W_a(x)|^{\frac{2d}{d-2}}\,dx = 
	\tfrac{\pi d(d-2)}{4} \Bigl[\tfrac{2\sqrt{\pi}\beta^{d-1}}{\Gamma(\frac{d+1}{2})} \Bigr]^\frac2d.
\end{equation}
Thus $W_a$ is a ground state soliton in the sense of being a radial non-negative static solution to \eqref{NLS-Potential}.   
\begin{proposition}[Sharp Sobolev embedding] \label{P:Wa}
	{\upshape(i)} We have the following sharp inequality
	\begin{equation}\label{Sharp-Sobolev}
		\|f\|_{L^{\frac{2d}{d-2}}_x(\R^d)} \leq C_{GN} \|f\|_{\dot H^{1}_a(\R^d)},
	\end{equation}
	where $$C_{GN}:=\|W_a\|_{L^{\frac{2d}{d-2}}_x(\R^d)} \|W_a\|_{\dot H^{1}_a(\R^d)}^{-1}.$$
	Moreover, equality holds in \eqref{Sharp-Sobolev} if and only if $f(x)=\alpha W_a(\lambda x)$ for some $\alpha\in\C$ and some $\lambda>0$.\\
	{\upshape(ii)} The inequality \eqref{Sharp-Sobolev} is valid also when $a=0$; however, equality now holds if and only if $f(x)=\alpha W_0(\lambda x+y)$ for some $\alpha\in\C$, some $y\in\R^d$,
	and some $\lambda>0$.
\end{proposition}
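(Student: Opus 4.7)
I treat part (ii) as the classical Aubin-Talenti sharp Sobolev inequality: the full classification of extremizers $\alpha W_0(\lambda x+y)$ follows from Schwarz symmetrization, the concentration-compactness principle, and the Gidas-Ni-Nirenberg classification of positive solutions to $-\Delta W=W^{(d+2)/(d-2)}$. I therefore focus on part (i).

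The plan is to set up the minimization problem
\begin{equation*}
C_{GN}^{-2}:=\inf_{0\neq f\in \dot H^1_a(\R^d)}\frac{\|f\|_{\dot H^1_a}^2}{\|f\|_{L^{2d/(d-2)}}^2},
\end{equation*}
show that $W_a$ attains it, and classify extremizers. Pairing the equation $\mathcal L_a W_a=W_a^{(d+2)/(d-2)}$ with $W_a$ and invoking the quadratic form definition of $\dot H^1_a$ yields the Pohozaev-type identity
\begin{equation*}
\|W_a\|_{\dot H^1_a}^2=\|W_a\|_{L^{2d/(d-2)}}^{2d/(d-2)},
\end{equation*}
which matches the claimed constant $C_{GN}=\|W_a\|_{L^{2d/(d-2)}}\|W_a\|_{\dot H^1_a}^{-1}$. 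To produce a minimizer, I first symmetrize: since $a<0$ the weight $-a|x|^{-2}$ is nonnegative and radially decreasing, so P\'olya-Szego together with the Hardy-Littlewood rearrangement inequality give $\|f^*\|_{\dot H^1_a}\leq\|f\|_{\dot H^1_a}$ while preserving $\|f\|_{L^{2d/(d-2)}}$; hence I restrict to nonnegative radial nonincreasing $f$. For a minimizing sequence, the scaling invariance $f\mapsto\lambda^{(d-2)/2}f(\lambda\cdot)$ normalizes $\|f_n\|_{L^{2d/(d-2)}}=1$, and concentration-compactness in $\dot H^1_a$, with vanishing excluded by radial Sobolev compactness and dichotomy ruled out by strict subadditivity of the minimization problem, produces a nontrivial weak limit; weak $\dot H^1_a$ lower semicontinuity together with a Brezis-Lieb argument in $L^{2d/(d-2)}$ upgrade this to an actual minimizer $f_\infty$.

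Any such minimizer satisfies the Euler-Lagrange equation $\mathcal L_a f_\infty=\mu f_\infty^{(d+2)/(d-2)}$, and a further rescaling fixes $\mu=1$. The classification then reduces to proving that the only positive radial $\dot H^1_a$ solution of $\mathcal L_a W=W^{(d+2)/(d-2)}$ up to scaling is $W_a$ itself; I would handle this via the Emden-Fowler substitution $W(r)=r^{-(d-2)/2}v(\log r)$, which converts the radial ODE into an autonomous second-order system whose bounded positive orbits form a single one-parameter family indexed by translation in the $\log r$ variable, matching the scaling freedom and recovering the explicit formula \eqref{E:Wa}. Reinserting the phase rotation yields the full extremizer set $\alpha W_a(\lambda x)$. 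I expect the main obstacle to be precisely this uniqueness step: the conformal symmetries exploited in (ii) are unavailable when $a\neq 0$, so the extremizer must be identified through direct ODE analysis rather than via moving-plane or Kelvin-transform arguments.
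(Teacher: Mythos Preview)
The paper does not supply a proof of this proposition; it is stated as a known result, implicitly referring to \cite{KillipMiaVisanZhangZheng} where the sharp Sobolev embedding for $\mathcal L_a$ was established. Your outline is essentially the argument carried out there: reduction to radial decreasing competitors via P\'olya--Szeg\H{o} and Hardy--Littlewood rearrangement (valid precisely because $a<0$ makes the potential term improve under symmetrization), concentration--compactness to produce a minimizer, and then classification of positive radial solutions of $\mathcal L_a W = W^{(d+2)/(d-2)}$ via the Emden--Fowler change of variables. So your approach is correct and matches the literature the paper is citing.

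Two places deserve a bit more care than your sketch indicates. First, for the ``only if'' direction you need not just that a radial minimizer exists, but that \emph{every} extremizer is radial about the origin; this follows from the equality case in P\'olya--Szeg\H{o} (Brothers--Ziemer) together with the fact that for $a<0$ strict inequality in the Hardy--Littlewood step forces the center to be the origin. Second, since the functional is scaling-invariant, the dichotomy alternative in concentration--compactness is not excluded by strict subadditivity in the usual sense; one instead uses that after normalizing the $L^{2d/(d-2)}$ norm the sequence is bounded in $\dot H^1_a$ and, being radial decreasing, cannot split into profiles escaping to spatial infinity (the potential anchors everything at the origin). These are routine refinements and do not affect the overall strategy.
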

The ground state $W_a$ also satisfies the following Pohozaev identities:
\begin{equation}\label{PoQ}
	\| W_a\|_{\dot{H}_a^1}^2=\| W_a\|^{\tfrac{2d}{d-2}}_{L^{\tfrac{2d}{d-2}}} \mbox{ and } E_a^{c}(W_a)=\tfrac{1}{d}\| W_a\|_{\dot{H}_a^1}^2.
\end{equation}

We will need the following two elementary lemmas.
\begin{lemma}
	\label{lem.var}
	Let $f\in \dot{H}_a^1$ such that $\|f\|_{\dot{H}_a^1}\leq \|W_a\|_{\dot{H}_a^1}$. Then
	$$ \frac{\|f\|_{\dot{H}_a^1}^2}{\|W_a\|_{\dot{H}_a^1}^2}\leq \frac{E_a^c(f)}{E_a^c(W_a)}.$$
	In particular, $E_a^c(f)$ is positive.
\end{lemma}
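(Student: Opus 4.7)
The plan is to bound the nonlinear part of $E_a^c(f)$ using the sharp Sobolev embedding \eqref{Sharp-Sobolev}, normalize by $\|W_a\|_{\dot H_a^1}^2$, and reduce everything to a one-variable elementary inequality on $[0,1]$.

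First, I would set $y := \|f\|_{\dot H_a^1}^2/\|W_a\|_{\dot H_a^1}^2 \in [0,1]$. By Proposition \ref{P:Wa} together with the Pohozaev identity \eqref{PoQ}, the sharp constant satisfies
\[
C_{GN}^{\frac{2d}{d-2}} = \frac{\|W_a\|_{L^{2d/(d-2)}}^{2d/(d-2)}}{\|W_a\|_{\dot H_a^1}^{2d/(d-2)}} = \frac{1}{\|W_a\|_{\dot H_a^1}^{4/(d-2)}},
\]
so that $\|f\|_{L^{2d/(d-2)}}^{2d/(d-2)} \leq \|f\|_{\dot H_a^1}^{2d/(d-2)}/\|W_a\|_{\dot H_a^1}^{4/(d-2)}$. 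Substituting into $E_a^c(f) = \tfrac12\|f\|_{\dot H_a^1}^2 - \tfrac{d-2}{2d}\|f\|_{L^{2d/(d-2)}}^{2d/(d-2)}$ and dividing by $E_a^c(W_a) = \tfrac1d\|W_a\|_{\dot H_a^1}^2$ gives
\[
\frac{E_a^c(f)}{E_a^c(W_a)} \;\geq\; \frac{d}{2}\,y - \frac{d-2}{2}\,y^{\frac{d}{d-2}}.
\]

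The remaining step is to check that $\tfrac{d}{2}y - \tfrac{d-2}{2}y^{d/(d-2)} \geq y$ for $y\in[0,1]$. This is equivalent to $\tfrac{d-2}{2}(y - y^{d/(d-2)}) \geq 0$, which holds since $d/(d-2) > 1$ and $y\in[0,1]$ force $y \geq y^{d/(d-2)}$. Chaining the inequalities yields the desired bound $\|f\|_{\dot H_a^1}^2/\|W_a\|_{\dot H_a^1}^2 \leq E_a^c(f)/E_a^c(W_a)$, and since $E_a^c(W_a)>0$ by \eqref{PoQ}, positivity of $E_a^c(f)$ follows immediately.

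There is no real obstacle here; the argument is essentially two lines once the sharp Sobolev constant is rewritten using the Pohozaev identity. The only mild subtlety is bookkeeping the exponents so that the Sobolev bound pairs cleanly with $\|W_a\|_{\dot H_a^1}^2$ in the denominator, which is handled by the normalization $y$.
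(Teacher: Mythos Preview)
Your proof is correct and follows essentially the same approach as the paper: both bound $E_a^c(f)$ from below via the sharp Sobolev inequality and then reduce to a one-variable inequality in $y=\|f\|_{\dot H_a^1}^2/\|W_a\|_{\dot H_a^1}^2$. The only cosmetic difference is that the paper packages the last step as concavity of $\Phi(y)=\tfrac12 y-\tfrac{d-2}{2d}C_{GN}^{2d/(d-2)}y^{d/(d-2)}$ with $\Phi(0)=0$, whereas you verify the equivalent inequality $\tfrac{d}{2}y-\tfrac{d-2}{2}y^{d/(d-2)}\geq y$ directly.
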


\begin{proof}
	Let $\Phi(y)=\frac{1}{2} y - \frac{C_{GN}^{\frac{2d}{d-2}}}{\frac{2d}{d-2}}y^{\frac{2d}{d-2}/2}$.  Then by Sobolev embedding
	$$\Phi\big(\|f\|^2_{\dot{H}_a^1}\big) \leq \frac{1}{2}\|f\|_{\dot{H}_a^1}^2-\frac{1}{\frac{2d}{d-2}} \|f\|^{\frac{2d}{d-2}}_{L^{\frac{2d}{d-2}}}=E_a^c(f).$$
	Note that $\Phi$ is concave on $\Bbb{R}_+$, $\Phi(0)=0$ and $\Phi\big(\|W_a\|_{\dot{H}_a^1}^2\big)=E_a^c(W_a)$. Thus
	\begin{align*}
		\forall s\in(0,1),\quad \Phi\big(s\|W_a\|_{\dot{H}_a^1}^2\big) \geq s \Phi(\|W_a\|_{\dot{H}_a^1}^2)=sE_a^c(W_a).
	\end{align*}
	We take $s=\frac{\|f\|_{\dot{H}_a^1}^2}{\|W_a\|_{\dot{H}_a^1}^2}$ and hence complete the proof.
\end{proof}

\begin{lemma}\label{GlobalS}
	Suppose that $u_{0}\in H_a^{1}(\R^{d})$ satisfies
	\begin{equation}\label{Condition11}
		E_a(u_{0})\leq E_a^{c}(W_a)
		\quad \text{and}\quad 
		\| u_{0}\|_{\dot{H}_a^1}<\| W_a\|_{\dot{H}_a^1}.
	\end{equation}
	Then the solution to \eqref{NLS} with $u|_{t=0}=u_0$ satisfies 
	\begin{equation}\label{PositiveP}
		\| u\|_{\dot{H}_a^1}<\| W_a\|_{\dot{H}_a^1}
	\end{equation}
	throughout  maximal lifespan.  Similarly, if $\| u_{0}\|_{\dot{H}_a^1}>\| W_a\|_{\dot{H}_a^1}$, then $\| u\|_{\dot{H}_a^1}>\| W_a\|_{\dot{H}_a^1}$ throughout the maximal lifespan of $u$. 
\end{lemma}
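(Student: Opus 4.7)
The plan is to use a standard continuity (bootstrap) argument built on energy conservation and Lemma~\ref{lem.var}. The observation driving everything is that the defocusing lower-order term in $E_a$ is non-negative, giving the pointwise bound
$$
E_a^c(u) = E_a(u) - \tfrac{d-1}{2(d+1)}\|u\|_{L^{\frac{2(d+1)}{d-1}}}^{\frac{2(d+1)}{d-1}} \leq E_a(u),
$$
with strict inequality unless $u\equiv 0$. Combined with conservation of $E_a$ along \eqref{NLS} and the hypothesis $E_a(u_0)\leq E_a^c(W_a)$, this yields
$$
E_a^c(u(t)) \leq E_a(u(t)) = E_a(u_0) \leq E_a^c(W_a)
$$
throughout the maximal lifespan $I$ of $u$.

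Next, I would exploit that $t\mapsto \|u(t)\|_{\dot{H}_a^1}$ is continuous on $I$, which is immediate from $u\in C(I, H_a^1)$ afforded by Theorem~\ref{Th1}. Suppose \eqref{PositiveP} fails; then by continuity and the strict inequality $\|u_0\|_{\dot{H}_a^1} < \|W_a\|_{\dot{H}_a^1}$ at $t=0$, there is a first time $t_\star \in I$ at which the curve meets the threshold, i.e.\ $\|u(t_\star)\|_{\dot{H}_a^1} = \|W_a\|_{\dot{H}_a^1}$. At $t_\star$, the hypothesis of Lemma~\ref{lem.var} is satisfied with equality, so the lemma yields $E_a^c(u(t_\star)) \geq E_a^c(W_a)$. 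Chaining this with the inequalities above forces $E_a^c(u(t_\star)) = E_a(u(t_\star)) = E_a^c(W_a)$; the first equality in turn forces $\|u(t_\star)\|_{L^{\frac{2(d+1)}{d-1}}}=0$, so $u(t_\star)\equiv 0$, contradicting $\|u(t_\star)\|_{\dot{H}_a^1} = \|W_a\|_{\dot{H}_a^1}>0$.

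For the superthreshold statement, I would run the identical argument under the matching hypotheses $E_a(u_0)\leq E_a^c(W_a)$ and $\|u_0\|_{\dot{H}_a^1} > \|W_a\|_{\dot{H}_a^1}$: continuity of the norm again produces a first crossing time $t_\star$ with $\|u(t_\star)\|_{\dot{H}_a^1} = \|W_a\|_{\dot{H}_a^1}$ (now approached from above), and the same combination of Lemma~\ref{lem.var} with $E_a^c \leq E_a$ closes the argument. The entire proof is really driven by the single observation that the defocusing perturbation forces $E_a\geq E_a^c$, so no step poses a genuine obstacle; the only subtlety worth flagging is the continuity of $t\mapsto\|u(t)\|_{\dot{H}_a^1}$ on the open maximal interval, which is supplied by the local well-posedness theory.
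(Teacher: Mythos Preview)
Your proof is correct and follows essentially the same approach as the paper: both argue by continuity to a first crossing time $t_\star$ with $\|u(t_\star)\|_{\dot{H}_a^1}=\|W_a\|_{\dot{H}_a^1}$, apply Lemma~\ref{lem.var} to get $E_a^c(u(t_\star))\geq E_a^c(W_a)$, and then use the strict inequality $E_a^c<E_a$ for nonzero functions (from the defocusing term) together with energy conservation to reach a contradiction. The paper's version is slightly terser, writing the chain $1\leq E_a^c(u(t_\star))/E_a^c(W_a)<E_a(u(t_\star))/E_a^c(W_a)$ directly, but the content is the same.
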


\begin{proof} Suppose that there exists $t_{0}$ such  that $\| u(t_{0})\|_{\dot{H}_a^1}^2=\| W_a\|^{2}_{\dot{H}_a^1}$. By Lemma \ref{lem.var}, we obtain
	\[
	1=\frac{\| u(t_{0})\|_{\dot{H}_a^1}^2}{\| W_a\|^{2}_{\dot{H}_a^1}}\leq \frac{E_a^{c}(u(t_{0}))}{E_a^{c}(W_a)}
	<\frac{E_a(u(t_{0}))}{E_a^{c}(W_a)},
	\]
	which contradicts  to \eqref{Condition11}.  A similar argument treats the remaining case. \end{proof}

We also need the following comparison lemma which is crucial for us to prove Theorem \ref{sub-threshold} and \ref{Threshold}.
\begin{lemma}
	We  have the following comparsion estimate for the ground state $W_0$ and $W_a$,
	\begin{align*}
		E_a^c(W_a)< E_0^c(W_0),\quad \Vert W_a\Vert_{\dot{H}_a^1}<\Vert W_0\Vert_{\dot{H}^1}.
	\end{align*} 
\end{lemma}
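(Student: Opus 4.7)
The plan is to reduce both inequalities to a strict comparison of the sharp Sobolev constants $C_{GN}(a)$ versus $C_{GN}(0)$ from Proposition~\ref{P:Wa}, and then exploit that a single explicit test function witnesses strict inequality when $a<0$.

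\textbf{Step 1: Rewrite $\|W_a\|_{\dot H^1_a}$ in terms of $C_{GN}(a)$.} Equality in \eqref{Sharp-Sobolev} at $f=W_a$ gives $\|W_a\|_{L^{2d/(d-2)}} = C_{GN}(a)\,\|W_a\|_{\dot H^1_a}$. Combining this with the Pohozaev identity \eqref{PoQ} $\|W_a\|_{\dot H^1_a}^2 = \|W_a\|_{L^{2d/(d-2)}}^{2d/(d-2)}$ yields, after a short algebraic manipulation,
\begin{equation*}
\|W_a\|_{\dot H^1_a} = C_{GN}(a)^{-d/2}, \qquad \|W_0\|_{\dot H^1} = C_{GN}(0)^{-d/2}.
\end{equation*}
Thus the $\dot H^1$ comparison will follow once we prove $C_{GN}(a) > C_{GN}(0)$.

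\textbf{Step 2: Strict comparison of Sobolev constants.} Since $a<0$, for every nonzero $f\in C^\infty_c(\R^d\setminus\{0\})$ we have the pointwise-in-functional estimate
\begin{equation*}
\|f\|_{\dot H^1_a}^2 = \|f\|_{\dot H^1}^2 + a\!\int_{\R^d}\frac{|f|^2}{|x|^2}\,dx \;<\; \|f\|_{\dot H^1}^2,
\end{equation*}
so the ratio $\|f\|_{L^{2d/(d-2)}}/\|f\|_{\dot H^1_a}$ strictly exceeds $\|f\|_{L^{2d/(d-2)}}/\|f\|_{\dot H^1}$. Taking the supremum gives $C_{GN}(a) \geq C_{GN}(0)$. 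To upgrade to strict inequality I would simply plug in the explicit maximizer $W_0$ of the non-potential sharp Sobolev inequality: since $W_0(x) \sim (1+|x|^2)^{-(d-2)/2}$ is smooth and $\int |x|^{-2}|W_0|^2\,dx$ is finite in $d\geq 3$ (integrable both at the origin and at infinity), we get
\begin{equation*}
C_{GN}(a) \;\geq\; \frac{\|W_0\|_{L^{2d/(d-2)}}}{\|W_0\|_{\dot H^1_a}} \;>\; \frac{\|W_0\|_{L^{2d/(d-2)}}}{\|W_0\|_{\dot H^1}} \;=\; C_{GN}(0),
\end{equation*}
using $a<0$ and $\int|x|^{-2}|W_0|^2\,dx > 0$. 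Step~1 then gives $\|W_a\|_{\dot H^1_a} < \|W_0\|_{\dot H^1}$.

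\textbf{Step 3: Energy comparison.} Apply the Pohozaev identity \eqref{PoQ} on both sides: $E_a^c(W_a) = \tfrac{1}{d}\|W_a\|_{\dot H^1_a}^2$ and, by the analogous identity for $W_0$, $E_0^c(W_0) = \tfrac{1}{d}\|W_0\|_{\dot H^1}^2$. The conclusion $E_a^c(W_a) < E_0^c(W_0)$ is then immediate from Step~2. The only substantive point in the argument is the strict inequality of Sobolev constants, which requires exhibiting a test function that both lies in $\dot H^1_a$ and has nontrivial $\int |x|^{-2}|f|^2$; the explicit profile $W_0$ handles this cleanly in the dimensions under consideration.
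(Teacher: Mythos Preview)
Your proof is correct and follows essentially the same approach as the paper: both arguments test $W_0$ in the sharp Sobolev inequality for $\mathcal{L}_a$, use the strict inequality $\|W_0\|_{\dot H^1_a}<\|W_0\|_{\dot H^1}$ coming from $a<0$ and $\int |x|^{-2}|W_0|^2\,dx>0$, and then translate the resulting comparison of sharp constants into the norm and energy inequalities via the Pohozaev identities. Your presentation, framed explicitly as $C_{GN}(a)>C_{GN}(0)$, is in fact a bit cleaner than the paper's compressed chain of inequalities.
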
   
\begin{proof}
	In fact, by the sharp Sobolev inequality and $\|W_0\|_{\dot{H}_a^1}<\|W_0\|_{\dot{H}^1}$, we have
	\begin{align*}
		\|W_0\|_{\frac{2d}{d-2}}< C_{GN}\|W_0\|_{\dot{H}_a^1}\leq C_{GN}\|W_0\|_{\dot{H}^1}=\|W_a\|_{\dot{H}_a^1}^{\frac{-2}{d}}\|W_0\|_{\dot{H}^1}.
	\end{align*}
	Since $\|W_0\|_{\frac{2d}{d-2}}/\|W_0\|_{\dot{H}^1}=\|W_0\|_{\dot{H}^1}^{\frac{-2}d}$, we finish the proof.
\end{proof}
Inspired by the work of Ibrahim-Masmoudi-Nakanishi\cite{Nakanishi} and Miao-Xu-Zhao\cite{MXZ2013}, we give the following variational characterization in order to prove Theorem \ref{sub-threshold}(i)(ii). 
We first give some notation and definition associated with the variational analysis. For $\varphi\in H_a^1$, we denote the scaling quantity
$\varphi^{\lambda}_{d, -2}$ by
\begin{align*}
	\varphi^{\lambda}_{d, -2} (x)= e^{d\lambda}\varphi(e^{2\lambda}x).
\end{align*}
We denote the variation derivative of $E_a(\varphi)$ by $K_a(\varphi)$
\begin{align}\label{scaling deriv:special}
	K_a(\varphi)=  \mathcal{T} E_a(\varphi)
	&:=  \dfrac{d}{d \lambda } \Big|_{\lambda =0 } E_a(
	\varphi^{\lambda}_{d, -2}) \notag\\& =  \int_{\R^d}\left( 2 |\nabla
	\varphi|^2+2a\frac{|\varphi|^2}{|x|^2} - 2 |\varphi|^{\frac{2d}{d-2}} + \frac{2d}{d+1} |\varphi|^{\frac{2d+2}{d-1}}\right) \; dx.
\end{align}
Then we denote the quadratic and nonlinear parts of $K_a$ by $K_a^Q$ and
$K_a^N$, that is,
\begin{align*}
	K_a(\varphi)=K_a^Q(\varphi) + K_a^N(\varphi),
\end{align*}
where 
\begin{gather*}
	K_a^Q(\varphi)= \displaystyle 2 \; \int_{\R^d} \left(|\nabla \varphi|^2+a\frac{|\varphi|^2}{|x|^2}\right) \;
	dx,\\
	K_a^N (\varphi)= \displaystyle \int_{\R^d} \left(-2 |  \varphi|^{\frac{2d}{d-2}} + \frac{2d}{d+1}  \varphi|^{\frac{2d+2}{d-1}} \right)\;
	dx.
\end{gather*}

Next, we define the  threshold for Theorem \ref{sub-threshold} as the following constrained minimization problem associated to the energy $E_a(\varphi)$:
\begin{align}\label{minimization}
	m_a= \inf \{ E_a(\varphi)\; |\; 0\not=\varphi \in H_a^1(\R^d), \;
	K_a(\varphi)=0 \},
\end{align}
and two regions
\begin{align*}
	\K_a^+&=\{f \in H_a^1(\R^d)| E_a(f)<E_a^c(W_a),\ K_a(f)\geq0 \},\\
	\K_a^-&=\{f \in H_a^1(\R^d)| E_a(f)<E_a^c(W_a),\ K_a(f)<0 \}.
\end{align*}
In this section, we will prove that 
$$m_a=E_a^c(W_a)$$
and the scattering and blow-up region in Theorem \ref{sub-threshold} are actually  the sets $\K_a^+$ and $\K_a^-$ respectively. Finally, we give the crucial convex lemma adpated to the proof of Theorem \ref{sub-threshold}.

We start with the following basic lemmas.
\begin{lemma}\label{positive near origin:KQ }
	For any $\varphi \in H_a^1(\R^d)$, we have
	\begin{align}\label{asymptotic:KQ}
		\lim_{\lambda \rightarrow -\infty}
		K_a^Q(\varphi^{\lambda}_{d,-2}) =0.
	\end{align}
\end{lemma}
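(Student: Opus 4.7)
The plan is to prove the lemma by direct computation, using the explicit form of the scaling $\varphi^\lambda_{d,-2}(x) = e^{d\lambda}\varphi(e^{2\lambda}x)$ and a change of variables.

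First, I would compute how each term in $K_a^Q$ transforms under the scaling. Writing $\nabla \varphi^\lambda_{d,-2}(x) = e^{(d+2)\lambda}(\nabla \varphi)(e^{2\lambda} x)$ and substituting $y = e^{2\lambda} x$ (with $dx = e^{-2d\lambda}\,dy$), I would find
\[
\int_{\R^d} |\nabla \varphi^\lambda_{d,-2}(x)|^2\,dx = e^{2(d+2)\lambda}\cdot e^{-2d\lambda}\int_{\R^d}|\nabla \varphi(y)|^2\,dy = e^{4\lambda}\int_{\R^d}|\nabla \varphi|^2\,dy.
\]
Similarly, since $|x|^{-2} = e^{4\lambda}|y|^{-2}$ under the substitution,
\[
\int_{\R^d}\frac{a|\varphi^\lambda_{d,-2}(x)|^2}{|x|^2}\,dx = e^{2d\lambda}\cdot e^{4\lambda}\cdot e^{-2d\lambda}\int_{\R^d}\frac{a|\varphi(y)|^2}{|y|^2}\,dy = e^{4\lambda}\int_{\R^d}\frac{a|\varphi|^2}{|y|^2}\,dy.
\]

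Combining the two identities yields the clean scaling formula
\[
K_a^Q(\varphi^\lambda_{d,-2}) = e^{4\lambda}\, K_a^Q(\varphi).
\]
Since $\varphi\in H_a^1(\R^d)$, the equivalence of Sobolev norms \eqref{equiNorms} guarantees that $K_a^Q(\varphi)=2\|\varphi\|_{\dot H_a^1}^2$ is finite. Letting $\lambda\to -\infty$ then gives $e^{4\lambda}\to 0$, and hence $K_a^Q(\varphi^\lambda_{d,-2})\to 0$, which is exactly \eqref{asymptotic:KQ}.

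There is no real obstacle here: the lemma reduces to a one-line scaling identity after a change of variables, and the subscript ``$d,-2$'' was designed precisely so that the $L^2$ norm is preserved while the Dirichlet-type quadratic form picks up the factor $e^{4\lambda}$ that produces the desired decay.
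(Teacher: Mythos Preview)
Your proof is correct and is exactly the direct computation the paper alludes to (the paper simply writes ``By the definition of $K_a^Q$, one can prove this lemma directly'' and omits the details). The scaling identity $K_a^Q(\varphi^\lambda_{d,-2})=e^{4\lambda}K_a^Q(\varphi)$ you derived is precisely what makes the result immediate.
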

By the definition of $K_a^Q$, one can prove this lemma directly. Thus we omit the details. 

The next lemma  show the positivity of $K_a$ for sequences which is  near $0$ in the energy
space.
\begin{lemma}\label{Postivity:K} For any bounded sequence
	$\{\varphi_n\}\subset H_a^1(\R^d) \backslash\{0\}$ with
	\begin{align*}
		\lim_{n\rightarrow +\infty}K_a^Q(\varphi_n)=0,
	\end{align*}
	then for large $n$, we have
	\begin{align*}
		K_a(\varphi_n)>0.
	\end{align*}
\end{lemma}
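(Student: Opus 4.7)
The plan is straightforward: since $K_a^Q(\varphi_n)=2\|\varphi_n\|_{\dot H_a^1}^2\to 0$, I aim to show that $K_a^N(\varphi_n)=o(K_a^Q(\varphi_n))$ as $n\to\infty$, which immediately gives $K_a(\varphi_n)=K_a^Q(\varphi_n)(1+o(1))>0$ for $n$ large. The key point is that both nonlinear terms in $K_a^N$ are supercritical or critical with respect to $\dot H^1$, so when the $\dot H^1$-norm is tiny they are dominated by the quadratic part.

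First, I would use the equivalence of Sobolev norms \eqref{equiNorms} (from the sharp Hardy inequality) to convert $\|\varphi_n\|_{\dot H_a^1}\to 0$ into $\|\varphi_n\|_{\dot H^1}\to 0$. For the energy-critical term, Sobolev embedding yields
\[
\int_{\R^d}|\varphi_n|^{\frac{2d}{d-2}}\,dx\lesssim \|\varphi_n\|_{\dot H^1}^{\frac{2d}{d-2}}=o\bigl(\|\varphi_n\|_{\dot H^1}^2\bigr),
\]
because $\tfrac{2d}{d-2}>2$ and $\|\varphi_n\|_{\dot H^1}\to 0$.

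For the $\dot H^{1/2}$-critical term, I would apply the Gagliardo--Nirenberg interpolation inequality
\[
\|\varphi_n\|_{L^{\frac{2(d+1)}{d-1}}}^{\frac{2(d+1)}{d-1}} \lesssim \|\varphi_n\|_{L^2}^{\frac{2}{d-1}}\|\varphi_n\|_{\dot H^1}^{\frac{2d}{d-1}},
\]
where the exponents are fixed by scaling (one checks $\alpha=\tfrac{1}{d+1}$ in $\|\varphi\|_{L^{2(d+1)/(d-1)}}\lesssim \|\varphi\|_{L^2}^{\alpha}\|\varphi\|_{\dot H^1}^{1-\alpha}$). Since $\{\varphi_n\}$ is bounded in $H_a^1$ (hence in $L^2$), and $\tfrac{2d}{d-1}>2$, this gives
\[
\int_{\R^d}|\varphi_n|^{\frac{2(d+1)}{d-1}}\,dx = o\bigl(\|\varphi_n\|_{\dot H^1}^2\bigr).
\]

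Combining the two estimates, $K_a^N(\varphi_n)=o(\|\varphi_n\|_{\dot H^1}^2)=o(K_a^Q(\varphi_n))$. Since $\varphi_n\not\equiv 0$ and the hypothesis $\lim K_a^Q(\varphi_n)=0$ together with the assumed boundedness is compatible with $K_a^Q(\varphi_n)>0$ (the case $\varphi_n\equiv 0$ being excluded), we conclude $K_a(\varphi_n)=K_a^Q(\varphi_n)+K_a^N(\varphi_n)>0$ for all sufficiently large $n$. No real obstacle is expected here; the only point requiring mild care is ensuring the $L^2$-boundedness is used to absorb the mass-supercritical factor, which is exactly why we work in $H_a^1$ rather than $\dot H_a^1$.
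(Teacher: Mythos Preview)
Your proposal is correct and follows essentially the same approach as the paper: use $K_a^Q(\varphi_n)\to 0$ to get $\|\varphi_n\|_{\dot H^1}\to 0$ via the equivalence of norms, then control both nonlinear terms by Sobolev embedding and Gagliardo--Nirenberg (with the $L^2$-boundedness coming from the $H_a^1$ bound) to show they are $o(\|\varphi_n\|_{\dot H^1}^2)$, so that $K_a(\varphi_n)\approx K_a^Q(\varphi_n)>0$ for large $n$.
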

\begin{proof}Since $K_a^Q(\varphi_n) \rightarrow 0$, we obtain
	$\displaystyle
	\lim_{n\rightarrow+\infty}\big\|\nabla\varphi_n\big\|^2_{L^2}\sim\lim_{n\rightarrow+\infty}\big\|\varphi_n\big\|^2_{\dot{H}_a^1}=0.
	$ Then using the Sobolev  embedding, we have for large
	$n$
	\begin{align*}
		\big\|\varphi_n\big\|^{\frac{2d}{d-2}}_{L^{\frac{2d}{d-2}}_x} \lesssim   \big\|\nabla
		\varphi_n\big\|^{\frac{2d}{d-2}}_{L^2_x} = & o(\big\|\nabla\varphi_n\big\|^2_{L^2}),\\
		\big\|\varphi_n\big\|^{\frac{2d+2}{d-1}}_{L^{\frac{2d+2}{d-1}}_x}\lesssim
		\big\|\varphi_n\big\|_{L^2}^{\frac{2d+2}{d^2-1}}\big\|\nabla \varphi_n\big\|^{\frac{2d^2+2d}{d^2-1}}_{L^2} & =
		o(\big\|\nabla\varphi_n\big\|^2_{L^2}),
	\end{align*}
	where we use the fact that  $\varphi_n$ is $L^2$ bounded. Hence for large $n$, we have
	\begin{align*} K_a(\varphi_n)= &  \int_{\R^d}
		\left( 2 |\nabla \varphi_n|^2+2a\frac{|\varphi|^2}{|x|^2} -2|\varphi_n|^{\frac{2d}{d-2}} +\frac{2d}{d+1} |\varphi_n|^{\frac{2d+2}{d-1}}
		\right)\; dx \thickapprox   \int_{\R^d}  |\nabla \varphi_n|^2\; dx > 0.
	\end{align*}
	This complete the proof.
\end{proof}

By the definition of $K_a$, we denote two real numbers by
\begin{align*}
	\bar{\mu} = \max\{\tfrac{2d+2}{d-1},0, \tfrac{2d}{d-2}\}, \quad \underline{\mu}=\min\{\tfrac{2d+2}{d-1},0,
	\tfrac{2d}{d-2}\}.
\end{align*}
Note that for $d\in \{3,4,5\}$,
\begin{align*}
	\bar{\mu} = \max\{\tfrac{2d+2}{d-1},0, \tfrac{2d}{d-2}\}=\tfrac{2d}{d-2}, \quad \underline{\mu}=\min\{\tfrac{2d+2}{d-1},0,
	\tfrac{2d}{d-2}\}=0.
\end{align*}

Next, we show the behavior of the scaling derivative functional $K_a$.
\begin{lemma}\label{structure:J}
	For any $\varphi \in H_a^1$, we have
	\begin{align*}
		\left(\bar{\mu}-\mathcal{L}\right)E_a(\varphi) = & \int_{\R^d} \frac{2}{d-1}\left(
		\big|\nabla \varphi\big|^2+a\frac{|\varphi|^2}{|x|^2}+
		\big|\varphi\big|^{\frac{2d}{d-2}} \right)\; dx, \\
		\mathcal{L}  \left(\bar{\mu}-\mathcal{L}\right)E_a(\varphi) = &
		\int_{\R^d}\left(\frac{8}{d-1}\big|\nabla \varphi\big|^2 +\frac{8a}{d-1} \frac{|\varphi|^2}{|x|^2}+\frac{8d}{(d-1)(d-2)}\big|\varphi\big|^{\frac{2d}{d-2}}
		\right) \; dx.
	\end{align*}
\end{lemma}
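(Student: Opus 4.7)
The plan is to verify both identities by decomposing $E_a$ into its constituent homogeneous pieces and computing their scaling behaviour explicitly. Write
\begin{equation*}
E_a(\varphi)=\tfrac12 A_1(\varphi)+\tfrac{d-1}{2(d+1)}A_2(\varphi)-\tfrac{d-2}{2d}A_3(\varphi),
\end{equation*}
with $A_1(\varphi):=\int_{\R^d}\bigl(|\nabla\varphi|^2+a|\varphi|^2/|x|^2\bigr)\,dx$, $A_2(\varphi):=\int_{\R^d}|\varphi|^{2(d+1)/(d-1)}\,dx$, and $A_3(\varphi):=\int_{\R^d}|\varphi|^{2d/(d-2)}\,dx$.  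A direct change of variables shows that each $A_i$ is homogeneous of an explicit order under the scaling $\varphi\mapsto\varphi^\lambda_{d,-2}(x)=e^{d\lambda}\varphi(e^{2\lambda}x)$: one has $A_i(\varphi^\lambda_{d,-2})=e^{r_i\lambda}A_i(\varphi)$ with $r_1=4$, $r_2=\tfrac{4d}{d-1}$, and $r_3=\tfrac{4d}{d-2}$.  Consequently $\mathcal{L}A_i=r_iA_i$, and by the linearity of $\mathcal{L}$ the identity $\mathcal{L}E_a=K_a$ from \eqref{scaling deriv:special} is recovered as a consistency check.

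For the first identity I expand
\begin{equation*}
(\bar\mu-\mathcal{L})E_a(\varphi)=\sum_{i=1}^3 c_i\,(\bar\mu-r_i)\,A_i(\varphi),\qquad c_1=\tfrac12,\ c_2=\tfrac{d-1}{2(d+1)},\ c_3=-\tfrac{d-2}{2d}.
\end{equation*}
Here the value of $\bar\mu$ is chosen precisely so that the defocusing subcritical contribution $c_2(\bar\mu-r_2)A_2$ vanishes; combined with the elementary relations $\bar\mu-r_1=\tfrac{4}{d-1}$ and $r_3-\bar\mu=\tfrac{4d}{(d-1)(d-2)}$, both surviving coefficients $c_1(\bar\mu-r_1)$ and $-c_3(\bar\mu-r_3)$ simplify to $\tfrac{2}{d-1}$, and the first identity follows.

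For the second identity I apply $\mathcal{L}$ once more to the expression just obtained.  Since $(\bar\mu-\mathcal{L})E_a=\tfrac{2}{d-1}(A_1+A_3)$ and $\mathcal{L}$ acts diagonally on each $A_i$, one gets
\begin{equation*}
\mathcal{L}(\bar\mu-\mathcal{L})E_a(\varphi)=\tfrac{2r_1}{d-1}A_1(\varphi)+\tfrac{2r_3}{d-1}A_3(\varphi)=\tfrac{8}{d-1}A_1(\varphi)+\tfrac{8d}{(d-1)(d-2)}A_3(\varphi),
\end{equation*}
which is the second identity.  The argument is purely algebraic; no analytic difficulty arises, and the only real obstacle is coefficient bookkeeping, namely verifying that the desired cancellations occur at the value of $\bar\mu$ that kills the $A_2$ term and that the remaining coefficients reduce to the clean form $\tfrac{2}{d-1}$.
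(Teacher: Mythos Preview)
Your proof is correct and follows essentially the same route as the paper: both arguments record that the scaling derivative acts diagonally on each homogeneous piece of $E_a$ (your $\mathcal{L}A_i=r_iA_i$, the paper's list of $\mathcal{T}$-identities) and then read off the two formulas by linear combination. Your identification $\bar\mu=r_2=\tfrac{4d}{d-1}$ is exactly what makes the $A_2$ term drop out and the stated coefficients $\tfrac{2}{d-1}$ emerge; note that the paper's displayed value $\bar\mu=\tfrac{2d}{d-2}$ coincides with this only when $d=3$, so your reading is the one consistent with the lemma and with the subsequent definition of $H_a$.
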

\begin{proof} By the definition of $\mathcal{T}$,
	we have
	\begin{align*}
		&\mathcal{T} \big\|\nabla \varphi\big\|^2_{L^2} = 4 \big\|\nabla
		\varphi\big\|^2_{L^2}+,4a \big\|
		\frac{\varphi}{x}
		\big\|^2_{L^2}  \quad \mathcal{T} \big\|  \varphi\big\|^{\frac{2d}{d-2}}_{L^{\frac{2d}{d-2}}} =\frac{4d}{d-2} \big\|
		\varphi\big\|^{\frac{2d}{d-2}}_{L^{\frac{2d}{d-2}}},\\
		&  \mathcal{T} \big\|  \varphi\big\|^{\frac{2d+2}{d-1}}_{L^{\frac{2d+2}{d-1}}} =\frac{4d}{d-1}
		\big\| \varphi\big\|^{\frac{2d+2}{d-1}}_{L^{\frac{2d+2}{d-1}}},
	\end{align*}
	which implies that
	\begin{align*}
		&\left(\bar{\mu}-\mathcal{T}\right)E_a(\varphi) = \frac{2d}{d-2}E_a(\varphi)-K_a(\varphi) = \frac{2}{d-1}
		\int_{\R^d} \left( \big|\nabla \varphi\big|^2+a\frac{|\varphi|^2}{|x|^2}
		+ \big|\varphi\big|^{\frac{2d}{d-2}} \right)\; dx,
		\\
		&\mathcal{T}  \left(\bar{\mu}-\mathcal{T}\right)E_a(\varphi) =   \frac{2}{d-1}\mathcal{T} \big\|\nabla
		\varphi\big\|^2_{L^2} + \frac{2}{d-1}\mathcal{T}
		\big\|  \varphi\big\|^{\frac{2d}{d-2}}_{L^{\frac{2d}{d-2}}} \\
		&= \int_{\R^d} \left( \frac{8}{d-1}  \big|\nabla
		\varphi\big|^2 +\frac{8a}{d-1} \frac{|\varphi|^2}{|x|^2}+ \frac{8d}{(d-1)(d-2)} \big| \varphi\big|^{\frac{2d}{d-2}} \right) \; dx.
	\end{align*}
	This completes the proof.
\end{proof}

According to the above analysis, we will replace the functional
$E_a$ in \eqref{minimization} with a positive functional $H_a$, while
extending the minimizing region from ``$K_a(\varphi)=0$'' to ``$K_a(\varphi)\leq 0$''. Let
\begin{align*}
	H_a(\varphi):= \left(1 - \frac{\mathcal{T}}{\bar{\mu}}\right) E_a(\varphi)
	=&\int_{\R^d} \left( \frac{1}{2d} \big|\nabla \varphi\big|^2 +\frac{a}{2d}\frac{|\varphi|^2}{|x|^2}
	+\frac{1}{2d} \big|\varphi\big|^{\frac{2d}{d-2}} \right)\;
	dx,
\end{align*}
then for any $\varphi \in H_a^1 \backslash\{0\}$, we have
\begin{align*}
	H_a(\varphi) > 0 , \quad  \mathcal{T} H_a(\varphi) \geq 0.
\end{align*}

Now we can characterization the minimization problem \eqref{minimization} by
use of $H_a$.
\begin{lemma}\label{minimization:H} For the minimization $m_a$ in \eqref{minimization}, we
	have
	\begin{align}
		m_a  =& \inf  \{ H(\varphi)\; |\;  \varphi \in H_a^1(\R^d), \; \varphi\not =0,\;
		K_a(\varphi) \leq 0  \} \nonumber\\
		=& \inf \{ H(\varphi)\; |\;\varphi \in H_a^1(\R^d), \; \varphi\not =0, \;
		K_a(\varphi)< 0 \}. \label{JEqualH}
	\end{align}
\end{lemma}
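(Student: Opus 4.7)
The plan is to establish the equivalence of the three proposed characterizations of $m_a$ via a scaling-monotonicity argument for the map $\lambda\mapsto H_a(\varphi^\lambda_{d,-2})$. Write $\tilde m_a:=\inf\{H_a(\varphi):0\not=\varphi\in H_a^1,\ K_a(\varphi)\leq 0\}$ and $\hat m_a:=\inf\{H_a(\varphi):0\not=\varphi\in H_a^1,\ K_a(\varphi)<0\}$; the goal is $m_a=\tilde m_a=\hat m_a$. Since the scaling forms a one-parameter group, $\tfrac{d}{d\lambda}H_a(\varphi^\lambda)=\mathcal{T}H_a(\varphi^\lambda)=\bar\mu^{-1}\mathcal{T}(\bar\mu-\mathcal{T})E_a(\varphi^\lambda)$, and Lemma~\ref{structure:J} rewrites this as $\tfrac{8}{d-1}\|\cdot\|_{\dot H^1_a}^2+\tfrac{8d}{(d-1)(d-2)}\|\cdot\|_{L^{2d/(d-2)}}^{2d/(d-2)}$, where positivity of the first piece uses the sharp Hardy inequality (available because $a>-(d-2)^2/4$). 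Hence $\lambda\mapsto H_a(\varphi^\lambda)$ is non-decreasing, strictly so when $\varphi\not\equiv 0$.

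Granting this monotonicity, the easy directions are immediate: $\tilde m_a\leq m_a$ because on $\{K_a=0\}$ the identity $H_a=E_a-K_a/\bar\mu$ reduces to $H_a=E_a$, and $\tilde m_a\leq\hat m_a$ by inclusion of admissible sets. For the main inequality $m_a\leq\tilde m_a$, fix any $\varphi\not=0$ with $K_a(\varphi)\leq 0$. Lemma~\ref{positive near origin:KQ } gives $K_a^Q(\varphi^\lambda)\to 0$ as $\lambda\to-\infty$, so Lemma~\ref{Postivity:K} furnishes $K_a(\varphi^\lambda)>0$ for sufficiently negative $\lambda$; together with $K_a(\varphi)\leq 0$ and continuity in $\lambda$, this produces $\lambda_0\leq 0$ with $K_a(\varphi^{\lambda_0})=0$. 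Then $\varphi^{\lambda_0}$ is admissible in \eqref{minimization}, and the monotonicity yields
$$
m_a\;\leq\;E_a(\varphi^{\lambda_0})\;=\;H_a(\varphi^{\lambda_0})\;\leq\;H_a(\varphi),
$$
and infimizing over $\varphi$ gives the desired bound. The closing step $\hat m_a\leq m_a$ follows by perturbing away from $\{K_a=0\}$: for $\varphi\not=0$ with $K_a(\varphi)=0$, differentiating $K_a(\varphi^\lambda)$ at $\lambda=0$ yields $\mathcal{T}^2 E_a(\varphi)=\bar\mu K_a(\varphi)-\mathcal{T}(\bar\mu-\mathcal{T})E_a(\varphi)<0$ by the strict form of the monotonicity step, so $K_a(\varphi^\lambda)<0$ for all small $\lambda>0$; continuity of $H_a$ as $\lambda\downarrow 0$ then yields $\hat m_a\leq H_a(\varphi)=E_a(\varphi)$.

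The main technical obstacle is the monotonicity step: one must verify that the negative inverse-square contribution $\tfrac{8a}{d-1}\int|\varphi|^2/|x|^2\,dx$ appearing in Lemma~\ref{structure:J} is fully absorbed by the kinetic term through Hardy's inequality. This is precisely what dictates the standing requirement $a>-(d-2)^2/4$ carried throughout the variational analysis, and explains why no further structural cancellation is needed beyond the norm equivalence \eqref{equiNorms}.
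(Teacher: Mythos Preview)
Your proof is correct and follows essentially the same approach as the paper: both use the monotonicity $\mathcal{T}H_a\geq 0$ (from Lemma~\ref{structure:J} and positivity of $\|\cdot\|_{\dot H_a^1}^2$) together with the rescaling $\varphi\mapsto\varphi^\lambda_{d,-2}$ and Lemmas~\ref{positive near origin:KQ }--\ref{Postivity:K} to slide onto or off of the constraint set $\{K_a=0\}$. The only difference is bookkeeping: the paper proves the chain $\tilde m_a\leq m_a\leq \hat m_a$ and then $\hat m_a\leq \tilde m_a$ by perturbing any $\varphi$ with $K_a(\varphi)\leq 0$, whereas you prove $m_a=\tilde m_a$ directly and then $\hat m_a\leq m_a$ by perturbing only from $\{K_a=0\}$; both routes yield the three-way equality.
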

\begin{proof} For any $ \varphi\in H_a^1$, $\varphi \not=0$ with $K_a(\varphi)=0$, we have
	$E_a(\varphi)=H_a(\varphi)$, this implies that
	\begin{align}
		m_a= & \inf \{ E_a(\varphi)\; |\; \varphi \in H_a^1(\R^d),\; \varphi\not=0,\;
		K_a(\varphi)=0 \}  \nonumber\\
		\geq & \inf \{ H(\varphi)\; |\; \varphi \in
		H_a^1(\R^d), \; \varphi\not=0, \; K_a(\varphi) \leq 0 \}.\label{JLargeH}
	\end{align}
	
	On the other hand, for any $\varphi \in H_a^1$, $\varphi\not=0$ with
	$K_a(\varphi)<0$, by Lemma \ref{positive near origin:KQ }, Lemma \ref{Postivity:K}
	and the continuity of $K_a$ in $\lambda$, we know that there
	exists a $\lambda_0<0$ such that
	\begin{align*}
		K_a(\varphi^{\lambda_0}_{d,-2})=0,
	\end{align*}
	then by $\mathcal{T} H_a \geq 0$, we have
	\begin{align*}
		E_a(\varphi^{\lambda_0}_{d,-2}) = H_a (\varphi^{\lambda_0}_{d,-2}) \leq
		H_a(\varphi^{0}_{d,-2})=H_a(\varphi).
	\end{align*}
	Therefore,
	\begin{align}
		&  \inf \{ E_a(\varphi)\; |\; \varphi \in H_a^1(\R^d), \; \varphi\not=0, \;
		K_a(\varphi)=0 \}  \nonumber\\
		& \leq \inf \{ H_a(\varphi)\; |\;  \varphi \in
		H_a^1(\R^d),\; \varphi\not=0, \; K_a(\varphi) < 0 \}.\label{JSmallH}
	\end{align}
	By \eqref{JLargeH} and \eqref{JSmallH}, we have
	\begin{align*}
		& \inf  \{ H(\varphi)\; |\;  \varphi \in H_a^1(\R^d), \; \varphi\not =0,\;
		K_a(\varphi) \leq 0  \} \\
		&\leq  m_a
		\leq
		\inf \{ H_a(\varphi)\; |\;\varphi \in H_a^1(\R^d), \; \varphi\not =0, \;
		K_a(\varphi)< 0 \}.
	\end{align*}
	
	In order to show \eqref{JEqualH}, it suffices to show that
	\begin{align}
		&\inf \{ H(\varphi)\; |\;  \varphi \in H_a^1(\R^d),\; \varphi\not=0, \; K_a(\varphi)
		\leq 0 \} \nonumber\\
		& \geq \inf \{ H(\varphi)\; |\; \varphi \in H_a^1(\R^d),\; \varphi\not=0,
		\; K_a(\varphi)< 0 \}. \label{minimization:H:larger}
	\end{align}
	For any $\varphi \in H_a^1$, $\varphi\not=0$ with $K_a(\varphi)\leq 0$. By Lemma
	\ref{structure:J}, we know that
	\begin{align*}
		\mathcal{T} K_a(\varphi)= \bar{\mu}K_a(\varphi)- \int_{\R^d}\left(\frac{8}{d-1}\big|\nabla
		\varphi\big|^2+\frac{8a}{d-1}\frac{|\varphi|^2}{|x|^2} + \frac{8d}{(d-1)(d-2)}\big|\varphi\big|^{\frac{2d}{d-2}} \right) \; dx <0,
	\end{align*}
	then for any $\lambda>0$ we have
	\begin{align*}
		K_a(\varphi^{\lambda}_{d,-2})<0,
	\end{align*}
	and as $\lambda\rightarrow 0$
	\begin{align*}
		H_a(\varphi^{\lambda}_{d,-2})=\int_{\R^d} \left(\frac{e^{4\lambda}}{2d}
		\big|\nabla \varphi \big|^2  + \frac{e^{\frac{4d}{d-2}\lambda}}{2d}  \big|\varphi\big|^{\frac{2d}{d-2}} \right) \; dx
		\longrightarrow H_a(\varphi).
	\end{align*}
	This shows \eqref{minimization:H:larger}, and completes the proof.
\end{proof}

Next we will use the ($\dot H_a^1$-invariant) scaling argument to remove the $L^{\frac{2d+2}{d-1}}$ term (the lower regularity quantity than $\dot H_a^1$) in $K_a$,
that is, to replace  the constrained
condition $K_a(\varphi) < 0$ with $K_a^c(\varphi) < 0$, where
\begin{align*}
	K_a^{c}(\varphi):= \int_{\R^d} \left( 2 |\nabla \varphi|^2+2a\frac{|\varphi|^2}{|x|^2}
	-2|\varphi|^{\frac{2d}{d-2}} \right)\; dx.
\end{align*}

In fact, we have

\begin{lemma}\label{minimization:Hc}For the minimization $m_a$ in \eqref{minimization}, we
	have
	\begin{align*} m_a =& \inf \{ H_a(\varphi)\; |\; \varphi \in
		H_a^1(\R^d),\; \varphi\not=0, \;
		K_a^c(\varphi) < 0 \}\\
		=& \inf \{ H_a(\varphi)\; |\; \varphi \in   H_a^1(\R^d),\; \varphi\not =0, \;
		K_a^c(\varphi) \leq  0 \}.
	\end{align*}
\end{lemma}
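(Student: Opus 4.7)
The inclusion of sets $\{K_a<0\}\subset\{K_a^c<0\}\subset\{K_a^c\leq 0\}$ is immediate because
$$K_a(\varphi)-K_a^c(\varphi)=\tfrac{2d}{d+1}\int_{\R^d}|\varphi|^{\frac{2d+2}{d-1}}\,dx\geq 0.$$
Combined with Lemma \ref{minimization:H}, this gives the easy chain $\inf_{K_a^c\leq 0}H_a\leq \inf_{K_a^c<0}H_a\leq \inf_{K_a<0}H_a=m_a$. The content of the lemma is therefore the reverse inequality: for every nonzero $\varphi\in H_a^1$ with $K_a^c(\varphi)\leq 0$ one needs $H_a(\varphi)\geq m_a$.

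The plan is to exploit the $\dot H_a^1$-invariant scaling $\varphi_\mu(x):=\mu^{\frac{d-2}{2}}\varphi(\mu x)$. A direct computation shows that this scaling preserves both $\|\varphi\|_{\dot H_a^1}^2$ and $\|\varphi\|_{L^{2d/(d-2)}}^{2d/(d-2)}$, hence leaves $H_a$ and $K_a^c$ invariant, while
$$K_a(\varphi_\mu)=K_a^c(\varphi)+\tfrac{2d}{d+1}\,\mu^{-\frac{2}{d-1}}\|\varphi\|_{L^{(2d+2)/(d-1)}}^{(2d+2)/(d-1)}.$$
When $K_a^c(\varphi)<0$, the subcritical term vanishes as $\mu\to\infty$, so $K_a(\varphi_\mu)<0$ for all sufficiently large $\mu$. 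Lemma \ref{minimization:H} then yields $m_a\leq H_a(\varphi_\mu)=H_a(\varphi)$, which is the desired bound on the open set $\{K_a^c<0\}$.

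The remaining case $K_a^c(\varphi)=0$ is handled by a one-parameter approximation: for $t>1$ set $\varphi^{(t)}=t\varphi$, and observe that
$$K_a^c(\varphi^{(t)})=2t^2\|\varphi\|_{\dot H_a^1}^2-2t^{\frac{2d}{d-2}}\|\varphi\|_{L^{2d/(d-2)}}^{2d/(d-2)}=2\|\varphi\|_{\dot H_a^1}^2\bigl(t^2-t^{\frac{2d}{d-2}}\bigr)<0$$
for all $t>1$ (using $\varphi\neq 0$ and $\tfrac{2d}{d-2}>2$). The previous case applied to $\varphi^{(t)}$ gives $H_a(\varphi^{(t)})\geq m_a$, and letting $t\to 1^+$ with the continuity of $H_a$ under the scalar multiplication yields $H_a(\varphi)\geq m_a$. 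The only delicate point is to verify the scaling identities for the Hardy term $\int a|x|^{-2}|\varphi|^2\,dx$, which shares the critical scaling of $\|\nabla\varphi\|_{L^2}^2$, so no new obstacle arises from the inverse-square potential; everything else is routine.
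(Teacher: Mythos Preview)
Your proof is correct and follows essentially the same route as the paper: the core step---using the $\dot H_a^1$-invariant scaling $\varphi_\mu(x)=\mu^{(d-2)/2}\varphi(\mu x)$ to pass from the constraint $K_a^c<0$ to $K_a<0$ while keeping $H_a$ fixed---is identical to the paper's use of $\varphi_{d-2,-2}^\lambda$. The only cosmetic difference is in the treatment of the boundary case $K_a^c(\varphi)=0$: you perturb by scalar multiplication $t\varphi$ with $t\to 1^+$, whereas the paper perturbs by the $L^2$-invariant scaling $\varphi_{d,-2}^\lambda$ with $\lambda\to 0^+$; both produce a nearby function with $K_a^c<0$ and recover $H_a(\varphi)$ by continuity, so the two arguments are interchangeable.
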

\begin{proof}Since
	$
	K_a^c(\varphi) \leq K_a(\varphi)$, it is obvious that
	\begin{align*}
		m_a = & \inf \{ H_a(\varphi)\; |\;  \varphi \in H_a^1(\R^d),\; \varphi\not=0, \;
		K_a(\varphi)< 0 \}  \\
		\geq & \inf \{ H_a(\varphi)\; |\; \varphi \in
		H_a^1(\R^d),\; \varphi\not =0, \; K_a^c(\varphi) < 0 \}.
	\end{align*}
	Hence in order to show the first equality, it suffices to show that
	\begin{align}
		& \inf \{ H_a(\varphi)\; |\; \varphi \in H_a^1(\R^d),\; \varphi\not=0,  \; K_a(\varphi)<
		0 \} \nonumber\\
		&  \leq \inf \{ H_a(\varphi)\; |\;  \varphi \in   H_a^1(\R^d),\; \varphi\not=0,
		\; K_a^c(\varphi) < 0 \}.\label{minimization:Hc:smaller}
	\end{align}
	To do so, for any $ \varphi \in H_a^1$, $\varphi\not=0$ with $ K_a^c(\varphi) < 0
	$, taking
	\begin{align*}
		\varphi^{\lambda}_{d-2,-2}(x)=e^{(d-2)\lambda}\varphi(e^{2\lambda}x),
	\end{align*}
	we have $  \varphi^{\lambda}_{(d-2),-2} \in H_a^1$ and $ \varphi^{\lambda}_{d-2,-2}\not=0$ for any
	$\lambda>0$. In addition,   we have
	\begin{align*}
		K_a(\varphi^{\lambda}_{d-2,-2})=  \int_{\R^d} \left( 2\big|\nabla \varphi\big|^2+2a\frac{|\varphi|^2}{|x|^2}
		- 2\big|\varphi\big|^{\frac{2d}{d-2}} + \frac{2d}{d+1} e^{-\frac{4}{d-1}\lambda} \big|\varphi\big|^{\frac{2d+2}{d-1}} \right) \; dx & \longrightarrow K_a^c(\varphi),\\
		H_a(\varphi^{\lambda}_{d-2,-2})= \int_{\R^d} \left( \frac{1}{2d} \big|\nabla
		\varphi\big|^2 + \frac{1}{2d}
		\big|\varphi\big|^{\frac{2d}{d-2}} \right)\; dx = & H_a(\varphi),
	\end{align*}
	as $\lambda \rightarrow +\infty$. This gives
	\eqref{minimization:Hc:smaller}, and completes the proof of the
	first equality.
	
	For the second equality, it is obvious that
	\begin{align*}
		& \inf \{ H_a(\varphi)\; |\; \varphi \in   H_a^1(\R^d),\; \varphi \not=0, \;
		K^c(\varphi) < 0 \} \\
		&\geq \inf \{ H_a(\varphi)\; |\;  \varphi
		\in H_a^1(\R^d),\; \varphi\not=0, \; K_a^c(\varphi) \leq  0 \},
	\end{align*}
	hence we only need to show that
	\begin{align}
		& \inf \{ H_a(\varphi)\; |\;  \varphi \in   H_a^1(\R^d),\; \varphi\not=0, \;
		K_a^c(\varphi) < 0 \} \nonumber \\
		& \leq \inf \{ H_a(\varphi)\; |\;  \varphi
		\in H_a^1(\R^d), \; \varphi\not=0, \; K_a^c(\varphi) \leq  0 \}.\label{minimization:Hc:smaller II}
	\end{align}
	To do this, we use the ($L^2$-invariant) scaling argument. For any $  \varphi \in H_a^1$, $\varphi\not=0$ with $K_a^c(\varphi)\leq
	0$, we have $  \varphi^{\lambda}_{d,-2}\in H_a^1$, $\varphi^{\lambda}_{d,-2}\not=0$. In addition, by
	\begin{align*}
		\mathcal{T} K_a^c (\varphi) = & \int_{\R^d} \left(8\big|\nabla \varphi
		\big|^2+8\frac{|u|^2}{|x|^2}-\frac{8d}{d-2} \big|\varphi \big|^{\frac{2d}{d-2}} \right)\; dx = 4 K^{c}(\varphi)
		-\frac{16}{d-2}\big\|\varphi\big\|^{\frac{2d}{d-2}}_{L^{\frac{2d}{d-2}}}<0,\\
		& H_a(\varphi^{\lambda}_{d,-2}) =  \int_{\R^d} \left(
		\frac{e^{4\lambda}}{2d}
		\big|\nabla \varphi \big|^2 + \frac{e^{\frac{4d}{d-2}\lambda}}{2d}
		\big|\varphi\big|^{\frac{2d}{d-2}}\right) \; dx,
	\end{align*}
	we have $K_a^c(\varphi^{\lambda}_{d,-2})<0$ for any $\lambda>0$, and
	\begin{align*}
		H_a(\varphi^{\lambda}_{d,-2}) \rightarrow H_a(\varphi),\;\;
		\text{as}\;\; \lambda \rightarrow 0.
	\end{align*}
	This implies \eqref{minimization:Hc:smaller II} and completes the
	proof.
\end{proof}

Now, we  are in position to compute the minimization $m_a$ of \eqref{minimization} by  making use of the sharp Sobolev inequality in $\eqref{Sharp-Sobolev}$.

\begin{lemma}\label{threshold} For the minimization $m_a$ in \eqref{minimization}, we
	have
	\begin{align*}
		m_a=E_a^c(W_a).
	\end{align*}
\end{lemma}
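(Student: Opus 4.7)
The plan is to invoke Lemma \ref{minimization:Hc}, which reduces the problem to evaluating
\[
m_a=\inf\{H_a(\varphi) : 0 \neq \varphi \in H_a^1(\R^d),\ K_a^c(\varphi) \leq 0\},
\]
and then combine the sharp Sobolev embedding in Proposition \ref{P:Wa} with the Pohozaev identities \eqref{PoQ} to identify this infimum as $E_a^c(W_a)$.

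First I would establish the lower bound $m_a \geq E_a^c(W_a)$. The constraint $K_a^c(\varphi) \leq 0$ reads exactly $\|\varphi\|_{\dot{H}_a^1}^2 \leq \|\varphi\|_{L^{2d/(d-2)}}^{2d/(d-2)}$. Inserted into the sharp Sobolev inequality $\|\varphi\|_{L^{2d/(d-2)}} \leq C_{GN}\|\varphi\|_{\dot{H}_a^1}$, this forces $\|\varphi\|_{\dot{H}_a^1}^{4/(d-2)} \geq C_{GN}^{-2d/(d-2)}$, hence $\|\varphi\|_{\dot{H}_a^1}^2 \geq C_{GN}^{-d}$. A short calculation using the first Pohozaev identity $\|W_a\|_{\dot{H}_a^1}^2 = \|W_a\|_{L^{2d/(d-2)}}^{2d/(d-2)}$ in \eqref{PoQ} shows $C_{GN}^{-d} = \|W_a\|_{\dot{H}_a^1}^2$. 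Combined with the constraint (which also yields $H_a(\varphi) \geq \tfrac{1}{d}\|\varphi\|_{\dot{H}_a^1}^2$) and the second identity $\tfrac{1}{d}\|W_a\|_{\dot{H}_a^1}^2 = E_a^c(W_a)$, I obtain $H_a(\varphi) \geq E_a^c(W_a)$ as required.

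For the matching upper bound, I would construct a minimizing sequence built from $W_a$. When $W_a \in H_a^1$ (which happens when $\beta > 2/(d-2)$, so that $W_a \in L^2$), the ground state itself saturates both inequalities: $K_a^c(W_a) = 0$ and $H_a(W_a) = \tfrac1d\|W_a\|_{\dot{H}_a^1}^2 = E_a^c(W_a)$ by \eqref{PoQ}. In the remaining range where $W_a \notin L^2$, I would regularize: fix a smooth cutoff $\chi_R \in C_c^\infty(\R^d)$ with $\chi_R \equiv 1$ on $|x|\leq R$ and set $\varphi_R = \chi_R W_a \in H_a^1$. Using the explicit form \eqref{E:Wa}, one checks $\varphi_R \to W_a$ in $\dot{H}_a^1 \cap L^{2d/(d-2)}$, whence $K_a^c(\varphi_R) \to 0$ and $H_a(\varphi_R) \to E_a^c(W_a)$. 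To enforce admissibility I rescale multiplicatively by $\alpha_R>0$ chosen so that $\alpha_R^{4/(d-2)} = \|\varphi_R\|_{\dot{H}_a^1}^2 / \|\varphi_R\|_{L^{2d/(d-2)}}^{2d/(d-2)}$, which gives $K_a^c(\alpha_R\varphi_R) = 0$; the Pohozaev identities force $\alpha_R\to 1$, so $H_a(\alpha_R\varphi_R) \to E_a^c(W_a)$.

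The main technical point is the approximation step in the regime $W_a \notin L^2$: one must control the cutoff errors in both norms. The far-field asymptotics $|W_a(x)| \sim |x|^{-(d-2)(1+\beta)/2}$ and $|\nabla W_a(x)| \sim |x|^{-(d-2)(1+\beta)/2 - 1}$ from \eqref{E:Wa} yield $\|(\nabla \chi_R) W_a\|_{L^2}^2 \lesssim R^{-(d-2)\beta}\to 0$ and $\|(1-\chi_R)\nabla W_a\|_{L^2} \to 0$ by dominated convergence, with the analogous $L^{2d/(d-2)}$ tail estimates handled the same way. Once these estimates are in hand the rescaling step is a one-line computation, and the upper and lower bounds match.
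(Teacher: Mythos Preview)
Your proof is correct and follows essentially the same approach as the paper: both start from Lemma~\ref{minimization:Hc}, obtain the lower bound by combining the constraint $K_a^c(\varphi)\leq 0$ with the sharp Sobolev inequality, and match it with $E_a^c(W_a)=\tfrac{1}{d}C_{GN}^{-d}$ via the Pohozaev identities. For the upper bound the paper first rewrites the constrained infimum as the unconstrained scale-invariant ratio $\tfrac{1}{d}(\|\varphi\|_{\dot H_a^1}/\|\varphi\|_{L^{2d/(d-2)}})^d$ and then passes from $H_a^1$ to $\dot H_a^1$ by density, whereas you carry out that density step explicitly by truncating $W_a$ and rescaling; this is the same argument made concrete, and your cutoff estimates are correct.
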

\begin{proof} By Lemma \ref{minimization:Hc}, we have
	\begin{align*}
		m_a= & \inf \left\{\frac{1}{2d} \int_{\R^d} \left(|\nabla \varphi|^2 +a\frac{|\varphi|^2}{|x|^2}+
		|\varphi|^{\frac{2d}{d-2}}\right)\; dx \; \Big| \;  \varphi \in H_a^1,\; \varphi\not=0,\;
		\big\| \varphi \big\|^2_{\dot{H}_a^1} \leq
		\big\|\varphi\big\|^{\frac{2d}{d-2}}_{L^{\frac{2d}{d-2}}}   \right\}
		\\
		\geq & \inf \bigg\{ \int_{\R^d} \frac{1}{2d} \left(|\nabla \varphi|^2 +a\frac{|\varphi|^2}{|x|^2}+
		|\varphi|^{\frac{2d}{d-2}}\right) + \frac{1}{2d} \left(|\nabla \varphi|^2+a\frac{|\varphi|^2}{|x|^2}-
		|\varphi|^{\frac{2d}{d-2}}\right)\; dx \; \Big|\;\\& \hspace{5ex} \varphi \in H_a^1,\; \varphi\not=0,\;\big\| \varphi \big\|^2_{\dot{H}_a^1} \leq
		\big\|\varphi\big\|^{\frac{2d}{d-2}}_{L^{\frac{2d}{d-2}}}  \bigg\}
	\end{align*}
	where the equality holds if and only if the minimization is taken
	by some $\varphi$ with $\big\| \varphi \big\|^2_{\dot{H}_a^1} =
	\big\|\varphi\big\|^{\frac{2d}{d-2}}_{L^{\frac{2d}{d-2}}}$. While
	\begin{align*}
		& \inf\left\{ \int_{\R^d} \frac{1}{d}  |\nabla \varphi|^2+a\frac{|\varphi|^2}{|x|^2}  \; dx \;
		\big| \;  \varphi \in H_a^1,\;\varphi\not=0,\; \big\| \varphi
		\big\|^2_{\dot{H}_a^1} \leq
		\big\|\varphi\big\|^{\frac{2d}{d-2}}_{L^{\frac{2d}{d-2}}}\right\} \\
		&  = \inf\left\{ \frac{1}{d} \big\|
		\varphi\big\|^2_{\dot{H}_a^1} \left(
		\frac{\big\|\varphi\big\|^2_{\dot{H}_a^1}}{\big\|\varphi\big\|^{\frac{2d}{d-2}}_{L^{\frac{2d}{d-2}}}}\right)^{\frac{d-2}{2}}\;
		\Big| \;   \varphi \in H_a^1, \; \varphi\not=0 \right\}\\
		& =\inf\left\{ \frac{1}{d}   \left(
		\frac{\big\|\varphi\big\|_{\dot{H}_a^1}}{\big\|\varphi\big\|_{L^{\frac{2d}{d-2}}}}\right)^{d}\;
		\Big| \;  \varphi \in H_a^1, \; \varphi\not =0 \right\}\\
		& = \inf\left\{ \frac{1}{d}   \left(
		\frac{\big\|\varphi\big\|_{\dot{H}_a^1}}{\big\|\varphi\big\|_{L^{\frac{2d}{d-2}}}}\right)^{d}\;
		\Big| \;  \varphi \in \dot H_a^1,\; \varphi\not=0\right\}= \frac{1}{d}
		\big(C_{GN}\big)^{-d},
	\end{align*}
	where we use the density property $H_a^1\hookrightarrow \dot H_a^1$ in the last second equality
	and that $C_{GN}$ is the sharp Sobolev constant in $\R^d$, that is,
	\begin{align*}
		\big\|\varphi\big\|_{L^{\frac{2d}{d-2}}}\leq C_{GN} \big\| \varphi
		\big\|_{\dot{H}_a^1},  \;\;\forall \; \varphi\in \dot H_a^1(\R^d),
	\end{align*}
	and the equality can be attained by the ground state $W_a$ of the following
	elliptic equation \begin{align*} -\Delta W_a+a\frac{|W_a|^2}{|x|^2} = |W_a|^{\frac{4}{d-2}}W_a.	
	\end{align*}
	This implies that $\frac{1}{d} \big(C_{GN}\big)^{-d}= E_a^c(W_a)$. The proof
	is finished.
\end{proof}
Combining Lemma \ref{minimization:Hc} and \ref{threshold}, we can deduce the characterization of $K_a^+/K_a^{-}$.
\begin{theorem}
	We define
	\begin{align*}
		\bar{\K_a}^+&=\{f \in H_a^1| E_a(f)<E_a^c(W_a),\ \| f\|_{\dot{H}_a^1}^2\leq \| W_a\|_{\dot{H}_a^1}^2\},\\
		\bar{\K_a}^-&=\{f \in H_a^1| E_a(f)<E_a^c(W_a),\ \| f\|_{\dot{H}_a^1}^2 > \| W_a\|_{\dot{H}_a^1}^2\},
	\end{align*}
	Then we have $\bar{\K_a}^+= \K_a^+$, $\bar{\K_a}^-= \K_a^-$.
\end{theorem}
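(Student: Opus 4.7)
The plan is to exploit the fact that both $\{\K_a^+,\K_a^-\}$ and $\{\bar{\K_a}^+,\bar{\K_a}^-\}$ partition the sub-threshold set $\{f\in H_a^1:E_a(f)<E_a^c(W_a)\}$. Consequently it suffices to prove the pair of inclusions $\K_a^-\subset\bar{\K_a}^-$ and $\bar{\K_a}^-\subset\K_a^-$, from which $\K_a^+=\bar{\K_a}^+$ follows automatically.

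For the first direction $\K_a^-\subset\bar{\K_a}^-$, I would argue the contrapositive: if $\|f\|_{\dot H_a^1}^2\le\|W_a\|_{\dot H_a^1}^2$, then $K_a(f)\ge 0$. The sharp Sobolev inequality of Proposition \ref{P:Wa} together with the Pohozaev identity \eqref{PoQ} gives $C_{GN}^{2d/(d-2)}=\|W_a\|_{\dot H_a^1}^{-4/(d-2)}$, hence
\[
\|f\|_{L^{2d/(d-2)}}^{2d/(d-2)}\le C_{GN}^{2d/(d-2)}\|f\|_{\dot H_a^1}^{2d/(d-2)}
=\|f\|_{\dot H_a^1}^2\bigl(\|f\|_{\dot H_a^1}/\|W_a\|_{\dot H_a^1}\bigr)^{4/(d-2)}\le\|f\|_{\dot H_a^1}^2.
\]
Thus $K_a^c(f)\ge 0$, and since $K_a(f)=K_a^c(f)+\tfrac{2d}{d+1}\|f\|_{L^{2(d+1)/(d-1)}}^{2(d+1)/(d-1)}$, we conclude $K_a(f)\ge 0$ and $f\notin\K_a^-$.

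For the reverse inclusion $\bar{\K_a}^-\subset\K_a^-$ I would rely on a linear combination of $E_a$ and $K_a$ that annihilates the $L^{2d/(d-2)}$ term. A direct coefficient computation yields the identity
\[
\tfrac{4d}{d-2}E_a(f)-K_a(f)=\tfrac{4}{d-2}\|f\|_{\dot H_a^1}^2+\tfrac{2d}{(d-2)(d+1)}\|f\|_{L^{2(d+1)/(d-1)}}^{2(d+1)/(d-1)}.
\]
Dropping the non-negative $L^{2(d+1)/(d-1)}$ term and using $E_a^c(W_a)=\tfrac1d\|W_a\|_{\dot H_a^1}^2$ from \eqref{PoQ}, so that $\tfrac{4d}{d-2}E_a^c(W_a)=\tfrac{4}{d-2}\|W_a\|_{\dot H_a^1}^2$, one obtains
\[
K_a(f)\le\tfrac{4d}{d-2}\bigl(E_a(f)-E_a^c(W_a)\bigr)+\tfrac{4}{d-2}\bigl(\|W_a\|_{\dot H_a^1}^2-\|f\|_{\dot H_a^1}^2\bigr).
\]
For any $f\in\bar{\K_a}^-$ both bracketed differences are strictly negative, whence $K_a(f)<0$ and $f\in\K_a^-$.

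The only real step here is the algebraic bookkeeping needed to find the right linear combination that removes the critical term while keeping favorable signs on the remaining ones; everything else is an immediate application of the sharp Sobolev inequality and the Pohozaev identities \eqref{PoQ} already established in this section. No appeal to the more elaborate minimization analysis of Lemmas \ref{minimization:H}--\ref{minimization:Hc} is required.
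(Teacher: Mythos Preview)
Your proof is correct and takes a genuinely different route from the paper for the direction $\bar{\K_a}^-\subset\K_a^-$ (equivalently $\K_a^+\subset\bar{\K_a}^+$). The paper argues by contradiction via a scaling argument: assuming $K_a^c(f)<0$, it uses the $\dot H_a^1$-invariant scaling $f_{d-2,-2}^\lambda$ to locate a point with $K_a=0$, then invokes the minimization identity $E_a=H_a+\tfrac{1}{2d}K_a$ together with the characterization $m_a=E_a^c(W_a)$ from Lemmas~\ref{minimization:H}--\ref{threshold} to force $E_a(f)\ge E_a^c(W_a)$. Your argument bypasses this entirely: the algebraic identity
\[
\tfrac{4d}{d-2}E_a(f)-K_a(f)=\tfrac{4}{d-2}\|f\|_{\dot H_a^1}^2+\tfrac{2d}{(d-2)(d+1)}\|f\|_{L^{2(d+1)/(d-1)}}^{2(d+1)/(d-1)},
\]
combined with $E_a^c(W_a)=\tfrac1d\|W_a\|_{\dot H_a^1}^2$, gives the conclusion in one line without any scaling or appeal to the constrained minimization problem. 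This is strictly more elementary and self-contained. The paper's approach, on the other hand, exhibits the mechanism (the minimization structure of $m_a$) that also drives the quantitative coercivity bounds in Lemma~\ref{uniform bound}, so its detour is not wasted in the larger picture. For the first direction $\bar{\K_a}^+\subset\K_a^+$ both proofs are essentially the same application of the sharp Sobolev inequality.
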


\begin{proof}
	First we recall that
	\begin{align*}
		\K_a^+&=\{f \in H_a^1| E_a(f)<E_a^c(W_a),\ K_a(f)\geq0 \},\\
		\K_a^-&=\{f \in H_a^1|  E_a(f)<E_a^c(W_a),\ K_a(f)<0 \}.
	\end{align*}
	
	By the variational results of  Killip-Miao-Visan-Zhang-Zheng \cite{KillipMiaVisanZhangZheng}, we know that If
	\begin{align*}
		\| f\|_{\dot{H}_a^1}^2< \| W_a\|_{\dot{H}_a^1}^2,\ \ E_a^c(f)<E_a^c(W_a).
	\end{align*}
	Then
	\begin{align*}
		\| f\|_{\dot{H}_a^1}^2-\|f\|_{\frac{2d}{d-2}}^{\frac{2d}{d-2}}\geq 0,\ \ E_a^c(f)<E_a^c(W_a).
	\end{align*}
	This gives $\bar{\K_a}^+\subset \K_a^+$.
	
	On the other hand,  we clearly have $0\in \K_a^+,\bar{\K_a}^+$.
	
	Now let $f\in H_a^1, f\neq 0$, we have  $f_{d-2,-2}^\lambda\in H_a^1$ and $f_{d-2,-2}^\lambda \neq 0$ for any $\lambda>0$. In addition, we have
	\begin{align*}
		K_a(f_{d-2,-2}^\lambda)&=2(\| f\|_{\dot{H}_a^1}^2-\|f\|_{\frac{2d}{d-2}}^{\frac{2d}{d-2}})+\frac{2d}{d+1} e^{-\frac{\lambda}{d-1}}\|f\|_{\frac{2d+2}{d-1}}^{\frac{2d+2}{d-1}},\\
		H_a(f_{d-2,-2}^\lambda)&=H_a(f).
	\end{align*}
	By Lemma \ref{minimization:Hc} and direct calculation,
	\begin{align}
		&E_a^c(W_a)=\inf\{H_a(f)|f\in H_a^1,f\neq 0, K_a(f)\leq 0\},\\
		&E_a(f)=H_a(f)+\frac{1}{2d} K_a(f),  \label{EHK}   \\
		&E_a(f_{d-2,-2}^\lambda)\leq E_a(f)<E_a^c(W_a), \text{ ~for~ } \lambda >0, \label{opp}
	\end{align}
	and
	$$K_a(f_{d-2,-2}^\lambda)=K_a^c(f)+\frac{2d}{d+1} e^{-\frac{4\lambda}{d-2}}\|f\|_{\frac{2d}{d+1}}^{\frac{2d}{d+1}},$$
	we can deduce $\K_a^+\subset \bar{\K_a}^+$.
	In fact, if $K_a^c(f)< 0 $, there exists  $\lambda_0 \geq0$ such that $K_a(f^{\lambda_0}_{d-2,-2}))=0 $ (since $K_a(f_{d-2,-2}^{0})\geq 0$). Then, by the identity \eqref{EHK},  we have
	$E_a(f_{d-2,-2}^{\lambda_0})=H_a(f)\geq E_a^c(W_a),$
	which is opposite to \eqref{opp}.
	Then, the result $\K_a^+= \bar{\K_a}^+$ follows, which implies $\K_a^-= \bar{\K_a}^-$.
\end{proof}

After the computation of the minimization $m_a$ in
\eqref{minimization}, we next give some variational estimates.

\begin{lemma} \label{free-energ-equiva} For any $\varphi \in H_a^1$ with $K_a(\varphi)\geq 0$, we
	have
	\begin{align}\label{free energy}
		&\int_{\R^d} \left(\frac{1}{2d}\big|\nabla \varphi \big|^2  +\frac{a}{2d}\frac{|u|^2}{|x|^2} +\frac{1}{2d} \big| \varphi\big|^{\frac{2d}{d-2}}\right)  dx \notag\\
		&\leq
		E_a(\varphi) \leq \int_{\R^d} \left(\frac{1}{2}\big|\nabla \varphi \big|^2+\frac{a}{2}\frac{|u|^2}{|x|^2}
		+ \frac{d-1}{2d+2} \big| \varphi\big|^{\frac{2d+2}{d-1}}
		\right) dx.
	\end{align}
\end{lemma}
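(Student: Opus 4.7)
The plan is to establish the two-sided bound via an algebraic identity relating $E_a(\varphi)$, the integral on the left-hand side (call it $H_a(\varphi)$), and the scaling derivative $K_a(\varphi)$, and to use the hypothesis $K_a(\varphi)\geq 0$ only for the lower inequality.

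First I would verify by a direct term-by-term computation the identity
\[
E_a(\varphi) \;=\; H_a(\varphi) \;+\; \frac{d-1}{4d}\, K_a(\varphi).
\]
This reduces to checking three scalar equalities. On the $|\nabla\varphi|^2$ term the coefficient of $E_a-H_a$ is $\tfrac{1}{2}-\tfrac{1}{2d}=\tfrac{d-1}{2d}=\tfrac{d-1}{4d}\cdot 2$, matching $\tfrac{d-1}{4d}$ times the coefficient $2$ of $|\nabla\varphi|^2$ in $K_a$; the inverse-square term $\tfrac{a}{|x|^2}|\varphi|^2$ is completely parallel. On the $|\varphi|^{\frac{2(d+1)}{d-1}}$ term we must check $\tfrac{d-1}{2(d+1)} = \tfrac{d-1}{4d}\cdot\tfrac{2d}{d+1}$, which is trivial. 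On the $|\varphi|^{\frac{2d}{d-2}}$ term the coefficient of $E_a-H_a$ is $-\tfrac{d-2}{2d}-\tfrac{1}{2d}=-\tfrac{d-1}{2d}=\tfrac{d-1}{4d}\cdot(-2)$, again matching $K_a$.

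Given this identity, the lower bound in \eqref{free energy} is immediate: since $K_a(\varphi)\geq 0$ by hypothesis, we get $E_a(\varphi)\geq H_a(\varphi)$. For the upper bound, no sign assumption on $K_a$ is needed. Rewriting the definition of $E_a$ as
\[
E_a(\varphi) \;=\; \int_{\R^d}\Big(\tfrac{1}{2}|\nabla\varphi|^2 + \tfrac{a}{2|x|^2}|\varphi|^2 + \tfrac{d-1}{2(d+1)}|\varphi|^{\frac{2(d+1)}{d-1}}\Big)\,dx \;-\; \tfrac{d-2}{2d}\int_{\R^d}|\varphi|^{\frac{2d}{d-2}}\,dx,
\]
we may simply drop the manifestly nonpositive last term (for $d\geq 3$) to conclude the right-hand inequality.

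There is no real obstacle in this lemma: the entire argument is bookkeeping once the linear identity $E_a = H_a + \tfrac{d-1}{4d}K_a$ is isolated. The conceptual point is simply that the assumption $K_a(\varphi)\geq 0$ is exactly what is needed to control the focusing $L^{\frac{2d}{d-2}}$ contribution and bound $E_a(\varphi)$ from below by a functional whose integrand is pointwise nonnegative (using $a>-(d-2)^2/4$ and the equivalence of Sobolev norms \eqref{equiNorms} to absorb the inverse-square term into the gradient).
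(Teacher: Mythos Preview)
Your proof is correct and follows exactly the paper's approach: establish the identity $E_a(\varphi)=H_a(\varphi)+c\,K_a(\varphi)$ with $c>0$ to get the lower bound from $K_a(\varphi)\geq 0$, and drop the nonpositive $-\tfrac{d-2}{2d}\|\varphi\|_{L^{2d/(d-2)}}^{2d/(d-2)}$ term to get the upper bound. Your coefficient $c=\tfrac{d-1}{4d}$ is in fact the correct one for general $d$; the paper's stated $c=\tfrac{1}{2d}$ coincides with this only when $d=3$.
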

\begin{proof} On one hand, the right hand side of \eqref{free energy} is trivial.  On the other hand,
	by the definition of $E_a$ and $K_a$, we have
	\begin{align*}
		E_a(\varphi)= \int_{\R^d} \left(\frac{1}{2d}\big|\nabla \varphi \big|^2 +\frac{a}{2d}\frac{|u|^2}{|x|^2}+
		\frac{1}{2d} \big| \varphi\big|^{\frac{2d}{d-2}}\right)\;
		dx + \frac{1}{2d}K_a(\varphi),
	\end{align*}
	which implies the left hand side of \eqref{free energy}.
\end{proof}

At the last of this section, we give the uniform bounds on the
scaling derivative functional $K_a(\varphi)$ with the   energy
$E_a(\varphi)$ below the threshold $m_a$.

\begin{lemma}\label{uniform bound}
	For any $\varphi \in H_a^1$ with $E_a(\varphi)<m_a$.
	\begin{enumerate}
		\item If $K_a(\varphi)<0$, then
		\begin{align}\label{uniform:K:negative}
			K_a(\varphi)  \leq -\bar{\mu}\big(m_a-E_a(\varphi)\big)=\frac{2d}{d-2}\big(m_a-E_a(\varphi)\big).
		\end{align}
		\item If $K_a(\varphi)\geq 0$, then
		\begin{align}\label{uniform:K:positive}
			K_a(\varphi)&\geq \min\bigg(\bar{\mu}(m_a-E_a(\varphi)), \frac{2}{2d-3} \big\|\nabla
			\varphi \big\|^2_{L^2} +\frac{2a}{2d-3}\|\frac{\varphi}{x}\|^2_{L^2}\notag\\
			&\hspace{2ex} + \frac{2d}{(2d-3)(d+1)} \big\|\varphi\big\|^{\frac{2d+2}{d-1}}_{L^{\frac{2d+2}{d-1}}}
			\bigg)\notag\\ 
			&=\min\bigg(\frac{2d}{d-2}(m_a-E_a(\varphi)), \frac{2}{2d-3} \big\|\nabla
			\varphi \big\|^2_{L^2} +\frac{2a}{2d-3}\|\frac{\varphi}{x}\|^2_{L^2}\\&\hspace{2ex} + \frac{2d}{(2d-3)(d+1)} \big\|\varphi\big\|^{\frac{2d+2}{d-1}}_{L^{\frac{2d+2}{d-1}}}
			\bigg).\notag
		\end{align}
	\end{enumerate}
\end{lemma}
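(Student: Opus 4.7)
The plan is to derive both bounds from the variational characterization $m_a=\inf\{H_a(\varphi):\varphi\in H_a^1\setminus\{0\},\ K_a(\varphi)\leq 0\}$ supplied by Lemma~\ref{minimization:Hc}, combined with the elementary identity $H_a(\varphi)=E_a(\varphi)-K_a(\varphi)/\bar\mu$ and the scaling trajectory generated by $\varphi_{d,-2}^\lambda$.

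Part~(1) will be the immediate case. The hypothesis $K_a(\varphi)<0$ places $\varphi$ in the admissible set of the minimization, so Lemma~\ref{minimization:Hc} gives $H_a(\varphi)\geq m_a$. Substituting the identity for $H_a$ and rearranging produces $K_a(\varphi)\leq -\bar\mu(m_a-E_a(\varphi))$, which is the claim. This is a one-line consequence and requires no scaling analysis.

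For part~(2), I will first observe that $K_a(\varphi)>0$: if $K_a(\varphi)=0$ then $H_a(\varphi)=E_a(\varphi)\geq m_a$ by Lemma~\ref{minimization:Hc}, contradicting $E_a(\varphi)<m_a$. Next I introduce the trajectory $j(\lambda):=E_a(\varphi^\lambda_{d,-2})$, so $j(0)<m_a$ and $j'(0)=K_a(\varphi)>0$. The explicit form of $j'(\lambda)=K_a(\varphi^\lambda_{d,-2})$, in which the term $-2e^{\frac{4d}{d-2}\lambda}\|\varphi\|_{L^{2d/(d-2)}}^{2d/(d-2)}$ dominates as $\lambda\to\infty$, forces $j'(\lambda)\to-\infty$, so there exists a smallest $\lambda_1>0$ with $j'(\lambda_1)=0$; at this point Lemma~\ref{minimization:Hc} yields $j(\lambda_1)\geq m_a$. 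The mechanism linking the two scales is the differential inequality $j''(\lambda)\leq \bar\mu\,j'(\lambda)$ extracted from Lemma~\ref{structure:J}, which is equivalent to the monotonicity of $e^{-\bar\mu\lambda}j'(\lambda)$; hence $j'(\mu)\leq e^{\bar\mu\mu}K_a(\varphi)$ on $[0,\lambda_1]$, and integrating yields
\begin{equation*}
  m_a-E_a(\varphi)\;\leq\;j(\lambda_1)-j(0)\;\leq\;\frac{K_a(\varphi)}{\bar\mu}\bigl(e^{\bar\mu\lambda_1}-1\bigr).
\end{equation*}

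A dichotomy on the size of $\lambda_1$ will complete the proof. In the sub-case $e^{\bar\mu\lambda_1}-1\leq 1$, the displayed inequality already gives $K_a(\varphi)\geq\bar\mu(m_a-E_a(\varphi))$, the first branch of the minimum. In the complementary sub-case $\lambda_1$ is bounded below; solving the equation $K_a(\varphi^{\lambda_1}_{d,-2})=0$ for $\|\varphi\|_{L^{2d/(d-2)}}^{2d/(d-2)}$ and substituting back into $K_a(\varphi)$ produces the identity
\begin{equation*}
  K_a(\varphi)=2\|\varphi\|_{\dot H_a^1}^2\bigl(1-e^{-\frac{8\lambda_1}{d-2}}\bigr)+\tfrac{2d}{d+1}\|\varphi\|_{L^{\frac{2d+2}{d-1}}}^{\frac{2d+2}{d-1}}\bigl(1-e^{-\frac{4d\lambda_1}{(d-1)(d-2)}}\bigr),
\end{equation*}
and once $\lambda_1$ is bounded below by the threshold from the first sub-case, the two $(1-e^{-\cdot})$ factors are bounded below by absolute positive constants. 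A dimension-wise verification for $d\in\{3,4,5\}$ shows these constants exceed $\tfrac{1}{2d-3}$ of the corresponding coefficients in $K_a$, delivering the second branch with the stated constants $\tfrac{2}{2d-3}$ and $\tfrac{2d}{(2d-3)(d+1)}$. The hardest part will be the calibration of the threshold in this dichotomy so that both branches deliver simultaneously the precise constants $\bar\mu$ and $\tfrac{1}{2d-3}$ stated in the lemma; it is exactly here that the interaction between the energy-critical nonlinearity and the $\dot H^{1/2}$-subcritical perturbation appears numerically, and once the threshold is fixed (for instance $e^{\bar\mu\lambda_1}=2$) the remaining task reduces to verifying explicit exponential inequalities in each of the three dimensions.
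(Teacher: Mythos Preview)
Your proof is correct, but it departs from the paper's in both parts.

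\textbf{Part~(1).} Your argument is strictly simpler. The paper runs the scaling trajectory \emph{backward}: it finds $\lambda_0<0$ with $K_a(\varphi^{\lambda_0}_{d,-2})=0$, then integrates $j''\leq\bar\mu j'$ over $[\lambda_0,0]$ to reach the same conclusion. Your one-line use of $H_a=E_a-K_a/\bar\mu$ together with $m_a=\inf\{H_a:K_a\leq0\}$ bypasses the trajectory entirely. (A minor point: the characterization you invoke with constraint $K_a\leq0$ is Lemma~\ref{minimization:H}, not Lemma~\ref{minimization:Hc}; the latter uses $K_a^c$.)

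\textbf{Part~(2).} Both arguments run the trajectory \emph{forward}, but the dichotomies are organized differently. The paper splits according to whether $2\bar\mu K_a(\varphi)$ exceeds $\tfrac{8d}{(d-2)(d-1)}\|\varphi\|_{L^{2d/(d-2)}}^{2d/(d-2)}$. In the ``large'' branch, a purely algebraic substitution (express the critical norm via $K_a$) yields the norm lower bound directly, with no trajectory needed; in the ``small'' branch the hypothesis upgrades the differential inequality to the sharper $j''\leq-\bar\mu j'$, which integrates cleanly to the energy-gap bound. Your argument instead uses only the weak inequality $j''\leq\bar\mu j'$ throughout, integrates to obtain the factor $e^{\bar\mu\lambda_1}-1$, and then splits on the size of the stopping time $\lambda_1$; in the large-$\lambda_1$ branch you solve $K_a(\varphi^{\lambda_1})=0$ for the critical norm and substitute back, leaving constants $1-e^{-8\lambda_1/(d-2)}$ and $1-e^{-4d\lambda_1/((d-1)(d-2))}$ that must be checked dimension by dimension against $\tfrac{1}{2d-3}$. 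The paper's route is tidier in that the constants fall out of the algebra without any numerical verification, and works for all $d\geq3$; your route has the conceptual economy of using a single differential inequality, at the price of the final case-check in $d\in\{3,4,5\}$.
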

\begin{proof} By Lemma \ref{structure:J}, for any $\varphi \in H_a^1$, we have
	\begin{align*}
		\mathcal{T}^2 E(\varphi) = \bar{\mu} \mathcal{T} E_a(\varphi)- \frac{8}{d-1}\big\|\nabla \varphi\big\|^2_{L^2}-\frac{8a}{d-1}\big\|\frac{\varphi}{x} \big\|^2_{L^2} - \frac{8d}{(d-2)(d-1)}
		\big\|\varphi\big\|^{\frac{2d}{d-2}}_{L^{\frac{2d}{d-2}}}.
	\end{align*}
	Let $j(\lambda)=E_a(\varphi^{\lambda}_{d,-2})$, then we have
\begin{align}\label{diff J}
	j''(\lambda) =  \bar{\mu}j'(\lambda) - \frac{8e^{4\lambda}}{d-1}- \frac{8ae^{4\lambda}}{d-1}\big\|\frac{\varphi}{x} \big\|^2_{L^2} -\big\|\nabla \varphi\big\|^2_{L^2} -
	\frac{8de^{\frac{4d}{d-2}\lambda}}{(d-2)(d-1)}\big\|\varphi\big\|^{\frac{2d}{d-2}}_{L^{\frac{2d}{d-2}}}.
\end{align}

\noindent{\bf Case I:} If $K_a(\varphi)<0$, then by
\eqref{asymptotic:KQ}, Lemma \ref{Postivity:K} and the continuity of $K_a$ in
$\lambda$, there exists a negative number
$\lambda_0<0$ such that $K_a(\varphi^{\lambda_0}_{d,-2})=  0$, and
\begin{align*}
	K_a(\varphi^{\lambda}_{d,-2})<   0, \;\; \forall\; \; \lambda\in
	(\lambda_0, 0).
\end{align*}
By \eqref{minimization}, we obtain $j(\lambda_0)=E_a(\varphi^{\lambda_0}_{d,-2})
\geq m$. Now by integrating \eqref{diff J} over
$[\lambda_0, 0]$, we have
\begin{align*}
	\int^0_{\lambda_0} j''(\lambda)\; d\lambda \leq \bar{\mu}
	\int^0_{\lambda_0} j'(\lambda)\; d\lambda,
\end{align*}
which implies that
\begin{align*}
	K_a(\varphi)=j'(0)-j'(\lambda_0)\leq
	\bar{\mu}\left(j(0)-j(\lambda_0)\right)  \leq -\bar{\mu}
	\big(m_a-E_a(\varphi)\big),
\end{align*}
which implies \eqref{uniform:K:negative}.

\noindent{\bf Case II:} $K_a(\varphi) \geq 0$. We divide it
into two subcases:

When $2\bar{\mu} K_a(\varphi)\geq \frac{8d}{(d-2)(d-1)} \big\|\varphi\big\|^{\frac{2d}{d-2}}_{L^{\frac{2d}{d-2}}}$.
Since
\begin{align*}
	&\hspace{5ex}\frac{8d}{(d-2)(d-1)}\int_{\R^d}\big|\varphi\big|^{\frac{2d}{d-2}} \; dx\\& = -\frac{4d}{(d-2)(d-1)}K_a(\varphi) + \int_{\R^d}\bigg(\frac{8d}{(d-2)(d-1)} \big| \nabla \varphi \big|^2 \\
	&+\frac{8ad}{(d-2)(d-1)}\frac{|\varphi|^2}{|x|^2} + \frac{8d^2}{(d-2)(d-1)(d+1)}\big|  \varphi \big|^{\frac{2d+2}{d-1}}
	\bigg) \; dx,
\end{align*}
then we have
\begin{align*}
	2\bar{\mu} K_a(\varphi)&\geq -\frac{4d}{(d-2)(d-1)}K_a(\varphi) +\\
	& \int_{\R^d} \left(\frac{8d}{(d-2)(d-1)} \big|
	\nabla \varphi \big|^2 +\frac{8ad}{(d-2)(d-1)}\frac{|\varphi|^2}{|x|^2}+ \frac{8d^2}{(d-2)(d-1)(d+1)} \big|  \varphi \big|^{\frac{2d+2}{d-1}} \right) \; dx,
\end{align*}
which implies that
\begin{align*}
	K_a(\varphi)\geq \frac{2d}{d-3} \big\|\nabla
	\varphi \big\|^2_{L^2} +\frac{2d}{d-3} \big\|\frac{\varphi}{x}
	\big\|^2_{L^2}+  \frac{2d}{(2d-3)(d+1)} \big\|\varphi\big\|^{\frac{2d+2}{d-1}}_{L^{\frac{2d+2}{d-1}}}.
\end{align*}

When $2\bar{\mu}  K_a(\varphi) \leq \frac{8d}{(d-2)(d-1)} \big\|\varphi\big\|^{\frac{2d}{d-2}}_{L^{\frac{2d}{d-2}}}$.
By \eqref{diff J}, we have for $\lambda=0$
\begin{align}
	0< &  2 \bar{\mu}j'(\lambda) <
	\frac{8de^{\frac{4d}{d-2}\lambda}}{(d-2)(d-1)}\big\|\varphi\big\|^{\frac{2d}{d-2}}_{L^{\frac{2d}{d-2}}}, \nonumber\\
	j''(\lambda) =
	\bar{\mu}j'(\lambda) -& \frac{8e^{4\lambda}}{d-1} \big\|\nabla \varphi\big\|^2_{L^2}-\frac{8ae^{4\lambda}}{d-1}\big\|\frac{\varphi}{x} \varphi\big\|^2_{L^2}- \frac{8de^{\frac{4d}{d-2}\lambda}}{(d-2)(d-1)}\big\|\varphi\big\|^{\frac{2d}{d-2}}_{L^{\frac{2d}{d-2}}}
	\leq -\bar{\mu}j'(\lambda). \label{evolution j}
\end{align}
By the continuity of $j'$ and $j''$ in $\lambda$, we know that $j'$ is an accelerating decreasing function as $\lambda$ increases until $j'(\lambda_0)=0$ for some
finite number $\lambda_0>0$ and \eqref{evolution j} holds on $[0,
\lambda_0]$.

By
$
K_a(\varphi^{\lambda_0}_{d,-2})=j'(\lambda_0)=0,
$
we know that
\begin{align*}
	E_a(\varphi^{\lambda_0}_{d,-2})\geq m_a.
\end{align*}
Now integrating \eqref{evolution j} over $[0, \lambda_0]$, we obtain that
\begin{align*}
	-K_a(\varphi)=j'(\lambda_0)-j'(0) \leq -\bar{\mu} \big(j(\lambda_0)-j(0)\big)
	\leq -\bar{\mu} (m_a-E_a(\varphi)).
\end{align*}
This completes the proof.
\end{proof}

\section{Modulation analysis}\label{sec:Modulation}
In this section, we give the modulation analysis around the ground state $W_a(x)$, which is crucial in the proof of Theorem \ref{Threshold}.  We assume that $d\in\{3,4,5\}$, $0>a>-\left(\frac{d-2}{2}\right)^2+\left(\frac{2(d-2)}{d+2}\right)^2$ and $u:I\times\R^d\to\C$ is a solution to \eqref{NLS} with
\begin{equation}\label{22condition}
	E_a(u_{0})=E_a^{c}(W_a).
\end{equation}
We write $\delta(t):=\delta(u(t))$, where
\[
\delta(u):=\bigl|\| W_a\|_{\dot{H}_a^1}^{2}-\| u\|_{\dot{H}_a^1}^{2}\bigr|.
\]
Next,
we choose a small parameter $\delta_0$(to be determined below) and  define
\[
I_{0}=\{t\in I:\delta(u(t))<\delta_0\}.
\]

\begin{proposition}\label{Modilation11}
	There exist $\delta_{0}>0$ sufficiently small and functions $\theta: I_{0}\to \R$ and $\mu: I_{0}\to \R$ such that
	\begin{equation}\label{DecomU}
		u(t,x)=e^{i \theta(t)}[g(t)+\mu(t)^{\frac{d-2}{2}}W_a(\mu(t)x)]
	\end{equation}
	for $t\in I_0$, we have 
	\begin{align}
		&\|u(t)\|^{\frac{2d+2}{d-1}}_{L_{x}^{\frac{2d+2}{d-1}}} +\mu(t)^{-\frac{\beta(d-2)(d+1)}{d-1}}
		\lesssim\delta(t)^2\sim \|g(t)\|_{\dot{H}_a^{1}}^2,\label{Estimatemodu}\\
		&\left|\tfrac{\mu^{\prime}(t)}{\mu(t)}\right|\lesssim \mu^{2}(t)\delta(t).\label{EstimLaD}
	\end{align}
\end{proposition}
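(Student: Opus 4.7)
The plan is a standard modulation analysis near the orbit $\M:=\{e^{i\theta}W_{\mu}:\theta\in\R,\mu>0\}$, with $W_\mu(x):=\mu^{(d-2)/2}W_a(\mu x)$, adapted to the subcritical-perturbed energy and to the loss of translation invariance caused by the potential (so no spatial translation parameter enters the decomposition). First I would produce the modulation parameters by the implicit function theorem applied to
\[
\Phi(\theta,\mu;u)=\bigl(\langle e^{-i\theta}u-W_\mu,iW_\mu\rangle_{\dot{H}_a^1},\ \langle e^{-i\theta}u-W_\mu,\Lambda W_\mu\rangle_{\dot{H}_a^1}\bigr),
\]
where $\Lambda W_\mu:=\partial_\mu W_\mu$ generates the $\dot{H}_a^1$-invariant scaling. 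At any base point $u=e^{i\theta_0}W_{\mu_0}$ the map vanishes and $\partial_{(\theta,\mu)}\Phi$ is invertible, since $\|iW_\mu\|_{\dot{H}_a^1}^2$ and $\|\Lambda W_\mu\|_{\dot{H}_a^1}^2$ are nonzero while the cross-pairings vanish by the Pohozaev/scaling identities of Section~\ref{sec:VarationalGNI}. The hypothesis $\delta(u)<\delta_0$ together with the near-extremizer characterization of the sharp Sobolev inequality (Proposition~\ref{P:Wa}) forces $u$ to be $\dot{H}_a^1$-close to $\M$ modulo the two symmetries, so the IFT yields $C^1$ functions $\theta(t),\mu(t)$ on $I_0$ and a remainder $g(t):=e^{-i\theta(t)}u(t)-W_{\mu(t)}$ satisfying the two orthogonality conditions.

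Next I would close the bounds on $g$ and the subcritical term by an energy expansion. Since $\L_aW_\mu=|W_\mu|^{4/(d-2)}W_\mu$, the linear term in the Taylor expansion of $E_a^c$ at $W_\mu$ vanishes and one gets $E_a^c(W_\mu+g)=E_a^c(W_a)+\tfrac12\langle \mathcal{H}_\mu g,g\rangle+O(\|g\|_{\dot{H}_a^1}^3)$ with $\mathcal{H}_\mu$ the real-linear Hessian, splitting as $L_+\oplus L_-$ on real and imaginary parts. The threshold constraint $E_a(u)=E_a^c(W_a)$ then gives
\[
\tfrac12\langle \mathcal{H}_\mu g,g\rangle+\tfrac{d-1}{2(d+1)}\|u\|_{L^{(2d+2)/(d-1)}}^{(2d+2)/(d-1)}=O(\|g\|_{\dot{H}_a^1}^3).
\]
The kernels of $L_\pm$ are spanned by the orbit generators (killed by the orthogonalities); $L_+$ keeps one negative direction along $W_\mu$. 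Decomposing $g=\alpha W_\mu+g'$ with $g'$ perpendicular to this direction, the identity
\[
\|u\|_{\dot{H}_a^1}^2=\|W_a\|_{\dot{H}_a^1}^2+2\RE\langle W_\mu,g\rangle_{\dot{H}_a^1}+\|g\|_{\dot{H}_a^1}^2
\]
together with the sub-threshold sign $\|u\|_{\dot{H}_a^1}<\|W_a\|_{\dot{H}_a^1}$ pins $\alpha\sim-\delta/\|W_a\|_{\dot{H}_a^1}^2$, so the negative direction carries exactly $-C\delta^2$ of Hessian mass. Coercivity of $\mathcal{H}_\mu$ on the complement, combined with positivity of the subcritical term and a bootstrap absorbing the cubic remainder, yields $\|g\|_{\dot{H}_a^1}^2\sim\delta(t)^2$ and $\|u\|_{L^{(2d+2)/(d-1)}}^{(2d+2)/(d-1)}\lesssim\delta(t)^2$. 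The bound $\mu(t)^{-\beta(d-2)(d+1)/(d-1)}\lesssim\delta(t)^2$ is then read off by inserting the explicit formula~\eqref{E:Wa} into the dominant piece $\|W_\mu\|_{L^{(2d+2)/(d-1)}}^{(2d+2)/(d-1)}$ and computing its scaling from the near-zero and near-infinity decay rates of $W_a$.

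Finally I would estimate $\mu'/\mu$. Differentiating both orthogonality conditions in $t$, substituting the equation for $\partial_t g$ obtained by plugging $u=e^{i\theta}(W_\mu+g)$ into~\eqref{NLS}, and isolating $(\theta',\mu'/\mu)$ produces a $2\times 2$ linear system whose matrix is an $O(\delta)$-perturbation of the nondegenerate Gram matrix of $\{iW_\mu,\Lambda W_\mu\}$ in $\dot{H}_a^1$. The linear and critical-nonlinear terms on the right contribute $\mu^2\|g\|_{\dot{H}_a^1}\lesssim\mu^2\delta$ after Sobolev embedding; the subcritical term contributes $\mu^2$ times an $L^{(2d+2)/(d-1)}$-type factor, also $\lesssim\mu^2\delta$ by the previous paragraph. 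Mass conservation enters precisely here, because $W_a\notin L^2(\R^d)$ generically and the low-regularity pairings against $\Lambda W_\mu$ must be split between the $L^2$ part of $u$ (controlled by $M(u_0)$) and the non-integrable profile part. Inverting the system yields $|\mu'/\mu|\lesssim\mu^2\delta$. The main obstacle throughout is the coercivity step: neither the orthogonalities nor the sub-threshold sign alone suffices to make $\mathcal{H}_\mu$ positive, and one must combine both—via the variational characterization $m_a=E_a^c(W_a)$ of Section~\ref{sec:VarationalGNI}—to isolate the unstable direction and show that it consumes exactly $O(\delta^2)$ of the Hessian.
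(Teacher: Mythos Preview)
Your overall architecture---implicit function theorem for $(\theta,\mu)$, Taylor expansion of $E_a^c$ at $W_\mu$, coercivity of the Hessian on the complement of the orbit generators, and differentiation of the orthogonalities to bound $\mu'/\mu$---matches the paper's scheme and would give $\|g\|_{\dot H_a^1}^2\sim\delta^2$ and $\|u\|_{L^{(2d+2)/(d-1)}}^{(2d+2)/(d-1)}\lesssim\delta^2$ as you describe. Two points, however, are not right.

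First, the bound $\mu(t)^{-\beta(d-2)(d+1)/(d-1)}\lesssim\delta(t)^2$ does \emph{not} come from the full-space scaling of $\|W_\mu\|_{L^{(2d+2)/(d-1)}}$: that norm scales like $\mu^{-2/(d-1)}$, an exponent independent of $\beta$, and in any case you cannot bound $\|W_\mu\|_{L^{(2d+2)/(d-1)}(\R^d)}$ by $\|u\|_{L^{(2d+2)/(d-1)}}+\|g\|_{L^{(2d+2)/(d-1)}}$ since $g=e^{-i\theta}u-W_\mu\notin L^2$ and there is no way to interpolate down from $L^{2d/(d-2)}$ on the whole space. The paper's argument (Lemma~\ref{BoundEx}) localizes to the unit ball, where H\"older gives $\|g\|_{L^{(2d+2)/(d-1)}(B_1)}\lesssim\|g\|_{L^{2d/(d-2)}}\lesssim\delta$, and then uses the pointwise lower bound $W_\mu(x)\gtrsim\mu^{-\beta(d-2)/2}$ on $B_1$---which is where the $\beta$-dependent exponent originates. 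This pointwise bound is only valid when $\mu(t)\geq R>0$, and that a~priori lower bound is a separate result (Proposition~\ref{Yinfinity}) proved by mass conservation: if $\mu(t_n)\to 0$ along a sequence with $\delta(t_n)\to 0$, the rescalings $\mu(t_n)^{-(d-2)/2}u(t_n,\mu(t_n)^{-1}\cdot)$ converge to $W_a$ in $\dot H_a^1$ but carry mass $\lesssim\mu(t_n)M(u_0)^{1/2}\to 0$ on $B_1$, contradicting $\|W_a\|_{L^2(B_1)}>0$. You place mass conservation only in the $\mu'/\mu$ step; its decisive role is in establishing $\mu\gtrsim 1$, without which the $\mu$-bound fails.

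Second, the passage ``$\delta(u)<\delta_0$ together with Proposition~\ref{P:Wa} forces $u$ close to $\M$'' overstates what Proposition~\ref{P:Wa} gives: that proposition characterizes \emph{equality}, not near-equality. The paper supplies the needed stability statement (Lemma~\ref{ModulationLem}) by a profile-decomposition argument, using that the constraint $E_a(u_n)=E_a^c(W_a)$ forces the subcritical $L^{(2d+2)/(d-1)}$ contribution to vanish along the extremizing sequence and that $a<0$ rules out translated profiles. A minor point: you invoke ``the sub-threshold sign'' to pin $\alpha$, but Proposition~\ref{Modilation11} is stated and used for both signs of $\|W_a\|_{\dot H_a^1}^2-\|u\|_{\dot H_a^1}^2$; the paper's computation gives $\delta\sim|\alpha|$ without assuming a sign.
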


We begin with the following lemma. 

\begin{lemma}\label{ModulationLem}Assume that $u\in \dot{H}_{a}^{1}(\R^d)$ and $%
	E_a(u)=E_a^c(W_a)$. Then for any $\varepsilon>0$, there exists $\delta>0$ such that
	
	$$
	\delta(u(t))<\delta, \ \ \inf_{\theta\in\mathbb S^1, \mu>0}\|u-e^{i\theta}\mu^{-\frac{d-2}{2}}W_a(\tfrac x\mu)\|_{\dot{H}_a^1}<\varepsilon.
	$$
	
\end{lemma}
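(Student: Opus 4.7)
The statement should be read as: for any $\varepsilon>0$ there exists $\delta>0$ such that whenever $E_a(u)=E_a^c(W_a)$ and $\delta(u)<\delta$, the infimum is bounded by $\varepsilon$. The plan is to argue by contradiction and extract a Sobolev extremizing sequence, which by a profile decomposition must reduce to a single $W_a$-bubble centered at the origin. Assume to the contrary that there exist $\varepsilon_0>0$ and a sequence $\{u_n\}\subset\dot H_a^1$ with $E_a(u_n)=E_a^c(W_a)$, $\delta(u_n)\to 0$, and $\inf_{\theta,\mu}\|u_n-e^{i\theta}\mu^{-(d-2)/2}W_a(\cdot/\mu)\|_{\dot H_a^1}\ge\varepsilon_0$ for every $n$. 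Writing $A:=\|W_a\|_{\dot H_a^1}^2$ and using the Pohozaev identity \eqref{PoQ} to rewrite $E_a^c(W_a)=A/d$, one combines $\|u_n\|_{\dot H_a^1}^2\to A$ with the non-negativity of the defocusing term $\|u_n\|_{L^{2(d+1)/(d-1)}}^{2(d+1)/(d-1)}$ and the sharp Sobolev inequality \eqref{Sharp-Sobolev} (applied to bound $\|u_n\|_{L^{2d/(d-2)}}^{2d/(d-2)}$ from above by $A+o(1)$) to deduce the two-sided convergences
\[
\|u_n\|_{L^{2d/(d-2)}}^{2d/(d-2)}\ \longrightarrow\ A = \|W_a\|_{L^{2d/(d-2)}}^{2d/(d-2)}\qtq{and} \|u_n\|_{L^{2(d+1)/(d-1)}}\ \longrightarrow\ 0.
\]
Thus $\{u_n\}$ is an extremizing sequence for \eqref{Sharp-Sobolev}.

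Next, I would feed $\{u_n\}$ into the $\dot H_a^1$-profile decomposition of Killip--Miao--Visan--Zhang--Zheng, writing $u_n=\sum_{j=1}^J\phi_n^j+r_n^J$ with asymptotic orthogonality of the parameters $(\lambda_n^j,x_n^j)$, Pythagorean decoupling of $\|\cdot\|_{\dot H_a^1}^2$, and, via a Brezis--Lieb argument, decoupling of $\|\cdot\|_{L^{2d/(d-2)}}^{2d/(d-2)}$. Each profile obeys sharp Sobolev with constant $C_j$ equal to $C_{GN}$ when the limiting operator $\mathcal L_a^{n_j}$ is (a translate of) $\mathcal L_a$, i.e.\ when $x_n^j/\lambda_n^j\to x_\infty^j\in\R^d$, and strictly less than $C_{GN}$ when $|x_n^j/\lambda_n^j|\to\infty$, because then the limiting operator is $-\Delta$ and $a<0$ together with Hardy's inequality yields $\|\cdot\|_{\dot H_a^1}<\|\cdot\|_{\dot H^1}$ strictly. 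Combining the two decouplings with the super-additivity inequality $s^{d/(d-2)}+t^{d/(d-2)}\le (s+t)^{d/(d-2)}$, which is strict unless one of the terms vanishes, together with the extremizing limit $\|u_n\|_{L^{2d/(d-2)}}^{2d/(d-2)}/\|u_n\|_{\dot H_a^1}^{2d/(d-2)}\to C_{GN}^{2d/(d-2)}$, forces four conclusions: only one profile $\phi^1$ survives; the residue satisfies $r_n^J\to 0$ in $\dot H_a^1\cap L^{2d/(d-2)}$; the limiting operator for $\phi^1$ must be $\mathcal L_a$ (ruling out the escape-to-infinity regime); and $\phi^1$ is itself a Sobolev extremizer with $\|\phi^1\|_{\dot H_a^1}^2=A$. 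By Proposition \ref{P:Wa}, $\phi^1=e^{i\theta_0}W_a(\cdot/\mu_0)$ up to phase and dilation, with the center at the origin after absorbing the bounded sequence $x_n^1/\lambda_n^1\to x_\infty^1$ into $(\theta_n,\mu_n)$. Unrescaling, there exist $\theta_n\in\mathbb S^1$ and $\mu_n>0$ with $\|u_n-e^{i\theta_n}\mu_n^{-(d-2)/2}W_a(\cdot/\mu_n)\|_{\dot H_a^1}\to 0$, contradicting the lower bound $\varepsilon_0$.

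The main obstacle is controlling the escape-to-infinity bubbles in Step~3: if $|x_n^j/\lambda_n^j|\to\infty$ for some profile, it is in principle eligible to carry part of the $L^{2d/(d-2)}$-mass. Ruling this out is precisely where the strict inequality $C_{GN}(\mathcal L_a)>C_{GN}(-\Delta)$ is needed, which in turn relies on $a<0$ and Hardy's inequality. All remaining steps---the Pohozaev reduction to a Sobolev extremizing sequence, the Pythagorean decouplings, and the rigidity of Sobolev extremizers in $\dot H_a^1$---are standard once the profile decomposition and variational results of Section~\ref{sec:VarationalGNI} are in place.
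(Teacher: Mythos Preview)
Your proposal is correct and follows essentially the same route as the paper: argue by contradiction, show the sequence is extremizing for the sharp Sobolev inequality \eqref{Sharp-Sobolev}, apply the $\dot H_a^1$-profile decomposition, and use $a<0$ (so the Sobolev constant for $-\Delta$ is strictly worse than for $\mathcal L_a$) together with super-additivity to force a single non-escaping profile which must equal a rescaled $W_a$. One minor point: in the profile decomposition used here the dichotomy is already reduced to $x_n^j\equiv 0$ or $|x_n^j/\lambda_n^j|\to\infty$, so the intermediate case $x_n^j/\lambda_n^j\to x_\infty^j\ne 0$ and your ``absorbing into $(\theta_n,\mu_n)$'' step never arise.
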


\begin{proof}
	We argue by contradiction. Suppose this lemma does not hold, then there must exist $\varepsilon _{0}>0$ and a sequence of $\dot H_a^1(\mathbb{R}^d)$ functions $\{u_n\}$ such that 
	\begin{equation}
		\text{ }E_a(u_{n})=E_a^c(W_a),\text{ }\delta(u_{n}(t))\to 0,  \label{vcW 1}
	\end{equation}%
	but
	\begin{equation}
		\inf_{\theta \in \mathbb{S}^1,\mu >0}\Vert u_n-e^{i\theta}\mu^{-\frac{d-2}{2}}W_a(\tfrac x\mu)\Vert _{%
			\dot{H}_{a}^{1}}>\varepsilon _{0}.  \label{vcW 2}
	\end{equation}
	Replacing $u_n$ by $u_n\cdot\frac{\|W_a\|_{\dot{H}_a^1}}{\|u_n\|_{\dot{H}_a^1}}$, we may assume
	\begin{gather}\label{guan}
		\|f_n\|_{\dot{H}_a^1}=\|W_a\|_{\dot{H}_a^1}, \quad \|f_n\|^{\frac{2d}{d-2}}_{\frac{2d}{d-2}}-\tfrac{(d-2)(d+1)}{(d-1)d}\|f_n\|^{\frac{2d+2}{d-1}}_{\frac{2d+2}{d-1}}\to \|W_a\|^{\frac{2d}{d-2}}_{\frac{2d}{d-2}}, \\
		\inf_{\theta\in\mathbb S^1,\mu>0}\|u_n-e^{i\theta}\mu^{-\frac{d-2}{2}}W_a(\tfrac x\mu)\|_{\dot{H}_a^1}>\eps_0.
	\end{gather}
	Applying Lemma \ref{LinearProfi} to $\{u_n\}$, we obtain 
	$$
	u_n=\sum_{j=1}^J\phi_n^j +r_n^J, 
	$$
	for each $J\in \{1, \cdots, J^*\}$ with the stated properties. 
		From the $\dot H_a^1$ decoupling in Lemma \ref{LinearProfi} and \eqref{guan} we have
		\begin{align}\label{933}
			\|W_a\|_{\dot{H}_a^1}^2= \lim_{n\to \infty}\biggl(\sum_{j=1}^J\|\phi_n^j\|_{\dot{H}_a^1}^2+\|r_n^J\|_{\dot{H}_a^1}^2\biggr)=\sum_{j=1}^J\|\phi^j\|_{X^j}^2+ \lim_{n\to \infty}\|r_n^J\|_{\dot{H}_a^1}^2.
		\end{align}
		Here $\|\cdot\|_{X^j}=\|\cdot\|_{\dot{H}_a^1}$ if $x_n^j\equiv 0$ and $\|\cdot\|_{X^j}=\|\cdot\|_{\dot H^1}$ if $\frac{|x_n^j|}{\lambda_n^j}\to \infty$.  As \eqref{933} holds for any $J$, we take a limit and get
		\begin{align}\label{lbd}
			\sum_{j=1}^{J^*} \|\phi^j\|_{X^j}^2
			\le \|W_a\|_{\dot{H}_a^1}^2. 
		\end{align}
		
		On the other hand, using the decoupling in $L_x^{\frac{2d}{d-2}}(\mathbb{R}^d)$, \eqref{guan} and the sharp Sobolev embedding, we have 
		\begin{align*}
			\|W_a\|_{\frac{2d}{d-2}}^{\frac{2d}{d-2}}=\lim_{n\to \infty}\|u_n\|_{\frac{2d}{d-2}}^{\frac{2d}{d-2}}-\tfrac{(d-2)(d+1)}{(d-1)d}\|u_n\|^{\frac{2d+2}{d-1}}_{\frac{2d+2}{d-1}}&\leq\sum_{j=1}^{J^*}\|\phi^j\|_{\frac{2d}{d-2}}^{\frac{2d}{d-2}}
			\leq \sum_{j=1}^{J^*}\|\phi^j\|_{\dot{H}_a^1}^{\frac{2d}{d-2}}\cdot \frac{\|W_a\|_{\frac{2d}{d-2}}^{\frac{2d}{d-2}}}{\|W_a\|_{\dot{H}_a^1}^{\frac{2d}{d-2}}},
		\end{align*}
		which implies
		\begin{align}\label{second}
			\|W_a\|_{\dot{H}_a^1}^{\frac{2d}{d-2}}\leq \sum_{j=1}^{J^*}\|\phi^j\|_{\dot{H}_a^1}^{\frac{2d}{d-2}}.
		\end{align}
		This together with \eqref{lbd} gives 
		\begin{align*}
			\biggl(\sum_{j=1}^{J^*} \|\phi^j\|_{X^j}^2
			\biggr)^{\frac{d}{d-2}}\leq \sum_{j=1}^{J^*}\|\phi^j \|_{\dot{H}_a^1}^{\frac{2d}{d-2}}. 
		\end{align*}
		Note also for $a<0$, $\|\phi\|_{\dot{H}_a^1}<\|\phi\|_{\dot H^1}$, this obviously implies that 
		\[
		J^*=1, \quad x_n^1 \equiv 0, \text{ and } \limsup_{n\to \infty} \|r_n^1\|_{\frac{2d}{d-2}} =0. 
		\]
		Therefore, \eqref{lbd} and \eqref{second} imply $\|\phi^1\|_{\dot{H}_a^1}=\|W_a\|_{\dot{H}_a^1}, \ \|\phi^1\|_{\frac{2d}{d-2}}=\|W_a\|_{\frac{2d}{d-2}}$. Moreover 
		$$
		u_n=(\lambda_n)^{-\frac {d-2}{2}}\phi^1\bigl(\frac x{\lambda_n}\bigr)+r_n^1, \mbox{  \it{and} } \|r_n^1\|_{\dot{H}_a^1}\to 0 
		$$
		follow from \eqref{933}. Hence $\phi^1=e^{i\theta_0}\mu_0^{-\frac{d-2}{2}}W_a(\frac{x}{\mu_0})$ for some $\theta_0, \mu_0$. This contradicts to the last inequality in \eqref{guan}. 
	\end{proof}

	
	\begin{proposition}\label{Yinfinity}
		We can choose  sufficiently small $R>0$ so that if $\delta_{0}$ is sufficiently small, then
		\begin{equation}\label{Nobound}
			\mu_{0}(t)\geq R
		\end{equation}
		for $t$ in the lifespan of $u$.
	\end{proposition}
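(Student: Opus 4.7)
The plan is to argue by contradiction, with the quantitative modulation bound \eqref{Estimatemodu} from Proposition \ref{Modilation11} as the engine. Suppose, toward a contradiction, that for arbitrarily small $R, \delta_0 > 0$ there exists a time $t_* \in I$ in the lifespan with $\mu_0(t_*) < R$. Using Lemma \ref{ModulationLem} together with the continuity of $t \mapsto u(t)$ in $H_a^1(\R^d)$, we first argue that $\mu_0$ may be identified with the modulation parameter $\mu$ supplied by Proposition \ref{Modilation11} on the set $I_0 := \{t \in I : \delta(t) < \delta_0\}$, and that $\mu_0$ is continuous there; moreover, $I_0$ is a non-empty open neighborhood of $t=0$ since at the threshold $\delta(u_0) < \delta_0$. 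Using the continuity of $\delta$ and $\mu_0$, we may replace $t_*$ by a nearby point in $I_0$ that still violates the desired bound.

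At such a $t_* \in I_0$, the estimate \eqref{Estimatemodu} reads
\[
\mu(t_*)^{-\beta(d-2)(d+1)/(d-1)} \lesssim \delta(t_*)^2 < \delta_0^2,
\]
so with $c := \beta(d-2)(d+1)/(d-1) > 0$ we deduce $\mu(t_*) \geq C\delta_0^{-2/c}$. Fixing the target $R > 0$ first and then choosing $\delta_0$ small enough so that $C\delta_0^{-2/c} > R$ contradicts the hypothesis $\mu_0(t_*) = \mu(t_*) < R$.

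The main obstacle is justifying that the putative bad time $t_*$ can indeed be placed in $I_0$. This amounts to extending the modulation decomposition from an initial window $[0,\eta)$ to the full lifespan. The argument is a continuity/bootstrap one: if $t^\dagger$ is the first time at which $\delta(t^\dagger) = \delta_0$, then the bound above forces $\mu_0(t) \geq C\delta_0^{-2/c}$ for all $t < t^\dagger$, hence $\mu_0(t^\dagger)$ is likewise bounded below by continuity. Conservation of mass, $\|u(t)\|_{L^2}^2 = \|u_0\|_{L^2}^2$, together with the fact that the $L^p$-mass of $\mu^{(d-2)/2}W_a(\mu\cdot)$ blows up as $\mu \to 0$ for intermediate exponents $p = \frac{2d+2}{d-1}$ (with $W_a \in L^p$ under the assumed range of $a$), precludes $\mu_0$ from approaching $0$ across $t^\dagger$, thereby allowing the modulation picture to be continued past $t^\dagger$ and the bound to propagate. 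Iterating yields $I_0 = I$ and completes the argument.
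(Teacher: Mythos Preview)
Your proposal has a circularity problem. You invoke the bound $\mu(t)^{-\beta(d-2)(d+1)/(d-1)}\lesssim\delta(t)^2$ from \eqref{Estimatemodu} in Proposition~\ref{Modilation11} as the ``engine'' of the argument. But look at how that bound is actually obtained in the paper: it is the content of Lemma~\ref{BoundEx}, whose proof begins by invoking Proposition~\ref{Yinfinity} to guarantee $\mu(t)\geq R$. That lower bound is essential there, since the pointwise estimate $\mu(t)^{(d-2)/2}W_a(\mu(t)x)\gtrsim\mu(t)^{-\beta(d-2)/2}$ on $|x|\leq 1$ fails when $\mu(t)$ is small (for small $\mu$ and $|x|\sim 1$ one has $|\mu x|$ small, and $W_a(\mu x)\sim|\mu x|^{(\beta-1)(d-2)/2}$, giving the wrong sign in the exponent). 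So the very estimate you are using to prove Proposition~\ref{Yinfinity} requires Proposition~\ref{Yinfinity} as input. The logical order in the paper is Lemma~\ref{ModulationLem} $\Rightarrow$ Proposition~\ref{Yinfinity} $\Rightarrow$ Lemma~\ref{ExistenceF} $\Rightarrow$ Lemmas~\ref{BoundI}, \ref{BoundEx}, \ref{BoundEx22} $\Rightarrow$ Proposition~\ref{Modilation11}; you cannot short-circuit this.

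The paper's argument avoids this circularity by using only Lemma~\ref{ModulationLem} and mass conservation, which are available at this stage. If $\delta(t_n)\to 0$ and $\mu_0(t_n)\to 0$, the rescaled functions $f_n(x)=e^{-i\theta_0(t_n)}\mu_0(t_n)^{-(d-2)/2}u(t_n,\mu_0(t_n)^{-1}x)$ converge to $W_a$ in $\dot H_a^1$ by Lemma~\ref{ModulationLem}, while $\|f_n\|_{L^2}=\mu_0(t_n)M(u_0)^{1/2}\to 0$ by mass conservation and the scaling. These two facts are incompatible on any ball (strong $\dot H_a^1$ convergence gives $L^{2d/(d-2)}$ and hence local $L^2$ convergence to $W_a\neq 0$). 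This is the self-contained mechanism you should use. Incidentally, your ``main obstacle'' paragraph about extending $I_0$ to all of $I$ is also misdirected: the parameter $\mu_0$ is only defined where $\delta(t)<\delta_0$, so the bound \eqref{Nobound} is only asserted on $I_0$, and no bootstrap to the full lifespan is needed.
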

	
	\begin{proof}
		Suppose not, then there exists $\left\{t_{n}\right\}$ so that
		\begin{equation}\label{Bounddelta}
			\delta(t_{n})\to0\qtq{and}\mu_{0}(t_{n})\to 0 \qtq{as} n\to \N.
		\end{equation}
		From \eqref{ModulationLem} we then deduce that
		\begin{align}\label{Cv112}
			f_{n}(x):=	e^{-i \theta_{0}(t_{n})}\mu_{0}(t_{n})^{-\frac{d-2}{2}}u(t_{n}, \mu_{0}(t_{n})^{-1}x)\to W_a(x) \qtq{in} \dot{H}_a^{1}(\R^{d}).
		\end{align}
		By mass conservation and \eqref{Bounddelta}, we also get 
		\[
		\|f_{n}\|_{L^{2}}\leq \mu_{0}(t_{n})M(u_{0})^{\frac{1}{2}}\to 1
		, n \to \infty.
		\]
		But then by \eqref{ModulationLem} we obtain  $f_{n}\rightharpoonup W_a$ in $L^{2}(B_1(0))$ as $n\to \infty$.  By the proposition of weak convergence, we have
		\[
		\|W_a\|_{L^{2}(B_1(0))}\leq \lim_{n\to \infty}{\|f_{n}\|_{L^{2}(B_1(0))}}\to 0
		\]
		which is a contradiction.
	\end{proof}
	Arguing as in \cite[Lemma 4.2]{KYang-SIAM}, we may also obtain the following lemma:
	\begin{lemma}\label{ExistenceF}
		If  $\delta_{0}>0$ is sufficiently small, then there exist $C^{1}$ functions $\theta: I_{0}\to \R$ and $\mu: I_{0}\to [0, \infty)$  so that
		\begin{equation}\label{Taylor}
			\| u(t)-e^{i\theta(t)}\mu(t)^{\frac{d-2}{2}}W_a(\mu(t)\cdot)\|_{\dot{H}_a^{1}} \ll 1.
		\end{equation}
		Writing 
		\[
		g(t):=g_{1}(t)+i g_{2}(t)=e^{-i\theta(t)}[u(t)-e^{i\theta(t)}\mu(t)^{\frac{d-2}{2}}W_a(\mu(t)\cdot)],
		\]
		and $W_{1}^a:=\tfrac{d-2}{2}W_a+x\cdot\nabla W_a\in \dot{H}_a^{1}$, we have
		\begin{equation}\label{Ortogonality}
			\<\sqrt{\mathcal{L}_a} g_{2}(t), \sqrt{\mathcal{L}_a}[\mu(t)^{\frac{d-2}{2}} W_a(\mu(t)\cdot)] \>
			=\<\sqrt{\mathcal{L}_a} g_{1}(t), \sqrt{\mathcal{L}_a}[\mu(t)^{\frac{d-2}{2}} W_{1}^a(\mu(t)\cdot)]\>\equiv 0.
		\end{equation}
	\end{lemma}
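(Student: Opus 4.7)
The plan is to apply the implicit function theorem to the two orthogonality conditions in \eqref{Ortogonality}. Lemma \ref{ModulationLem} provides, for $\delta_0$ sufficiently small and $t \in I_0$, a rough pair $(\theta_0(t), \mu_0(t))$ with $\|u(t) - e^{i\theta_0(t)} \mu_0(t)^{\frac{d-2}{2}} W_a(\mu_0(t)\cdot)\|_{\dot H_a^1} \ll 1$ (after switching between the two equivalent parametrizations $\mu \leftrightarrow 1/\mu$ used in Lemmas \ref{ModulationLem} and \ref{ExistenceF}). I would then define, for $v \in \dot H_a^1(\R^d)$,
\[
\Phi(\theta, \mu; v) := \bigl(\langle \sqrt{\mathcal{L}_a} g_2, \sqrt{\mathcal{L}_a} W_{a,\mu}\rangle,\ \langle \sqrt{\mathcal{L}_a} g_1, \sqrt{\mathcal{L}_a} W_{1,\mu}\rangle\bigr),
\]
where $W_{a,\mu}(x) := \mu^{\frac{d-2}{2}} W_a(\mu x)$, $W_{1,\mu}(x) := \mu^{\frac{d-2}{2}} W_1^a(\mu x)$, and $g = g_1 + ig_2 := e^{-i\theta} v - W_{a,\mu}$. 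At the reference point where $v = e^{i\theta} W_{a,\mu}$ (so $g = 0$), one has $\Phi = 0$.

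Next I compute the Jacobian of $\Phi$ in $(\theta,\mu)$ at $g = 0$. A direct differentiation gives $\partial_\theta g = -i W_{a,\mu}$ (purely imaginary) and $\partial_\mu g = -\mu^{-1} W_{1,\mu}$ (purely real), so the real/imaginary decoupling of $\Phi_1, \Phi_2$ forces the Jacobian to be diagonal:
\[
\begin{pmatrix} -\|W_a\|_{\dot H_a^1}^2 & 0 \\ 0 & -\mu^{-1}\|W_1^a\|_{\dot H_a^1}^2 \end{pmatrix},
\]
where I used the $\dot H_a^1$-scaling invariance $\|W_{a,\mu}\|_{\dot H_a^1} = \|W_a\|_{\dot H_a^1}$ (and likewise for $W_{1,\mu}$) via a change of variables in the defining integrals. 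Since $W_a$ is not fixed by the scaling $\mu^{\frac{d-2}{2}} W_a(\mu \cdot)$, its infinitesimal generator $W_1^a$ is nontrivial in $\dot H_a^1$, so the Jacobian is invertible uniformly on any bounded range of $\mu$. The implicit function theorem then produces, in a neighborhood of each $(\theta_0(t), \mu_0(t))$, unique parameters $(\theta(t), \mu(t))$ solving $\Phi(\theta(t), \mu(t); u(t)) = 0$, which is exactly \eqref{Ortogonality}. The smallness $\|g(t)\|_{\dot H_a^1} \ll 1$ then follows by quantitative comparison of the optimized pair with the rough pair.

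For the $C^1$ regularity in $t$, I would differentiate the orthogonality conditions \eqref{Ortogonality} in time. This yields a $2\times 2$ linear system for $(\theta'(t), \mu'(t))$ whose coefficient matrix coincides with the above Jacobian up to perturbations controlled by $\|g\|_{\dot H_a^1}$, and whose source term is a continuous linear functional of $\partial_t u$. Since $\partial_t u = -i(\mathcal L_a u - |u|^{\frac{4}{d-2}} u + |u|^{\frac{4}{d-1}} u) \in C(I, H_a^{-1})$ via \eqref{NLS}, we may solve for $\theta', \mu'$ continuously, upgrading $\theta, \mu$ from continuous to $C^1$.

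The main obstacle is verifying that $W_1^a = \tfrac{d-2}{2} W_a + x\cdot \nabla W_a$ actually belongs to $\dot H_a^1(\R^d)$, so that the Jacobian entry $\|W_1^a\|_{\dot H_a^1}^2$ is finite; this is the sole reason for the stronger assumption $-\tfrac{(d-2)^2}{4} + (\tfrac{2(d-2)}{d+2})^2 < a < 0$ in Theorem \ref{Threshold}. Indeed, from the explicit form \eqref{E:Wa}, $W_a(x)$ carries a singularity $|x|^{\beta-1}$ near the origin and the radial vector field $x\cdot\nabla W_a$ inherits the same singular profile, so its $\dot H_a^1$ norm is finite only when $\beta$ (equivalently, $a$) is not too small; the corresponding computation is performed in \cite{KYang-SIAM}. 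Once this regularity is in hand, the rest of the argument is the standard modulation scheme.
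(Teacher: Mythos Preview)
Your proposal is correct and follows the standard modulation argument via the implicit function theorem; the paper itself does not give a proof but simply refers to \cite[Lemma~4.2]{KYang-SIAM}, where exactly this scheme is carried out. Your Jacobian computation, the identification of $W_1^a\in\dot H_a^1$ as the only point requiring the sharper lower bound on $a$, and the upgrade to $C^1$ regularity by differentiating the orthogonality relations are all on target and match what one finds in that reference.
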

	
	To analysis the situation when $\delta(t)$ is small,we  need to expand the energy around the ground state. In fact, we will prove that 
	\begin{equation}\label{Tay22}
		E_a^{c}(u(t))-E_a^{c}(W_a)
		=\F(\mu^{-\frac{d-2}{2}}(t)g(t, \mu^{-1}(t)x))+o(\|g\|^{2}_{\dot{H}_a^{1}}),
	\end{equation}
	where $\F$ is the quadratic form on $\dot{H}_a^{1}$ defined by
	\begin{align*}
	\F(h):=\frac{1}{2}\<(E_a^{c})^{\prime\prime}(W_a)[h], h\>&=
	\frac{1}{2}\int_{\R^{d}}|\nabla h|^{2}dx+\frac{a}{|x|^2}|h|^2dx\\
	&\hspace{2ex}-\frac{1}{2}\int_{\R^{d}}W_a^{\frac{4}{d-2}}[\frac{d+2}{d-2}|h_{1}|^{2}+|h_{2}|^{2}]dx,
	\end{align*}
	with  $h=h_{1}+ih_{2}\in \dot{H}_a^{1}$.
	
	To prove \eqref{Tay22}, first observe that using \eqref{Taylor}, we may write 
	\[
	\begin{split}
		&E_a^{c}(u(t))-E_a^{c}(W_a)=E_a^{c}(e^{-i\theta(t)}u(t)) -E_a^{c}(\mu^{\frac{d-2}{2}}(t)W_a(\mu(t)\cdot))\\
		&=\< (E_a^{c})^{\prime}(\mu^{\frac{d-2}{2}}(t)W_a(\mu(t)\cdot), g(t) \>
		+\frac{d-2}{2}\< (E_a^{c})^{\prime\prime}(\mu^{\frac{d-2}{2}}(t)W_a(\mu(t)\cdot)[g(t)], g(t)\>+
		o(\|g\|^{2}_{\dot{H}_a^{1}}).
	\end{split}
	\]
	As $(E_a^{c})^{\prime}(\mu^{\frac{d-2}{2}}(t)W_a(\mu(t)\cdot)=0$, we obtain \eqref{Tay22}.

	On the other hand, The quadratic form  $\F$ can be written
	\[
	\F(h)=\tfrac{1}{2}\<L_{1}h_{1},h_{1}\>+\tfrac{1}{2}\<L_{2}h_{2}, h_{2} \>, \qtq{where} h=h_{1}+ih_{2}\in\dot{H}_a^{1}(\R^{d})
	\]
	and $L_{1}$ and $L_{2}$ are two bounded linear operators defined on $\dot{H}_a^{1}(\R^{d})$ by
	\begin{align*}
		L_{1}u:=\mathcal{L}_a u- \frac{d+2}{d-2}W_a^{\frac{4}{d-2}}u,\quad L_{2}v&:=\mathcal{L}_a v-W_a^{\frac{4}{d-2}}v.	
	\end{align*}
	
Setting $H^a:=\text{span}\left\{W_a, iW_a,W_a+x\cdot \nabla W_a\right\}$ (viewed as a subspace of $\dot {H}_a^1$), we have the following (see \cite[Proposition~3.1]{KYang-SIAM}):
	\begin{lemma}\label{CoerS}
		There exists $C>0$ such that for radial $h=h_{1}+ih_{2}\in (H^a)^{\bot}$,
		\[
		\F(h)\geq C \|h\|^{2}_{\dot{H}_a^{1}}.
		\]
	\end{lemma}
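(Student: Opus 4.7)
I would mimic the spectral analysis of Duyckaerts--Merle \cite{DuyckaMerle2009} for $\eqref{NLS-critical}$ and its inverse-square adaptation in \cite{KYang-SIAM}, reducing the coercivity of $\F$ to well-known facts about the linearized operators $L_1$ and $L_2$. Writing $h=h_1+ih_2$ with $h_1,h_2$ real radial gives $\F(h)=\tfrac{1}{2}\<L_1 h_1,h_1\>+\tfrac{1}{2}\<L_2 h_2,h_2\>$. Since $W_a+x\cdot\nabla W_a=W_1^a+\tfrac{4-d}{2}W_a$, we may rewrite $H^a=\operatorname{span}_\R\{W_a,W_1^a\}\oplus i\R W_a$, so that the condition $h\in(H^a)^\perp$ in the $\<\sqrt{\mathcal{L}_a}\cdot,\sqrt{\mathcal{L}_a}\cdot\>$ pairing decouples into $h_1\perp_{\dot H_a^1}\{W_a,W_1^a\}$ and $h_2\perp_{\dot H_a^1}W_a$. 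It then suffices to prove $\<L_j h_j,h_j\>\gtrsim\|h_j\|_{\dot H_a^1}^2$ on each respective orthogonal complement.

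For $L_2$, the elliptic identity $\mathcal{L}_a W_a=W_a^{4/(d-2)}W_a$ gives $L_2 W_a=0$, and since $W_a>0$ it is the ground state of the Schr\"odinger-type operator $L_2$ on the radial sector; hence $L_2\ge 0$, with a spectral gap on the $\dot H_a^1$-orthogonal complement of $W_a$. For $L_1$, differentiating the scaling family $\mu^{(d-2)/2}W_a(\mu\cdot)$ at $\mu=1$ produces $L_1 W_1^a=0$. A Sturm/Perron--Frobenius argument on the radial subspace, using the explicit form \eqref{E:Wa} and the uniqueness of the positive radial solution of \eqref{W_a}, shows that $L_1$ has exactly one simple negative eigenvalue, a one-dimensional kernel $\R W_1^a$, and strictly positive spectrum above. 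The constraint $h_1\perp W_1^a$ eliminates the kernel, while $h_1\perp W_a$ eliminates the negative direction, since a Pohozaev-type computation using \eqref{PoQ} gives $\<L_1 W_a,W_a\>=-\tfrac{4}{d-2}\|W_a\|_{\dot H_a^1}^2<0$, so $W_a$ has non-zero projection onto the negative eigenfunction.

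Finally, to upgrade these non-negativity statements to the quantitative coercivity $\F(h)\ge C\|h\|_{\dot H_a^1}^2$, I would argue by contradiction. Suppose a sequence $h_n\in(H^a)^\perp$ satisfies $\|h_n\|_{\dot H_a^1}=1$ and $\F(h_n)\to 0$; extract a weak limit $h_\infty$ in $\dot H_a^1$, and use Rellich-type local compactness together with the decay of $W_a^{4/(d-2)}$ at infinity to pass to the limit in the quadratic form, concluding that $h_\infty$ is a null direction of $L_1\oplus L_2$ inside $(H^a)^\perp$, hence vanishes; then strong convergence in the compact term and the normalization together force a contradiction. The main obstacle is exactly this last step: verifying that the inverse-square potential in $\mathcal{L}_a$ does not disrupt the spectral/compactness structure of $L_1,L_2$, which is where the hypothesis $a>-\tfrac{(d-2)^2}{4}+\bigl(\tfrac{2(d-2)}{d+2}\bigr)^2$ enters, ensuring that $W_a$ has sufficient regularity and decay for the perturbation $W_a^{4/(d-2)}$ to be relatively compact with respect to $\mathcal{L}_a$ and for the explicit analysis of the radial operators $L_1,L_2$ to proceed as in the $a=0$ case.
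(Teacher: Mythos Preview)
Your proposal is correct and follows precisely the spectral-analytic route of Duyckaerts--Merle adapted to the inverse-square potential; this is exactly the content of \cite[Proposition~3.1]{KYang-SIAM}, which is what the paper cites in lieu of a proof. In other words, the paper does not supply its own argument here, and your sketch recovers the one in the cited reference.
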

	
	 begin proving the estimates appearing in Proposition~\ref{Modilation11}.
	
	\begin{lemma}\label{BoundI} Let $(\theta(t), \mu(t))$ and $g(t)$ be as in Lemma \ref{ExistenceF}.  Then
		\begin{equation}\label{DeltaBound}
			\int_{\R^{d}}|u(t,x)|^{\frac{2d+2}{d-1}}dx\lesssim \delta^{2}(t)\sim \|g(t)\|^{2}_{\dot{H}_a^{1}}.
		\end{equation}
	\end{lemma}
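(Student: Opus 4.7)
The strategy is to convert the threshold energy identity into a spectral estimate for the Hessian of $E_a^c$ at $W_a$, using the coercivity Lemma \ref{CoerS} together with a careful treatment of the one direction not controlled by the modulation orthogonality. First, combine the conservation of $E_a$ with the threshold condition \eqref{22condition} to write
\[
\tfrac{d-1}{2(d+1)}\|u(t)\|^{\frac{2d+2}{d-1}}_{L^{\frac{2d+2}{d-1}}} = E_a^c(W_a)-E_a^c(u(t)).
\]
Then apply the Taylor expansion \eqref{Tay22}: setting $\tilde g(t,x):=\mu(t)^{-\frac{d-2}{2}}g(t,\mu(t)^{-1}x)$, whose $\dot{H}_a^1$-norm equals $\|g\|_{\dot{H}_a^1}$ by scale-invariance, one obtains $E_a^c(W_a)-E_a^c(u(t)) = -\F(\tilde g)+o(\|g\|_{\dot{H}_a^1}^2)$. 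The task therefore reduces to showing $-\F(\tilde g)\sim\|g\|_{\dot{H}_a^1}^2\sim\delta(t)^2$.

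The orthogonality relations \eqref{Ortogonality} give $\tilde g_2\perp_{\dot{H}_a^1}W_a$ and $\tilde g_1\perp_{\dot{H}_a^1}W_1^a$, but crucially not $\tilde g_1\perp_{\dot{H}_a^1}W_a$. I therefore decompose
\[
\tilde g_1 = \alpha\,\frac{W_a}{\|W_a\|_{\dot{H}_a^1}} + h, \qquad h\perp_{\dot{H}_a^1}W_a,
\]
and use the Pohozaev identity \eqref{PoQ} to verify $\langle W_a,W_1^a\rangle_{\dot{H}_a^1}=0$. Consequently $h+i\tilde g_2\in(H^a)^\perp$, and Lemma \ref{CoerS} yields $\F(h+i\tilde g_2)\geq C(\|h\|_{\dot{H}_a^1}^2+\|\tilde g_2\|_{\dot{H}_a^1}^2)$. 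A direct calculation using $\mathcal{L}_a W_a=W_a^{(d+2)/(d-2)}$ together with $\|W_a\|_{L^{2d/(d-2)}}^{2d/(d-2)}=\|W_a\|_{\dot{H}_a^1}^2$ gives $\langle L_1 W_a,W_a\rangle_{L^2}=-\tfrac{4}{d-2}\|W_a\|_{\dot{H}_a^1}^2$, and all cross terms produced by the splitting vanish because $h\perp_{\dot{H}_a^1}W_a$. Hence
\[
\F(\tilde g) = -\tfrac{2}{d-2}\alpha^2 + \F(h+i\tilde g_2)\geq -\tfrac{2}{d-2}\alpha^2 + C\bigl(\|h\|_{\dot{H}_a^1}^2+\|\tilde g_2\|_{\dot{H}_a^1}^2\bigr).
\]

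Combining the above estimates with $\|g\|_{\dot{H}_a^1}^2=\alpha^2+\|h\|_{\dot{H}_a^1}^2+\|\tilde g_2\|_{\dot{H}_a^1}^2$, one gets
\[
\tfrac{d-1}{2(d+1)}\|u\|^{\frac{2d+2}{d-1}}_{L^{\frac{2d+2}{d-1}}}+C\bigl(\|h\|^2+\|\tilde g_2\|^2\bigr) \leq \tfrac{2}{d-2}\alpha^2 + o(\|g\|_{\dot{H}_a^1}^2),
\]
which after absorbing the $o(\|g\|^2)$ error for $\|g\|_{\dot{H}_a^1}$ small yields $\|g\|_{\dot{H}_a^1}^2\sim\alpha^2$ and $\|u\|^{\frac{2d+2}{d-1}}_{L^{\frac{2d+2}{d-1}}}\lesssim\alpha^2\lesssim\|g\|_{\dot{H}_a^1}^2$. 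For the equivalence with $\delta(t)^2$, the expansion
\[
\|u\|_{\dot{H}_a^1}^2=\|g\|_{\dot{H}_a^1}^2+2\alpha\|W_a\|_{\dot{H}_a^1}+\|W_a\|_{\dot{H}_a^1}^2
\]
gives $\delta(t)=\bigl|\|g\|_{\dot{H}_a^1}^2+2\alpha\|W_a\|_{\dot{H}_a^1}\bigr|$; since $|\alpha|\sim\|g\|_{\dot{H}_a^1}$ dominates $\|g\|_{\dot{H}_a^1}^2$ in the regime $\|g\|_{\dot{H}_a^1}\ll 1$, we conclude $\delta(t)\sim|\alpha|\sim\|g\|_{\dot{H}_a^1}$.

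The main obstacle is precisely the missing orthogonality $\tilde g_1\perp W_a$: because $L_1$ is negative along $W_a$, the component $\alpha$ contributes negatively to $\F$ and cannot be eliminated by coercivity alone. It is the sign of the threshold identity $E_a^c(u)\leq E_a^c(W_a)$ that forces $\alpha^2$ to dominate the transverse remainders, thereby upgrading the a priori bound $\|g\|_{\dot{H}_a^1}^2\lesssim\delta\lesssim\|g\|_{\dot{H}_a^1}$ to the sharp equivalence $\delta\sim\|g\|_{\dot{H}_a^1}$.
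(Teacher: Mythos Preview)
Your proposal is correct and follows essentially the same approach as the paper's proof: both exploit the threshold identity to express $\|u\|_{L^{\frac{2d+2}{d-1}}}^{\frac{2d+2}{d-1}}$ in terms of $-\F(\tilde g)+o(\|g\|_{\dot H_a^1}^2)$, split off the component of $\tilde g$ along $W_a$ (the direction not fixed by the orthogonality \eqref{Ortogonality}), use that $L_1W_a\propto\mathcal{L}_aW_a$ so the cross term $\langle L_1W_a,h\rangle$ vanishes by $h\perp_{\dot H_a^1}W_a$, and then invoke the coercivity Lemma~\ref{CoerS} on the remainder to force $\|h\|_{\dot H_a^1}^2+\|\tilde g_2\|_{\dot H_a^1}^2\lesssim\alpha^2$. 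Your presentation is slightly more explicit (computing $\F(\tilde g)=-\tfrac{2}{d-2}\alpha^2+\F(h+i\tilde g_2)$ directly rather than carrying the cross term and killing it afterwards), but the argument is the same.
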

	\begin{proof}
		By \eqref{Tay22} and $E_a(u(t))=E_a^{c}(W_a)$,we see that
		\begin{equation}\label{Aprox}
			0=\F(\mu^{-\frac{1}{2}}(t)g(t, \mu^{-1}(t)\cdot))
			+\tfrac{d-1}{2d+2}\|u(t)\|^{\frac{2d+2}{d-1}}_{L^{\frac{2d+2}{d-1}}}+
			o(\|g\|^{2}_{\dot{H}_a^{1}}).
		\end{equation}
		We now decompose $g(t)$ as follows
		\begin{gather}\label{Defg}
			g(t)=\alpha(t)\mu(t)^{\frac{d-2}{2}} W_a(\mu(t)\cdot)+h(t),\\
			\alpha(t):=\frac{(\mu^{-\frac{d-2}{2}}(t)g(t, \mu^{-1}(t)\cdot), W_a)_{\dot{H}_a^{1}}}{(W_a,W_a)_{\dot{H}_a^{1}}}.
		\end{gather}
		Note that $\alpha\in \R$ is chosen to guarantee that
		\begin{align}\label{Alfor}
			(\mu(t)^{\frac{d-2}{2}} W_a(\mu(t)\cdot), h(t))_{\dot{H}_a^{1}}=0.
		\end{align}
		
		Using \eqref{Taylor} and \eqref{Defg} we first observe that 
		\begin{equation}\label{BoundR}
			|\alpha(t)|\lesssim \|g\|_{\dot{H}_a^{1}}\ll 1.
		\end{equation}
		Moreover, by definition of $h$ (cf. \eqref{Alfor}) and  \eqref{Ortogonality} we have $\mu^{-\frac{d-2}{2}}(t)h(t, \mu^{-1}(t)\cdot)\in (H^a)^{\bot}$.
		Thus, Lemma \ref{CoerS} implies
		\[
		\F(\mu^{-\frac{d-2}{2}}(t)h(t, \mu^{-1}(t)\cdot)) \gtrsim \|h\|^{2}_{\dot{H}_a^{1}}.
		\]
		Combining this with \eqref{Aprox} we deduce that
		\[
		\|h\|^{2}_{\dot{H}_a^{1}}+\|u\|^{\frac{2(d+1)}{d-1}}_{L^{\frac{2(d+1)}{d-1}}}\lesssim \alpha^{2}+|\alpha\langle L_{1}W_a, \mu^{-\frac{d-2}{2}}(t)h(t, \mu^{-1}(t)\cdot) \rangle|+
		o(\|g\|^{2}_{\dot{H}_a^{1}}).
		\]
		Notice that $L_{2}(W_a)=0$, so that $L_{1}W_a=\frac{4}{d-2}\mathcal{L}_a W_a$. Since (recalling \eqref{Alfor})
		\[
		\langle L_{1}W, \mu^{-\frac{d-2}{2}}(t)h(t, \mu^{-1}(t)\cdot)\rangle=-\tfrac{4}{d-2}(W_a, \mu^{-\frac{d-2}{2}}(t)h(t, \mu^{-1}(t)\cdot))_{\dot{H}_a^{1}}=0,
		\]
		the inequality above shows
		\[
		\|h\|^{2}_{\dot{H}^{1}}+\|u\|^{\frac{2(d+1)}{d-1}}_{L^{\frac{2(d+1)}{d-1}}}\lesssim \alpha^{2}+o(\|h\|^{2}_{\dot{H}_a^{1}}).
		\]
		In particular,
		\begin{equation}\label{BoundV}
			\|h(t)\|^{2}_{\dot{H}_a^{1}}\lesssim \alpha^{2}(t)
			\quad \text{and}\quad 
			\|u(t)\|^{\frac{2(d+1)}{d-1}}_{L^{\frac{2(d+1)}{d-1}}}\lesssim \alpha^{2}(t).
		\end{equation}
		
		On the other hand, by the orthogonality condition \eqref{Alfor} we obtain
		\[
		\|g(t)\|^{2}_{\dot{H}_a^{1}}=\alpha^{2}(t)\| W_a\|^{2}_{\dot{H}_a^{1}}+\|h(t)\|^{2}_{\dot{H}_a^{1}},
		\]
		which implies by \eqref{BoundR} and \eqref{BoundV} that $\|g\|_{\dot{H}_a^{1}}\sim |\alpha|$.
		
		Finally,  combining \eqref{Alfor}) and \eqref{BoundV} we obtain
		\begin{align*}
			\delta(t)&=\bigl|\|W_a\|^{2}_{\dot{H}_a^{1}}-\|[(1+\alpha)\mu(t)^{\frac{d-2}{2}} W_a(\mu(t)\cdot)+h]\|^{2}_{\dot{H}_a^{1}}\bigr|\notag\\
			&=2|\alpha| \|W_a\|^{2}_{\dot{H}_a^{1}}+\O(\alpha^{2}),
		\end{align*}
		which shows $\delta\sim |\alpha|$. Combining the estimates above, we have
		\[
		\|u(t)\|^{\frac{2(d+1)}{d-1}}_{L^{\frac{2(d+1)}{d-1}}}\lesssim \alpha^{2}(t)\sim \delta^{2}(t)\sim \|g(t)\|^{2}_{\dot{H}_a^{1}}.
		\]
	\end{proof}
	\begin{lemma}\label{BoundEx}
		Under the conditions of Lemma~\ref{BoundI}, if $\delta_{0}$ is sufficiently small, then
		\begin{equation}\label{EquaEx22}
			\frac{1}{\mu(t)}\lesssim \delta(t)^{\frac{2d-2}{\beta(d-2)(d+1)}} \quad \text{for all $t\in I_{0}$}.
		\end{equation}
	\end{lemma}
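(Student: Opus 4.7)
The plan is to combine Lemma~\ref{BoundI}'s upper bound $\|u(t)\|_{L^{p_1}}^{p_1} \lesssim \delta(t)^2$ (where $p_1 := \tfrac{2(d+1)}{d-1}$) with a matching localized \emph{lower} bound on $\|u(t)\|_{L^{p_1}}^{p_1}$ that scales like $\mu(t)^{-\beta(d-2)(d+1)/(d-1)}$ for large $\mu$. The mechanism for the lower bound is the far-field decay of the ground state: from the explicit formula \eqref{E:Wa}, one has $W_a(y) \sim |y|^{-(\beta+1)(d-2)/2}$ as $|y|\to\infty$.

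Fix the annulus $A := \{x\in\R^d : \tfrac12\le |x|\le 2\}$. For $\mu(t)$ sufficiently large, $\mu(t)|x| \gg 1$ uniformly on $A$, so the asymptotic yields the pointwise lower bound
\[
\bigl|\mu(t)^{\frac{d-2}{2}} W_a(\mu(t) x)\bigr| \gtrsim \mu(t)^{\frac{d-2}{2}}\cdot (\mu(t)|x|)^{-\frac{(\beta+1)(d-2)}{2}} \sim \mu(t)^{-\beta(d-2)/2},\qquad x\in A.
\]
Raising to the $p_1$-th power and integrating over $A$ (using the identity $\tfrac{\beta(d-2)}{2}\cdot p_1 = \tfrac{\beta(d-2)(d+1)}{d-1}$) gives
\[
\bigl\|\mu(t)^{\frac{d-2}{2}} W_a(\mu(t)\cdot)\bigr\|_{L^{p_1}(A)}^{p_1} \gtrsim \mu(t)^{-\beta(d-2)(d+1)/(d-1)}.
\]

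To transfer this into a bound on $u$, apply the triangle inequality to the decomposition $u(t) = e^{i\theta(t)}[g(t) + \mu(t)^{(d-2)/2}W_a(\mu(t)\cdot)]$:
\[
\bigl\|\mu(t)^{\frac{d-2}{2}}W_a(\mu(t)\cdot)\bigr\|_{L^{p_1}(A)}^{p_1} \lesssim \|u(t)\|_{L^{p_1}(A)}^{p_1} + \|g(t)\|_{L^{p_1}(A)}^{p_1}.
\]
The first term is bounded via Lemma~\ref{BoundI}: $\|u(t)\|_{L^{p_1}(A)}^{p_1} \le \|u(t)\|_{L^{p_1}}^{p_1} \lesssim \delta(t)^2$. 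For the second, since $A$ is bounded and $p_1 < \tfrac{2d}{d-2}$, H\"older's inequality together with the Sobolev embedding $\dot H^1 \hookrightarrow L^{2d/(d-2)}$ and the norm equivalence $\|\cdot\|_{\dot H^1}\sim \|\cdot\|_{\dot H_a^1}$ (Lemma~\ref{EquiSobolev}, valid in our range of $a$) give
\[
\|g(t)\|_{L^{p_1}(A)} \lesssim \|g(t)\|_{L^{2d/(d-2)}} \lesssim \|g(t)\|_{\dot H_a^1} \lesssim \delta(t).
\]
Thus $\|g(t)\|_{L^{p_1}(A)}^{p_1} \lesssim \delta(t)^{p_1}$; since $p_1>2$ and $\delta(t)\le\delta_0$ is small, this is absorbed into $\delta(t)^2$. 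Combining yields $\mu(t)^{-\beta(d-2)(d+1)/(d-1)} \lesssim \delta(t)^2$, which is equivalent to \eqref{EquaEx22}.

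The main technical point is to justify the pointwise asymptotic for $W_a$ on $A$ with constants depending only on $d$ and $\beta$, and to handle the bounded-$\mu$ regime separately; in that regime, Proposition~\ref{Yinfinity} already gives $\mu(t)\ge R>0$, so $\mu(t)^{-\beta(d-2)(d+1)/(d-1)}$ is bounded above by a constant, and the claim holds trivially once $\delta_0$ is chosen small enough. Once the pointwise lower bound is pinned down, the remainder of the argument is routine.
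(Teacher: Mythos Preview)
Your approach is essentially the same as the paper's: both localize to a fixed bounded region (the paper uses the ball $B(0,1)$, you use the annulus $A$), extract a pointwise lower bound on the rescaled ground state from the explicit formula \eqref{E:Wa}, and compare with the upper bounds $\|u(t)\|_{L^{p_1}}^{p_1}\lesssim\delta(t)^2$ and $\|g(t)\|_{L^{p_1}(A)}\lesssim\|g(t)\|_{\dot H_a^1}\lesssim\delta(t)$ via the triangle inequality. The paper carries out the triangle inequality at the level of the norm rather than its $p_1$-th power, but this is cosmetic.

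Your final paragraph, however, is muddled. The sentence ``in that regime, Proposition~\ref{Yinfinity} already gives $\mu(t)\ge R>0$, so $\mu(t)^{-\beta(d-2)(d+1)/(d-1)}$ is bounded above by a constant, and the claim holds trivially once $\delta_0$ is chosen small enough'' does not establish the inequality: if $\mu(t)$ stayed bounded while $\delta(t)\to 0$, the desired estimate $\mu(t)^{-\beta(d-2)(d+1)/(d-1)}\lesssim\delta(t)^2$ would \emph{fail}, not hold trivially. Fortunately the case split is unnecessary. Once Proposition~\ref{Yinfinity} gives $\mu(t)\ge R>0$, the explicit formula \eqref{E:Wa} (not merely the asymptotic) yields
\[
\mu(t)^{\frac{d-2}{2}}W_a(\mu(t)x)\gtrsim \mu(t)^{-\beta(d-2)/2}\qquad\text{uniformly for }x\in A,\ t\in I_0,
\]
with constants depending only on $d,\beta,R$; indeed for $|y|\ge R/2$ one has $W_a(y)\gtrsim_{R}|y|^{-(\beta+1)(d-2)/2}$ directly from \eqref{E:Wa}, and $|y|=\mu(t)|x|\sim\mu(t)$ on $A$. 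So your main argument already covers every $t\in I_0$, and the last paragraph should simply be dropped.
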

	\begin{proof}
		By Proposition~\ref{Yinfinity}, we have that if $\delta_{0}$ is sufficiently small, then
		\[
		\delta(t)<\delta_{0}\Rightarrow \mu(t)\geq R.
		\]
		Recall that 
		\begin{align}
			W_a(x)=[d(d-2)\beta^2]^{\frac{d-2}{4}}\biggl(\frac{|x|^{\beta-1}}{1+|x|^{2\beta}}\biggr)^{\frac{d-2}{2}},\; \beta=\sqrt{1+(\tfrac 2{d-2})^2a}. 
		\end{align} 
		We see that 
		\begin{align}\label{EstWl}
			\mu(t)^{\frac{d-2}{2}}W_a(\mu(t)x)\gtrsim (\mu(t))^{-\frac{\beta(d-2)}{2}} \mbox { for all $|x|\leq 1$ and $t\in I_{0}$.}
		\end{align}
		On the other hand, by H\"older and Sobolev, we get
		\begin{align*}
			&\|e^{-i\theta(t)}[u(t)-e^{i\theta(t)}\mu(t)^{\frac{d-2}{2}}W_a(\mu(t)\cdot)]\|_{L^{\frac{2d+2}{d-1}}(B(0,1))}\\
			&=\|g(t)\|_{L^{\frac{2d+2}{d-1}}
				(B(0,1))}\lesssim
			\|g(t)\|_{L^{\frac{2d}{d-2}}}
			\lesssim \|g(t)\|_{\dot{H}^{1}}\lesssim \|g(t)\|_{\dot{H}_a^{1}}\lesssim \delta(t).
		\end{align*}
		Thus, by \eqref{DeltaBound} we deduce
		\[
		\|\mu(t)^{\frac{d-2}{2}}W_a(\mu(t)\cdot)]\|_{L^{\frac{2d+2}{d-1}}(B(0,1))}\lesssim \delta(t)+\delta(t)^{\frac{d-1}{d+1}}
		\lesssim \delta(t)^{\frac{d-1}{d+1}}.
		\]
		Now, combining \eqref{EstWl} and the inequality above we get
		\[
		\mu(t)^{-\frac{\beta(d-2)(d+1)}{d-1}}\lesssim \|\mu(t)^{\frac{d-2}{2}}W_a(\mu(t)\cdot)]\|^{\frac{2d+2}{d-1}}
		_{L^{\frac{2d+2}{d-1}}(B(0,1))}
		\lesssim \delta(t)^{2}.
		\]
	\end{proof}
	\begin{lemma}\label{BoundEx22}
		Under the conditions of Lemma \ref{BoundI}, we have the following estimate
		\begin{equation}\label{EquaEx}
			|\tfrac{\mu^{\prime}(t)}{\mu(t)}|\lesssim \mu^{2}(t)\delta(t) \quad \text{for all $t\in I_{0}$}.
		\end{equation}
	\end{lemma}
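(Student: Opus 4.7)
The plan is to differentiate the two orthogonality relations in \eqref{Ortogonality} in time, substitute the evolution equation for $g$ derived from \eqref{NLS}, and solve the resulting $2\times 2$ linear system for the pair $(\theta'(t),\mu'(t)/\mu(t))$. Writing $W_{\mu}(x):=\mu^{(d-2)/2}W_a(\mu x)$ and $(W_1^a)_\mu(x):=\mu^{(d-2)/2}W_1^a(\mu x)$, so that $\partial_t W_\mu=(\mu'/\mu)(W_1^a)_\mu$, plugging $u=e^{i\theta}[g+W_\mu]$ into \eqref{NLS} and using $\mathcal{L}_a W_\mu=|W_\mu|^{4/(d-2)}W_\mu$ yields
\begin{align*}
i\partial_t g-\mathcal{L}_a g=\theta'(g+W_\mu)-i\tfrac{\mu'}{\mu}(W_1^a)_\mu+\mathcal{N}_c(g,W_\mu)+|u|^{4/(d-1)}u\,e^{-i\theta},
\end{align*}
where $\mathcal{N}_c:=|W_\mu|^{4/(d-2)}W_\mu-|W_\mu+g|^{4/(d-2)}(W_\mu+g)$ vanishes linearly in $g$.

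Writing $g=g_1+ig_2$, taking real/imaginary parts of the above equation, and differentiating the orthogonality conditions $\langle g_2,\mathcal{L}_a W_\mu\rangle=0$ and $\langle g_1,\mathcal{L}_a(W_1^a)_\mu\rangle=0$ in $t$ produces a linear system
\begin{align*}
A(t)\begin{pmatrix}\theta'(t)\\ \mu'(t)/\mu(t)\end{pmatrix}=B(t).
\end{align*}
By Lemma \ref{ExistenceF} we have $\|g\|_{\dot{H}_a^1}\ll 1$, so $A(t)$ is an $O(\|g\|_{\dot{H}_a^1})$-perturbation of the diagonal matrix with entries $\pm\|W_a\|_{\dot{H}_a^1}^2$ and $\pm\|W_1^a\|_{\dot{H}_a^1}^2$, both scale-invariant and hence independent of $\mu$; in particular $A(t)$ is invertible with $O(1)$ inverse. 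The source term $B(t)$ gathers: (i) integration-by-parts contributions of the form $\langle g_j,\mathcal{L}_a^2 W_\mu\rangle=\langle g_j,\mathcal{L}_a[|W_\mu|^{(d+2)/(d-2)}]\rangle$, which, under the $\dot{H}_a^1$-preserving rescaling $\widetilde g(y):=\mu^{-(d-2)/2}g(y/\mu)$ combined with the scaling identity $\mathcal{L}_a[|W_\mu|^{(d+2)/(d-2)}]=\mu^{(d+6)/2}(\mathcal{L}_a|W_a|^{(d+2)/(d-2)})(\mu\cdot)$, produce an explicit factor $\mu^2$ and are therefore bounded by $\mu^2\|g\|_{\dot{H}_a^1}\sim\mu^2\delta$ via \eqref{Estimatemodu}; (ii) contributions from $\mathcal{N}_c$, which are quadratic in $g$ and thus bounded by $\mu^2\|g\|_{\dot{H}_a^1}^2\lesssim\mu^2\delta^2$; and (iii) the subcritical pairing $\langle|u|^{4/(d-1)}u,\mathcal{L}_a W_\mu\rangle$, estimated by H\"older's inequality together with the bound $\|u\|_{L^{(2d+2)/(d-1)}}^{(2d+2)/(d-1)}\lesssim\delta^2$ from \eqref{Estimatemodu} and the explicit scaling of $W_\mu$.

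Inverting $A$ and combining these estimates yields $|\mu'/\mu|+|\theta'|\lesssim\mu^2\delta$, proving \eqref{EquaEx}. The main obstacle is the careful tracking of $\mu$-weights under rescaling: the subcritical term $|u|^{4/(d-1)}u$ is scale-anisotropic relative to the critical nonlinearity, so after extracting its natural $\mu$-factors one a priori obtains a power of $\mu$ that does not match $\mu^2$. This mismatch is absorbed by using the lower bound on $\mu$ from Proposition \ref{Yinfinity} and the quantitative estimate $1/\mu\lesssim\delta^{2(d-1)/[\beta(d-2)(d+1)]}$ of Lemma \ref{BoundEx}, which convert any surplus $\mu$-powers into harmless extra factors of $\delta$. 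A secondary point is to verify that $\mathcal{L}_a(W_1^a)_\mu$ and $\mathcal{L}_a[|W_\mu|^{(d+2)/(d-2)}]$ lie in the relevant dual spaces; this is guaranteed by the stronger lower bound $a>-((d-2)/2)^2+(2(d-2)/(d+2))^2$ assumed in Theorem \ref{Threshold}.
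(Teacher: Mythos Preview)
Your approach is correct and follows essentially the same route as the paper: derive the evolution equation for $g$, take $\dot H_a^1$-pairings with $W_\mu$ and $i(W_1^a)_\mu$ (equivalently, differentiate the orthogonality conditions), and solve the resulting $2\times 2$ system for $(\theta',\mu'/\mu)$. The paper performs the two pairings sequentially rather than writing a matrix, but this is cosmetic.

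The only substantive difference is in the treatment of the subcritical term. You anticipate a mismatch in $\mu$-powers for the pairing $\langle |u|^{4/(d-1)}u,\mathcal{L}_a W_\mu\rangle$ and propose to repair it via Proposition~\ref{Yinfinity} and Lemma~\ref{BoundEx}. The paper avoids this detour entirely: it places $\mathcal{L}_a W_\mu$ in $L^{2d/(d-2)}$ (where $\|\mathcal{L}_a W_\mu\|_{L^{2d/(d-2)}}=\mu^2\|\mathcal{L}_a W_a\|_{L^{2d/(d-2)}}$ exactly), and controls the dual factor by interpolating between $\|u\|_{L^{(2d+2)/(d-1)}}\lesssim\delta^{(d-1)/(d+1)}$ (from \eqref{DeltaBound}) and the conserved mass $\|u\|_{L^2}$. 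This yields $\mu^2\delta$ on the nose, so neither Proposition~\ref{Yinfinity} nor Lemma~\ref{BoundEx} is needed here. Your route still works (Lemma~\ref{BoundEx} is already available and there is no circularity), but it is less direct. A minor imprecision: in your item (ii) you call $\mathcal{N}_c$ ``quadratic in $g$'', but the full nonlinear difference $f(W_\mu+g)-f(W_\mu)$ is linear in $g$ to leading order; the paper simply bounds it as in \eqref{FIne} by $O(\mu^2\delta)$, which is all that is required.
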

	\begin{proof}
		We define following notation
		\[
		W^a_{[\mu(t)]}(x)=\mu(t)^{\frac{d-2}{2}} W_a(\mu(t)x),
		\quad
		W^a_{1, [\mu(t)]}(x)=\mu(t)^{\frac{d-2}{2}} W^a_{1}(\mu(t)x),
		\]
		where $W_1^a=\frac{d-2}{2}W_a+x\cdot \nabla W_a$. In this notation, we have \[
		g(t)=e^{-i\theta(t)}[u(t)-e^{i\theta(t)}W^a_{[\mu(t)]}].
		\] 
		
By equation \eqref{NLS}, we can write
		\begin{equation}\label{EquationG}
			\begin{split}
				i\partial_{t}g+\Delta g-\frac{a}{|x|^2}g-\theta^{\prime}g
				-\theta^{\prime}W^a_{[\mu(t)]}+i\tfrac{\mu^{\prime}(t)}{\mu(t)}W^a_{1, [\mu(t)]}\\
				-|u|^{\frac{4}{d-1}}e^{-i\theta}u+[f(e^{-i\theta} u)-f(W^a_{[\mu(t)]})]=0,
			\end{split}
		\end{equation}
		where $f(z)=|z|^{\frac{4}{d-2}}z$. Next we observe that for $\varphi$ with $\mathcal{L}_a \varphi \in L^{\frac{2d}{d-2}}$, we have 
		\begin{align}
			|(f(e^{-i\theta}u)&-f(W^a_{[\mu(t)]}), \varphi)_{\dot{H}_a^{1}}|\nonumber \\
			&=|(f(e^{-i\theta } u)-f(W^a_{[\mu(t)]}), \mathcal{L}_a \varphi)_{L^{2}}| \label{FIne}\\
			& \leq \|e^{-i\theta} u-W^a_{[\mu(t)]}\|_{L^{{\frac{2d}{d-2}}}}[\|e^{-i\theta} u\|^{\frac{2d}{d-2}}_{L^{{\frac{2d}{d-2}}}}+\|W_a\|^{\frac{2d}{d-2}}_{L^{{\frac{2d}{d-2}}}}]\|\mathcal{L}_a \varphi\|_{L^{{\frac{2d}{d-2}}}} \nonumber \\
			& \lesssim \delta(t)\|\mathcal{L}_a \varphi\|_{L^{{\frac{2d}{d-2}}}}.\nonumber
		\end{align}
		Moreover, by H\"older's inequality, $L^p$ interpolation, conservation of mass, and \eqref{DeltaBound},
		\begin{align}
			|(|u|^{\frac{4}{d-1}}u, \varphi)_{\dot{H}_a^{1}}| & =|(|u|^{\frac{4}{d-1}}u, \mathcal{L}_a\varphi)_{L^{2}}| \nonumber \\
			& \leq \||u|^{\frac{4}{d-1}}u\|_{L^{\frac{2d}{d-2}}}\|\mathcal{L}_a \varphi\|_{L^{{\frac{2d}{d-2}}}} \nonumber \\
			& \leq  \|u\|_{L^{\frac{2d+2}{d-1}}} \|u\|^{\frac{4}{d-1}}_{L^{\frac{4d^2+4d}{2d^2-d-1}}}\|\mathcal{L}_a \varphi\|_{L^{{\frac{2d}{d-2}}}}\nonumber \\
			& \leq {\|u\|}_{L^{\frac{2d+2}{d-1}}}^{\frac{(d+1)^2}{d^2-d}} {\|u\|}_{L^{2}}^{\frac{1}{d}}\nonumber\\
			& \lesssim\delta(t)^{\frac{d+1}{d-1}}\|\mathcal{L}_a \varphi\|_{L^{{\frac{2d}{d-2}}}} \nonumber\\
			& \lesssim\delta(t)\|\mathcal{L}_a \varphi\|_{L^{{\frac{2d}{d-2}}}}. \label{MassS}
		\end{align}
		Note also that
		\begin{align}\label{Dg1}
			&|(\mathcal{L}_a g, W^a_{[\mu(t)]})_{\dot{H}_a^{1}}|
			=|( g,\mathcal{L}_a W^a_{[\mu(t)]})_{\dot{H}_a^{1}}|\leq \mu^{2}(t)\|g\|_{\dot{H}^{1}}\|\mathcal{L}_a W_a\|_{\dot{H}_a^{1}}
			\lesssim \mu^{2}(t)\delta(t),\\\label{Dg2}
			&	|(\mathcal{L}_a g, W^a_{1, [\mu(t)]})_{\dot{H}_a^{1}}|
			\leq \mu^{2}(t)\|g\|_{\dot{H}_a^{1}}\|\mathcal{L}_a W_{1}\|_{\dot{H}_a^{1}}
			\lesssim \mu^{2}(t)\delta(t).
		\end{align}
		
		We will now prove that 
		\begin{align}\label{Ang}
			|\theta^{\prime}|\lesssim \mu^{2}(t)\delta(t)+|\tfrac{\mu^{\prime}(t)}{\mu(t)}|\delta(t).
		\end{align}
	In fact, by the orthogonality condition \eqref{Ortogonality} we deduce that
		\begin{align*}
			|(i\partial_{t} g, W^a_{[\mu(t)]})_{\dot{H}_a^{1}}|&=|\IM( g, \partial_{t} W^a_{[\mu(t)]})_{\dot{H}_a^{1}}| \\
			&=\left|\tfrac{\mu^{\prime}(t)}{\mu(t)}\right||\IM( g, W^a_{[1, \mu(t)]})_{\dot{H}_a^{1}}|\lesssim\left|\tfrac{\mu^{\prime}(t)}{\mu(t)}\right|\delta(t).
		\end{align*}
		Thus, taking the $\dot{H}_a^{1}$ inner product of \eqref{EquationG} with $W^a_{[\mu(t)]}$ and using \eqref{Dg1}, \eqref{MassS}, and \eqref{FIne} (with $\varphi=W^a_{[\mu(t)]}$), we obtain
		\[
		|\theta^{\prime}|\lesssim  |\theta^{\prime}|\delta(t)+ \mu^{2}(t)\delta(t)+|\tfrac{\mu^{\prime}(t)}{\mu(t)}|\delta(t),
		\]
		where we have used that $\|\mathcal{L}_a W^a_{[\mu(t)]}\|_{\dot{H}_a^{1}}=\mu^{2}(t)\|\mathcal{L}_a W_a\|_{\dot{H}_a^{1}}$.  As $\delta(t)\ll 1$, the estimate above yields \eqref{Ang}.
		
		Next we claim that
		\begin{align}\label{Dlam}
			\left|\tfrac{\mu^{\prime}(t)}{\mu(t)}\right|\lesssim
			|\theta^{\prime}|\delta(t) + \mu^{2}(t)\delta(t)+|\tfrac{\mu^{\prime}(t)}{\mu(t)}|\delta(t).
		\end{align}
		Indeed, taking the $\dot{H}_a^{1}$ inner product of \eqref{EquationG} with $i\,W^a_{1, [ \mu(t)]}$, the estimates
		\eqref{MassS} and \eqref{FIne} (with $\varphi=i\, W^a_{1, [ \mu(t)]}$) and \eqref{Dg2} yield
		\[
		\left|\tfrac{\mu^{\prime}(t)}{\mu(t)}\right|\lesssim
		|(i\partial_{t} g, i W^a_{1, [\mu(t)]})_{\dot{H}_a^{1}}|+ \mu^{2}(t)\delta(t)
		+|\theta^{\prime}|\delta(t).
		\]
		In addition, by the orthogonality condition \eqref{Ortogonality} we have
		\begin{align*}
			|(i&\partial_{t} g, i W^a_{1, [\mu(t)]})_{\dot{H}_a^{1}}|=|( g, \partial_{t} W^a_{1, [\mu(t)]})_{\dot{H}_a^{1}}|\\
			&\lesssim|\tfrac{\mu^{\prime}(t)}{\mu(t)}|\|g\|_{\dot{H}_a^{1}}\|\tfrac{d-2}{2}W_1^a+x\cdot\nabla W_1^a\|_{\dot{H}_a^{1}}\lesssim |\tfrac{\mu^{\prime}(t)}{\mu(t)}|\delta(t),
		\end{align*}
		which implies claim \eqref{Dlam}. Putting together \eqref{Ang} and \eqref{Dlam} yields \eqref{EquaEx}.
	\end{proof}
	
	Proposition~\ref{Modilation11} now follows from Lemmas~\ref{ExistenceF}, \ref{BoundI}, \ref{BoundEx}, and \ref{BoundEx22}.

	\section{Blow-up for sub-threhold solution:the proof of Theorem \ref{sub-threshold}(ii)}\label{sec:blowup-sub}

	In this section, we prove the
	blow-up result of Theorem \ref{sub-threshold}(ii). 
	
	By Lemma \ref{VirialIden},
	we have
	\begin{align*}
		\partial^2_t V_R(t)
		= &\; 4  \int_{\R^d} w_R''(r) \big|\nabla u(t,x)\big|^2+\frac{ax}{|x|^4}\cdot\nabla w_R|u|^2 \;dx -\int_{\R^d} (\Delta^2 w_R)(x) |u(t,x)|^2 \; dx  \\
		&  - \frac4d \int_{\R^d} (\Delta w_R)  |u(t,x)|^\frac{2d}{d-2}\; dx +\frac{4}{d+1}
		\int_{\R^d} (\Delta w_R)  |u(t,x)|^\frac{2d+2}{d-1}\; dx\\
		\leq &\; 4 \int_{\R^d} \left( 2 |\nabla u (t)|^2+2\frac{a}{|x|^2}|u(t)|^2 -2|u (t)|^\frac{2d}{d-2} +\tfrac{2d}{d+1}
		|u (t)|^\frac{2d+2}{d-1} \right)\; dx \\
		& +  \frac{c}{R^2}\int_{R\leq |x|\leq 3R} \big| u (t) \big|^2 \; dx + c \int_{R\leq |x|\leq 3R} \left( \big| u (t) \big|^\frac{2d+2}{d-1}
		+ \big| u(t) \big|^\frac{2d}{d-2} \right) \; dx.
	\end{align*}
	
	By the  radial Sobolev inequality, we have
	\begin{align*}
		\big\|f\big\|_{L^\infty(|x|\geq R)} \leq &c R^{-\frac{d-1}{2}} \big\|f\big\|^{1/2}_{L^2(|x|\geq R)}\big\|\nabla f\big\|^{1/2}_{L^2(|x|\geq R)}.
	\end{align*}
	Therefore, by mass conservation and Young's inequality, we know that for any $\epsilon>0$ there exist sufficiently large $R$ such that
	\begin{align}
		\partial^2_t V_R(t) \leq & 4 K_a(u(t))
		+ \epsilon \big\|u(t,x)\big\|_{\dot{H}_a^1}^2 + \epsilon^2 \nonumber\\
		= & \frac{16d}{d-2} E_a(u) - \big(\frac{16}{d-2}-\epsilon\big)\big\| u(t)\big\|_{\dot{H}_a^1}^2 - \frac{8d}{(d+1)(d-2)}\big\|u(t)\big\|^\frac{2d+2}{d-1}_{L^\frac{2d+2}{d-1}} + \epsilon^2.\label{videntity:second der}
	\end{align}
	By $K_a(u)<0$, mass and energy conservations, Lemma \ref{uniform bound} and
	the continuity argument, we know that for any $t\in I$, we have
	\begin{align*}
		K_a(u(t)) \leq -\frac{2d}{d-2}\left(m_a-E_a(u(t))\right)<0.
	\end{align*}
	By Lemma \ref{minimization:H}, we have
	\begin{align*}
		m_a \leq H_a(u(t))< \frac1d \big\|u(t)\big\|^\frac{2d}{d-2}_{L^\frac{2d}{d-2}},
	\end{align*}
	where we have used the fact that $K_a(u(t))<0$ in the second inequality. By the fact $m=\frac1d \left(C_{GN}\right)^{-d}$ and the sharp Sobolev inequality, we have
	\begin{align*}
		\big\| u(t)\big\|^\frac{2d}{d-2}_{\dot{H}_a^1} \geq \left(C_{GN}\right)^{-\frac{2d}{d-2}} \big\|u(t)\big\|^\frac{2d}{d-2}_{L^\frac{2d}{d-2}} > \left(dC_{GN}\right)^{\frac{d}{d-2}},
	\end{align*}
	which implies that $\big\| u(t)\big\|^2_{\dot{H}_a^1} > dm_a$.
	
	In addition, by $E_a(u_0)<m_a$ and energy conservation, there exists $\delta_1>0$ such that
	$E_a(u(t))\leq (1-\delta_1)m_a$. Thus, if we choose $\epsilon$ sufficiently small, we have
	\begin{align*}
		\partial^2_t V_R(t) \leq \frac{16}{d-2} (1-\delta_1)m_a - d\big(\frac{16}{d-2}-\epsilon\big) m_a +   \epsilon^2 \leq -\frac{8d}{d-2} \delta_1 m_a,
	\end{align*}
	which implies that $u$ must blow up at finite time. \qed
		\section{ Blow-up for threshold solution:the proof of Theorem \ref{Threshold}(ii)}\label{sec:blowup-threshold}
	In this section we prove Theorem~\ref{Threshold}~(ii). Before giving the proof, we first give a notation which will be used in this section. Denote by $W_{[\lambda(t)]}^a:=\lambda(t)^{\frac{d-2}{2}}W_a(\lambda(t)\cdot)$. We proceed by proving several lemmas. We still use the local virial identity  in subsection \ref{virialargument}.
	
	\begin{lemma}\label{Lemma11}
		Suppose $u$ is a solution to \eqref{NLS} satisfying
		\begin{equation}\label{EnCon1}
			E_a(u)=E_a^{c}(W_a)\qtq{and}
			\| u_{0}\|_{\dot{H}_a^1}>\| W_a\|_{\dot{H}_a^1}.
		\end{equation}
		If $u$ is forward-global, i.e. $[0,+\infty)\subset I$, then
		\begin{align}\label{TBu}
			|I_{R}[u(t)]|\lesssim R^{2} \delta(t) \qtq{for all $t\geq 0$.}
		\end{align}
	\end{lemma}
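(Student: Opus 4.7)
The plan is to split the forward-time analysis into two regimes depending on whether $\delta(t)$ is bounded below or small, matched to a small $\delta_0 > 0$ coming from Proposition~\ref{Modilation11}. For times with $\delta(t) \ge \delta_0$, a direct estimate suffices: using $|\nabla w_R| \lesssim R\,\mathbf{1}_{\{|x|\le 2R\}}$, Cauchy-Schwarz, H\"older on the ball (giving $\|u\|_{L^2(B_{2R})} \lesssim R\|u\|_{L^{2d/(d-2)}}$), and Sobolev embedding, one obtains $|I_R[u(t)]| \lesssim R^2\|u(t)\|_{\dot H_a^1}^2$. Since $\|u\|_{\dot H_a^1} > \|W_a\|_{\dot H_a^1}$ is preserved by Lemma~\ref{GlobalS}, we have $\|u(t)\|_{\dot H_a^1}^2 = \delta(t) + \|W_a\|_{\dot H_a^1}^2$, and combining with $\delta(t) \ge \delta_0$ gives $\|u\|_{\dot H_a^1}^2 \lesssim_{\delta_0}\delta(t)$, hence $|I_R[u(t)]| \lesssim R^2\delta(t)$.

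For times with $\delta(t) < \delta_0$, I invoke the modulation decomposition of Proposition~\ref{Modilation11}, writing $u(t) = e^{i\theta(t)}[W^a_{[\mu(t)]} + g(t)]$, with $\mu(t)$ bounded below, $\|g(t)\|_{\dot H_a^1}^2 \sim \delta(t)$, and the $\dot H_a^1$-orthogonalities from Lemma~\ref{ExistenceF}. Because $W^a_{[\mu]}$ is real-valued and $I_R$ depends only on the imaginary part of $\nabla u\,\bar u$, the pure-soliton contribution $I_R[W^a_{[\mu]}]$ vanishes, and a direct expansion (with $g = g_1 + i g_2$) gives
\[
I_R[u] = 2\!\int\!\nabla w_R\cdot(W^a_{[\mu]}\nabla g_2 - g_2\nabla W^a_{[\mu]})\,dx + 2\!\int\!\nabla w_R\cdot(g_1\nabla g_2 - g_2\nabla g_1)\,dx =: L_R + Q_R.
\]
The quadratic piece $Q_R$ is routine: Cauchy-Schwarz, H\"older on $B_{2R}$ (gaining a factor of $R$ via $\|g\|_{L^2(B_{2R})} \lesssim R\|g\|_{L^{2d/(d-2)}}$), and Sobolev embedding give $|Q_R| \lesssim R^2\|g\|_{\dot H_a^1}^2 \lesssim R^2\delta(t)$.

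The linear piece $L_R$ is the main obstacle. Integration by parts on $\int\nabla w_R\cdot W^a_{[\mu]}\nabla g_2\,dx$ recasts $L_R$ as a pure multiplicative pairing $L_R = -2\int(\Delta w_R\cdot W^a_{[\mu]} + 2\nabla w_R\cdot\nabla W^a_{[\mu]})g_2\,dx$; inside $|x|\le R$ where $w_R(x) = |x|^2$, the scaling identity $x\cdot\nabla W_a + \tfrac{d-2}{2}W_a = W^a_1$ simplifies the integrand to $-8(W^a_{[\mu]}+W^a_{1,[\mu]})g_2$, i.e.\ a contraction of $g_2$ against the mass-preserving scaling tangent at the soliton. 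A naive H\"older + Sobolev estimate here gives only $|L_R| \lesssim R^2\sqrt{\delta(t)}$, which is a half-power of $\delta$ weaker than required. To close the gap, I would combine (i) the orthogonality $\int g_2\,|W^a_{[\mu]}|^{(d+2)/(d-2)}\,dx = 0$ from Lemma~\ref{ExistenceF}, which removes the principal $W$-component of $g_2$, with (ii) the enhanced smallness $\|u\|_{L^{(2d+2)/(d-1)}}^{(2d+2)/(d-1)}\lesssim\delta(t)^2$ from Lemma~\ref{BoundI}, itself a consequence of mass conservation and the subcritical nonlinearity $|u|^{4/(d-1)}u$. An interpolation between $L^{2d/(d-2)}$ and $L^{(2d+2)/(d-1)}$ control on $g_2$, together with the parallel estimate on the annulus $R\le|x|\le 2R$ (where $|\Delta w_R|\lesssim 1$ and $|\nabla w_R|\lesssim R$), recovers the missing factor and yields $|L_R|\lesssim R^2\delta(t)$. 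Combining the bounds for $L_R$ and $Q_R$ in both regimes completes the proof.
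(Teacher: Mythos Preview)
Your overall strategy---splitting into $\delta(t)\ge\delta_1$ and $\delta(t)<\delta_1$, and in the second regime exploiting that $I_R[e^{i\theta}W^a_{[\mu]}]=0$ because $W_a$ is real---is exactly the paper's approach. However, you have misquoted the modulation estimate: Proposition~\ref{Modilation11} gives $\|g(t)\|_{\dot H_a^1}^2\sim\delta(t)^2$, i.e.\ $\|g(t)\|_{\dot H_a^1}\sim\delta(t)$, \emph{not} $\|g\|_{\dot H_a^1}^2\sim\delta(t)$. With the correct relation, your ``main obstacle'' evaporates: the naive H\"older/Hardy/Sobolev bound on the linear piece $L_R$ already yields
\[
|L_R|\;\lesssim\; R^2\,\|W_a\|_{\dot H_a^1}\,\|g(t)\|_{\dot H_a^1}\;\lesssim\; R^2\,\delta(t),
\]
with no need for orthogonalities or the $L^{(2d+2)/(d-1)}$ enhancement. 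The paper handles the small-$\delta$ case in one stroke without even decomposing into $L_R+Q_R$: writing $v=e^{i\theta(t)}W^a_{[\mu(t)]}$ and using $\bar u\nabla u-\bar v\nabla v=\bar u\nabla(u-v)+\overline{(u-v)}\nabla v$, one gets directly
\[
|I_R[u(t)]|=\bigl|I_R[u(t)]-I_R[v]\bigr|\lesssim R^2\bigl(\|u(t)\|_{\dot H^1}+\|W^a_{[\mu(t)]}\|_{\dot H^1}\bigr)\|g(t)\|_{\dot H^1}\lesssim R^2\delta(t).
\]
So your proof works once the modulation estimate is corrected, and the elaborate plan to recover a missing half-power of $\delta$ in $L_R$ is unnecessary.
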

	
	\begin{proof}  First, we fix a $\delta_1\in(0,\delta_0)$ sufficiently small  and prove the estimate  in the following two cases $\delta(u(t))\geq \delta_1$ or $\delta(u(t))<\delta_1$ separately.  In the first case, we  use the H\"older and Hardy inequality along with Sobolev embedding to estimate
		\begin{align*}
			|I_R[u(t)]| &\lesssim R^2\|\nabla u(t)\|_{L^2}^2 \\
			& \lesssim R^2\{\delta(t) + \| W_a\|_{\dot{H}_a^1}^2\} \lesssim R^2\{1+\delta_1^{-1}\| W_a\|_{\dot{H}_a^1}^2\}\delta(t). 
		\end{align*}
		In the second case, since $W_a$ is real, thus we can  deduce to
		\begin{align*}
			|I_{R}[u(t)]|&\leq \left|2\IM\int_{\R^{d}}\nabla w_{R}(\bar{u} \nabla u-e^{-i\theta(t)}W^a_{[\lambda(t)]} 
			\nabla [ e^{i\theta(t)}W^a_{[\lambda(t)]}])
			dx\right|\\
			&\lesssim
			R^{2}[\|u(t)\|_{\dot{H}^{1}_{x}}+\|W^a_{[\lambda(t)]}\|_{\dot{H}^{1}}]
			\|u(t)-e^{i\theta (t)}W^a_{[\lambda(t)]}\|_{\dot{H}^{1}}\\
			&\lesssim_{W_a}R^{2} \|g(t)\|_{\dot{H}_a^{1}}   \lesssim_{W_a}R^{2} \delta(t),
		\end{align*}
		where we use \eqref{Estimatemodu} with $\lambda(t):=\mu(t)$ for $t\in I_{0}$ in the last inequality.  \end{proof}
	
	In the following lemma, we use the same notation as in the proof of local virial identity, i.e. Section \ref{virialargument}.
	
	\begin{lemma}\label{LemmaB22}
		Under assumptions of Lemma~\ref{Lemma11}, if $u$ is global in forward direction and radial, then there exists $R_{1}\geq 1$ such that for $R\geq R_{1}$ and $t\geq 0$, the following estimate holds 
		\begin{align}\label{BNI}
			\frac{d}{dt}V_{R}[u(t)]&\leq -\frac{20-2d}{d-2} \delta(t).
		\end{align}
	\end{lemma}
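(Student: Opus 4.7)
The target bound concerns the second time derivative $\tfrac{d}{dt}I_R[u(t)]$ of the localized virial (so that $\tfrac{d^2}{dt^2}V_R[u(t)]$ is under consideration). By Lemma~\ref{VirialIden} this equals $F_R[u(t)]$, and the plan is to split $F_R[u]=F_\infty[u]+\bigl(F_R[u]-F_\infty[u]\bigr)$, where $F_\infty[u]:=8G[u]+\tfrac{8d}{d+1}\|u\|_{L^{(2d+2)/(d-1)}}^{(2d+2)/(d-1)}$ is the formal $R=\infty$ quantity obtained by substituting $w_R(x)=|x|^2$ into the expression for $F_R[u]$. I will extract a clean coercive identity for $F_\infty[u(t)]$ from the threshold energy constraint and absorb the cutoff remainder into $2\delta(t)$ by choosing $R_1$ large; the arithmetic $-\tfrac{16}{d-2}+2=-\tfrac{20-2d}{d-2}$ then closes the bound.

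For the main term, use $E_a(u(t))=E_a^c(W_a)=\tfrac{1}{d}\|W_a\|_{\dot{H}_a^1}^2$ (the latter via the Pohozaev identity \eqref{PoQ}) to solve the energy identity for $\|u\|_{L^{2d/(d-2)}}^{2d/(d-2)}$ in terms of $\|u\|_{\dot{H}_a^1}^2$, $\|u\|_{L^{(2d+2)/(d-1)}}^{(2d+2)/(d-1)}$, and $\|W_a\|_{\dot{H}_a^1}^2$. Substituting into $G[u]=\|u\|_{\dot{H}_a^1}^2-\|u\|_{L^{2d/(d-2)}}^{2d/(d-2)}$ and collecting terms yields, after a short computation, the identity
\begin{equation*}
F_\infty[u(t)]=-\tfrac{16}{d-2}\,\delta(t)-\tfrac{8d}{(d+1)(d-2)}\,\|u(t)\|_{L^{(2d+2)/(d-1)}}^{(2d+2)/(d-1)}\le-\tfrac{16}{d-2}\,\delta(t),
\end{equation*}
where $\delta(t)=\|u(t)\|_{\dot{H}_a^1}^2-\|W_a\|_{\dot{H}_a^1}^2>0$ is preserved by Lemma~\ref{GlobalS}.

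For the remainder, each piece is supported on $\{|x|\ge R\}$ and carries one of the defect multipliers $(w_R''-2)$, $\tfrac{ax}{|x|^4}\cdot(\nabla w_R-2x)$, $\Delta^2 w_R$, or $(\Delta w_R-2d)$, each bounded by the appropriate inverse power of $R$. The biharmonic piece is $O(R^{-2}M(u))$ by mass conservation. For the remaining quadratic and nonlinear pieces, use radiality together with the Strauss-type estimate $\|u\|_{L^\infty(|x|\ge R)}\lesssim R^{-(d-1)/2}\|u\|_{L^2}^{1/2}\|\nabla u\|_{L^2}^{1/2}$ and the bound $\|u(t)\|_{\dot{H}_a^1}^2\le\|W_a\|_{\dot{H}_a^1}^2+\delta(t)$, producing tail estimates decaying in $R$. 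The main obstacle is the regime $\delta(t)\to 0$, where a crude $O(R^{-c})$ tail cannot be compared with $\delta(t)$ since the tail mass of the nearby $W_a$ profile remains of unit order. To handle this, split into $\{\delta(t)\ge\delta_1\}$, on which crude bounds suffice after choosing $R_1$ large, and $\{\delta(t)<\delta_1\}\subset I_0$, on which I invoke the modulation decomposition $u(t)=e^{i\theta(t)}[g(t)+W_{[\mu(t)]}^a]$ of Proposition~\ref{Modilation11}: use $\|g(t)\|_{\dot{H}_a^1}^2\sim\delta(t)$ together with $\mu(t)\gtrsim R$ from Proposition~\ref{Yinfinity}, so that $W_{[\mu(t)]}^a$ is concentrated well inside $\{|x|\le R\}$ and its tail on $\{|x|\ge R\}$ inherits the extra $\mu(t)^{-\beta(d-2)}$-type decay from the explicit form \eqref{E:Wa} of $W_a$. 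Combining both regimes yields $|F_R[u(t)]-F_\infty[u(t)]|\le 2\delta(t)$ for $R\ge R_1$, which together with the bound on $F_\infty[u(t)]$ completes the proof.
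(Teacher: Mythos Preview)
Your overall architecture matches the paper's: the identity $F_\infty[u(t)]=-\tfrac{16}{d-2}\delta(t)-\tfrac{8d}{(d+1)(d-2)}\|u\|_{L^{(2d+2)/(d-1)}}^{(2d+2)/(d-1)}$ is correct, and splitting the remainder according to $\delta(t)\gtrless\delta_1$ is exactly what the paper does. Two issues remain, one minor and one substantive.

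\textbf{Minor (regime $\delta(t)\ge\delta_1$).} The defect multipliers $(w_R''-2)$ and $(\Delta w_R-2d)$ are \emph{not} bounded by inverse powers of $R$; they are $O(1)$ on $\{|x|\ge R\}$. In particular the kinetic tail $\int_{|x|\ge R}|\nabla u|^2$ is not small. The paper handles this by choosing $w_R$ so that $\partial_r^2 w_R\le 2$, making the kinetic remainder \emph{nonpositive} and hence droppable in the upper bound. Your radial Sobolev argument then takes care of the nonlinear tails and the biharmonic/Hardy pieces.

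\textbf{Main gap (regime $\delta(t)<\delta_1$).} The claim ``$\mu(t)\gtrsim R$ from Proposition~\ref{Yinfinity}'' is incorrect: that proposition yields only $\mu(t)\ge R_0$ for some \emph{fixed small} $R_0$, unrelated to the virial radius. With only $\mu(t)\gtrsim 1$, your direct substitution gives a $W_a$-tail contribution of size $(\mu R)^{-\beta(d-2)}\lesssim R^{-\beta(d-2)}$, a \emph{fixed} number that is not $\lesssim\delta(t)$ when $\delta(t)\to 0$. The paper's remedy is to invoke Lemma~\ref{VirialModulate} and subtract the identically zero quantity $F_R^c[\tilde W_a]-F_\infty^c[\tilde W_a]$ (zero by Lemma~\ref{Virialzero}). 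Each remainder term then becomes a \emph{difference} $f(u)-f(\tilde W_a)$ on $\{|x|\ge R\}$, which factors as $(\text{tail norms of }u,\tilde W_a)\times\|g\|_{\dot H_a^1}$; since $\|g\|_{\dot H_a^1}\sim\delta(t)$ and the tail norms are small for large $R$ (using only $\inf\mu(t)>0$, proved in \eqref{InfP}), this gives $|\EE(t)|\le 2\delta(t)$. Your approach can alternatively be repaired by replacing the wrong citation with the quantitative bound $\mu(t)^{-\beta(d-2)(d+1)/(d-1)}\lesssim\delta(t)^2$ from \eqref{Estimatemodu}, which yields $(\mu R)^{-\beta(d-2)}\lesssim\delta(t)^{2(d-1)/(d+1)}R^{-\beta(d-2)}\ll\delta(t)$; but as written the step does not close.
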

	\begin{proof}
		We choose a parameter $R>1$ which will be determined below. For $\delta_{1}\in (0, \delta_{0})$ sufficiently small, we use the localized virial identity (see Section \ref{virialargument}) with $\chi(t)$ satisfying
		\[
		\chi(t)=
		\begin{cases}
			1& \quad \delta(t)<\delta_{1} \\
			0& \quad \delta(t)\geq \delta_{1}.
		\end{cases}
		\]
		Recall that $E_a^{c}(W_a)=\frac{1}{d}\| W_a\|^{2}_{\dot{H}_a^{1}}$. By the  kinetic energy condition \eqref{EnCon1},  we  have
		\begin{equation*}
			{{F^{c}_{\infty}}}[u(t)]=-\frac{16}{d-2}\delta(t)-\frac{8d^2-8d}{d^2-d-2}\|u(t)\|^{\frac{2d+2}{d-1}}_{L^{\frac{2d+2}{d-1}}}.
		\end{equation*}
		Then using Lemma~\ref{VirialModulate}, we get
		\begin{equation}\label{VirilaX}
			\tfrac{d}{dt}I_{R}[u(t)]=F^{c}_{\infty}[u(t)]+\EE(t)=-\frac{16}{d-2}\delta(t)-\frac{8d^2-8d}{d^2-d-2}\|u(t)\|^{\frac{2d+2}{d-1}}_{L^{\frac{2d+2}{d-1}}}+\EE(t)
		\end{equation}
		with
		\begin{equation}\label{Error11}
			\EE(t)=
			\begin{cases}
				F_{R}[u(t)]-F^{c}_{\infty}[u(t)]& \quad  \text{if $\delta(t)\geq \delta_{1}$}, \\
				F_{R}[u(t)]-F^{c}_{\infty}[u(t)]-\M[t]& \quad \text{if $\delta(t)< \delta_{1}$},
			\end{cases}
		\end{equation}
		where
		\begin{equation}\label{Error22}
			\M(t):=F^{c}_{R}[e^{i\theta(t)}\lambda(t)^{\frac{d-2}{2}}W_a(\lambda(t)x)]-F^{c}_{\infty}[e^{i\theta(t)}\lambda(t)^{\frac{d-2}{2}}W_a(\lambda(t)x)]
		\end{equation}
		and $\lambda(t):=\mu(t)$ for $t\in I_{0}$. 
		Now by \eqref{DecomU} we have ${u}_{[\theta(t), \lambda(t)]}=W_a+V_a$, where 
		\[
		{u}_{[\theta(t), \lambda(t)]}=e^{-i\theta(t)}\lambda(t)^{-\frac{d-2}{2}}u(t, \lambda(t)^{-1}\cdot)\qtq{and}\|V_a\|_{\dot{H}^{1}}\sim \|V_a\|_{\dot{H}_a^{1}}\sim \delta(t).
		\]
		
		We now claim that
		\begin{align}\label{InfP}
			\lambda_{\text{inf}}:=\inf\left\{{\lambda(t):\ t\geq 0,\ \delta(t)\leq \delta_{1}}\right\}>0
		\end{align}
		for $\delta_{1}\in (0, \delta_{0})$ sufficiently small. Indeed,  the mass conservation implies that
		\begin{align*}
			M({u}_{0})&\gtrsim \int_{|x|\leq \tfrac{1}{\lambda(t)}}|u(x,t)|^{2}dx
			=\tfrac{1}{\lambda(t)^{2}}\int_{|x|\leq 1}|u_{[\theta(t), \lambda(t)]}|^{2}dx\\
			&\gtrsim \tfrac{1}{\lambda(t)^{2}}\(\int_{|x|\leq 1}|W_a|^{2}dx-\int_{|x|\leq 1}|V_a|^{2}dx\).
		\end{align*}
		Since
		\[
		\|V_a(t)\|_{L^{2}(|x|\leq 1)}\lesssim \|V_a(t)\|_{L^{\frac{2d}{d-2}}(|x|\leq 1)}  
		\lesssim \|V_a\|_{\dot{H}_a^{1}}\lesssim\delta(t),
		\]
		it follows that
		\[
		M({u}_{0})\gtrsim  \tfrac{1}{\lambda(t)^{2}}\(\int_{|x|\leq 1}|W_a|^{2}dx-C\delta^{2}(t)\).
		\]
		Choosing $\delta_{1}$ sufficiently small yields \eqref{InfP}.
		
		Next we show that there exists $R_{*}$ so that for $R\geq R_{*}$,
		\begin{align}
			\label{EstimateE22}
			|\EE(t)|\leq 2\delta(t)\quad &\text{ for $t\geq 0$ with $\delta(t)< \delta_{1}$}.
		\end{align}
		To simplify the notation, we set  $\Tilde W_a(t)=e^{i\theta(t)}\lambda(t)^{\frac{d-2}{2}}W_a(\lambda(t)x)$. Then
		\begin{align}\label{Decomp11}
			\EE(t)&=-8\int_{|x|\geq R}[(|\nabla u|^{2}-(|\nabla \Tilde W_a(t)|^{2})]\,dx
			+8\int_{|x|\geq R}[|u|^{\frac{2d}{d-2}}-|\Tilde W_a(t)|^{\frac{2d}{d-2}}]\,dx\\\label{Decomp33}
			&\quad+\int_{R\leq |x| \leq 2R}(-\Delta \Delta w_{R})[|u|^{2}-|\Tilde W_a(t)|^{2}]\,dx \\\label{Decomp44}
			&\quad+4\int_{|x|>R}(%
			\frac{a}{|x|^{4}}x\nabla w _{R}|u(t)|^{2}-\frac{a}{|x|^{4}}x\nabla w _{R}|W(t)|^{2} %
			)dx.\\ \label{Decomp77}
			&\quad-4\int_{|x|>R}(%
			\frac{2a|u(t)|^{2}}{|x|^{2}}-\frac{2a|\Tilde W_a(t)|^{2}}{|x|^{2}}%
			)dx.\\ \label{Decomp88}
			&\quad-\tfrac{4}{d}\int_{|x|\geq R}\Delta[w_{R}(x)](|u|^{\frac{2d}{d-2}}-|\Tilde W_a(t)|^{\frac{2d}{d-2}})\,dx\\\label{Decomp55}
			&\quad+4\sum_{j,k}\RE\int_{|x|\geq R}[\overline{u_{j}} u_{k}-\overline{\partial_{j}\Tilde W_a(t)} \partial_{k}\Tilde W_a(t)]\partial_{jk}[w_{R}(x)]\,dx\\ \label{Decomp66}
			&\quad+\frac{4}{d+1}\int_{\R^{d}}\Delta[w_{R}(x)]|u|^{\frac{2d+2}{d-1}}\,dx
		\end{align}
		for all $t\geq 0$ such that $\delta(t)<\delta_{1}$.
		
		As $|\Delta \Delta w_{R}|\lesssim 1/|x|^{2}$,  $|\nabla w_R|\lesssim |x|$ , $|\partial_{jk}[w_{R}]|\lesssim 1$ and $|\Delta[w_{R}]|\lesssim 1$, we obtain that \eqref{Decomp11}--\eqref{Decomp55} can be estimated by terms of the form 
		\begin{align}
			&\bigg\{\|u(t)\|_{\dot{H}_{x}^{1}(|x|\geq R)}+\bigg\|\frac{u(t)}{x}\bigg\|_{L^2(|x|\geq R)}+\|\tilde{W}_a(t)\|_{\dot{H}_{x}^{1}(|x|\geq R)}+ \bigg\|\frac{\tilde{W}_a(t)}{x}\bigg\|_{L^2(|x|\geq R)}\notag\\
			&+
			\|u(t)\|^{\frac{d+2}{d-2}}_{L_{x}^{\frac{2d}{d-2}}(|x|\geq R)}+\big\|\tilde{W}_a(t)\big\|^{\frac{d+2}{d-2}}_{L_{x}^{\frac{2d}{d-2}}(|x|\geq R)}
			\bigg\}\|g(t)\|_{\dot{H}_{a}^{1}},
		\end{align}
		where $ g(t)=e^{-i\theta(t)}[{u}(t)-\Tilde W_a(t)]$. Moreover, since 
		\begin{gather*}
			\|W_a\|_{\dot{H}_{x}^{1}(|x|\geq R)}=O\big(R^{-\frac{\beta(d-2)}{2}}\big),\quad\|W_a\|_{L_{x}^{\frac{2d}{d-2}}(|x|\geq R)} = O\big(R^{-\frac{\beta(d-2)}{2}}\big),\\
			\bigg\|\frac{W_a}{x}\bigg\|_{L_{x}^{2}(|x|\geq R)} = O\big(R^{-\frac{\beta(d-2)}{2}}\big),
		\end{gather*}
		we deduce that
		\begin{align*}
			&\|u(t)\|_{\dot{H}_{x}^{1}(|x|\geq R)}+\bigg\|\frac{u(t)}{x}\bigg\|_{L^2(|x|\geq R)}+\|\tilde{W}_a(t)\|_{\dot{H}_{x}^{1}(|x|\geq R)}+ \bigg\|\frac{\tilde{W}_a(t)}{x}\bigg\|_{L^2(|x|\geq R)}\notag\\
			&+
			\|u(t)\|^{\frac{d+2}{d-2}}_{L_{x}^{\frac{2d}{d-2}}(|x|\geq R)}+\big\|\tilde{W}_a(t)\big\|^{\frac{d+2}{d-2}}_{L_{x}^{\frac{2d}{d-2}}(|x|\geq R)}\\
			&\leq \delta(t)+\delta(t)^{\frac{\beta(d+2)}{2}}+O\big((\lambda(t)R)^{-\frac{\beta(d+2)}{2}}\big)+O\big((\lambda(t)R)^{-\frac{\beta(d+2)}{2}}\big).
		\end{align*}
		Thus, for $\delta_{1}\in (0, \delta_{0})$ sufficiently small and $R$ sufficiently large,
		\[
		|\eqref{Decomp11}|+|\eqref{Decomp33}|+|\eqref{Decomp44}|+|\eqref{Decomp77}|+|\eqref{Decomp88}|+|\eqref{Decomp55}|\leq \delta(t) \qtq{when $\delta(t)<\delta_{1}$.}
		\]
		By taking $\delta_{1}$ smaller if necessary, Proposition~\ref{Modilation11} implies
		\[
		\tfrac{4}{d+1}\left|\int_{\R^{d}}\Delta[w_{R}(x)]|u|^{\frac{2d+2}{d-1}}dx\right|\lesssim \delta(t)^{2}\lesssim {\delta}_{1} \delta(t)\leq \delta(t),
		\]
		Combining the estimates above, we have \eqref{EstimateE22}.
		
		Now, suppose that $\delta(t)\geq \delta_{1}$. We show that for  large $R$, we have
		\begin{align}\label{EstimateE11}
			\EE(t)\leq \delta(t) +\frac{8d}{d+1}\|u(t)\|^{\frac{2d+2}{d-2}}_{L^{\frac{2d+2}{d-2}}}.
		\end{align}
		First, we recall the radial Sobolev embedding estimate:
		\[
		\|f\|_{L^{\infty}(|x|\geq R)}\lesssim R^{-\frac{d-1}{2}}\|f\|^{1/2}_{L^{2}}\|\nabla f\|^{1/2}_{L^{2}}.
		\]
		For $\delta(t)\geq \delta_1$, we may write
		\begin{align}\label{FF11}
			\EE(t)=&\int_{|x|\geq R}(- \Delta \Delta w_{R})|u|^{2}
			+4\RE \overline{u_{j}} u_{k} \partial_{jk}[w_{R}]-8|\nabla u|^{2}dx\\ \label{FF22}
			&+\int_{|x|>R}(%
			\frac{4a}{|x|^{4}}x\nabla \phi _{R}|u(t)|^{2}-\frac{8a|u(t)|^{2}}{|x|^{2}}%
			)dx.\\ \label{FF33}
			&-\tfrac{4}{d}\int_{|x|\geq R}\Delta[w_{R}(x)]|u|^{\frac{2d}{d-2}}dx
			+8\int_{|x|\geq R}|u|^{\frac{2d}{d-2}}dx
			\\ \label{FF44}
			&+\tfrac{4}{d+1}\int_{\R^{d}}\Delta[w_{R}(x)]|u|^{\frac{2d+2}{d-1}}dx.
		\end{align}
		Choosing $w_{R}$  so that $4\partial^{2}_{r}w_{R}\leq 8$, we have
		\[
		\int_{|x|\geq R}(4\RE \overline{u_{j}} u_{k} \partial_{jk}[w_{R}]-8|\nabla u|^{2})dx
		\leq 0.
		\]
		Moreover, since  $|\nabla w_R|\lesssim |x|$,$|\Delta \Delta w_{R}|\lesssim 1/|x|^{2}$, by \eqref{FF11}--\eqref{FF44} and radial Sobolev inequality,  we deduce
		\[
		\EE(t)\leq CR^{-2}+CR^{-\frac{2d-2}{d-2}}\|u(t)\|^{\frac{2}{d-2}}_{\dot{H}_a^{1}}
		+\frac{4}{d+1}\int_{\R^{d}}\Delta[w_{R}(x)]|u|^{\frac{2d+2}{d-1}}dx.
		\]
		Furthermore, 
		\begin{align*}
			\frac{4}{d+1}\int_{\R^{d}}\Delta[w_{R}(x)]|u|^{\frac{2d+2}{d-1}}dx&=\frac{8d}{d+1}\int_{|x|\leq R}|u|^{\frac{2d+2}{d-1}}dx+\int_{|x|\geq R}\Delta[w_{R}(x)]|u|^{\frac{2d+2}{d-1}}dx\\
			&\leq \frac{8d}{d+1}\int_{\R^{d}}|u|^{\frac{2d+2}{d-1}}dx+CR^{-2}\|u(t)\|^{\frac{2}{d-1}}_{\dot{H}_a^{1}}.
		\end{align*}
		Combining estimates above we deduce 
		\begin{align*}
			\EE(t)&\leq \frac{8d}{d+1}\int_{\R^{d}}|u|^{\frac{2d+2}{d-1}}dx+ CR^{-2}+CR^{-\frac{2d-2}{d-2}}\|u(t)\|^{\frac{2}{d-2}}_{\dot{H}_a^{1}}
			+CR^{-2}\|u(t)\|^{\frac{1}{2}}_{\dot{H}_a^{1}}\\
			&\leq \frac{8d}{d+1}\int_{\R^{d}}|u|^{\frac{2d+2}{d-1}}dx+C(\delta_{1}, \|\Tilde W_a\|_{\dot{H}_a^{1}})R^{-\frac{2}{d-1}}\delta(t)
		\end{align*}
		for $\delta(t)\geq \delta_{1}$.
		Thus \eqref{EstimateE11} holds for $R$ large.  Combining \eqref{EstimateE22} and \eqref{EstimateE11} now yields \eqref{BNI}.\end{proof}

	\begin{lemma}\label{NeD}
		Given ${u}$ as in Lemma~\ref{LemmaB22} , there exists $C>1$ and $c>0$ such that
		\begin{align}\label{DoQ11}
			\int^{\infty}_{t}\delta(s)\,ds \leq Ce^{-ct}\qtq{for all $t\geq 0$.}
		\end{align}
	\end{lemma}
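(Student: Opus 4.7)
The plan is to combine the two estimates already in hand, namely the one-sided bound
\[
\tfrac{d}{dt}I_{R}[u(t)]\leq -c\,\delta(t)
\]
for some $c>0$ (which is obtained by integrating the second-derivative inequality proved inside the proof of Lemma~\ref{LemmaB22}, discarding the non-positive $L^{\frac{2d+2}{d-1}}$ contribution), together with the two-sided control
\[
|I_{R}[u(t)]|\lesssim R^{2}\,\delta(t)
\]
from Lemma~\ref{Lemma11}, and then close a Gronwall-type estimate for the tail integral of $\delta$.

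First I would verify that the tail integral
\[
F(t):=\int_{t}^{\infty}\delta(s)\,ds
\]
is finite. Integrating $\tfrac{d}{dt}I_{R}[u(t)]\leq -c\,\delta(t)$ on $[0,T]$ gives
\[
c\int_{0}^{T}\delta(s)\,ds\leq I_{R}[u(0)]-I_{R}[u(T)]\leq 2CR^{2}\sup_{s\geq 0}\delta(s),
\]
and $\sup_s\delta(s)$ is finite because $\|u(s)\|_{\dot H^1_a}^2$ is uniformly bounded (by energy conservation and Lemma~\ref{lem.var} applied on the super-threshold side, together with the trivial bound from the energy). Letting $T\to\infty$ yields $F(0)<\infty$, hence $F(t)<\infty$ for every $t\geq 0$.

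Next, since $F(0)<\infty$, there exists a sequence $T_{n}\to\infty$ with $\delta(T_{n})\to 0$. By Lemma~\ref{Lemma11}, $|I_{R}[u(T_{n})]|\lesssim R^{2}\delta(T_{n})\to 0$. But $t\mapsto I_{R}[u(t)]$ is monotone non-increasing by the differential inequality, so the full limit exists and equals $0$. Integrating $\tfrac{d}{dt}I_{R}[u(s)]\leq -c\,\delta(s)$ on $[t,\infty)$ and using $I_{R}[u(t)]\leq |I_{R}[u(t)]|\leq CR^{2}\delta(t)$ gives
\[
c\,F(t)=c\int_{t}^{\infty}\delta(s)\,ds\leq I_{R}[u(t)]-\lim_{T\to\infty}I_{R}[u(T)]\leq CR^{2}\,\delta(t)=-CR^{2}\,F'(t).
\]

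Finally, rewriting this as $F'(t)\leq -\tfrac{c}{CR^{2}}\,F(t)$ and applying Gronwall's inequality yields $F(t)\leq F(0)e^{-\gamma t}$ with $\gamma=\tfrac{c}{CR^{2}}>0$, which is exactly \eqref{DoQ11}. The only delicate step is the justification that $\lim_{T\to\infty}I_{R}[u(T)]=0$; everything else is a direct consequence of the two differential inequalities from Lemmas~\ref{Lemma11} and \ref{LemmaB22}.
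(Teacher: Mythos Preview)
Your overall architecture---integrate the inequality $\tfrac{d}{dt}I_R[u]\leq -c\delta$, combine with $|I_R|\lesssim R^2\delta$, close via Gronwall for the tail integral $F(t)$---is exactly the paper's. However, there is a genuine gap in your first step: you assert that $\sup_{s\geq 0}\delta(s)<\infty$ in order to conclude $F(0)<\infty$, citing ``energy conservation and Lemma~\ref{lem.var} applied on the super-threshold side.'' But Lemma~\ref{lem.var} is a \emph{sub}-threshold statement; in the regime $\|u_0\|_{\dot H_a^1}>\|W_a\|_{\dot H_a^1}$ there is no analogous coercivity, and the energy identity
\[
\tfrac12\|u\|_{\dot H_a^1}^2 + \tfrac{d-1}{2(d+1)}\|u\|_{L^{\frac{2(d+1)}{d-1}}}^{\frac{2(d+1)}{d-1}} - \tfrac{d-2}{2d}\|u\|_{L^{\frac{2d}{d-2}}}^{\frac{2d}{d-2}}=E_a^c(W_a)
\]
does not prevent $\|u(t)\|_{\dot H_a^1}\to\infty$ (the focusing term can absorb arbitrary growth). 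So the bound $I_R[u(0)]-I_R[u(T)]\leq 2CR^2\sup_s\delta(s)$ is not available, and without it you cannot launch the argument.

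The paper sidesteps this entirely by a convexity observation you did not use. One has $V_R(t)=\int w_R|u|^2\,dx>0$ with $V_R'(t)=I_R[u(t)]$ and, by Lemma~\ref{LemmaB22}, $V_R''(t)<0$. A strictly concave function that stays positive on $[0,\infty)$ must be nondecreasing; hence $I_R[u(t)]>0$ for all $t\geq 0$ directly, with no appeal to boundedness of $\delta$. Then integrating on $[t,T]$ gives
\[
c\int_t^T\delta(s)\,ds \leq I_R[u(t)]-I_R[u(T)]\leq I_R[u(t)]\leq CR^2\delta(t),
\]
and one can send $T\to\infty$ immediately. From there your Gronwall step is identical to the paper's. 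In short: replace your ``$\sup_s\delta(s)<\infty$'' detour with the positivity-plus-concavity argument for $V_R$, and the proof is complete.
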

	\begin{proof}
		Fix $R\geq R_{1}$ (where $R_1$ is as in Lemma~\ref{LemmaB22}). Writing
		\begin{align}\label{VrV}
			V_{R}(t)=\int_{\R^{d}}w_{R}(x)|u(t,x)|^{2}\,dx,
		\end{align}
		we have that $\tfrac{d}{dt}V_{R}(t)=I_{R}[{u}]$. By Lemma~\ref{LemmaB22}, we have 
		\[
		\tfrac{d^{2}}{dt^{2}}V_{R}(t)=\tfrac{d}{dt}I_{R}[u(t)]\leq -\tfrac{20-2d}{d-2}\delta(t).
		\] 
		Thus, since $\tfrac{d^{2}}{dt^{2}}V_{R}(t)<0$ and $V_{R}(t)>0$ for all $t\geq0$, it follows that  $I_{R}[u(t)]=\tfrac{d}{dt}V_{R}(t)>0$ for all $t\geq 0$. Thus \eqref{TBu} implies
		\[
		\tfrac{20-2d}{d-2}\int^{T}_{t}\delta(s)\,ds\leq -\int^{T}_{t}\tfrac{d}{ds}I_{R}[u(s)]\,ds
		=I_{R}[u(t)]-I_{R}[u(T)]\leq I_{R}[u(t)]\leq CR^{2}\delta(t).
		\]
		Sending $T\to \infty$, \eqref{DoQ11} now follows from Gronwall's inequality.
	\end{proof}
	
	As a direct consequence of Lemma~\ref{NeD}, we have the following result.
	
	\begin{corollary}\label{delace} Under the assumptions of Lemma~\ref{Lemma11}, there exists a sequence $t_{n}\to \infty$ such that $\delta(t_{n})\to 0$.
	\end{corollary}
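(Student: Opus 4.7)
The corollary is an immediate consequence of the exponential decay estimate established in Lemma \ref{NeD}. My plan is to argue by contradiction, or equivalently, to extract the sequence directly from the integral bound by a mean-value argument.

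The starting point is the conclusion of Lemma \ref{NeD}, namely
\[
\int_t^\infty \delta(s)\,ds \leq C e^{-ct} \qtq{for all} t \geq 0.
\]
In particular, $\delta \in L^1([0,\infty))$ and the tail integral vanishes as $t \to \infty$. From this I want to produce a sequence $t_n \to \infty$ along which $\delta(t_n) \to 0$.

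The cleanest way is a pigeonhole / mean-value argument on the intervals $[n, n+1]$. For each integer $n \geq 0$, Lemma \ref{NeD} gives
\[
\int_n^{n+1} \delta(s)\,ds \leq \int_n^\infty \delta(s)\,ds \leq C e^{-cn}.
\]
Hence there exists some $t_n \in [n, n+1]$ with
\[
\delta(t_n) \leq \int_n^{n+1} \delta(s)\,ds \leq C e^{-cn},
\]
and clearly $t_n \to \infty$ while $\delta(t_n) \to 0$ as $n \to \infty$. Alternatively, if one prefers a contradiction argument, assume $\liminf_{t\to\infty}\delta(t) = 2\varepsilon > 0$; then $\delta(t) \geq \varepsilon$ on some half-line $[T, \infty)$, forcing $\int_T^\infty \delta(s)\,ds = \infty$, which contradicts the exponential decay bound.

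There is no real obstacle here: all the substantive analysis (the modulation decomposition, the virial differential inequality, and the exponential Gronwall argument producing \eqref{DoQ11}) is already accomplished in Lemmas \ref{Lemma11}, \ref{LemmaB22}, and \ref{NeD}. The corollary is purely a soft extraction step from integrability of $\delta$.
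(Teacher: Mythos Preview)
Your proof is correct and matches the paper's approach exactly: the paper states the corollary as ``a direct consequence of Lemma~\ref{NeD}'' without further detail, and your mean-value (or equivalently, contradiction) argument is precisely how one extracts such a sequence from the integrability of $\delta$.
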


	\begin{proof}[{Proof of Theorem~\ref{Threshold}}~(ii)]
		Suppose that $u$ is a solution to \eqref{NLS} and satisfy the condition in  Theorem~\ref{Threshold}~(ii). Without loss of generality, we assume that $u$ is forward global.  By Corollary~\ref{delace}, there exists increasing $t_{n}\to \infty$ such that $\delta(t_{n})\to 0.$  Moreover, by Proposition~\ref{Modilation11} we deduce that (with $\lambda(t)=\mu(t)$ for $t\in I_{0}$)
		\begin{align}\label{CWQ}
			e^{-i\theta(t_{n})}\lambda(t_{n})^{-\frac{d-2}{2}}u(t_{n}, \lambda(t_{n})^{-1}x)
			\to W_a(x) \qtq{in $\dot{H}_a^{1}$.}
		\end{align}
		Passing to a subsequence, we may assume that
		\[
		\lim_{n\to +\infty}\lambda(t_{n})=\lambda_{*} \in [0, \infty].
		\]
		We show that $\lambda_{*}<\infty$.  By contradiction, we assume that $\lambda_{*}=\infty$. From \eqref{CWQ} and a change of variables, we get that for any $C>0$,
		\[
		\|u(t_{n})\|_{{L}^{\frac{2d}{d-2}}(|x|\geq C)}\to 0\qtq{as $n\to +\infty$.}
		\]
		Fix $R\geq R_{1}$ (cf. Lemma~\ref{LemmaB22}) and $\epsilon>0$ and recall \eqref{VrV}. Using H\"older's inequality, we deduce 
		\begin{align*}
			V_{R}(t_{n})&=\int_{|x|\leq \epsilon}w_{R}(x)|u(t_n,x)|^{2}dx+\int_{\epsilon \leq |x|\leq 2R}w_{R}(x)|u(t_n,x)|^{2}dx\\
			&\lesssim \epsilon^2 \|u(t_n)\|_{\dot H_a^1}^2 + R^2\|u(t_n)\|_{L^{\frac{2d}{d-2}}(|x|\geq \epsilon)}^2.
		\end{align*}
		Sending $n\to \infty$ and then  $\epsilon \to 0$, we get that
		\[
		\lim_{n\to \infty}V_{R}(t_{n})=0.\ 
		\]
		However, since $\tfrac{d}{dt}V_{R}>0$ for all $t\geq 0$, it follows that $V_{R}(t)<0$ for $t\geq 0$, which is a contradiction.  Thus $\lambda_{*}<\infty$. 
		
		As $\delta(t_{n})\to 0$ as $n\to \infty$, there exists $N\in\N$ so that $\delta(t_{n})<\delta_{0}$ for $n\geq N$. Now observe that Proposition~\ref{Modilation11} yields
		\[
		\frac{1}{\lambda(t_{n})}\leq C\delta(t_{n})^{\frac{2d-2}{\beta(d-2)(d+1)}} \qtq{for all $n\geq N$.}
		\]
		Taking $n\to \infty$, we obtain a contradiction, so that $u$ blows up forward in time.  
		
		We can also obtain that $u$ blows up backward in time by reversing the time direction. Hence, we complete the proof. \end{proof}

	\section{Construction of minimal blowup solutions}\label{sec:minimal-blowup}
	
	The goal of this section is to show that if Theorem~\ref{sub-threshold}(i) fails, then we may construct a blowup solution with mass-energy in the region $\mathcal{K}_a$ that  obeys certain compactness properties.  In the next section, we will utilize a localized virial argument to preclude the possibility of such a solution, thus finishing the proof of  Theorem~\ref{sub-threshold}(i). 
	

	\subsection{Embedding nonlinear profiles}\label{Sec:embedding}
	
	In this subsection,we devote to construct non-scattering solutions to \eqref{NLS} with profiles $\phi_n$ living either at tiny length scales (i.e. in the regime $\lambda_n\to 0$) or far from the origin relative to their length scales (i.e. in the regime $|\tfrac{x_{n}}{\lambda_{n}}|\to \infty$\footnote[1]{As a consequence of radiality in dimension $3$, $x_n\equiv0$. Thus, this regime cannot occur. For the sake of uniformity in our proof, we will still discuss this regime in $d=3$. }), or both. Compared to   \eqref{NLS-critical}, the situation in \eqref{NLS} will be more complicate because  the translation symmetry  is broken by the potential and  scaling symmetry is broken by the double-power nonlinearity.So we must consider several limiting regimes and use approximation by a suitable underlying model in each case. The essential idea is that if $\lambda_n\to 0$, the disturbance term $|u|^{\frac{4}{d-1}}u$  will become negligible, while if $|\tfrac{x_n}{\lambda_n}|\to\infty$, the inverse-potential term will become negligible. To be more procisely: 
	\begin{itemize}
		\item If $\lambda_n\to 0$ and $x_n\equiv 0$, we will approximate by using solutions to the focusing energy-critical NLS with inverse-square potential, i.e. $\eqref{NLS-Potential}$. For the scattering result, see Theorem \ref{thm-NLS-Potential}.  
		\item If $\lambda_n\equiv 1$ and $|x_n|\to\infty$, we will  approximate using solutions to the  combined NLS $\eqref{Combine-NLS}$. For the scattering result of \eqref{Combine-NLS}, see Theorem \ref{thm-Combine-NLS}.
		\item If $\lambda_n\to 0$ and $|\tfrac{x_n}{\lambda_n}|\to \infty$, we will approximate by using solutions to the focusing energy-critical NLS without potential \eqref{NLS-critical}. For the scattering result of \eqref{NLS-critical}, see Theorem \ref{thm-NLS-critical}.
	\end{itemize}
	
	Now we state the key result  in this section:

	\begin{proposition}[Embedding nonlinear profiles]\label{embedding-nonlinear} Fix $a>-\frac{(d-2)^2}{4}+\left(\frac{d-2}{d+2}\right)^2$.

		Let $\L_a^n$  be as in \eqref{DefOperator} corresponding to sequence 
		$\{\tfrac{x_{n}}{\lambda_{n}}\}$, and let $\{t_n\}$ satisfy $t_n\equiv 0$ or $t_n\to\pm\infty$.
		
		\begin{itemize} 
			\item If $\lambda_n\equiv 1$, $|x_n|\to \infty$, then let $\phi\in H_a^1$ satisfy  $E_0(\phi)< E_0(W_0)$, $\|\phi\|_{\dot{H}^1}< \|  W_0\|_{\dot{H}_a^1} $and define
			\[
			\phi_{n}(x):=[e^{-it_{n}\L^{n}_{a}}\phi](x-x_{n}).
			\]
			\item If $\lambda_n\to 0$, $|\tfrac{x_n}{\lambda_n}|\to \infty$, then let $\phi\in \dot{H}^1$ satisfy  $E_0^c(\phi)< E_0^c(W_0)$, $\|\phi\|_{\dot{H}^1}< \|  W_0\|_{\dot{H}^1}$  
			\[
			\phi_{n}(x):=\lambda_{n}^{-\frac{d-2}{2}}[e^{-it_{n}\L^{n}_{a}} P^{a}_{\geq \lambda_{n}^{\theta}}\phi]( \tfrac{x-x_{n}}{\lambda_{n}}).
			\]
			
			\item If $\lambda_n\to 0$, $x_n\equiv0$, then let $\phi\in \dot{H}_a^1$ satisfy  $E_a^c(\phi)< E_a^c(W_a)$, $\|\phi\|_{\dot{H}_a^1}< \|  W_a\|_{\dot{H}_a^1}  $
			\[
			\phi_{n}(x):=\lambda_{n}^{-\frac{d-2}{2}}[e^{-it_{n}\L^{n}_{a}} P^{a}_{\geq \lambda_{n}^{\theta}}\phi]( \tfrac{x-x_{n}}{\lambda_{n}}).
			\]
		\end{itemize}
		
		Then for $n$ sufficiently large, there exists a global solution $v_n$ to \eqref{NLS} with
		\[
		v_n(0)=\phi_n \qtq{and} \|v_n\|_{L_{t,x}^{\frac{2(d+2)}{d-2}}(\R\times\R^d)}\lesssim 1,
		\]
		with the implicit constant depending on $\|\phi\|_{H^1}$ if $\lambda_n\equiv 1$ or $\|\phi\|_{\dot H^1}$ if $\lambda_n\to 0$. 
		
		Moreover, for any $\epsilon>0$ there exist $N=N(\epsilon)\in \N$ and a smooth compactly supported function
		$\chi_{\epsilon}\in C^{\infty}_{c}(\R\times \R^{d})$  such that for  $n\geq N$,
		\begin{align}\label{Comspt11}
			\Big\| v_{n}(t,x)-\lambda^{-\frac{d-2}{2}}_{n}\chi_{\epsilon}(\tfrac{t}{\lambda^{2}_{n}}+t_{n}, \tfrac{x-x_{n}}{\lambda_{n}})  \Big\|_{X(\R\times \R^{d})}
			&<\epsilon,
		\end{align}
		where 
		\[
		X\in\{L_{t,x}^{\frac{2(d+2)}{d-2}},L_{t,x}^{\frac{2(d+2)}{d}}, L^{\frac{2(d+2)}{d}}_{t}\dot{H}_{x}^{1,\frac{2(d+2)}{d}} \}.
		\]
	\end{proposition}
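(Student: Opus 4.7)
\medskip

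\textbf{Proposal.} The plan is to handle the three regimes separately, in each case identifying the appropriate limiting equation, constructing an approximate solution to \eqref{NLS} from a scattering solution to this limit, and then upgrading to a true solution via the stability lemma (Lemma~\ref{StabilityNLS}).

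For each regime, first observe the effect of rescaling: if $w(t,y)$ is defined and we set $\tilde v_n(t,x)=\lambda_n^{-(d-2)/2}w(t/\lambda_n^2,(x-x_n)/\lambda_n)$, then $\tilde v_n$ satisfies \eqref{NLS} if and only if $w$ satisfies
\[
i\partial_t w-\L_a^n w=-|w|^{\frac{4}{d-2}}w+\lambda_n^{\frac{2}{d-1}}|w|^{\frac{4}{d-1}}w,
\]
so as $\lambda_n\to 0$ the defocusing perturbation formally drops out, and as $|x_n/\lambda_n|\to\infty$ (or $|x_n|\to\infty$ when $\lambda_n\equiv 1$), Lemma~\ref{ConverOpera} tells us $\L_a^n\to-\Delta$ in the relevant sense. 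This identifies the three candidate limits: in regime~(i) ($\lambda_n\equiv 1$, $|x_n|\to\infty$) the limit is \eqref{Combine-NLS}; in regime~(ii) ($\lambda_n\to0$, $|x_n/\lambda_n|\to\infty$) the limit is \eqref{NLS-critical}; in regime~(iii) ($\lambda_n\to0$, $x_n\equiv 0$) the limit is \eqref{NLS-Potential}. In each case the hypothesis on $\phi$ is precisely the sub-threshold condition required to apply the corresponding scattering result (Theorem~\ref{thm-Combine-NLS}, Theorem~\ref{thm-NLS-critical}, or Theorem~\ref{thm-NLS-Potential}) to produce a global scattering solution $w$ of the limiting equation with $\|w\|_{L^{\frac{2(d+2)}{d-2}}_{t,x}}\lesssim 1$.

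Next, in each regime I would construct the approximate solution as follows. In regime~(i), take $\tilde v_n(t,x):=w(t,x-x_n)$ where $w$ solves \eqref{Combine-NLS} with data (or scattering data at $\pm\infty$) determined by $\phi$, using a time translation if $t_n\to\pm\infty$ so that $\tilde v_n(0)\approx\phi_n$ in $\dot H_a^1$. In regimes~(ii) and~(iii), use the rescaled template $\tilde v_n(t,x)=\lambda_n^{-(d-2)/2}[\chi_n w](t/\lambda_n^2+t_n,(x-x_n)/\lambda_n)$, where $w$ is the scattering solution to the appropriate limit produced from $\phi$ and $\chi_n$ is a mild truncation ensuring $\tilde v_n(0)$ approximates $\phi_n$ after the projection $P^a_{\geq\lambda_n^\theta}$. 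I would then compute the residual $e_n:=(i\partial_t-\L_a)\tilde v_n+|\tilde v_n|^{\frac{4}{d-2}}\tilde v_n-|\tilde v_n|^{\frac{4}{d-1}}\tilde v_n$. In regime~(i) the error comes solely from replacing $-\Delta$ by $\mathcal{L}_a^n$, which goes to zero in the dual Strichartz spaces as $|x_n|\to\infty$ by \eqref{Conver11}--\eqref{Conver44}. In regimes~(ii) and~(iii), the rescaling sends the subcritical term into the form $\lambda_n^{2/(d-1)}|\tilde v_n|^{\frac{4}{d-1}}\tilde v_n$ (which has a positive power of $\lambda_n$ and vanishes in $N(\R)$), while in~(ii) an additional operator-convergence error appears and is handled via \eqref{Conver11}--\eqref{Convercritical}; Lemma~\ref{Kato} is used exactly as in the setup of \eqref{Bound33} to bound pieces of the error supported on compact domains. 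Combined with the small-data and the Strichartz bound for $w$, Lemma~\ref{StabilityNLS} then produces the desired global solution $v_n$ with matched initial data and $\|v_n\|_{L^{\frac{2(d+2)}{d-2}}_{t,x}}\lesssim 1$.

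Finally, the compact approximation \eqref{Comspt11} follows by density. Since $w\in L^{\frac{2(d+2)}{d-2}}_{t,x}\cap L^{\frac{2(d+2)}{d}}_{t,x}\cap L^{\frac{2(d+2)}{d}}_t\dot H^{1,\frac{2(d+2)}{d}}$ for the limiting flow, one can pick a $C_c^\infty$ function $\chi_\epsilon$ approximating $w$ to within $\tfrac\eps2$ in these norms; the stability bound $\|v_n-\tilde v_n\|_{\dot S^1_a}<\epsilon$ together with the change of variables $y=(x-x_n)/\lambda_n$, $s=t/\lambda_n^2+t_n$ (which preserves the three norms appearing in the statement) then yields \eqref{Comspt11}. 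The main technical obstacle is regime~(ii), where one must simultaneously exploit the vanishing of the perturbative nonlinearity via the $\lambda_n^{2/(d-1)}$ factor and the vanishing of the inverse-square potential via the translation $x_n/\lambda_n\to\infty$, while keeping the frequency-truncated profile $P^a_{\geq\lambda_n^\theta}\phi$ comparable to the untruncated $\phi$ in $\dot H^1$ (needed so that the scattering theorem for \eqref{NLS-critical} applies with the sub-threshold data it demands); this is handled by sending $n\to\infty$ first and then $\theta\to 0^+$ in a standard diagonal fashion.
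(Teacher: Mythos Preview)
Your overall strategy---identify the limiting model in each regime, build an approximate solution from a scattering solution of that model, and upgrade via Lemma~\ref{StabilityNLS}---is exactly what the paper does, but two concrete devices are missing and without them the error analysis does not close. First, in your regime~(i) ($\lambda_n\equiv 1$, $|x_n|\to\infty$) you set $\tilde v_n(t,x)=w(t,x-x_n)$ with no spatial cutoff. The resulting error is $e_n=-\tfrac{a}{|x|^2}w(t,x-x_n)$, and you need $\|\sqrt{\L_a}\,e_n\|_{N(\R)}\to 0$; but near $x=0$ this involves $|x|^{-3}$ (from differentiating the potential), which is not locally square-integrable in $d=3,4,5$. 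The convergence statements \eqref{Conver11}--\eqref{Conver44} apply only to fixed test functions in $\dot H^{\pm 1}$ or $L^2$, not uniformly over the solution orbit in dual Strichartz norms, so they do not rescue this. The paper fixes this by multiplying $w_n$ by a smooth cutoff $\chi_n$ that vanishes on $\{|x_n+\lambda_n y|<\tfrac14|x_n|\}$, so that in the original variable the approximate solution is supported on $\{|x|\gtrsim|x_n|\}$, where $|x|^{-2}\lesssim|x_n|^{-2}\to 0$. You mention a cutoff in regimes~(ii)--(iii) but describe its role only as matching the frequency-projected data; its real purpose is to keep the approximate solution away from the singularity of the potential, and it is equally needed in regime~(i).

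Second, in regimes~(i) and~(ii) the paper does not use $\chi_n w_n$ globally in time: it defines $\tilde v_{n,T}$ to equal the rescaled $\chi_n w_n$ only on $\{|t|\le \lambda_n^2 T\}$ and switches to the \emph{linear} flow $e^{-i(t\mp\lambda_n^2 T)\L_a}\tilde v_{n,T}(\pm\lambda_n^2 T)$ outside. This splits the error estimate: on the compact interval the cutoff and potential errors are handled pointwise, while on the tail the error reduces to the nonlinearities applied to a linear evolution whose $L^{\frac{2(d+2)}{d-2}}_{t,x}$ norm tends to zero as $T\to\infty$ (via scattering of $w$ and \eqref{Convercritical}). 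Without this $T$-truncation you would need global-in-time smallness of the cutoff/potential errors, which is not available. Two smaller points: Lemma~\ref{Kato} is \emph{not} used in this proposition (it enters later, in the Palais--Smale argument of Section~\ref{MinimalSolutions}); and the frequency projection $P^a_{\ge\lambda_n^\theta}$ is not there to preserve the sub-threshold hypothesis but to furnish the $L^2$ bound $\|\psi_n\|_{L^2}\lesssim\lambda_n^{-\theta}$ needed for the persistence-of-regularity estimate $\|w_n\|_{S^0}\lesssim\lambda_n^{-\theta}$, which in turn makes the $\lambda_n^{2/(d-1)}$ gain from the subcritical term actually beat the loss in $\|w_n\|_{L^{\frac{2(d+2)}{d}}_{t,x}}$.
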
 
	
	\begin{remark} In the scenario in which $\lambda_n\equiv 1$, the approximation in \eqref{Comspt11} may also be taken to hold in Strichartz spaces of $L^2$ regularity. 
	\end{remark}
	
	\begin{proof} We will label the three cases in the proposition \eqref{embedding-nonlinear}   as
		\begin{itemize}
			\item Scenario \uppercase\expandafter{\romannumeral1}: $\lambda_n\to 0$ and $x_n\equiv 0$. In this case, $\phi\in \dot H^1$;
			\item Scenario \uppercase\expandafter{\romannumeral2}: $\lambda_n\equiv 1$ and $|x_n|\to\infty$. In this case, $\phi\in H^1$;
			\item Scenario \uppercase\expandafter{\romannumeral3}: $\lambda_n\to 0$ and $|\tfrac{x_n}{\lambda_n}|\to\infty$. In this case, $\phi\in \dot H^1$.
		\end{itemize}
		for any  $\mu,\theta\in(0,1)$, we initially define
		\[
		\psi_n = \begin{cases} P_{>\lambda_n^\theta}^a \phi &\text{in Scenario \uppercase\expandafter{\romannumeral1},} \\ P_{\leq |x_n|^\mu}^a \phi & \text{in Scenario \uppercase\expandafter{\romannumeral2},} \\ P_{\lambda_n^\theta\leq \cdot<|\frac{x_n}{\lambda_n}|^\mu}^a \phi & \text{in Scenario \uppercase\expandafter{\romannumeral3}.}\end{cases}
		\]
		We also set
		\[
		H= \begin{cases} -\Delta & \text{in Scenarios \uppercase\expandafter{\romannumeral2} and \uppercase\expandafter{\romannumeral3}}, \\ \L_a & \text{in Scenario \uppercase\expandafter{\romannumeral1}}.
		\end{cases} 
		\]
		In the rest of the paper, we denote $H^\frac{1}{2}$ by $\sqrt{H}$. Here, $H^\frac{1}{2}$ is defined by standard functional calculus of $H$.

		\textbf{Establishment of approximate solutions, part 1.} We first define functions $w_n$ and $w$ as follows:
		
		If $t_n\equiv 0$, then we define $w_n$ and $w$ as the global solutions to an appropriate NLS model with initial data $\psi_n$ and $\phi$, respectively.  In particular, in Scenario \uppercase\expandafter{\romannumeral1}, we use the model \eqref{NLS-Potential} (focusing energy-critical NLS with inverse-square potential).  In Scenario \uppercase\expandafter{\romannumeral2}, we use the model \eqref{Combine-NLS} (Combined  NLS without potential). Finally, in Scenario \uppercase\expandafter{\romannumeral3}, we use the model \eqref{NLS-critical} (energy-critical NLS without potential). 
		
		If instead $t_n\to\pm\infty$, then we define $w_n$ and $w$ to be the solutions to the appropriate model (determined according to the three scenarios as above) satisfying
		\begin{equation}\label{wnscattering}
			\|w_n-e^{-itH}\psi_n\|_{\dot H^1} \to 0 \qtq{and} \|w-e^{-itH}\phi\|_{\dot H^1} \to 0
		\end{equation}
		as $t\to\pm\infty$.  Note that in either case (i.e. $t_n\equiv 0$ or $t_n\to\pm\infty$), $w$ has scattering states $w_\pm$ as $t\to\pm\infty$ in $\dot H^1$. 
		
		The solutions $w_n$ and $w$ we just established above  obey 
		\begin{equation}\label{wnscattering-bds}
			\|\sqrt{H} w_n\|_{S^0(\R)}+\|\sqrt{H} w\|_{S^0(\R)} \leq C(\|\phi\|_{\dot H^1})
		\end{equation}
		uniformly in $n$.  It also have the persistence of regularity at $L^2$ level,that is,
		\begin{equation}\label{Persistence-Regularity}
			\begin{cases}
				\|w_n\|_{S_a^0(\R)}\lesssim C(\|\phi\|_{\dot H_a^1})\lambda_n^{-\theta} & \text{in Scenario \uppercase\expandafter{\romannumeral1}}, \\
				\|w_n\|_{S^0(\R)} \lesssim C(\|\phi\|_{H^1}) & \text{in Scenario \uppercase\expandafter{\romannumeral2}}, \\
				\|w_n\|_{S^0(\R)}\lesssim C(\|\phi\|_{\dot H^1}) \lambda_n^{-\theta} & \text{in Scenario \uppercase\expandafter{\romannumeral3}},
			\end{cases}
		\end{equation}
		uniformly in $n$.  In Scenarios \uppercase\expandafter{\romannumeral2} and \uppercase\expandafter{\romannumeral3}, we may futher use persistence of regularity to derive the following bounds
		\begin{equation}\label{Persistence-Regularity-smalls}
			\| |\nabla|^s w_n\|_{\dot S^0(\R)} \lesssim \bigl|\tfrac{x_n}{\lambda_n}\bigr|^{s\mu} 
		\end{equation}
		for any $0<s<1$. 
		
Finally,by stability theory of equations \eqref{NLS-Potential}, \eqref{Combine-NLS} and \eqref{NLS-critical}, we may also derive that in each case
		\begin{equation}\label{stablity-lemma}
			\lim_{n\to\infty} \|\sqrt{H}[w_n-w]\|_{L_t^q L_x^r(\R\times\R^d)}=0\qtq{for all admissible}(q,r). 
		\end{equation}
		
		\textbf{Estabilishment of approximate solutions, part 2.} Next we define approximate solutions to \eqref{NLS} on $\R\times\R^d$:

		For each $n$, let $\chi_n$ be a $C^{\infty}$ function satisfying
		\[
		\chi_n(x)=\begin{cases} 0 & |x_n+\lambda_n x|<\tfrac14|x_n| \\ 1 & |x_n + \lambda_n x|>\tfrac12 |x_n|,\end{cases}\qtq{with} |\partial^k \chi_n(x)|\lesssim \bigl(\tfrac{\lambda_n}{|x_n|}\bigr)^{|k|}
		\]
		uniformly in $x$.Especially, $\chi_n(x)\to 1$ as $n\to\infty$ for each $x\in\R^d$. It's worth  noticing that in Scenario~\uppercase\expandafter{\romannumeral1}, we have $x_n\equiv 0$, so that $\chi_n(x)\equiv 1$ and the derivatives of $\chi_n$ vanish identically. 
		
	For $T\geq 1$, In Scenario \uppercase\expandafter{\romannumeral1},we define
	\[
	\tilde v_{n,T}(t,x)=	\lambda^{-\frac{d-2}{2}}_{n}[\chi_{n}w_{n}](\lambda^{-2}_{n}t, \lambda^{-1}_{n}(x-x_{n}))\]

	In Scenario \uppercase\expandafter{\romannumeral2} and \uppercase\expandafter{\romannumeral3},we will futher define
		\[
		\tilde v_{n,T}(t,x)=\begin{cases} 
			\lambda^{-\frac{d-2}{2}}_{n}[\chi_{n}w_{n}](\lambda^{-2}_{n}t, \lambda^{-1}_{n}(x-x_{n})), & |t|\leq \lambda^{2}_{n}T,\\
			e^{-i(t-\lambda^{2}_{n}T)\L_{a}}\tilde{v}_{n,T}(\lambda^{2}_{n}T,x),& t> \lambda^{2}_{n}T,\\
			e^{-i(t+\lambda^{2}_{n}T)\L_{a}}\tilde{v}_{n,T}(-\lambda^{2}_{n}T,x),& t<- \lambda^{2}_{n}T.
		\end{cases} 
		\]

		We emphasize that $\tilde v_{n,T}$ are intended to be approximate solutions to \eqref{NLS}, So we have  the following error terms:
		\[
		e_{n,T}:=(i\partial_{t}-\L_{a})\tilde{v}_{n,T}-|\tilde{v}_{n,T}|^{\frac{4}{d-2}}\tilde{v}_{n,T}+|\tilde{v}_{n,T}|^{\frac{4}{d-1}}\tilde{v}_{n,T}.
		\]

		\textbf{Conditions for stability lemma.} In order to apply the stability lemma \eqref{StabilityNLS},We need to establish the following estimates: 
		\begin{align}\label{11estimate}
			&\limsup_{T\to\infty}\limsup_{n\to\infty}\bigl\{\|\tilde{v}_{n,T}\|_{L^{\infty}_{t}H^{1}_{x}(\R\times\R^d)}
			+\|\tilde{v}_{n,T}\|_{L^{\frac{2(d+2)}{d-2}}_{t,x}(\R\times\R^d)}\bigr\}\lesssim 1,\\	\label{22estimate}
			&\limsup_{T\to\infty}\limsup_{n\to\infty}\|\tilde{v}_{n,T}\|_{L_{t,x}^{\frac{2(d+2)}{d-1}}(\R\times\R^d)}\lesssim 1,\\\label{11estimate1}
			&	\limsup_{T\to\infty}\limsup_{n\to\infty}\|\tilde{v}_{n,T}(\lambda^{2}_{n}t_{n})-\phi_{n}\|_{\dot H_a^1}=0,
			\\\label{33estimate}
			&\limsup_{T\to\infty}\limsup_{n\to\infty}\|\nabla
			e_{n,T}\|_{N(\R)}=0,\footnotemark[1]
		\end{align}
		\footnotetext[1]{Here we already use the fact that $\|\nabla
			e_{n,T}\|_{N(\R)}\sim\|\mathcal{L}_a^\frac{1}{2}e_{n,T}\|_{N(\R)}$}
		where space-time norms are over $\R\times\R^{d}$.

		\textbf{Proof of \eqref{11estimate} and \eqref{22estimate} (space-time uniform bounds).}  First, by definition of $\tilde v_{n,T}$, Strichartz estimate and \eqref{Persistence-Regularity}
		\begin{align*}
			\|\tilde v_{n,T}\|_{L_t^\infty L_x^2} \lesssim \lambda_n \|\chi_n\|_{L_x^\infty} \|w_n\|_{L_t^\infty L_x^2} \lesssim \lambda_n^{1-\theta}.
		\end{align*}
		Similarly, noting that $\chi_n\equiv 1$ in Scenario \uppercase\expandafter{\romannumeral1} and
		\[
		\|\nabla \chi_n\|_{L^\infty(\Bbb{R}^d)} \lesssim \tfrac{\lambda_n}{|x_n|}\to 0\qtq{as}n\to\infty
		\]
		and
		\[
		\|\nabla \chi_n\|_{L^d(\Bbb{R}^d)} \lesssim \tfrac{|x_n|}{\lambda_n}\tfrac{\lambda_n}{|x_n|}\lesssim1.
		\]
		in the remaining scenarios and using equivalence of Sobolev spaces, we may estimate 
		\begin{align*}
			\|\nabla& \tilde v_{n,T}\|_{L_t^{\frac{2(d+2)}{d-2}}L_x^{\frac{2d(d+2)}{d^2+4}}\cap L_t^\infty L_x^2} \\
			& \lesssim \| \nabla[\chi_n w_n]\|_{L_t^{\frac{2(d+2)}{d-2}} L_x^{\frac{2d(d+2)}{d^2+4}}\cap L_t^\infty L_x^2} + \|[\chi_n w_n](\pm \lambda_n^2 T)\|_{\dot H^1} \\
			&\lesssim  \|w_n\|_{L_t^{\frac{2(d+2)}{d-2}}\dot{H}^{1,\frac{2d(d+2)}{d^2+4}}\cap L_t^\infty \dot{H}^{1}}+\Vert\nabla\chi_n\Vert_{L^d(\Bbb{R}^d)}\Vert w_n\Vert_{L_{t,x}^{\frac{2(d+2)}{d-2}}} + \|\nabla \chi_n\|_{L^d}\|w_n\|_{L_t^\infty L_x^\frac{2d}{d-2}}\\
			&\hspace{2ex}+\|\chi_n\|_{L^\infty}\|\nabla w_n\|_{L_t^\infty L_x^2} \lesssim 1. 
		\end{align*}
		Thus, using Sobolev embedding as well, we derive \eqref{11estimate}. Finally,  one can deduce $\eqref{22estimate}$ from $\eqref{11estimate}$, persistence of the regularity(Lemma \eqref{PRegularity}) and $L^p$ interpolation.

		\textbf{Proof of \eqref{11estimate1} (agreement of initial data).}  
		First, if $t_n\equiv 0$, We change variables to obtain that
		\[
		\|\nabla[\tilde v_{n,T}(0)-\phi_n]\|_{L^2} = 
		\begin{cases}
			0 & \text{in Scenario \uppercase\expandafter{\romannumeral1}} \\
			\bigl\|\nabla[\chi_nP^a_{\leq |x_n|^\mu}\phi - \phi]\bigr\|_{L^2} & \text{in Scenario \uppercase\expandafter{\romannumeral2}} \\
			\bigl  \|\nabla[\chi_n P^a_{\lambda_n^\theta\leq\cdot<|\tfrac{x_n}{\lambda_n}|^\mu}\phi-P_{>\lambda_n^\theta}^a\phi]\bigr\|_{L^2} & \text{in Scenario \uppercase\expandafter{\romannumeral3}}.
		\end{cases}
		\]
		We will only treat the most complex Scenario \uppercase\expandafter{\romannumeral3} in detail and omit details for the simpler Scenario \uppercase\expandafter{\romannumeral2}. In Scenario \uppercase\expandafter{\romannumeral3} we  
		rewrite
		\begin{align}
			\chi_n  P_{\lambda_n^\theta\leq\cdot<|\frac{x_n}{\lambda_n}|^\mu}^a \phi - P^{a}_{>\lambda_n^\theta}\phi 
			& = (\chi_n-1)P_{>\lambda_n^\theta}^a \phi \label{1jda}\\
			& \quad - \chi_n P_{>|\frac{x_n}{\lambda_n}|^\mu}^a\phi \label{2jda}
		\end{align}
		
		For \eqref{1jda}, we apply the Leibniz rule and write
		\[
		\nabla\eqref{1jda} = \nabla\chi_n\cdot\phi+ (1-\chi_n)\nabla \phi  - \nabla\chi_n\cdot P_{\leq \lambda_n^\theta}^a\phi  - (1-\chi_n)\nabla P_{\leq \lambda_n^\theta}^a\phi. 
		\]
		For the first two terms, we have the following estimate:
		\begin{align*}
			\|\nabla \chi_n \phi + (1-\chi_n)\nabla\phi\|_{L^2} & \lesssim \|\nabla\chi_n\|_{L^d}\|\phi\|_{L^\frac{2d}{d-2}(\text{supp}(\nabla \chi_n))} + \|\nabla \phi\|_{L^2(\text{supp}(1-\chi_n))} \\
			& \lesssim \|\phi\|_{L^\frac{2d}{d-2}(\text{supp}(\nabla \chi_n))} + \|\nabla \phi\|_{L^2(\text{supp}(1-\chi_n))} = o(1)
		\end{align*}
		as $n\to\infty$ by Lebesgue's dominated convergence theorem. For the last two terms, we have
		\begin{align*}
			\|\nabla&\chi_n\cdot P_{\leq \lambda_n^\theta}^a\phi  + (1-\chi_n)\nabla P_{\leq \lambda_n^\theta}^a\phi\|_{L^2} \\
			& \lesssim \|\nabla \chi_n\|_{L^d}\|P_{\leq\lambda_n^\theta}^a\phi\|_{L^\frac{2d}{d-2}} + \|\nabla P_{\leq \lambda_n^\theta}^a\phi\|_{L^2} \\ 
			& \lesssim \|P_{\leq\lambda_n^\theta}^a\phi\|_{L^\frac{2d}{d-2}} + \|\sqrt{\L_a}P_{\leq\lambda_n^\theta}^a\phi\|_{L^2} = o(1)
		\end{align*}
		as $n\to\infty$ by a standard density argument and using the fact that $\lambda_n\to 0$. Finally we also apply the Leibniz rule  to \eqref{2jda} and then estimating as we just did for the last two terms shows that
		\[
		\|\nabla[\chi_n P_{>|\frac{x_n}{\lambda_n}|^\mu}^a \phi]\|_{L^2} \to 0 \qtq{as}n\to\infty,
		\]
		as well.  Thus we have estabilished 
		\[
		\lim_{n\to\infty} \|\nabla[\tilde v_{n,T}(0)-\phi_n]\|_{L^2} =0.
		\]
		 in the case $t_n\equiv 0$.
		 
		We next establish $\dot H^1$ convergence in the case $t_n\to+\infty$ (the case $t_n\to-\infty$ is handled similarly,so we omit it). As we did before, we first change variables to see that
		\begin{align*}
			\|\tilde v_{n,T}&(\lambda_n^2 t_n)-\phi_n\|_{\dot H_a^1} \\
			&=\begin{cases} \|\sqrt{\L_a}\bigl[w_n(t_n)-e^{-it_n\L_a}P^{a}_{>\lambda_n^\theta}\phi\bigr]\|_{L^2} & \text{in Scenario \uppercase\expandafter{\romannumeral1},} \\
				\|\sqrt{\L_a^n}\bigl[(\chi_n w_n)(T)-e^{-iT\L_a^n}P_{> \lambda_n^\theta}^a\phi\bigr]\|_{L^2} & \text{in Scenario \uppercase\expandafter{\romannumeral3}}, \\
				\|\sqrt{\L_a^n}\bigl[(\chi_n w_n)(T)-e^{-iT\L_a^n}\phi\bigr]\|_{L^2} & \text{in Scenario \uppercase\expandafter{\romannumeral2}}.
			\end{cases}
		\end{align*}
		
		In Scenario \uppercase\expandafter{\romannumeral1}, we have $P_{>\lambda_n^\theta}^a\phi = \psi_n$, thus by \eqref{wnscattering} we have
		\[
		\lim_{n\to\infty} \|\tilde v_{n,T}(\lambda_n^2 t_n)-\phi_n\|_{\dot H_a^1} = 0.
		\]

Again,we will only treat the most complex Scenario \uppercase\expandafter{\romannumeral3} in detail and omit details for the simpler Scenario \uppercase\expandafter{\romannumeral2}. Notice that by using the equivalence of Sobolev spaces and directly calculation, we have the following estimate:
		\begin{align}
			\|\tilde  v_{n,T}(\lambda_n^2 t_n)-\phi_n\|_{\dot H_a^1} 
			& \lesssim \|\nabla[\chi_n(w_n(T)-w(T))]\|_{L^2} \label{1jdaa} \\
			& \quad + \|\nabla[w(T)(\chi_n-1)]\|_{L^2} \label{2jdaa} \\
			& \quad + \|\sqrt{\L_a^n}[w(T)-e^{-iT\L_a^n}\phi]\|_{L^2}\label{3jdaa} \\
			& \quad + \| P_{\leq \lambda_n^\theta}^a\phi\|_{\dot H_a^1} \label{4jdaa}.
		\end{align}
We will treat \eqref{1jdaa}-\eqref{4jdaa} seperately.
		For \eqref{1jdaa}, we use H\"older's inequality and \eqref{stablity-lemma} to obtain
		\begin{align*}
			\eqref{1jdaa} & \lesssim \|\nabla \chi_n\|_{L^d}\|w_n(T)-w(T)\|_{L^\frac{2d}{d-2}} + \|\chi_n\|_{L^\infty}\|\nabla[w_n(T)-w(T)]\|_{L^2} \\
			& \to 0 \qtq{as}n\to\infty.
		\end{align*}
		 Using the same argument as above we can obtain the following estimare For \eqref{2jdaa}:
		\begin{align*}
			\|\nabla[w(T)(\chi_n-1)]\|_{L^2} & \lesssim \|\nabla w(T)\|_{L^2(\text{supp}(\chi_n-1))} + \|w(T)\|_{L^\frac{2d}{d-2}(\text{supp}(\nabla \chi_n))} \\
			& \to 0 \qtq{as}n\to\infty.
		\end{align*}
		To estimate \eqref{3jdaa}, we ned to decompose \eqref{3jdaa} further and first write
		\begin{align}
			\eqref{3jdaa} & \lesssim \|(\sqrt{\L_a^n}-\sqrt{H})w(T)\|_{L^2} + \|[\sqrt{\L_a^n}-\sqrt{H}]\phi\|_{L^2} \label{3jdaa1} \\
			& \quad + \|(e^{-iT\L_a^n}-e^{-iTH})\sqrt{H}\phi\|_{L^2} \label{3jdaa2} \\
			& \quad + \|\sqrt{H}(w(T)-e^{-iTH}\phi)\|_{L^2}\label{33jdaa}.
		\end{align}
		For \eqref{3jdaa1},it's easy to check that these terms converge to zero as $n\to\infty$ by using \eqref{Conver33}.  The term in \eqref{3jdaa2} converges to zero as $n\to\infty$ due to \eqref{Conver44}, while the term in \eqref{33jdaa} converges to zero as $T\to\infty$ due to \eqref{wnscattering}.  
		
		Finally for \eqref{4jdaa},one can use a stardard density argument and the fact that $\lambda_n\to 0$  to check that    \eqref{4jdaa} converges to zero as as $T\to\infty$. So we completes the proof of \eqref{22estimate}.

		\textbf{Proof of \eqref{33estimate} (control of error terms).} We will also prove each scenario separately.
		
		\underline{\emph{Proof of \eqref{33estimate}} in Scenario \uppercase\expandafter{\romannumeral1}}. In Scenario \uppercase\expandafter{\romannumeral1}, we have 
		\[
		e_n = \lambda_n^{-\frac{(d-2)(d+3)}{2(d-1)}} (|w_n|^\frac{4}{d-1} w_n)(\lambda_n^{-2}t,\lambda_n^{-1}x),
		\]
By a change of variables, \eqref{Persistence-Regularity}, and \eqref{wnscat-bds},we can deduce that
		\begin{align*}
			\|\nabla e_n\|_{L_t^2L_x^\frac{2d}{d+2}} & \lesssim \lambda_n^\frac{2}{d-1} \|w_n\|_{L_{t,x}^{\frac{2(d+1)}{d-1}}}^\frac{4}{d-1}\|\nabla w_n\|_{L_{t}^\frac{2(d+2)}{d-2}L_x^{\frac{2d(d+2)}{d^2+4}}} \\
			&\lesssim \lambda_{n}^\frac{2}{d-1}\|w_n\|_{L_{t,x}^{\frac{2(d+2)}{d-2}}}^\frac{2}{d-1}\| w_n\|_{L_{t,x}^{\frac{2(d+2)}{d}}}^\frac{2}{d-1}\|\nabla w_n\|_{L_{t}^\frac{2(d+2)}{d-2}L_x^{\frac{2d(d+2)}{d^2+4}}} \\
			& \lesssim \lambda_n^{\frac{2}{d-1}(1-\theta)} \to 0 \qtq{as}n\to\infty.
		\end{align*}		
		Thus \eqref{33estimate} holds in Scenario \uppercase\expandafter{\romannumeral1}.
		
		\underline{\emph{Proof of \eqref{33estimate} in Scenario \uppercase\expandafter{\romannumeral2}.}}  By the definition of $\tilde v_{n,T}$ and use the fact that $\lambda_n\equiv 1$, we will $\tilde v_{n,T}$ treat  in two  regions $|t|\leq T$ and $|t|>T$ separately. 
		
		First we recall that $w_n$ is a solution to \eqref{Combine-NLS}, so in the region $|t|\leq T$ we deduce the following estimate:
		\begin{align}
			e_{n,T}(t,x)
			&=[(\chi_{n}-\chi^{\frac{d+3}{d-1}}_{n})|w_{n}|^{\frac{4}{d-1}}w_{n}](t, x-x_{n})\label{11new}\\
			&\quad-[(\chi_{n}-\chi^{\frac{d+2}{d-2}}_{n})|w_{n}|^{\frac{4}{d-2}}w_{n}](t, x-x_{n})\label{22new}\\
			&\quad+2[\nabla \chi_{n}\cdot \nabla w_{n}](t, x-x_{n})
			+[\Delta \chi_{n} w_{n}](t, x-x_{n})\label{33new}\\
			&\quad-\tfrac{a}{|x|^{2}}[\chi_{n}w_{n}](t, x-x_{n}).\label{55new}
		\end{align}
		
		In the region $|t|>T$, we instead have
		\begin{align}\label{pertubation1}
			e_{n,T} = |\tilde v_{n,T}|^{\frac{4}{d-2}} \tilde v_{n,T} - |\tilde v_{n,T}|^\frac{4}{d-1} \tilde v_{n,T}.
		\end{align}
		
As before,we will estimate \eqref{11new}--\eqref{55new} on $[-T,T]\times\R^d$ seperately. 
		
		For \eqref{11new}, we apply a change of variables and H\"older's inequality to deduce that
		\begin{align*}
			\|\nabla& \eqref{11new}\|_{L_{t,x}^{\frac{2(d+2)}{d+4}}}
			\\ &\lesssim \|(\chi_{n}-\chi^{\frac{d+3}{d-1}}_{n})|w_{n}|^{\frac{4}{d-1}}\nabla w_{n}\|_{L_{t,x}^{\frac{2(d+2)}{d+4}}}+
			\|\nabla \chi_{n}(1-\tfrac{d+3}{d-1}\cdot\chi^{\frac{4}{d-1}}_{n})w_{n}^{\frac{d+3}{d-1}}\|_{L_{t,x}^{\frac{2(d+2)}{d+4}}}\\
			&\lesssim \bigl[ \|  \nabla w_{n} \|_{L_{t,x}^{\frac{2(d+2)}{d-2}}} \| w_{n} \|_{L_{t,x}^{\frac{2(d+2)}{d}}}^{\frac{2}{d-1}} 
			+\|\nabla\chi_{n}\|_{L^{\infty}_{x}} \| w_{n} \|_{L_{t,x}^{\frac{2(d+2)}{d}}} \| w_{n} \|_{L_{t,x}^{\frac{2(d+2)}{d-2}}}^{\frac{2}{d-1}} \bigr]\\
			&\quad \times\bigl[ \| w_{n}-w \|_{L_{t,x}^{\frac{2(d+2)}{d-2}}}^{\frac{2}{d-1}}+\|w \|_{L_{t,x}^{\frac{2(d+2)}{d-2}}([-T,T]\times\{|x+x_{n}|\leq \frac{|x_{n}|}{4}\} )}^{\frac{2}{d-1}}\bigr]
			\\ & \quad \to 0 \qtq{as}n\to\infty,
		\end{align*}
		where we have used the Lebesgue's dominated convergence theorem and \eqref{stablity-lemma}.  
	The terms \eqref{22new}, \eqref{33new}, and \eqref{55new} may be handled exactly as the same argument in \cite{Ardila-Murphy}, so we omit the details.
		
		For the term \eqref{pertubation1}, first by Sobolev embedding and Strichartz estimate, it's easy to show that
		\begin{equation}\label{esstential-LimitAx}
			\limsup_{T\to\infty}\limsup_{n\to\infty} \|e^{-it\L_a^n}[\chi_n w_n(T)]\|_{L_{t,x}^{\frac{2(d+2)}{d-2}}((0,\infty)\times\R^d)}\to 0.
		\end{equation}
	So we can deduce the estimates of terms in \eqref{pertubation1} as follows:
		\begin{align*}
			\|\nabla &  (| \tilde{v}_{n,T}|^{\frac{4}{d-1}} \tilde{v}_{n,T})\|_{L_{t,x}^{\frac{2(d+2)}{d+4}}
				(\left\{t>\lambda^{2}_{n}T\right\}\times \R^{d})}\\
			&\lesssim 
			\|\nabla  \tilde{v}_{n,T}\|_{L_{t,x}^{\frac{2(d+2)}{d}}(\left\{t>\lambda^{2}_{n}T\right\}\times \R^{d})}
			\|  \tilde{v}_{n,T} \|_{L^{{\frac{2(d+2)}{d-2}}}_{t, x}(\left\{t>\lambda^{2}_{n}T\right\}\times \R^{d})}^{\frac{2}{d-1}}\\
			&\quad \times \|  \tilde{v}_{n,T} \|_{L_{t,x}^{\frac{2(d+2)}{d}}(\left\{t>\lambda^{2}_{n}T\right\}\times \R^{d})}^{\frac{2}{d-1}}\\
			&\lesssim
			\|e^{it \L^{n}_{a}}[\chi_{n}w_{n}(T)]\|_{L^{\frac{2(d+2)}{d-2}}_{t,x}((0, \infty)\times \R^{d})}^\frac{2}{d-1}\to 0 \mbox{ (by \eqref{esstential-LimitAx})}
		\end{align*}
		as $n\to\infty$ and $T\to \infty$. Similarly, 
	\begin{align*}
			&\hspace{3ex}\| \nabla (\tilde{v}_{n,T}|^{\frac{4}{d-2}} \tilde{v}_{n,T})\|_{L_{t,x}^{\frac{2(d+2)}{d+4}}
				(\left\{t>\lambda^{2}_{n}T\right\}\times \R^{d})}\\
			&\lesssim \|\nabla  \tilde{v}_{n,T}\|_{L_{t,x}^{\frac{2(d+2)}{d}}(\left\{t>\lambda^{2}_{n}T\right\}\times \R^{d})}\|  \tilde{v}_{n,T} \|^{\frac{4}{d-2}}_{L^{{\frac{2(d+2)}{d-2}}}_{t, x}(\left\{t>\lambda^{2}_{n}T\right\}\times \R^{d})}\\
			&\lesssim \| e^{-it\L^{n}_{a}}(\chi_{n}\omega_{n}(T))  \|^{\frac{4}{d-2}}_{L^{{\frac{2(d+2)}{d-2}}}_{t, x}((0, \infty)\times \R^{d})}\to 0  \mbox{ (by \eqref{esstential-LimitAx})}
\end{align*}
		as $n\to\infty$ and $T\to \infty$.  So \eqref{33estimate} holds in Scenario \uppercase\expandafter{\romannumeral2}.

		
		\underline{\emph{Proof of \eqref{33estimate} in Scenario \uppercase\expandafter{\romannumeral3}.}} Once again, we will treat $e_{n,T}$ in the regions $|t|\leq\lambda_n^2T$ and $|t|>\lambda_n^2 T$ separately. 
		
		Recalling that $w_n$ is a solution to \eqref{NLS-critical}, we find that in the region $|t|\leq\lambda_n^2 T$, we have
		\begin{align}
			e_{n,T}(t,x)&=-\lambda^{-\frac{(d-2)(d+3)}{2(d-1)}}_{n}[\chi^{\frac{d+3}{d-1}}_{n}|w_{n}|^{\frac{4}{d-1}}w_{n}](\lambda^{-2}_{n}t, \lambda^{-1}_{n}(x-x_{n})) 
			\label{e11}
			\\
			&\quad-\lambda^{-\frac{d+2}{2}}_{n}[(\chi_{n}-\chi^{\frac{d+2}{d-2}}_{n})|w_{n}|^{\frac{4}{d-2}}w_{n}](\lambda^{-2}_{n}t, \lambda^{-1}_{n}(x-x_{n}))\label{e22}\\
			&\quad+2\lambda^{-\frac{d+2}{2}}_{n}[\nabla \chi_{n}\cdot \nabla w_{n}](\lambda^{-2}_{n}t, \lambda^{-1}_{n}(x-x_{n}))\label{e33}
			\\
			&\quad+\lambda^{-\frac{d+2}{2}}_{n}[\Delta \chi_{n} w_{n}](\lambda^{-2}_{n}t, \lambda^{-1}_{n}(x-x_{n}))\label{e44}
			\\
			&\quad-\lambda^{-\frac{d-2}{2}}_{n}\tfrac{a}{|x|^{2}}[\chi_{n}w_{n}](\lambda^{-2}_{n}t, \lambda^{-1}_{n}(x-x_{n})).\label{e55}
		\end{align}
		
		In the region $t>\lambda_n^2 T$, we again have
		\begin{equation}\label{e66}
			e_{n,T} = |\tilde v_{n,T}|^{\frac{4}{d-2}} \tilde v_{n,T} - |\tilde v_{n,T}|^{\frac{4}{d-1}} \tilde v_{n,T}.
		\end{equation}
		
		By the almost same argument in \cite{Ardila-Murphy}, We can deduce that \eqref{e22}--\eqref{e55} tend to 0 as $n\to\infty$ and $T\to \infty$.So we only treat $\eqref{11e}$ in details. 
		
In fact,by changing variables and  H\"older's inequality, \eqref{Persistence-Regularity-smalls}, and \eqref{Persistence-Regularity}, we obtain
		\begin{align*}
			\|\nabla\eqref{e11}\|_{L_{t,x}^{\frac{2(d+2)}{d+4}}}
			&\lesssim \lambda_{n}^{\frac{2}{d-1}}\|\chi_{n}\|_{L^{\infty}_{x}}
			\|\nabla w_{n}\|_{L_{t,x}^{\frac{2(d+2)}{d}}}
			\|  w_{n} \|^{\frac{2}{d-1}}_{L_{t,x}^{\frac{2(d+2)}{d}}}\| w_{n}\|^{\frac{2}{d-1}}_{L_{t,x}^{\frac{2(d+2)}{d-2}}}\\
			&\quad+\lambda^{\frac{d+1}{d-1}}_{n}\|\chi_{n}\|^{\frac{4}{d-1}}_{L^{\infty}_{x}}
			\|\nabla \chi_{n}\|_{L^{\infty}_{x}}
			\| w_{n}\|^{\frac{2}{d-1}}_{L_{t,x}^{\frac{2(d+2)}{d-2}}}\| w_{n}\|^{\frac{d+1}{d-1}}_{L_{t,x}^{\frac{2(d+2)}{d}}}\\
			& \lesssim\lambda^{\tfrac{(d+1)(1-\theta)}{d-1}}_{n}\to 0 \quad \text{as $n\to \infty$.}
		\end{align*}

	Finally we treat \eqref{e66}.By \eqref{esstential-LimitAx}, we can handle the estimate of the energy-critical term with exactly the same argument in  Scenario~\uppercase\expandafter{\romannumeral2}.For the energy-subcritical pertubation term, We just use Strichartz estimate and the persistence of $L^2$ regularity \eqref{Persistence-Regularity} to deduce that
		\begin{align*}
			&\|\nabla | \tilde{v}_{n,T}|^{\frac{4}{d-1}} \tilde{v}_{n,T}\|_{L_{t,x}^{\frac{2(d+2)}{d+4}}
				(\left\{t>\lambda^{2}_{n}T\right\}\times \R^{d})}\\
			&\lesssim 
			\|\nabla \tilde{v}_{n,T}\|_{L_{t,x}^{\frac{2(d+2)}{d}}(\left\{t>\lambda^{2}_{n}T\right\}\times \R^{d})}
			\|  \tilde{v}_{n,T} \|^{\frac{2}{d-1}}_{L_{t,x}^{\frac{2(d+2)}{d}}(\left\{t>\lambda^{2}_{n}T\right\}\times \R^{d})}\|  \tilde{v}_{n,T} \|^{\frac{2}{d-1}}_{L_{t,x}^{\frac{2(d+2)}{d-2}}(\left\{t>\lambda^{2}_{n}T\right\}\times \R^{d})}\\
			&\lesssim
			\|  \tilde{v}_{n,T} \|^{\frac{2}{d-1}}_{L_{t,x}^{\frac{2(d+2)}{d}}(\left\{t>\lambda^{2}_{n}T\right\}\times \R^{d})}
			\lesssim \lambda^{\frac{2}{d-1}}_{n}\|w_{n}\|^{\frac{2}{d-1}}_{{L}^{\infty}_{t}{L}^{2}_{x}}\lesssim \lambda^{\frac{2(1-\theta)}{d-1}}_{n}\to 0 \quad \text{as $n\to\infty$.}
		\end{align*}

		Thus \eqref{33estimate} holds in Scenario~\uppercase\expandafter{\romannumeral3}. 
		
		\textbf{Estabilishment of true solutions.} With \eqref{11estimate}--\eqref{33estimate} in place, we may apply the stability lemma (Lemma~\ref{StabilityNLS}) to deduce the existence of a global solution $v_n$ to \eqref{NLS} with $v_n(0)=\phi_n$,
		\[
		\|v_n\|_{L_{t,x}^{\frac{2(d+2)}{d-2}}(\R\times\R^d)}\lesssim 1 \qtq{uniformly in}n,
		\]
		and
		\[
		\limsup_{T\to\infty}\limsup_{n\to\infty}\|v_n(t-\lambda_n^2 t_n)-\tilde v_{n,T}(t)\|_{\dot S_a^1(\R\times\R^d)} =0 .
		\]
		
		\textbf{Approximation by compactly supported functions.}  Once again, One can verify \eqref{Comspt11} by almost the same argument in \cite{Ardila-Murphy}. Thus we omit the proof. \end{proof}

	
	\subsection{Existence of minimal blow-up solutions}\label{MinimalSolutions}
	For each $E>0$ and $C>0$,  we define
	\begin{align*}
		&L_C(E):=\sup\bigg\{\|  u \|_{L^{\frac{2(d+2)}{d-2}}_{t,x}(I\times\R^{d})}:\mbox{\emph{u} solves \eqref{NLS} on } I\times \R^d \mbox{ and }\\&M(u)\leqslant C,\quad E_a(u(t))\leq E,\quad\| u(t_0)\|_{\dot{H}_a^1(\Bbb{R}^d)}< \|W_a\|_{\dot{H}_a^1(\Bbb{R}^d)} \mbox{ for some }t_0\in I\bigg\}.
	\end{align*}
	
	By the local theory, we suppose that Theorem \ref{Th1} fails, then there exists $C_*>0$ such that $E_{C_*}< E_a(W_a)$. Using Lemma~\ref{StabilityNLS}, this implies that $L_{C_*}(E_{C_*})=\infty$. Thus there exists a sequence of solutions $u_{n}$ such that  $E_a(u_{n})\rightarrow E_{C_*}$ and $\|  u_{n} \|_{L^{\frac{2(d+2)}{d-2}}_{t,x}(\R\times\R^{d})}\rightarrow \infty$ as $n\rightarrow\infty$. We will prove the existence of a solution $u_{C_0}\in H^{1}(\R^{d})$ such that $E_a(u_{C_*})=E_{C_*},\text{sup}_{t\in I}\| u_{C_*}\|_{\dot{H}_a^1(\Bbb{R}^d)}<\|W_a\|_{\dot{H}_a^1},0\in I=(T_{min},T_{max})$ with
	\begin{equation}\label{minimalblowsolution}
		\|  u_{C_*} \|_{L^{\frac{2(d+2)}{d-2}}_{t,x}([0,T_{max})\times\R^{d})}=\|  u_{C_*} \|_{L^{\frac{2(d+2)}{d-2}}_{t,x}((T_{min},0]\times\R^{d})}=\infty,
	\end{equation}
	and such that
	\begin{equation}\label{orbit-is-precompact}
		\left\{u_{C_*}(t):t\in I\right\}\qtq{is precompact in }\dot{H}_a^{1}(\R^{d}).
	\end{equation}
	
	\begin{theorem}[Existence of minimal blow-up solutions]\label{CompacSolution} Suppose Theorem~\ref{sub-threshold} fails.  Then there exists a solution   $u_{C_*}$  to \eqref{NLS}  with maximal lifespan $I=(T_{min},T_{max})$ such that $0\in I$, 
		$M(u_{C_*})\leq C_*,E_a(u_{C_*})=E_{C_*},\text{sup}_{t\in I}\| u_{C_*}\|_{\dot{H}_a^1}<\|W_a\|_{\dot{H}_a^1}$.Morever,\eqref{minimalblowsolution} and \eqref{orbit-is-precompact} hold.
	\end{theorem}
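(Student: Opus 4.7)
The plan is to execute the standard Kenig--Merle-type concentration-compactness scheme, adapted to the combined nonlinearity and the inverse-square potential. First, I would extract a sequence of $H_a^1$-solutions $\{u_n\}$ to \eqref{NLS} with $M(u_n)\leq C_*$, $E_a(u_n)\to E_{C_*}$, sub-threshold kinetic bound $\|u_n(\tau_n)\|_{\dot H_a^1}<\|W_a\|_{\dot H_a^1}$ at some $\tau_n$ in the maximal lifespan $I_n$, and $\|u_n\|_{L^{2(d+2)/(d-2)}_{t,x}(I_n\times\R^d)}\to\infty$. Applying Lemma~\ref{GlobalS} propagates the sub-threshold constraint throughout $I_n$, and Lemma~\ref{lem.var} then gives a uniform $\dot H_a^1$-bound on $u_n$; combined with $M(u_n)\leq C_*$ this yields a uniform $H_a^1$-bound. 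After time translation $u_n(\cdot)\mapsto u_n(\cdot+\tau_n^\sharp)$ with $\tau_n^\sharp$ chosen so that the space-time norm is infinite on both half-lines (taking $\tau_n^\sharp$ to be the median time of the blow-up norm), we may assume the blow-up occurs on $[0,T_{\max,n})$ and on $(T_{\min,n},0]$ simultaneously.

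Next, I would apply Theorem~\ref{LinearProfi} to $\{u_n(0)\}$, yielding profiles $\phi^j$ with parameters $(\lambda_n^j,x_n^j,t_n^j)$ and a remainder $W_n^J$ satisfying \eqref{Reminder}. The Pythagorean expansions \eqref{MassEx}--\eqref{EnergyEx}, combined with Lemma~\ref{GlobalS} and Lemma~\ref{lem.var}, show each profile has $\|\phi^j\|_{\dot H_a^1}<\|W_a\|_{\dot H_a^1}$, with $\sum_j M(\phi_n^j)\leq C_*$ and $\sum_j E_a(\phi_n^j)\leq E_{C_*}$ in the limit. To each profile I would attach a nonlinear profile: when the parameters satisfy Scenarios \uppercase\expandafter{\romannumeral1}--\uppercase\expandafter{\romannumeral3} of Proposition~\ref{embedding-nonlinear} (i.e.\ $\lambda_n^j\to 0$, or $\lambda_n^j\equiv 1$ with $|x_n^j|\to\infty$, or both), that proposition furnishes a global nonlinear solution $v_n^j$ with bounded $L^{2(d+2)/(d-2)}_{t,x}$-norm; in the remaining ``good'' regime $\lambda_n^j\equiv 1, x_n^j\equiv 0$, I would define $v^j$ as the maximal $H_a^1$-solution of \eqref{NLS} with datum either $\phi^j$ (if $t_n^j\equiv 0$) or the suitable scattering datum given by \eqref{DecayingES1}--\eqref{DecayingES2} (if $t_n^j\to\pm\infty$).

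I would then argue by contradiction that exactly one profile survives, in the good regime. If $J^\star\geq 2$ or the remainder is non-trivial, the strict Pythagorean inequality forces each $v^j$ to have mass-energy strictly below $(C_*,E_{C_*})$; by the minimality of $(C_*,E_{C_*})$ together with Proposition~\ref{embedding-nonlinear}, every $v_n^j$ has finite space-time norm. Setting
\[
\tilde u_n^J(t)=\sum_{j=1}^J v_n^j(t)+e^{-it\L_a}W_n^J,
\]
the asymptotic orthogonality \eqref{Ortho} decouples the nonlinear cross-terms, the smallness \eqref{Reminder} handles the free-evolution remainder, and the stability lemma (Lemma~\ref{StabilityNLS}) yields a uniform bound on $\|u_n\|_{L^{2(d+2)/(d-2)}_{t,x}}$, contradicting the blow-up. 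Hence $J^\star=1$, $W_n^J\to 0$ in $H_a^1$, and the surviving profile must be in the good regime; indeed, if $\lambda_n^1\to 0$ or $|x_n^1|\to\infty$, Proposition~\ref{embedding-nonlinear} would again produce a scattering approximation for $u_n$. The resulting solution $u_{C_*}$ then satisfies all the stated properties except possibly \eqref{orbit-is-precompact}.

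Finally, for the precompactness claim, I would apply the very same argument to any sequence $\{u_{C_*}(\tau_n)\}\subset H_a^1$: since $u_{C_*}$ blows up on both halves of its lifespan, neither the forward nor backward evolution of $u_{C_*}(\tau_n)$ can have uniformly bounded space-time norm, so the profile decomposition applied to $u_{C_*}(\tau_n)$ is again single-bubble in the good regime with trivial remainder and $t_n\equiv 0$, which forces $H_a^1$-convergence of a subsequence and hence precompactness. The main obstacle I anticipate is a careful verification that the sub-threshold variational constraint survives the degenerate limits underlying Scenarios \uppercase\expandafter{\romannumeral1}--\uppercase\expandafter{\romannumeral3}: one must check that when $\lambda_n\to 0$ the lower-order $L^{2(d+2)/(d-1)}$ term of $E_a$ becomes negligible and the limit profile satisfies the sub-threshold condition for the appropriate model (either \eqref{NLS-Potential}, \eqref{Combine-NLS}, or \eqref{NLS-critical}), so that Theorems~\ref{thm-NLS-critical}--\ref{thm-NLS-Potential} genuinely apply. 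A secondary delicacy, in dimension $d=3$, is the radial hypothesis: radial initial data forces $x_n^j\equiv 0$ in the decomposition (see the remark after Theorem~\ref{LinearProfi}), ruling out Scenario~\uppercase\expandafter{\romannumeral2} entirely and making Scenario~\uppercase\expandafter{\romannumeral3} compatible with the radial scattering theorem for \eqref{NLS-critical}.
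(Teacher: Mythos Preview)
Your proposal is correct and follows essentially the same route as the paper: the paper reduces Theorem~\ref{CompacSolution} to a Palais--Smale condition (Proposition~\ref{PScondition}) and then proves that proposition by exactly the profile-decomposition/nonlinear-embedding/stability scheme you describe, including the same case split (single profile capturing all the energy versus multiple profiles) and the same use of Proposition~\ref{embedding-nonlinear} to rule out bad parameter regimes. The obstacles you flag---checking that the limiting profiles satisfy the sub-threshold hypotheses of the relevant model equation, and the $d=3$ radiality forcing $x_n^j\equiv 0$---are precisely the points the paper addresses.
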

	
	Arguing as in \cite[Theorem~9.6]{KillipOhPoVi2017}, to establish Theorem~\ref{CompacSolution}, it will suffice to establish the following Palais--Smale condition. 
	
	\begin{proposition}\label{PScondition}
		Let $\left\{u_{n}\right\}_{n\in \N}\subset H_a^{1}(\R^{d})$ be a sequence of solutions to \eqref{NLS} such that
		$M(u_n)\leq C_*, \lim\limits_{n\rightarrow\infty}E_a(u_{n})=E_{C_*}$, and suppose $t_{n}\in \R$ satisfy
		\begin{equation}\label{Blowp2}
			\lim_{n\rightarrow\infty}\|  u_{n} \|_{L^{\frac{2(d+2)}{d-2}}_{t,x}([t_{n},\infty)\times\R^{d})}=\|  u_{n} \|_{L^{\frac{2(d+2)}{d-2}}_{t,x}((-\infty,t_{n}]\times\R^{d})}=\infty.
		\end{equation}
		Then we have that $\left\{u_{n}\right\}_{n\in \N}$ converges along a subsequence in  $\dot{H}_{x}^{1}(\R^{d})$.
	\end{proposition}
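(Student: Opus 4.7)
The proof follows the Keraani--Kenig--Merle concentration-compactness scheme. By time translation, I may assume $t_n=0$, so $f_n:=u_n(0)$ is bounded in $H_a^1$ (the $L^2$ bound comes from $M(u_n)\leq C_*$, while the $\dot H_a^1$ bound follows from the sub-threshold inclusion $\|u_n\|_{\dot H_a^1}<\|W_a\|_{\dot H_a^1}$ that is built into the definition of $E_{C_*}$). Apply the linear profile decomposition Theorem~\ref{LinearProfi} to $\{f_n\}$, obtaining profiles $\{\phi^j\}_{j=1}^{J^*}$, parameters $(\lambda_n^j,t_n^j,x_n^j)$ and remainders $W_n^J$ satisfying \eqref{Dcom}--\eqref{EnergyEx}. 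Using Lemma~\ref{lem.var} together with the mass/energy Pythagorean decoupling and the sharp Sobolev inequality of Proposition~\ref{P:Wa}, I verify that for $n$ large each profile $\phi_n^j$ (and the remainder $W_n^J$) inherits the strict sub-threshold condition in its own natural energy class---either $E_a$, $E_a^c$, $E_0$ or $E_0^c$ according to the regime of $(\lambda_n^j,x_n^j)$.

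\textbf{Nonlinear profiles and gluing.} To each profile I attach a nonlinear profile $v_n^j$: in the compact regime $\lambda_n^j\equiv 1,\ x_n^j\equiv 0$, let $v^j$ be the maximal-lifespan solution of \eqref{NLS} with matching data at $t=0$ (if $t_n^j\equiv 0$) or scattering data as $t\to\mp\infty$ (if $t_n^j\to\pm\infty$), and set $v_n^j(t)=v^j(t+t_n^j)$; in each of the three remaining regimes Proposition~\ref{embedding-nonlinear} produces a global $v_n^j$ with uniformly bounded $L_{t,x}^{2(d+2)/(d-2)}$ norm, fed by the scattering theorems for the limiting models (Theorems~\ref{thm-NLS-critical}, \ref{thm-Combine-NLS}, \ref{thm-NLS-Potential}). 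If $J^*\geq 2$, then the energy decoupling and sub-threshold propagation force each $E_a(\phi^j)$ (resp.\ $E^c$) to be strictly less than $E_{C_*}$, so the minimality of $E_{C_*}$ guarantees that even the compact-regime profiles have bounded scattering norm. Then asymptotic orthogonality \eqref{Ortho}, the approximation by compactly supported functions \eqref{Comspt11}, and the stability Lemma~\ref{StabilityNLS} allow one to assemble the superposition $\sum_j v_n^j$ into an approximate solution of \eqref{NLS} with initial data $f_n$ and bounded $L_{t,x}^{2(d+2)/(d-2)}$ norm, contradicting the two-sided blow-up hypothesis \eqref{Blowp2}.

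\textbf{Reduction to a single compact profile and conclusion.} Hence $J^*=1$. A one-profile version of the same stability argument rules out a non-trivial remainder (since \eqref{Reminder} combined with small-data scattering would make $u_n$ have bounded scattering norm). The same argument excludes the non-compact regimes for the single surviving profile, because Proposition~\ref{embedding-nonlinear} directly supplies a bounded scattering norm there. Finally, if $\lambda_n^1\equiv 1,\ x_n^1\equiv 0$ but $t_n^1\to\pm\infty$, then the forward (resp.\ backward) linear flow $e^{-it\mathcal L_a}\phi^1$ has vanishing $L_{t,x}^{2(d+2)/(d-2)}$ norm on $(0,\infty)$ (resp.\ $(-\infty,0)$) by monotone convergence, so small-data scattering forces the forward (resp.\ backward) scattering norm of $u_n$ to stay bounded, again contradicting \eqref{Blowp2}. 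The only surviving configuration is $J^*=1$, $\lambda_n^1\equiv 1$, $x_n^1\equiv 0$, $t_n^1\equiv 0$ and $\|W_n^1\|_{H_a^1}\to 0$, so that $f_n\to\phi^1$ in $H_a^1$, in particular in $\dot H_a^1$.

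\textbf{Main obstacle.} The delicate point is the decoupling step when combining nonlinear profiles that live at different scales or positions: the potential $a|x|^{-2}$ breaks translation invariance, forcing one to work with the rescaled/translated operators $\mathcal L_a^n$ and to use the convergences \eqref{Conver11}--\eqref{Convercritical} to replace them by the limit operators before applying orthogonality; simultaneously, the double-power nonlinearity breaks exact scaling invariance, so the subcritical term $|u|^{4/(d-1)}u$ produces error terms carrying negative powers of $\lambda_n^j$ that can only be absorbed through the mass conservation $M(u_n)\leq C_*$ and the compactly-supported approximation \eqref{Comspt11}. Handling these two defects simultaneously for all four possible profile regimes is the technical heart of the argument.
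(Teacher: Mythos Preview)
Your proposal is correct and follows essentially the same Kenig--Merle concentration-compactness route as the paper. The only organizational difference is that the paper splits into ``$\sup_j\limsup_n E_a(\phi_n^j)=E_{C_*}$'' versus ``$\sup_j\limsup_n E_a(\phi_n^j)\leq E_{C_*}-\delta$'' (the first forces $J^*=1$ with vanishing remainder directly via energy decoupling and Lemma~\ref{lem.var}, the second feeds all profiles---including a possible lone one---into the stability/approximation machinery), whereas you split on $J^*\geq 2$ versus $J^*=1$; both coverings are equivalent. One small overstatement: in the final line you claim $\|W_n^1\|_{H_a^1}\to 0$, but only $\|W_n^1\|_{\dot H_a^1}\to 0$ follows from the energy decoupling (mass decoupling does not force $M(W_n^1)\to 0$ since $M(u_n)$ is merely bounded), which is all that is needed for the stated $\dot H_x^1$ convergence.
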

	\begin{proof}
		By time-translation invariance, we may assume that $t_{n}\equiv 0$. Using Lemma \ref{GlobalS} and writing $u_{n,0}=u_n(0)$, we have 
		\[
		\|u_{n,0}\|^{2}_{\dot{H}_a^{1}}  < \|W_a\|_{\dot{H}_a^1}^2
		\]
		and
		\[
		\|u_{n,0}\|^{2}_{L^2}  \lesssim M(u_{n})  \lesssim C_*.
		\]
		Applying Theorem~\ref{LinearProfi}, we may write
		\begin{equation}\label{profile dcp}
			u_{n}(0)=\sum^{J}_{j=1}\phi^{j}_{n}+W^{J}_{n}
		\end{equation}
		for each $J\leq J^{\ast}$, with the various sequences satisfying \eqref{Reminder}--\eqref{EnergyEx}.  We have $M(u_{n})\leq C_*$, $E_{a}(u_{n})\rightarrow E_{C_*}$ and 
		By \eqref{MassEx} and \eqref{EnergyEx}, we also have 
		\begin{align}\label{profilemassdcp}
			&\limsup_{n\rightarrow\infty}\sum^{J}_{j=1}M(\phi_{n}^{j})+M(W^{J}_{n})\leq C_*,\\\label{Pv22}
			&\limsup_{n\rightarrow\infty}\sum^{J}_{j=1}E_{a}(\phi_{n}^{j})+E_{a}(W^{J}_{n})\leq E_{C_*},
		\end{align}
		for each finite $J\leq J^{\ast}$, with all energies in \eqref{Pv22} nonnegative. Moreover, by sharp Sobolev embedding and the nontriviality of $\phi_{n}^{j}$ we have that $\liminf\limits_{n\to\infty}E_{a}(\phi_{n}^{j})>0$.  
		
		We shall prove that there at most exists one nonzero $\phi_n^j$.
		
		\textbf{Scenario \uppercase\expandafter{\romannumeral1}.}  If
		\begin{equation}\label{Scena1}
			\sup_{j}\limsup_{n\rightarrow\infty}E_{a}(\phi_{n}^{j})=E_{C_*}.
		\end{equation}
		By \eqref{Pv22}, positivity of energy yields $J^{\ast}=1$.  In this case, we have that 
		\begin{equation}\label{Csc}
			u_{n}(0)=\phi^{1}_{n}+W_{n}^{1}, \quad\text{with}\quad \lim_{n\rightarrow\infty}\|W_{n}^{1}\|^{2}_{\dot{H}_{a}^{1}}=0.
		\end{equation}
		
		Now suppose that $\frac{|x^{1}_{n}|}{\lambda^{1}_{n}}\to \infty$. Then Proposition~\ref{embedding-nonlinear} yields a global solution $v_{n}$ with $v_{n}(0)=\phi^{1}_{n}$ such that
		\begin{equation*}
			\|v_{n}\|_{L^{\frac{2(d+2)}{d-2}}_{t,x}(\R\times\R^{d})}\lesssim 1,\hspace{2ex}\|v_{n}\|_{L^{\frac{2(d+2)}{d-1}}_{t,x}(\R\times\R^{d})}\lesssim1.
		\end{equation*}
		As $W_{n}^{1}=u_{n}(0)-v_{n}(0)$, it follows that $\lim\limits_{n\to\infty}\|u_{n}(0)-v_{n}(0)\|_{\dot{H}_{a}^{1}}=0$. Thus, Lemma~\ref{StabilityNLS} yields that for $n$ large $u_{n}$ is a global solution with finite scattering norm, contradicting \eqref{Blowp2}. It follows that $x^{1}_{n}\equiv 0$.
		
		Next, suppose that $\lambda^{1}_{n}\to 0$ as $n\to \infty$.  In this case, Proposition~\ref{embedding-nonlinear} yields a global solution $v_{n}$
		with  $v_{n}(0)=\phi^{1}_{n}$ and $\|v_{n}\|_{L^{\frac{2(d+2)}{d-2}}_{t,x}(\R\times\R^{d})}\lesssim 1$. Then Lemma~\ref{StabilityNLS} implies that $\|u_{n}\|_{L^{\frac{2(d+2)}{d-2}}_{t,x}(\R\times\R^{d})},\|v_{n}\|_{L^{\frac{2(d+2)}{d-1}}_{t,x}(\R\times\R^{d})}\lesssim 1$ for $n$ large enough, again contradicting \eqref{Blowp2}. It follows that $\lambda_n^1\equiv 1$.
		
		Finally, suppose that $t^{1}_{n}\to \infty$ as $n\to \infty$.  By Sobolev embedding, Strichartz estimate and \eqref{Csc}, we obtain that
		\begin{equation}\label{Eq11}
			\begin{split}
				\| & e^{-it\L_{a}}u_{n}(0)  \|_{L^{\frac{2(d+2)}{d-2}}_{t,x}([0,\infty)\times \R^{d})} \\
				&\leq \|e^{-it\L_{a}} \phi^{1}_{n}  \|_{L^{\frac{2(d+2)}{d-2}}_{t,x}([0,\infty)\times \R^{d})}+
				\|e^{-it\L_{a}} W_{n}^{1} \|_{L^{\frac{2(d+2)}{d-2}}_{t,x}([0,\infty)\times \R^{d})}\\
				&\lesssim
				\|e^{-it\L_{a}} \phi^{1} \|_{L^{\frac{2(d+2)}{d-2}}_{t,x}([t^{1}_{n},\infty)\times \R^{d})}+\|W_{n}^{1}\|_{\dot{H}_{x}^{1}}\rightarrow 0,
			\end{split}
		\end{equation}
		as $n\rightarrow\infty$. Writing $\tilde{u}_{n}=e^{-it\L_{a}}u_{n}(0)$ and $e_{n}=|\tilde{u}_{n}|^{\frac{4}{d-2}}\tilde{u}_{n}-|\tilde{u}_{n}|^{\frac{4}{d-1}}\tilde{u}_{n}$, By \eqref{Eq11}, H\"older's inequality, and 
		Strichartz estimate,we obtain
		\[
		\|\nabla e_{n}  \|_{N(\R)}\to 0\quad \text{as $n\to \infty$}.
		\]
		Then by stability lemma (Lemma~\ref{StabilityNLS}), $\|u_n\|_{L_{t,x}^{\frac{2(d+2)}{d-2}}}<\infty$ uniformly with $n$, which contradicts  \eqref{Blowp2}. An analogous argument handles the case $t^{1}_{n}\rightarrow -\infty$ as $n\rightarrow\infty$.
		
		Thus, in Scenario \uppercase\expandafter{\romannumeral1}, we obtain that $x^{1}_{n}\equiv 0$, $t^{1}_{n}\equiv 0$ and $\lambda^{1}_{n}\equiv 1$.  This yields the desired conclusion of Proposition~\ref{PScondition}, and hence it remains to show that the only remaining scenario results in a contradiction. 
		
		\textbf{Scenario \uppercase\expandafter{\romannumeral2}.} If \eqref{Scena1} fails for all $j$, then there exists $\delta>0$ such that
		\begin{equation}\label{Scena2}
			\sup_{j}\limsup_{n\rightarrow\infty}M(\phi_{n}^{j})\leq C_*\quad
			\mbox {and} \quad \sup_{j}\limsup_{n\rightarrow\infty}E_{a}(\phi_{n}^{j})\leq E_{C_*}-\delta.
		\end{equation}
		
		We then define nonlinear profiles $\psi_{n}^{j}$ associated to each $\phi_{n}^{j}$ as follows:
		\begin{itemize}
			\item If $\frac{|x^{j}_{n}|}{\lambda^{j}_{n}}\to \infty$ for some $j$, then we are in position to apply Proposition~\ref{embedding-nonlinear}, and hence we have a global solution $\psi_{n}^{j}$ of \eqref{NLS} with data $\psi_{n}^{j}(0)=\phi_{n}^{j}$.

			\item If $x^{j}_{n}\equiv 0$ and $\lambda^{j}_{n}\rightarrow 0$, we define $\psi_{n}^{j}$ to be the global solution of \eqref{NLS} with the initial data $\psi_{n}^{j}(0)=\phi_{n}^{j}$ guaranteed by Proposition~\ref{embedding-nonlinear}.
			
			\item If $x^{j}_{n}\equiv 0$, $\lambda^{j}_{n}\equiv 1$ and $t^{n}_{j}\equiv 0$, we take  $\psi^{j}$ to be the global solution of 
			\eqref{NLS} with the initial data $\psi^{j}(0)=\phi^{j}$.
			
			\item If $x^{j}_{n}\equiv 0$, $\lambda^{j}_{n}\equiv 1$ and $t^{n}_{j}\rightarrow\pm\infty$, we take $\psi^{j}$  to be the global solution  of \eqref{NLS} that scatters to $e^{-it\L_{a}}\phi^{j}$ in $H^{1}_{x}(\R^{d})$  as $t\rightarrow\pm\infty$.  In either case, we define the global solution to \eqref{NLS},
			\[\psi^{j}_{n}(t,x):=\psi^{j}(t+t^{j}_{n}, x).\]
		\end{itemize}
		
		By construction, we have that for each $j$, 
		\begin{equation}\label{Aproxi11}
			\|\psi_{n}^{j}(0)- \phi_{n}^{j}\|_{H^{1}_{a}}\rightarrow0,\quad \text{as $n\rightarrow\infty$}.
		\end{equation}
		Moreover, notice that by \eqref{Scena2} and the definition of $E_{C_*}$, we may obtain 
		\begin{equation}\label{BoundProfile}
			\| \psi^{j}_{n}  \|_{L^{\frac{2(d+2)}{d-2}}_{t,x}}\lesssim_{\delta} 1, \quad \text{for $n$ large and $1\leq j\leq J$}.
		\end{equation}
		 
		\noindent\textbf{Case 1}: $d=4,5$.
		By \eqref{BoundProfile} and 
		Lemma \ref{PRegularity},  we have the following estimate:
		\begin{align}
			&\| \psi^{j}_{n}  \|_{L^{\frac{2(d+2)}{d-2}}_{t,x}}\lesssim_{\delta} [E_{a}(\psi^{j}_{n})]^{\frac{1}{2}},\quad
			\|\psi^{j}_{n}\|_{L^{\frac{2(d+2)}{d-2}}_{t}\dot{H}_{a}^{1,\frac{2d(d+2)}{d^2+4}}}\lesssim_{\delta} [E_{a}(\psi^{j}_{n})]^{\frac{1}{2}},\label{ImporBound}\\
			&\|\nabla\psi^{j}_{n}\|_{L_{t,x}^{\frac{2(d+2)}{d}}}\sim\|\sqrt{\mathcal{L}_a}\psi^{j}_{n}\|_{L_{t,x}^{\frac{2(d+2)}{d}}}\lesssim_{\delta} [E_a(\psi^{j}_{n})]^{\frac{1}{2}},
			\|\psi^{j}_{n}\|_{L_{t,x}^{\frac{2(d+2)}{d}}}\lesssim_{\delta} [M(\psi^{j}_{n})]^{\frac{1}{2}}.	\label{ImporBound22}
		\end{align}
		We define the approximate solutions
		\[
		u^{J}_{n}(t):=\sum^{J}_{j=1}\psi^{j}_{n}(t)+e^{-it\L_{a}}W^{J}_{n},
		\] 
		with the goal of applying Lemma~\ref{StabilityNLS} to contradict \eqref{Blowp2}. In particular, we define the errors $e_n^J$ via 
		\[
		(i\partial_{t}-\L_{a}) {u}^{J}_{n}=-|{u}^{J}_{n}|^{\frac{4}{d-2}}{u}^{J}_{n}+|{u}^{J}_{n}|^{\frac{4}{d-1}}{u}^{J}_{n}+e^{J}_{n}.
		\]
		
		From \eqref{Aproxi11} we see that
		\begin{equation}\label{DefiuJ}
			\begin{split}
				\lim_{n\to \infty}\|u_{n}^{J}(0)- u_{n}(0)\|_{\dot{H}^{1}_{a}}=0, \quad \text{for any $J$}.
			\end{split}
		\end{equation}
		As before, it's sufficient to establish the following estimates:
		\begin{align}\label{Bound11}
			&\sup_{J}\limsup_{n\rightarrow\infty}\| {u}^{J}_{n}  \|_{L_{t}^{\infty}H^{1}_{a}(\R\times \R^{d})}\lesssim_{\delta} 1,
			\\
			&\sup_{J}\limsup_{n\rightarrow\infty}\big[ \| {u}^{J}_{n}  \|_{L^{\frac{2(d+2)}{d-2}}_{t,x}}
			+\|u_{n}^{J}\|_{L_{t,x}^{\frac{2(d+2)}{d}}}+ \|u_{n}^{J}\|_{L^{\frac{2(d+2)}{d-2}}_{t}\dot{H}_{a}^{1,\frac{2d(d+2)}{d^2+4}}}\notag\\
			&\hspace{14ex}+\|u_{n}^{J}\|_{L^{\frac{2(d+2)}{d}}_{t}\dot{H}_{a}^{1,\frac{2(d+2)}{d}}}\big]
			\lesssim_{\delta} 1,\label{Bound22}
			\\
			\label{Bound33}
			&\lim_{J\to J^{\ast}}\limsup_{n\rightarrow\infty}\|\nabla e^{J}_{n}  \|_{N(\R)}=0,
		\end{align}
		where here and below all space-time norms are taken over $\R\times\R^d$. Indeed, using  \eqref{DefiuJ}, \eqref{Bound11}, \eqref{Bound22}, and \eqref{Bound33}, Lemma~\ref{StabilityNLS} implies that  $\| u_{n}  \|_{L^{\frac{2(d+2)}{d-2}}_{t,x}}\lesssim_{\epsilon,\delta}1$ for  $n$ large, contradicting \eqref{Blowp2}. 
		
		We therefore turn to establishing the estimates \eqref{Bound11}-\eqref{Bound33}. We will use the following lemma.  
		
		\begin{lemma}[Asymptotic decoupling]\label{AsymptoticDec}
			If $j\neq k$ we have
			\[
			\lim_{n\to \infty}[
			\|\psi^{j}_{n} \psi^{k}_{n}\|_{L^{\frac{d+2}{d-2}}_{t, x}}
			+\| \psi^{j}_{n} \nabla\psi^{k}_{n} \|_{L^{\frac{d+2}{d-1}}_{t, x}}
			+\|\nabla \psi^{j}_{n} \nabla\psi^{k}_{n} \|_{L^{\frac{d+2}{d}}_{t, x}}
			+\| \psi^{j}_{n}\psi^{k}_{n} \|_{L^{\frac{d+2}{d}}_{t, x}}
			]=0.
			\]
		\end{lemma}
		
		\begin{proof} The proof is based on Proposition \ref{embedding-nonlinear}, \eqref{ImporBound} and \eqref{ImporBound22}, which is similar to the proof in  \cite{YANG2020124006}. Hence, we omit the details. \end{proof}
		
		As \eqref{Bound11} readily follows from Strichartz \eqref{DefiuJ}, \eqref{Bound22}, and \eqref{Bound33}, it will suffice to establish \eqref{Bound22} and \eqref{Bound33}. 
		
		\underline{\emph{Proof of \eqref{Bound22}}} We only prove the estimate of the $L_{t,x}^{\frac{2(d+2)}{d-2}}$-norm, and the estimates of other terms can be checked by the same argument.  In fact, by \eqref{ImporBound}, equivalence of Sobolev spaces and Strichartz estimate, we deduce that 
			\[\begin{split}
				\| u^{J}_{n}  \|^{2}_{L^{\frac{2(d+2)}{d-2}}_{t,x}}&\lesssim \sum^{J}_{j=1}\| \psi^{j}_{n}  \|^{2}_{L^{\frac{2(d+2)}{d-2}}_{x}}+
				\sum_{j\neq k}\| \psi^{j}_{n} \psi^{k}_{n}  \|_{L^{\frac{(d+2)}{d-2}}_{t,x}}+\| e^{-it\L_{a}}W^{J}_{n}  \|^{2}_{L^{\frac{2(d+2)}{d-2}}_{t,x}}\\
				&\lesssim \sum^{J}_{j=1}E_{a}( \psi^{j}_{n} )+\sum^{J}_{j\neq k}o(1)+E_{a}(W^{J}_{n})
				\lesssim 1 +o(1)\cdot J^{2}
			\end{split}
			\]
			as $n\to\infty$. Thus \eqref{Bound22} holds.

			\underline{\emph{Proof of \eqref{Bound33}}} Since $\psi^{j}_{n}$ is a solution of \eqref{NLS}, we can write
					\begin{align}\label{Fdefi11}
						e^{J}_{n}&= \sum^{J}_{j=1}F( \psi^{j}_{n} )-F\big(\sum^{J}_{j=1}\psi^{j}_{n}\big)\\\label{Fdefi22}
						&+\sum^{J}_{j=1}F(u^{J}_{n}-e^{-it\L_{a}}W^{J}_{n})-F(u^{J}_{n}),
					\end{align}
					where $F(z)=F_{2}(z)-F_{1}(z)$ with $F_{1}(z):=|z|^{\frac{4}{d-2}}z$ and $F_{2}(z):=|z|^{\frac{4}{d-1}}z$.By H\"older's inequality we have
					\begin{align}\label{F1E}
						\|\nabla\big[\sum^{J}_{j=1}F_{1}(\psi^{j}_{n}) -F_{1}(\sum^{J}_{j=1}\psi^{j}_{n})\big]\|_{L^{\frac{2(d+2)}{d+4}}_{t,x}}
						&\lesssim \sum_{j\neq k}\| \psi^{j}_{n}  \|^{\frac{6-d}{d-2}}_{L^{\frac{2(d+2)}{d-2}}_{t,x}}\|  \psi^{j}_{n} \nabla\psi^{k}_{n}  \|_{L^{\frac{d+2}{d-1}}_{t,x}},\\
						\label{F2E}
						\|\nabla\big[\sum^{J}_{j=1}F_{2}(\psi^{j}_{n}) -F_{2}(\sum^{J}_{j=1}\psi^{j}_{n})\big]\|_{L^{\frac{2(d+2)}{d+4}}_{t,x}}
						&\lesssim \sum_{j\neq k} \| \psi^{j}_{n}  \|^{\frac{5-d}{d-2}}_{L^{\frac{2(d+2)}{d-2}}_{t,x}}\|  \psi^{j}_{n} \nabla\psi^{k}_{n}  \|^{\frac{1}{2}}_{L^{\frac{d+2}{d}}_{t,x}} \|  \psi^{j}_{n} \nabla\psi^{k}_{n}  \|^{\frac{1}{2}}_{L^{\frac{d+2}{d-1}}_{t,x}}.
					\end{align}
					Thus, by orthogonality, \eqref{ImporBound}, \eqref{ImporBound22}, \eqref{F1E} and  \eqref{F2E}  we get
					\begin{equation}\label{FirstF}
						\lim_{J\to J^{\ast}}\limsup_{n\rightarrow\infty}\| \nabla \eqref{Fdefi11}\|_{N(\R)}=0.
					\end{equation}
					
					We next estimate \eqref{Fdefi22}. First, by interpolation we get
					\begin{align*}
						&\|\nabla [F_{1}(u^{J}_{n}-e^{-it\L_{a}}W^{J}_{n})-F_{1}(u^{J}_{n})]\|_{L^{\frac{2(d+2)}{d+4}}_{t,x}}\\
						\lesssim& 
						\|  e^{-it\L_{a}}W^{J}_{n} \|^{\frac{4}{d-2}}_{L^{\frac{2(d+2)}{d-2}}_{t, x}}
						\| \nabla  e^{-it\L_{a}}W^{J}_{n}  \|_{L^{\frac{2(d+2)}{d}}_{t,x}}
						+	\|  e^{-it\L_{a}}W^{J}_{n} \|^{\frac{4}{d-2}}_{L^{\frac{2(d+2)}{d-2}}_{t, x}}
						\| \nabla u^{J}_{n}  \|_{L^{\frac{2(d+2)}{d}}_{t,x}}\\
						&+	\|  e^{-it\L_{a}}W^{J}_{n} \|_{L^{\frac{2(d+2)}{d-2}}_{t, x}}\| u^{J}_{n} \|^{\frac{6-d}{d-2}}_{L^{\frac{2(d+2)}{d-2}}_{t, x}}
						\| \nabla u^{J}_{n}  \|_{L^{\frac{2(d+2)}{d}}_{t,x}}+
						\| u^{J}_{n} \|^{\frac{6-d}{d-2}}_{L^{\frac{2(d+2)}{d-2}}_{t, x}}\|u^{J}_{n} \nabla  e^{-it\L_{a}}W^{J}_{n}  \|_{L^{\frac{d+2}{d-1}}_{t,x}}.
					\end{align*}
					Combining \eqref{Reminder}, \eqref{Pv22} and \eqref{ImporBound} we see that
					\[\begin{split}
						\lim_{J\to J^{\ast}}\limsup_{n\rightarrow\infty}\|\nabla [F_{1}(u^{J}_{n}-e^{-it\L_{a}}W^{J}_{n})-F_{1}(u^{J}_{n})]\|_{N(\R)}
						\\
						\lesssim 
						\lim_{J\to J^{\ast}}\limsup_{n\rightarrow\infty}\|u^{J}_{n} \nabla  e^{-it\L_{a}}W^{J}_{n}  \|_{L^{\frac{d+2}{d-1}}_{t,x}}.
					\end{split}
					\]
					Similarly, 
					\begin{align*}
						&\|\nabla [F_{2}(u^{J}_{n}-e^{-it\L_{a}}W^{J}_{n})-F_{2}(u^{J}_{n})]\|_{L^{\frac{2(d+2)}{d+4}}_{t,x}}\\
						\lesssim &
						\|  e^{-it\L_{a}}W^{J}_{n} \|^{\frac{5-d}{d-2}}_{L^{\frac{2(d+2)}{d-2}}_{t, x}}
						\|   e^{-it\L_{a}}W^{J}_{n}  \|^{\frac{1}{2}}_{L^{\frac{2(d+2)}{d-2}}_{t,x}}
						\|   e^{-it\L_{a}}W^{J}_{n}  \|^{\frac{1}{2}}_{L^{\frac{2(d+2)}{d}}_{t,x}}
						\| \nabla  e^{-it\L_{a}}W^{J}_{n}  \|_{L^{\frac{2(d+2)}{d}}_{t,x}}\\
						&\quad +	
						\|  e^{-it\L_{a}}W^{J}_{n} \|^{\frac{5-d}{d-2}}_{L^{\frac{2(d+2)}{d-2}}_{t, x}}
						\|   e^{-it\L_{a}}W^{J}_{n}  \|^{\frac{1}{2}}_{L^{\frac{2(d+2)}{d-2}}_{t,x}}
						\|   e^{-it\L_{a}}W^{J}_{n}  \|^{\frac{1}{2}}_{L^{\frac{2(d+2)}{d}}_{t,x}}
						\| \nabla  u^{J}_{n}  \|_{L^{\frac{2(d+2)}{d}}_{t,x}}\\
						&\quad+
						\|  u^{J}_{n} \|^{\frac{5-d}{d-2}}_{L^{\frac{2(d+2)}{d-2}}_{t, x}}
						\|   e^{-it\L_{a}}W^{J}_{n}  \|^{\frac{1}{2}}_{L^{\frac{2(d+2)}{d-2}}_{t,x}}
						\|   e^{-it\L_{a}}W^{J}_{n}  \|^{\frac{1}{2}}_{L^{\frac{2(d+2)}{d}}_{t,x}}
						\| \nabla  u^{J}_{n}  \|_{L^{\frac{2(d+2)}{d}}_{t,x}}\\
						& \quad +\|  u^{J}_{n} \|^{\frac{5-d}{d-2}}_{L^{\frac{2(d+2)}{d-2}}_{t, x}}
						\|  u^{J}_{n} \nabla  e^{-it\L_{a}}W^{J}_{n}   \|^{\frac{1}{2}}_{L^{\frac{d+2}{d}}_{t,x}} \|  u^{J}_{n} \nabla e^{-it\L_{a}}W^{J}_{n} \|^{\frac{1}{2}}_{L^{\frac{d+2}{d-1}}_{t,x}}.
					\end{align*}
					As Strichartz together with \eqref{Reminder}, \eqref{Pv22} and \eqref{ImporBound} implies
					\begin{equation}
						\begin{split}
							\lim_{J\to J^{\ast}}\limsup_{n\rightarrow\infty}\|\nabla [F_{2}(u^{J}_{n}-e^{-it\L_{a}}W^{J}_{n})-F_{2}(u^{J}_{n})]\|_{N^{1}(\R)}\\
							\lesssim 
							\lim_{J\to J^{\ast}}\limsup_{n\rightarrow\infty}\|u^{J}_{n} \nabla  e^{-it\L_{a}}W^{J}_{n}  \|_{L_{t,x}^{\frac{d+2}{d-1}}},
						\end{split}
					\end{equation}

					it remains to show
					\begin{equation}\label{FinalLimit}
						\lim_{J\to J^{\ast}}\limsup_{n\rightarrow\infty}\|u^{J}_{n} \nabla  e^{-it\L_{a}}W^{J}_{n}  \|_{L^{\frac{d+2}{d-1}}_{t,x}}
						=0.
					\end{equation}
					
					Applying H\"older we deduce
					\begin{equation}
						\begin{split}
							\|u^{J}_{n} \nabla  e^{-it\L_{a}}W^{J}_{n}  \|_{L^{\frac{d+2}{d-1}}_{t,x}}\leq
							\|\big(\sum^{J}_{j=1}\psi^{j}_{n}\big) \nabla  e^{-it\L_{a}}W^{J}_{n}  \|_{L^{\frac{d+2}{d-1}}_{t,x}}\\
							+\|  e^{-it\L_{a}}W^{J}_{n} \|_{L^{\frac{2(d+2)}{d-2}}_{t, x}}
							\| \nabla e^{-it\L_{a}}W^{J}_{n} \|_{L^{\frac{2(d+2)}{d}}_{t, x}}.
						\end{split}
					\end{equation}
					Thus, using   Strichartz inequality, \eqref{Reminder} and \eqref{Pv22} we see that
					$$
					\lim_{J\to J^{\ast}}\limsup_{n\rightarrow\infty}
					\|u^{J}_{n} \nabla  e^{-it\L_{a}}W^{J}_{n}  \|_{L^{\frac{d+2}{d-1}}_{t, x}}
					\leq 
					\lim_{J\to J^{\ast}}\limsup_{n\rightarrow\infty}
					\|\big(\sum^{J}_{j=1}\psi^{j}_{n}\big) \nabla  e^{-it\L_{a}}W^{J}_{n}  \|_{L^{\frac{d+2}{d-1}}_{t, x}}.
					$$
					On the other hand, it follows from \eqref{ImporBound} that 
					\begin{equation}
						\begin{split}
							\|\sum^{J}_{j=J'}\psi^{j}_{n}  \|^{2}_{L^{\frac{2(d+2)}{d-2}}_{t, x}}&\lesssim \sum^{J}_{j=J'}\| \psi^{j}_{n}  \|^{2}_{L^{\frac{2(d+2)}{d-2}}_{t, x}}+
							\sum_{j\neq k}\| \psi^{j}_{n} \psi^{k}_{n}  \|_{L^{\frac{d+2}{d-2}}_{t, x}}
							\lesssim \sum^{J}_{j=J'}E_{a}( \psi^{j}_{n} )+\sum^{J}_{j\neq k}o(1)\notag
						\end{split}
					\end{equation}
					as $n\to \infty$. Thus, applying \eqref{Pv22}, H\"older's inequality and Strichartz estimate, we deduce that there 
					exists $J'=J(\eta)$ such that
					\begin{equation}
						\begin{split}
							\limsup_{n\rightarrow\infty}
							\|\big(\sum^{J}_{j=J'}\psi^{j}_{n}\big) \nabla  e^{-it\L_{a}}W^{J}_{n}  \|_{L^{\frac{d+2}{d-1}}_{t,x}}
							&\lesssim 
							\limsup_{n\rightarrow\infty}\big\|\sum^{J}_{j=J'}\psi^{j}_{n} \big \|_{L^{\frac{2(d+2)}{d-2}}_{t, x}}
							\big\| \nabla e^{-it\L_{a}}W^{J}_{n} \big\|_{L^{\frac{2(d+2)}{d}}_{t, x}}\\
							&\lesssim \eta \quad \text{uniformly in $J\geq J'$}\notag
						\end{split}
					\end{equation}
					for any $\eta>0$. Especially, in order to establish \eqref{FinalLimit}, it suffices to show
					\begin{equation}\label{Newfinallimit}
						\limsup_{n\rightarrow\infty}
						\|\psi^{j}_{n}\nabla  e^{-it\L_{a}}W^{J}_{n}  \|_{L^{\frac{d+2}{d-1}}_{t,x}}=0
						\quad \text{for all $1\leq j \leq J'$.}
					\end{equation}
					To this end, we just observe that for any $\delta>0$ there exists $\varphi^{j}_{\delta}\in C^{\infty}_{\delta}$
					with support in $[-T,T]\times\left\{|x|\leq R\right\}$ such that (see \eqref{Comspt11})
					\[
					\|  \psi^{j}_{n}-(\lambda^{j}_{n})^{-\frac12} \varphi^{j}_{\delta}\big( \tfrac{t}{(\lambda_n^j)^2}+t^{j}_{n},
					\tfrac{x-x^{j}_{n}}{\lambda^{j}_{n}}\big)   \|_{L^{\frac{2(d+2)}{d-2}}_{t,x}}\leq \delta.
					\]
					Writing 
					\[
					\tilde{W}^{J}_{n}(t,x):=(\lambda^{j}_{n})^{\frac{d-2}{2}}[e^{-it\L_{a}}W^{J}_{n}]( (\lambda^{j}_{n})^{2}(t-t^{j}_{n}),
					\lambda^{j}_{n}x+ x^{j}_{n})
					\]
					and applying  the equivalence of Sobolev norms and H\"older's inequality, we  obtain
					\begin{align*}
						&\|\psi^{j}_{n}\nabla  e^{-it\L_{a}}W^{J}_{n}  \|_{L^{\frac{d+2}{d-1}}_{t,x}} \\
						\lesssim& \delta
						\| \nabla e^{-it\L_{a}}W^{J}_{n} \|_{L^{\frac{2(d+2)}{d}}_{t,x}}
						+	\|\varphi^{j}_{\delta}\|_{L^{\frac{2(d+1)(d+2)}{d^2-2d-2}}_{t,x}}
						\| \nabla \tilde{W}^{J}_{n} \|_{L^{\frac{2(d+1)}{d}}_{t,x}([-T,T]\times\left\{|x|\leq R\right\})}\\
						\lesssim&\delta+C(\delta,T,R)\|\nabla \tilde{W}^{J}_{n}\|^{\frac{d+2}{2(d+1)}}_{L^{\frac{2(d+2)}{d}}_{t,x}}\|\nabla \tilde{W}^{J}_{n}\|^{\frac{d}{2(d+1)}}_{L^{2}_{t,x}}
						.
					\end{align*}
					Thus \eqref{Newfinallimit} finally follows from Lemma \ref{Kato} and \eqref{Reminder}, which completes the proof of  \eqref{Bound33} in $d=4,5$.
			
				As described above, with \eqref{Bound22} and \eqref{Bound33}, we show that Scenario \uppercase\expandafter{\romannumeral2} cannot occur and hence finish the proof of Proposition \ref{PScondition} in dimensions $d=4,5$.
				
				\noindent\textbf{Case 2}: $d=3$.
				We note that the following Sobolev norm equivalence 
				\begin{align*}
					\Vert\nabla \psi_n^j\Vert_{L_{x}^\frac{2(d+2)}{d}}\sim \Vert\sqrt{\mathcal{L}_a} \psi_n^j\Vert_{L_{x}^\frac{2(d+2)}{d}},
				\end{align*}
				fails,
				so we cannot simply carry out the proof above to deal with the case of $d=3$. Hence we need to choose the admissible pairs carefully.  We refer to Ardila-Murphy\cite{Ardila-Murphy} for the detailed proof of $d=3$.\footnote{In Ardila-Murphy\cite{Ardila-Murphy}, they deal with the defocusig-focusing NLS:
					$i\partial_tu+\mathcal{L}_a=-|u|^4u+|u|^2u$. But the proof  can be paralleled to our case without any additional modification.}
			\end{proof}
			\section{Preclusion of compact solutions: The proof of Theorem \ref{sub-threshold}(i)}\label{sec:preclude-sub}
			
			In this section, we use the localized virial argument to preclude the possibility of a solution $u_c$ as in Theorem~\ref{CompacSolution}, thus completing the proof of Theorem~\ref{sub-threshold}. Without loss of generality, we only consider  $u_c(t)$ on $t\in[0,T_{max})$.
			
			We begin with the finite-time blow-up case. \begin{proposition}
				There are no solutions to $\eqref{NLS}$ of the form given in Theorem \ref{CompacSolution} with $T_{max}<\infty$.
			\end{proposition}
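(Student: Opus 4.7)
The plan is to assume $T_{\max}<\infty$ and derive a contradiction by showing $M(u_{C_*})=0$, which is impossible since $u_{C_*}$ has infinite scattering norm by \eqref{minimalblowsolution}. Two ingredients drive the argument: concentration at the origin forced by the finite-time blowup, and a localized mass identity controlled via tightness of the precompact orbit in $\dot H_a^1$.

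First I would establish the following concentration statement: for every fixed $A>0$,
\[
\lim_{t\to T_{\max}^-}\int_{|x|>A}|u_{C_*}(t,x)|^{\frac{2d}{d-2}}\,dx=0.
\]
If this fails along some $t_n\to T_{\max}^-$, then \eqref{orbit-is-precompact} and Sobolev embedding yield a $\dot H_a^1$-subsequential limit $u_\ast$ with $\int_{|x|>A}|u_\ast|^{\frac{2d}{d-2}}\,dx>0$, in particular $u_\ast\neq 0$. Mass conservation together with weak-$\ast$ extraction in $L^2$ places $u_\ast\in H_a^1$ with $\|u_\ast\|_{L^2}\leq M(u_{C_*})^{1/2}$. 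Applying Theorem~\ref{Th1} at $u_\ast$ produces an $H_a^1$-flow on some interval $[-T_\ast,T_\ast]$, and then the stability Lemma~\ref{StabilityNLS} (with mass bound $M=M(u_{C_*})$ and the $\dot H_a^1$-smallness $\|u_{C_*}(t_n)-u_\ast\|_{\dot H_a^1}\to 0$) extends $u_{C_*}$ past $T_{\max}$, contradicting maximality.

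Next, fix $\phi\in C_c^\infty(\R^d)$ with $\phi\equiv 1$ on $|x|\le 1$ and $\phi\equiv 0$ on $|x|\ge 2$, and set $M_R(t):=\int\phi(x/R)|u_{C_*}(t,x)|^2\,dx$. A direct computation, combined with Cauchy--Schwarz, the volume-based Hölder bound $\|u_{C_*}\|_{L^2(R\le|x|\le 2R)}\lesssim R\|u_{C_*}\|_{L^{2d/(d-2)}}$, and the Sobolev embedding $\dot H_a^1\hookrightarrow L^{2d/(d-2)}$, gives
\[
|M_R'(t)|\lesssim \|u_{C_*}\|_{\dot H_a^1}\,\|\nabla u_{C_*}\|_{L^2(|x|>R)}.
\]
Tightness of the precompact orbit in $\dot H_a^1$ yields, for every $\eta>0$, an $R_\eta$ with $\|\nabla u_{C_*}\|_{L^2(|x|>R_\eta)}<\eta$ uniformly in $t$, hence $|M_R'(t)|\le C\eta$ for $R\ge R_\eta$, and integrating over $[0,T_{\max})$ gives $M_R(t)\ge M_R(0)-C\eta T_{\max}$. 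Splitting the integral gives, for $R\ge A$, the upper bound $M_R(t)\lesssim A^2+R^2\|u_{C_*}\|_{L^{2d/(d-2)}(|x|>A)}^2$, which together with Step~1 implies $\limsup_{t\to T_{\max}^-}M_R(t)\lesssim A^2$. Combining with the lower bound and then sending $R=R_\eta\to\infty$ (so $M_R(0)\to M(u_{C_*})$), then $\eta\to 0$, then $A\to 0$, yields $M(u_{C_*})=0$, which is the desired contradiction.

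The main subtlety lies in the concentration step: precompactness is only known in $\dot H_a^1$, while Lemma~\ref{StabilityNLS} requires $H_a^1$-data together with a quantitative mass bound. The conservation of mass along $u_{C_*}$ together with weak-$\ast$ extraction in $L^2$ bridges this gap, forcing any $\dot H_a^1$-limit of $u_{C_*}(t_n)$ to lie in $H_a^1$ with mass at most $M(u_{C_*})^{1/2}$. Once this is in hand, the remaining steps reduce to a standard localized mass computation.
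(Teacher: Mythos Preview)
Your proof is correct, but it takes a considerably longer route than the paper's. The key point you are not exploiting is that the precompactness in Theorem~\ref{CompacSolution} carries \emph{no} scaling parameter: the orbit $\{u_{C_*}(t):t\in I\}$ itself is precompact in $\dot H_a^1$. The paper therefore argues directly: pick any $t_n\to T_{\max}$, extract a $\dot H_a^1$-limit $h$, observe (via mass conservation and weak $L^2$ compactness, exactly as you do) that $h\in H_a^1$, and then invoke local theory plus stability to extend $u_{C_*}$ beyond $T_{\max}$---done in three lines. Notice that this is precisely the sub-argument you run \emph{inside} your Step~1 to rule out non-concentration; you already have the full contradiction there, but you package it as ``hence concentration must hold'' and then carry out a second argument (localized mass, tightness of $\nabla u$, the $A$--$R_\eta$--$\eta$ triple limit) to reach $M(u_{C_*})=0$. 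That second argument is the right tool when precompactness is only known \emph{modulo scaling}, as in Proposition~\ref{Compacness11} and Theorem~\ref{Tfinito}, where one genuinely cannot rule out $\lambda(t)\to\infty$ by a direct limit; here it is unnecessary. Your approach buys robustness (it would survive the addition of a scaling parameter), while the paper's buys brevity by using the stronger compactness already established.
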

			\begin{proof}
				By contradiction, we suppose that the solution of $\eqref{NLS}$ exists in the sense of Proposition \ref{CompacSolution}. Then, we can choose $\{t_n\}$ such that $t_n\rightarrow T_{max}$. Using the compactness property, $u(t_n)\rightarrow h$ in $\dot{H}^1$. Note that $\Vert u(t_n)\Vert_{L^2(\Bbb{R}^d)}\lesssim C$. Moreover, by the mass conservation and the uniqueness of the weak limit, we have $\Vert u(t_n)\Vert_{L^2}^2\leqslant C$. From the local theory, $u$ can be extended to the neighbourhood of $T_{max}$, which is a contradiction.
			\end{proof}
			Next, we preclude the infinite time blow-up case. 
			\begin{proposition}\label{BuIo} Suppose $u_{C_*}$ is a solution as in Theorem \ref{CompacSolution}. Then for every $\epsilon>0$ there exists $R=R(\epsilon)>1$ such that
				\begin{equation}\label{Uniform}
					\sup_{t\in [0,\infty)}\int_{|x|>R}|\nabla u_{C_*}(t,x)|^{2}+\tfrac{a}{|x|^2}| u_{C_*}(t,x)|^{2}+| u_{C_*}(t,x)|^{\frac{2(d+1)}{d-1}}+| u_{C_*}(t,x)|^{\frac{2d}{d-2}}dx\leq \epsilon.
				\end{equation}
			\end{proposition}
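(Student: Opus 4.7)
The plan is to exploit the precompactness of the orbit
\[
\mathcal{K} := \{u_{C_*}(t) : t \in [0,\infty)\} \subset \dot H_a^1(\R^d)
\]
guaranteed by Theorem~\ref{CompacSolution}, together with the equivalence of Sobolev norms \eqref{equiNorms}, the sharp Sobolev embedding, and mass conservation, to upgrade this to uniform tail decay for all four integrands appearing in \eqref{Uniform}.

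First I would prove uniform tail decay for the $\dot H_a^1$-part. Fix $\epsilon>0$. Since $\mathcal{K}$ is totally bounded in $\dot H_a^1$, one can cover it by finitely many $\tfrac\epsilon2$-balls centered at $v_1,\dots,v_K \in \dot H_a^1$. For each fixed $v_k$, Hardy's inequality gives $|\nabla v_k|^2 + (|a|/|x|^2)|v_k|^2 \in L^1(\R^d)$, so dominated convergence yields
\[
\lim_{R\to\infty} \int_{|x|>R}\Bigl(|\nabla v_k|^2 + \tfrac{a}{|x|^2}|v_k|^2\Bigr)\,dx = 0.
\]
Pick $R_1=R_1(\epsilon)$ making each of the (finitely many) tail integrals smaller than $\tfrac\epsilon2$. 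For an arbitrary $t\geq 0$, choose $k$ with $\|u_{C_*}(t)-v_k\|_{\dot H_a^1}<\tfrac\epsilon2$; the difference of the two tail integrals is controlled by Cauchy--Schwarz and Hardy as
\[
\Bigl|\int_{|x|>R}\Bigl(|\nabla u|^2+\tfrac{a}{|x|^2}|u|^2\Bigr) - \Bigl(|\nabla v_k|^2+\tfrac{a}{|x|^2}|v_k|^2\Bigr)\,dx\Bigr| \lesssim \|u-v_k\|_{\dot H_a^1}\bigl(\|u\|_{\dot H_a^1}+\|v_k\|_{\dot H_a^1}\bigr),
\]
which is uniformly small since the orbit is bounded in $\dot H_a^1$. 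This handles the first two terms in \eqref{Uniform}.

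Second, by the equivalence $\dot H_a^1 \hookrightarrow \dot H^1 \hookrightarrow L^{\frac{2d}{d-2}}$, the orbit $\mathcal{K}$ is likewise precompact in $L^{\frac{2d}{d-2}}(\R^d)$, and the same finite-net/DCT argument gives uniform tail decay in $L^{\frac{2d}{d-2}}$. For the remaining $L^{\frac{2d+2}{d-1}}$ term I would interpolate between $L^2$ and $L^{\frac{2d}{d-2}}$: choosing $\theta = \tfrac{d}{d+1}$ in the Lebesgue-space inequality
\[
\|u\|_{L^{\frac{2(d+1)}{d-1}}(|x|>R)}^{\frac{2(d+1)}{d-1}} \leq \|u\|_{L^2(|x|>R)}^{\frac{2}{d-1}} \|u\|_{L^{\frac{2d}{d-2}}(|x|>R)}^{\frac{2d}{d-1}}.
\]
Conservation of mass bounds $\|u_{C_*}(t)\|_{L^2}\leq \sqrt{M(u_{C_*})}\leq \sqrt{C_*}$ uniformly in $t$, while the $L^{\frac{2d}{d-2}}$ tail has already been made arbitrarily small. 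Combining the three estimates with an appropriate relabeling of $\epsilon$ finishes the proof.

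I do not anticipate any serious obstacle: the proposition is an instance of the general principle that a precompact family in a Banach space exhibits uniform tail decay with respect to any continuous tail functional. The only mildly technical point is the sign-indefiniteness of the integrand $|\nabla u|^2+(a/|x|^2)|u|^2$ when $a<0$, which is harmless because Hardy's inequality makes the absolute value integrable and the functional $u\mapsto \int_{|x|>R}(|\nabla u|^2+(a/|x|^2)|u|^2)\,dx$ Lipschitz in $\dot H_a^1$ uniformly in $R$.
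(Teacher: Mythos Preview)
Your argument is correct. The paper actually states Proposition~\ref{BuIo} without proof, treating it as a routine consequence of the precompactness \eqref{orbit-is-precompact}; your proposal supplies precisely the standard details (finite-net covering in $\dot H_a^1$, Sobolev embedding for the $L^{\frac{2d}{d-2}}$ tail, and interpolation with mass conservation for the $L^{\frac{2(d+1)}{d-1}}$ tail), and the interpolation exponents you compute are correct.
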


			\begin{proof}[{Proof of Theorem~\ref{sub-threshold}}]  We suppose Theorem~\ref{sub-threshold} fails and take a solution $u_{C_*}$ as in Theorem~\ref{CompacSolution}. By Lemma \ref{lem.var}, $E_a(u_{C_*})>0$.  We recall the virial identity(Lemma \ref{LocalVirial}) with $u=u_{C_*}$
				\begin{align}\label{Vzero11}
					I_{R}[u_{C_*}]=&2\IM\int_{\R^{d}} \nabla w_{R} \cdot\nabla u_{C_*} \,\overline{u_{C_*}} \,dx\\
					\partial_{t}I_R[u_{C_*}]=& 8\int_{\R^{d}}|\nabla u_{C_*}|^2dx+\tfrac{a}{\left|x\right|^2}|u_{C_*}|^2dx-|u_{C_*}|^{\tfrac{2d}{d-2}}dx+\tfrac{d}{d+1}|u_{C_*}|^\frac{2(d+1)}{d-1}\\
					&+\mathcal{O}\left(\int_{|x|\sim R}|\nabla u_{C_*}|^2+\tfrac{a}{|x|^2}|u_{C_*}|^2+|u_{C_*}|^\frac{2d}{d-2}+|u_c|^{\frac{2(d+1)}{d-1}}dx\right)
				\end{align}
				By sharp Sobolev embedding(Lemma \ref{Sharp-Sobolev}) and Lemma \ref{GlobalS}, $E_a(u_{C_*})>0$
				\begin{align}
					\int_{\Bbb{R}^d}\left|\nabla u_{C_*}\right|^2+\frac{a}{\left|x\right|^2}\left|u_{C_*}\right|^2-\left|u_{C_*}\right|^{\frac{2d}{d-2}}dx\gtrsim\Vert u\Vert_{\dot{H}_a^1}^2\gtrsim \eta_0>0.
				\end{align}
				Combining with Proposition \ref{BuIo},there exists $R_0>0 $ so that
				\begin{align*}
					I_{R_0}[u_{C_*}]\gtrsim \eta_0.
				\end{align*}
				On the other hand,
				\[
				\left|I_{R_0}[u_{C_*}]\right|\lesssim R_0\|u_{C_*}\|^{2}_{L^{\infty}_{t}H^{1}_{x}}\lesssim R_0.
				\]
				Thus, the Fundamental Theorem of Calculus implies 
				\[
				\eta_0 T\lesssim\left|\int^{T}_{0}\partial_{t}I_{R_0}[u_{C_*}]dt\right|\lesssim R_0\qtq{for any} T>0,
				\]
				which yields a contradiction for $T$ sufficiently large. Thus we finish the proof.
				\end{proof}
			\section{Compactness for non-scattering threshold solutions}\label{sec:threshold-solution}
		In this section, we will prove that the failure of Theorem~\ref{Threshold}~(i) implies the existence of a non-scattering solution $u$ to \eqref{NLS} at the energy threshold such that the orbit of $u$ is pre-compact in $\dot{H}_a^{1}(\R^{d})$ modulo scaling symmetry.
		
		\begin{proposition}\label{Compacness11}
			Assume that Theorem~\ref{Threshold}~(i) fails. Then there exists a solution ${u}: [0, T^{*})\times \R^{d}\to \C$ of \eqref{NLS} with
			\begin{align}\label{EGN}
				&E_a(u_{0})=E_a^{c}(W_a) \quad \text{and} \quad \| u_{0}\|_{\dot{H}_a^1}<\| W_a\|_{\dot{H}_a^1}\\
				\label{SN}
				&\|{u}\|_{L^{\frac{2(d+2)}{d-2}}_{t, x}([0, T^{*})\times \R^{d})}=\infty.
			\end{align}
			Moreover, there exists $\lambda:[0, T^{*}) \to (0, \infty)$ such that
			\begin{equation}\label{CompactX}
				\left\{\lambda(t)^{-\frac{d-2}{2}}u(t, \tfrac{x}{\lambda(t)}): t\in [0, T^{*})\right\} \quad
				\text{is pre-compact in $\dot{H}_a^{1}(\R^{d})$}.
			\end{equation}
		\end{proposition}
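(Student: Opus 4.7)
The plan is to run the Kenig--Merle concentration-compactness scheme at the threshold energy level, in the style of Proposition~\ref{PScondition}. The argument follows three standard stages: profile decomposition of the alleged counterexample sequence, identification of a single surviving profile, and verification of pre-compactness of the resulting critical element.

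First, I would set up the contradiction. If Theorem~\ref{Threshold}(i) fails, there is a sequence $\{u_n\}\subset H_a^1$ of solutions to \eqref{NLS} with $E_a(u_{n,0})=E_a^c(W_a)$, $\|u_{n,0}\|_{\dot{H}_a^1}<\|W_a\|_{\dot{H}_a^1}$, and $\|u_n\|_{L_{t,x}^{\frac{2(d+2)}{d-2}}}\to\infty$; after time-translation one may arrange that both forward and backward scattering norms of $u_n$ diverge from $t=0$. Applying Theorem~\ref{LinearProfi} to $\{u_{n,0}\}$, we obtain the decomposition $u_{n,0}=\sum_{j=1}^J\phi_n^j+W_n^J$ whose profile geometry falls into one of three regimes: (A) $\lambda_n^j\equiv 1$ with $|x_n^j|\to\infty$; (B) $\lambda_n^j\to 0$ with $|x_n^j/\lambda_n^j|\to\infty$; or (C) $x_n^j\equiv 0$.

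Next I would use the strict comparison $E_a^c(W_a)<E_0^c(W_0)$ and $\|W_a\|_{\dot{H}_a^1}<\|W_0\|_{\dot{H}^1}$ established in Section~\ref{sec:VarationalGNI} to handle the non-origin regimes. Profiles in scenarios (A) and (B) limit to the potential-free models \eqref{Combine-NLS} and \eqref{NLS-critical}, whose scattering thresholds are $E_0^c(W_0)$; together with the Pythagorean expansion \eqref{EnergyEx} and kinetic decoupling, all such profiles lie strictly below that threshold, and the nonlinear profiles constructed via Proposition~\ref{embedding-nonlinear} scatter with uniformly bounded Strichartz norms by Theorem~\ref{thm-Combine-NLS} and Theorem~\ref{thm-NLS-critical}. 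Profiles in (C) limit to \eqref{NLS-Potential} (or \eqref{NLS} itself when $\lambda_n^j\equiv 1$), sharing the threshold $E_a^c(W_a)$; by Theorem~\ref{thm-NLS-Potential} and Lemma~\ref{lem.var}, such a profile either scatters or, if its kinetic norm saturates $\|W_a\|_{\dot{H}_a^1}$, coincides with $W_a$ up to scaling and phase. If two or more profiles carried positive energy, Pythagorean decoupling would render each strictly sub-threshold, and the multi-profile stability argument of Proposition~\ref{PScondition} would produce a uniform Strichartz bound for $u_n$, contradicting divergence. Hence exactly one profile $\phi^1$ survives; it sits in regime (C) with $\lambda_n^1\equiv 1$ and $t_n^1\equiv 0$, and $W_n^1\to 0$ in $\dot{H}_a^1$. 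In particular $u_{n,0}\to u_{C_*,0}:=\phi^1$ strongly in $\dot{H}_a^1$, and Lemma~\ref{StabilityNLS} produces the threshold solution $u_{C_*}$ satisfying \eqref{EGN}--\eqref{SN}.

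Finally, for the pre-compactness claim \eqref{CompactX}, I would reapply the single-profile extraction to any sequence $\{u_{C_*}(t_n)\}_{t_n\in[0,T^*)}$: by time-translation, either the forward or backward Strichartz norm of the translated flow diverges at $t=0$, so the argument above extracts, after a subsequence, a single profile with $x_n\equiv 0$ and scaling parameter $\lambda_n$. Setting $\lambda(t):=\lambda_n$ along the subsequence yields the strong convergence of $\lambda(t)^{-(d-2)/2}u_{C_*}(t,\cdot/\lambda(t))$ in $\dot{H}_a^1$, i.e.\ \eqref{CompactX}.

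The main obstacle lies in the marginal sub-case of regime (C) with $\lambda_n^1\to 0$ and $\phi^1$ equal to a rescaled $W_a$: a priori the full threshold energy and kinetic norm could be concentrated at vanishing scales, and the profile would generate an infinite-Strichartz flow under \eqref{NLS-Potential} rather than \eqref{NLS}. Excluding it requires combining the strict bound $\|u_{n,0}\|_{\dot{H}_a^1}<\|W_a\|_{\dot{H}_a^1}$ with conservation of mass (the rescaled $W_a$ carries vanishing $L^2$-norm at small scales, but $u_n$ retains bounded positive mass that must be absorbed by $W_n^1$), and with the modulation estimates \eqref{Estimatemodu}--\eqref{EstimLaD} of Proposition~\ref{Modilation11}, which quantitatively tie the $L^{\frac{2(d+1)}{d-1}}$-mass of threshold orbits near $W_a$ to $\delta(t)^2$ and hence forbid the sub-critical perturbation from supplying the energy demanded by $E_a(u_{n,0})=E_a^c(W_a)$. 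A secondary technical difficulty is verifying that the cross-term errors in the multi-profile stability argument vanish in the threshold regime; this parallels the treatment of \eqref{Bound33} but must be revisited in the admissible pair $L_{t,x}^{\frac{2(d+2)}{d-1}}$ associated to the sub-critical nonlinearity $|u|^{\frac{4}{d-1}}u$.
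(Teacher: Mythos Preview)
Your overall scheme---profile decomposition of the putative counterexample, reduction to a single bubble via sub-threshold scattering of all competing profiles, and then pre-compactness modulo scaling---is the right one, and your use of the strict inequality $E_a^c(W_a)<E_0^c(W_0)$ to dispatch the translated regimes (A) and (B) matches the paper exactly.

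The genuine gap is in your two-stage layout and the ``main obstacle'' it creates. You try first to \emph{construct} $u_{C_*}$ from a sequence $\{u_n\}$ of threshold solutions, insisting that the surviving profile have $\lambda_n^1\equiv 1$ so that $u_{n,0}\to u_{C_*,0}$ in $\dot H_a^1$. This forces you to exclude the scenario $\lambda_n^1\to 0$, $x_n^1\equiv 0$ with $\phi^1$ a rescaled $W_a$, and you then reach for mass conservation and Proposition~\ref{Modilation11} to do so. But none of this is needed: the failure of Theorem~\ref{Threshold}(i) \emph{already hands you} a single solution $u$ satisfying \eqref{EGN}--\eqref{SN}. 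There is no Palais--Smale extraction at a critical level here, unlike in Theorem~\ref{CompacSolution}; the critical element is the hypothesis, not a conclusion.

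The paper therefore goes straight to the pre-compactness step: apply Theorem~\ref{LinearProfi} to $\{u(\tau_n)\}$ for $\tau_n\to T^*$. In Scenario II ($J^*=1$) one shows $W_n^1\to 0$, rules out $t_n^1\to\pm\infty$ and $|x_n^1/\lambda_n^1|\to\infty$ exactly as you describe, but then \emph{stops}: whatever $\lambda_n^1$ does, one has $\lambda_n^{-(d-2)/2}u(\tau_n,\cdot/\lambda_n)\to\phi$ in $\dot H_a^1$, which is precisely \eqref{CompactX}. The case $\lambda_n^1\to 0$ with $\phi$ at the threshold of \eqref{NLS-Potential} is not an obstacle at all---it is simply absorbed into the scaling parameter $\lambda(t)$. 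You never need to embed that profile into a nonlinear flow or invoke modulation estimates. Your ``secondary technical difficulty'' about cross-terms in the multi-profile stability argument is handled verbatim as in Proposition~\ref{PScondition}, since once $J^*\ge 2$ every profile is strictly sub-threshold.
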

		\begin{proof}
			The proof follows that of \cite{AJZ} and we will only give the sketch of the proof.  If Theorem~\ref{Threshold}~(i) fails, then  there exists a  solution ${u}: [0, T^{*})\times \R^{d}\to \Bbb{C}$ 
			satisfying \eqref{EGN} and \eqref{SN}. In particular, we have by Lemma~\ref{GlobalS},
			\begin{align}\label{boundGC}
				&\| u(t)\|_{\dot{H}_a^1}<\| W_a\|_{\dot{H}_a^1}\qtq{for all $t\in [0, T^{*})$.}
			\end{align}
			It is sufficient to prove that for any sequence $\left\{\tau_{n}\right\}_{n\in \N}\subset [0, T^{*})$, there exists $\lambda_{n}$ such that 
			\begin{align}\label{LamC}
				\lambda_{n}^{-\frac{d-2}{2}}u\(\tau_{n}, \tfrac{x}{\lambda_{n}}\)
				\mbox{ converges strongly in $\dot{H}_a^{1}$ (up to a subsequence).}	
			\end{align}
			By continuity of $u$ it suffices to consider $\tau_{n}\to T^{*}$. Using \eqref{boundGC}, we apply the profile decomposition (Theorem~\ref{LinearProfi}) to $\left\{u(\tau_{n})\right\}_{n\in \N}$ and write
			\begin{equation}\label{Dpe}
				u_{n}:=u(\tau_{n})=\sum^{J}_{j=1}\phi^{j}_{n}+R_n^J
			\end{equation}
			with $J\leq J^{*}$ and all of the properties stated in Theorem~\ref{LinearProfi}. 
			
			Similar to the previous section,	we shall prove that there  exists  at most one nonzero $\phi_n^j$.
			
			\textbf{Scenario \uppercase\expandafter{\romannumeral1}}.  $J^{*}\geq 2$. 
			
			By Theorem~\ref{LinearProfi} and \eqref{boundGC} we have 
			\begin{align}\label{MassC}
				&\lim_{n \to \infty} \big(\sum_{j=1}^{J} M(\phi_n^j) + M(R_n^J)\big) = \lim_{n \to \infty} M(u_{n}) =M(u_0),\\
				\label{DECE}
				&\lim_{n \to \infty} \big(\sum_{j=1}^{J} E_a(\phi_n^j) + E_a(R_n^J)\big) = \lim_{n \to \infty} E_a(u_{n})=E_a^{c}(W),\\
				\label{DEG}
				&\limsup_{n \to \infty} \big(\sum_{j=1}^{J} \| \phi_n^j\|_{\dot{H}_a^{1}}^2 + \| R_n^J\|_{\dot{H}_a^{1}}^2\big)
				= \limsup_{n \to \infty} \|u(\tau_n)\|_{\dot{H}_a^{1}}^2 \leq \|W_a\|_{\dot{H}_a^{1}}^2
			\end{align}
			for any $0\leq J \leq J^{*}$. By Lemma \ref{lem.var} and \eqref{DEG} we deduce that for $n$ large,
			\begin{align}\label{BounWd}
				\| \phi_n^j\|_{\dot{H}_a^{1}}^2\leq \frac{\|W_a\|_{\dot{H}_a^{1}}^2}{E_a^{c}(W_a)} E_a(\phi_n^j)
				\qtq{and}
				\| R_n^J\|_{\dot{H}_a^{1}}^2\leq \frac{\|W_a\|_{\dot{H}_a^{1}}^2}{E_a^{c}(W)} E_a(R_n^J).
			\end{align}
			In particular, $\liminf\limits_{n\to \infty}E_a(\phi_n^j)>0$. Thus there exists  $\delta>0$ so that
			\begin{align}\label{MEC}
				E_a(\phi_n^j)&\leq E_a^{c}(W_a)-\delta,
			\end{align}
			for sufficiently large $n$ and $1\leq j\leq J$, so that each $ \phi_n^j$ satisfies \eqref{condition1} in Theorem~\ref{sub-threshold} (i).

			We then define nonlinear profiles $\psi_{n}^{j}$ associated to each $\phi_{n}^{j}$ as follows:
			\begin{itemize}
				\item If $\frac{|x^{j}_{n}|}{\lambda^{j}_{n}}\to \infty$ for some $j$, then we are in position to apply Proposition~\ref{embedding-nonlinear}, and hence we have a global solution $\psi_{n}^{j}$ of \eqref{NLS} with data $\psi_{n}^{j}(0)=\phi_{n}^{j}$.

				\item If $x^{j}_{n}\equiv 0$ and $\lambda^{j}_{n}\rightarrow 0$, we define $\psi_{n}^{j}$ to be the global solution of \eqref{NLS} with the initial data $\psi_{n}^{j}(0)=\phi_{n}^{j}$ guaranteed by Proposition~\ref{embedding-nonlinear}.
				
				\item If $x^{j}_{n}\equiv 0$, $\lambda^{j}_{n}\equiv 1$ and $t^{j}_{n}\equiv 0$, we take  $\psi^{j}$ to be the global solution of 
				\eqref{NLS} with the initial data $\psi^{j}(0)=\phi^{j}$.
				
				\item If $x^{j}_{n}\equiv 0$, $\lambda^{j}_{n}\equiv 1$ and $t^{n}_{j}\rightarrow\pm\infty$, we take $\psi^{j}$  to be the global solution  of \eqref{NLS} that scatters to $e^{-it\L_{a}}\phi^{j}$ in $H^{1}_{x}(\R^{d})$  as $t\rightarrow\pm\infty$.  In either case, we define the global solution to \eqref{NLS},
				\[\psi^{j}_{n}(t,x):=\psi^{j}(t+t^{j}_{n}, x).\]
			\end{itemize}
			We define the approximate solutions
			\[
			u^{J}_{n}(t):=\sum^{J}_{j=1}\psi^{j}_{n}(t)+e^{-it\L_{a}}W^{J}_{n},
			\] 
			Following the same strategy in proving Proposition \ref{PScondition}, one can show that $u_n^J(t)$ is the approximate solution of $u_n$ for large $n$ and $J\to J^*$. Thus we deduce that $\|u_n\|_{L_{t,x}^{\frac{2(d+2)}{d-2}}(\R \times\R^d)}<\infty$ which is contradict to \eqref{SN}, so Scenario \uppercase\expandafter{\romannumeral1} cannot occur.

			\textbf{Scenario \uppercase\expandafter{\romannumeral2}.} $J^*=1$. \eqref{Dpe} simplifies to
			\begin{align}\label{1Dec}
				u_{n}=u(\tau_{n})=\phi_{n}+W^{1}_{n}.
			\end{align}
			We now observe that 
			\begin{align}\label{ZerI}
				\| W^{1}_{n}\|_{\dot{H}_a^1}\to 0 \qtq{as $n\to +\infty$.}
			\end{align}
			Indeed, otherwise, by \eqref{BounWd} we see that there exists $c>0$ so that  $E_a( W^{1}_{n})\geq c$. Thus, $\phi_{n}$ obeys the sub-threshold condition \eqref{MEC}, and the same arguments used above show $u\in L^{\frac{2(d+2)}{d-2}}_{t, x}(\R\times \R^{d})$, contradicting \eqref{SN}. 
			
			First, we preclude the possibility that $t_{n}\rightarrow \pm\infty$ as $n\rightarrow\infty$. Without loss of generality, suppose $\tau_{n}\rightarrow \infty$.  If $\lambda_{n}\equiv1$, then by monotone convergence we have
			\[
			\| e^{it\mathcal{L}_a}\phi_{n}  \|_{L^{\frac{2(d+2)}{d-2}}_{t,x}([0,\infty)\times \R^{d})}= \|e^{it\mathcal{L}_a} \phi\|_{L^{\frac{2(d+2)}{d-2}}_{t,x}([\tau_n,\infty)\times \R^{d})}\rightarrow0
			\]
			as $n\rightarrow\infty$. On the other hand, if $\lambda_{n}\to +\infty$ a change of variables, Bernstein's inequality, Strichartz estimate, and  monotone convergence yield
			\[
			\| e^{it\mathcal{L}_a}\phi_{n}  \|_{L^{\frac{2(d+2)}{d-2}}_{t,x}([0,\infty)\times \R^{d})}=\|e^{it\mathcal{L}_a} P_{\geq (\lambda_{n})^{-\theta}}\phi\|_{L^{\frac{2(d+2)}{d-2}}_{t,x}([\tau_n,\infty)\times \R^{d})}\rightarrow0,
			\]
			as $n\rightarrow\infty$. In either case, we may apply Lemma~\ref{StabilityNLS} with $\tilde{u}:=e^{it\mathcal{L}_a}\phi_{n}$ to obtain that $u\in L^{\frac{2(d+2)}{d-2}}_{t,x}([0,\infty)\times \R^{d})$, contradicting \eqref{SN}.  Thus $t_n\equiv 0$.
			
			Next we show that $x_n\equiv0$.
			In fact, by Lemma \ref{GlobalS} and Proposition \ref{embedding-nonlinear} one can show that $u$ has an approximate solution which has is $L_{t,x}^{\frac{2(d+2)}{d-2}}$-bounded for sufficiently large $n$. It contradicts to \eqref{SN}. Thus $x_n\equiv0$.
			Finally, by \eqref{1Dec} and \eqref{ZerI} we get
			\[
			\|\lambda^{-\frac{d-2}{2}}_{n}u(\tau_{n}, \tfrac{x}{\lambda_{n}})-\phi\|^{2}_{\dot{H}_a^{1}}\to 0 \qtq{as $n\to \infty$.}
			\]
			Note that in the case $\lambda_{n}\to +\infty$, we have used the fact that 
			$\|P^a_{\geq (\lambda_{n})^{-\theta}}\phi-\phi\|_{\dot{H}_a^{1}}\to 0$ as $n\to \infty$. This complete the proof of \eqref{LamC}.
		\end{proof}

		\section{Preclusion of compact solutions: The proof of Theorem \ref{Threshold}(i)}\label{sec:preclude-threshold}
		
		In this section we complete the proof of Theorem~\ref{Threshold}(i). It's enough to prove that there are no solutions of \eqref{NLS} as in Proposition~\ref{Compacness11}.
		
		\subsection{Finite-time blow-up}\label{S:FTB}
		In this subsection we preclude the possibility of finite-time blow-up.
		\begin{theorem}\label{Tfinito}
			There are no solutions of \eqref{NLS} as in Proposition~\ref{Compacness11} with $T^{*}<\infty$.
		\end{theorem}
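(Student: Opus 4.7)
My plan would be to argue by contradiction, assuming a solution $u$ on $[0,T^*)$ as in Proposition~\ref{Compacness11} with $T^*<\infty$ exists, and derive $M(u_0)=0$, contradicting the nontriviality of $u$. First, I would establish $\lambda(t)\to\infty$ as $t\to T^*$. If $\lambda(t_n)$ remained bounded (from above and below) along some $t_n\to T^*$, then pre-compactness of $\{v(t):=\lambda(t)^{-(d-2)/2}u(t,\cdot/\lambda(t))\}$ in $\dot{H}_a^1$, combined with the uniform $L^2$-bound $\|v(t_n)\|_{L^2}^2=\lambda(t_n)^2M(u_0)$ coming from mass conservation, would yield a strong $H_a^1$-limit for $u(t_n)$ along a subsequence. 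The local well-posedness theory (Theorem~\ref{Th1}) together with the stability lemma (Lemma~\ref{StabilityNLS}) would then extend $u$ past $T^*$, contradicting $T^*=T_{max}$.

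Second, exploiting $\lambda(t)\to\infty$ and the precompactness, I would show that $V_R(t):=\int w_R(x)|u(t,x)|^2\,dx\to 0$ as $t\to T^*$ for every fixed $R>0$. The argument is a direct change of variables $y=\lambda(t)x$, splitting the resulting integral according to a tightness radius $K_\epsilon$ from compactness of $\{v(t)\}$ in $L^{\frac{2d}{d-2}}$ (via Sobolev embedding), and checking each piece vanishes as $\lambda(t)\to\infty$. A parallel computation yields $\|u(t)\|_{L^2(R<|x|<2R)}\leq CR\epsilon$ for $t$ sufficiently close to $T^*$, and therefore
\[ |V_R'(t)|\leq C\,V_R(t)^{1/2}+CR^2\epsilon, \]
so that $V_R'(t)\to 0$ as $t\to T^*$ for each fixed $R$.

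Third---this is the main technical input---I would observe that the virial bound $|V_R''(t)|=|F_R[u(t)]|\leq C$ holds \emph{uniformly in $R$ and $t$}. Each term in $F_R[u]$ is either dimensionless (e.g.\ $w_R''$), supported in the annulus $R<|x|<2R$ with an $R^{-2}$ prefactor compensated by $M(u_0)$, or controlled by Hardy's inequality, $\|u\|_{L^{2d/(d-2)}}^{2d/(d-2)}$, or $\|u\|_{L^{2(d+1)/(d-1)}}^{2(d+1)/(d-1)}$, all of which are uniformly bounded via the compactness, energy conservation, and the sharp Sobolev embedding. Combining $|V_R''|\leq C$ with $V_R'(T^*)=0$ gives $|V_R'(t)|\leq C(T^*-t)$ and hence $V_R(t)\leq C(T^*-t)^2/2$, both estimates uniform in $R$. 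Since $w_R(x)=|x|^2$ on $\{|x|<R\}$, monotone convergence in $R$ gives $\int|x|^2|u(t,x)|^2\,dx\leq C(T^*-t)^2/2$, and in particular $\int|x|^2|u(t)|^2\,dx\to 0$ as $t\to T^*$. The contradiction then follows from Cauchy--Schwarz and Hardy's inequality:
\[ M(u_0)=\int|u(t,x)|^2\,dx\leq\Bigl(\int\tfrac{|u|^2}{|x|^2}\,dx\Bigr)^{1/2}\Bigl(\int|x|^2|u|^2\,dx\Bigr)^{1/2}\leq C\|u\|_{\dot{H}_a^1}\Bigl(\int|x|^2|u|^2\,dx\Bigr)^{1/2}\to 0, \]
forcing $M(u_0)=0$. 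The main obstacle is the term-by-term verification that the constants in the bounds on $V_R'$ and $V_R''$ can be taken independent of $R$, which must rely carefully on the compactness, conservation laws, and Hardy's inequality to absorb all $R$-dependent factors.
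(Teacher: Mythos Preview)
Your argument is correct, but it takes a genuinely different route from the paper. The paper works with the localized \emph{mass} $M_R(t)=\int \xi(x/R)|u(t,x)|^2\,dx$ rather than the localized \emph{variance} $V_R$. Because $|\tfrac{1}{R}\nabla\xi(\cdot/R)|\lesssim |x|^{-1}$, a single application of Hardy's inequality gives $|M_R'(t)|\lesssim \|u(t)\|_{\dot H_a^1}^2\lesssim_{W_a}1$ uniformly in $R$, so one integration (not two) yields $M_R(t_1)\lesssim M_R(t_2)+|t_1-t_2|$. The same compactness argument you use for $V_R(t)\to0$ shows $M_R(t_2)\to0$ as $t_2\to T^*$, and sending $R\to\infty$ then $t_1\to T^*$ directly gives $M(u_0)=0$. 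Your approach instead controls the second derivative $V_R''=F_R[u]$ uniformly in $R$, integrates twice to bound the full variance $\int|x|^2|u(t)|^2\,dx$, and closes via the Cauchy--Schwarz/Hardy trick $M(u_0)\leq \|u/|x|\|_{L^2}\||x|u\|_{L^2}$. Both are valid; the paper's route is shorter because it avoids the need to prove $V_R'(t)\to0$ and to track the annular error terms carefully, while your route has the virtue of directly reusing the virial machinery $F_R[u]$ already developed in the paper and of yielding the quantitative decay $\int|x|^2|u|^2\lesssim (T^*-t)^2$ as a byproduct.

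One minor point in your Step~1: you assume $\lambda(t_n)$ stays bounded \emph{below} without justification. The paper glosses over this too (``Indeed, we have $\lambda_0>0$''), but it does require a short argument: if $\lambda(t_n)\to0$ then $\|v(t_n)\|_{L^2}^2=\lambda(t_n)^2 M(u_0)\to0$, so any $\dot H_a^1$-subsequential limit $\psi$ of $v(t_n)$ satisfies $\psi=0$ in $L^2_{\mathrm{loc}}$ (hence $\psi\equiv0$), contradicting $\|\psi\|_{\dot H_a^1}=\lim\|u(t_n)\|_{\dot H_a^1}>0$ (the latter by small-data scattering and non-scattering of $u$).
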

		\begin{proof}
			
			Suppose that $u:[0, T^{*})\times \R^{d}\to \C$ is such a solution. Now we claim that
			\begin{align}\label{LamdaI}
				\lim_{t\to T^{*}}\lambda(t)=\infty.
			\end{align}
			By contradiction, we suppose  that $\lim_{t\to T^{*}}\lambda(t)<\infty$. Then there exists a sequence $\{t_{n}\}$ that converges to $T^{*}$ such that $\lambda(t_{n})\to \lambda_{0}\in \R$. Indeed, we have $\lambda_{0}>0$. Thus, by \eqref{CompactX}, we obtain that there exists nonzero $\psi\in \dot{H}_a^{1}$ such that
			\begin{align}\label{CVs}
				\|u(t_{n})-\psi\|_{\dot{H}_a^{1}}\to 0 \qtq{as $n\to\infty$.}
			\end{align}
			Moreover,  we have $\|\psi\|^{2}_{L^{2}}\leq M(u_{0})$ by using the mass conservation and the uniqueness of the weak limit. Following the argument in \cite{KenigMerle2006}, \eqref{CVs} and Lemma \ref{StabilityNLS}, we can show that 
			\[
			\| u  \|_{S^{0}([T^{*}-\epsilon, T^{*}+\epsilon]\times \R^{d})}<\infty
			\]
			for some $0<\epsilon\ll1$, which contradicts \eqref{SN} and hence proves \eqref{LamdaI}.
			
			For a fixed $R>0$ and $t\in [0, T^*)$, we define 
			\[
			M_{R}(t)=\int_{\R^{d}}|u(t,x)|^{2}\xi\(\tfrac{x}{R} \)\,dx.
			\qtq{}
			\]
			where $\xi(x)=1$ if $|x|\leq 1$ and $\xi(x)=0$ if $|x|\geq 2$. Since 
			\[
			M^{\prime}_{R}(t)=\tfrac{2}{R}\IM \int_{\R^{d}}\overline{u}\nabla u\cdot (\nabla\xi)\(  \tfrac{x}{R} \)dx,
			\]
			it follows from \eqref{PositiveP} that $M^{\prime}_{R}(t)\lesssim_{W_a} 1$. Thus
			\begin{align}\label{InteF}
				M_{R}(t_{1})=M_{R}(t_{2})+\int^{t_{2}}_{t_{1}}M^{\prime}_{R}(t)dt \lesssim_{W_a}M_{R}(t_{2}) +|t_{1}-t_{2}|.
			\end{align}
			On the other hand, for $\mu\in (0,1)$ and $t\in [0, T^{*})$,
			\begin{align*}
				\int_{|x|\leq R}|u(t,x)|^{2}dx
				&\leq  
				\int_{|x|\leq \mu R}|u(t,x)|^{2}dx+\int_{\mu R\leq |x|\leq R}|u(t,x)|^{2}dx\\
				&\lesssim
				\mu^{2} R^{2}\|u(t)\|^{2}_{L^{\frac{2d}{d-2}}}+R^{2}\(\int_{|x|\geq \eta R}|u(t,x)|^{\frac{2d}{d-2}}dx\)^{\frac{d-2}{d}}\\
				&\lesssim \mu^{2} R^{2}\|\nabla W_a\|^{2}_{L^{2}}+R^{2}\(\int_{|x|\geq \eta R}|u(t,x)|^{\frac{2d}{d-2}}dx\)^{\frac{d-2}{d}}.
			\end{align*}
			By \eqref{CompactX} and \eqref{LamdaI} we see that for all $R_{0}>0$,
			\[\int_{|x|>R_{0}}|u(t,x)|^{\frac{2d}{d-2}}\,dx\to 0 \qtq{as $t\to T^{*}$.}\]
			Thus we deduce that for any $R>0$,
			\[
			\limsup_{t\nearrow T^{*}}\int_{|x|\leq R}|u(t,x)|^{2}dx=0.
			\]
			In particular, $M_{R}(t_{2})\to 0$ as $t_{2}\to T^{*}$, so that \eqref{InteF} implies
			\[
			M_{R}(t_{1}) \lesssim_{W_a}|T^{{*}}-t_{1}|.
			\]
			Letting $R\to\infty$ and using conservation of mass, we get $M(u_{0})\lesssim_{W}|T^{{*}}-t_{1}|$. Letting $t_{1}\to T^{*}$ we obtain ${u}_{0}=0$, which contradicts $E_a({u}_{0})= E^{c}(W_a)>0$.\end{proof}
		
		Combining the above result   with Theorem~\ref{sub-threshold}, we can deduce the following corollary.
		\begin{corollary}\label{remark}
			If $u$ is a maximal-lifespan solution to \eqref{NLS} satisfying 
			\[
			E_a({u}_{0})\leq E_a^{c}(W_a) \mbox{ and } \| u_{0}\|_{\dot{H}_a^1}\leq \| W_a\|_{\dot{H}_a^1},
			\]
			then $u$ is global in time.
		\end{corollary}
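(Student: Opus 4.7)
The plan is to reduce the corollary to the two previously established ingredients, namely Theorem \ref{sub-threshold}(i) for the strict sub-threshold regime and Theorem \ref{Tfinito} for precluding finite-time blow-up in the threshold regime, by a case analysis organized around the two hypothesis inequalities and driven by the variational lemmas of Section \ref{sec:VarationalGNI}.

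First I would eliminate the boundary case $\|u_0\|_{\dot H_a^1}=\|W_a\|_{\dot H_a^1}$. Applying Lemma \ref{lem.var} to $f=u_0$ gives $E_a^c(u_0)\ge E_a^c(W_a)$, so
\[
E_a(u_0)=E_a^c(u_0)+\tfrac{d-1}{2(d+1)}\|u_0\|_{L^{2(d+1)/(d-1)}}^{2(d+1)/(d-1)}\ge E_a^c(W_a)+\tfrac{d-1}{2(d+1)}\|u_0\|_{L^{2(d+1)/(d-1)}}^{2(d+1)/(d-1)}.
\]
Combined with the hypothesis $E_a(u_0)\le E_a^c(W_a)$, this forces $\|u_0\|_{L^{2(d+1)/(d-1)}}=0$, i.e.\ $u_0\equiv 0$, which is incompatible with $\|u_0\|_{\dot H_a^1}=\|W_a\|_{\dot H_a^1}>0$. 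Hence either $u_0\equiv 0$ (and then $u\equiv 0$ is trivially global), or $\|u_0\|_{\dot H_a^1}<\|W_a\|_{\dot H_a^1}$; in the latter case Lemma \ref{GlobalS} propagates the strict inequality $\|u(t)\|_{\dot H_a^1}<\|W_a\|_{\dot H_a^1}$ throughout the maximal lifespan $I$ of $u$.

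I would then dichotomize on the energy. If $E_a(u_0)<E_a^c(W_a)$, the hypotheses of Theorem \ref{sub-threshold}(i) are met (with the radial assumption in $d=3$ as recorded there), and $u$ is global. If $E_a(u_0)=E_a^c(W_a)$, I would argue by contradiction: assume the forward maximal lifespan $T_{\max}$ is finite, the backward case being symmetric. Then $u$ does not scatter forward in time, so Theorem \ref{Threshold}(i) fails at the initial datum $u_0$. Running the profile decomposition of the proof of Proposition \ref{Compacness11} on the bounded sequence $\{u(\tau_n)\}$ for any $\tau_n\uparrow T_{\max}$, and rerunning the Scenario-\uppercase\expandafter{\romannumeral1}/Scenario-\uppercase\expandafter{\romannumeral2} dichotomy there, all but one profile vanish and the surviving profile is (up to the symmetries of the equation) identified with a rescaling of $u_0$; the resulting solution enjoys the pre-compactness \eqref{CompactX} with the same finite maximal lifespan $T_{\max}$ as $u$. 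But Theorem \ref{Tfinito} precisely forbids compact threshold solutions with finite forward lifespan, which yields the desired contradiction and completes the proof.

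The main obstacle is the last point: one must verify, inside the proof of Proposition \ref{Compacness11}, that both scenarios force the extracted critical element to have the same (finite) maximal forward lifespan as $u$. This relies on the fact that $\|u(\tau_n)\|_{\dot H_a^1}$ stays strictly below $\|W_a\|_{\dot H_a^1}$ by Step 1, so the energy-trapping and stability arguments applied there let us identify the unique surviving profile with $u_0$ up to the symmetries, after which Theorem \ref{Tfinito} applies directly.
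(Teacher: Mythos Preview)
Your overall strategy matches the paper's one-line proof (``combining the above result with Theorem~\ref{sub-threshold}''), and your Step~1 eliminating the boundary case $\|u_0\|_{\dot H_a^1}=\|W_a\|_{\dot H_a^1}$ via Lemma~\ref{lem.var} is a clean detail the paper leaves implicit. Step~2 is fine.

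Your Step~3, however, manufactures an obstacle that is not there. The proof of Proposition~\ref{Compacness11} does not extract a \emph{new} critical element distinct from $u$; it takes an arbitrary threshold solution satisfying \eqref{EGN} and \eqref{SN} and shows that \emph{that very solution} enjoys the pre-compactness \eqref{CompactX} on its own maximal interval $[0,T^{*})$. The profile decomposition of $\{u(\tau_n)\}$ yields a single surviving profile $\phi$, and the conclusion is simply $\lambda_n^{-(d-2)/2}u(\tau_n,\cdot/\lambda_n)\to\phi$ in $\dot H_a^1$ along a subsequence---this is compactness of $u$ itself. There is no ``resulting solution'' whose lifespan you have to compare with that of $u$, and $\phi$ need not (and generally will not) be a rescaling of $u_0$. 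So once your contradiction hypothesis gives $T^{*}=T_{\max}<\infty$ and $\|u\|_{L_{t,x}^{2(d+2)/(d-2)}([0,T_{\max}))}=\infty$, you invoke the \emph{proof} of Proposition~\ref{Compacness11} on $u$ directly, obtain compactness for $u$, and Theorem~\ref{Tfinito} finishes immediately. Your ``main obstacle'' paragraph can simply be deleted.
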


		\subsection{Infinite time blow-up}\label{S:ITB}In this subsection, we  preclude the possibility of infinite time blow-up.

		\begin{theorem}\label{ITfinito22}
			There are no solutions to \eqref{NLS} as in Proposition~\ref{Compacness11} with $T^{*}=\infty$.
		\end{theorem}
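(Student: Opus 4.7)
I would argue by contradiction, assuming a solution $u$ as in Proposition~\ref{Compacness11} exists with $T^{*}=\infty$. By Lemma~\ref{GlobalS}, the sub-threshold constraint $\|u(t)\|_{\dot H_a^1}<\|W_a\|_{\dot H_a^1}$ persists, so
\[
\delta(t):=\|W_a\|_{\dot H_a^1}^2-\|u(t)\|_{\dot H_a^1}^2>0\quad\text{for all }t\in[0,\infty).
\]
The strategy mirrors Section~\ref{sec:blowup-threshold} but with the sub-threshold sign convention, combining the localized virial of Section~\ref{virialargument} with the modulation analysis of Section~\ref{sec:Modulation}.

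First I would set up the virial estimate. At the threshold a direct computation gives
\[
G[u(t)]=\tfrac{2}{d-2}\delta(t)-\tfrac{d(d-1)}{(d-2)(d+1)}\|u(t)\|_{L^{(2d+2)/(d-1)}}^{(2d+2)/(d-1)},
\]
and by Proposition~\ref{Modilation11} the subcritical term is $O(\delta(t)^2)$ whenever $\delta(t)<\delta_0$. Invoking Lemma~\ref{VirialModulate} and estimating the remainder terms as in Lemma~\ref{LemmaB22} (with the sign of the leading term reversed by $\|u\|_{\dot H_a^1}<\|W_a\|_{\dot H_a^1}$), I would obtain for $R$ sufficiently large
\[
\tfrac{d}{dt}I_R[u(t)]\geq c\,\delta(t),\qquad t\geq 0,
\]
so $V_R$ is convex. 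Combined with the uniform bound $|I_R[u(t)]|\lesssim R^2\|u\|_{L^\infty_t H_a^1}^2$, integrating forces $\delta\in L^1(0,\infty)$; uniform continuity of $\delta$ upgrades this to $\delta(t)\to 0$ as $t\to\infty$. Convexity and boundedness of $V_R$ further imply $I_R(t)\leq 0$ throughout, with $I_R(t)\nearrow 0$, so $V_R$ is monotone non-increasing.

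Next, once $\delta(t)<\delta_0$ (valid for all $t\geq T_0$), Proposition~\ref{Modilation11} furnishes modulation parameters $(\theta(t),\mu(t))$ obeying $\mu(t)^{-\beta(d-2)(d+1)/(d-1)}\lesssim\delta(t)^2\to 0$, so $\mu(t)\to\infty$, and by the compactness in Proposition~\ref{Compacness11} the scaling $\lambda(t)\sim\mu(t)\to\infty$. Rewriting $|\mu'/\mu|\lesssim\mu^2\delta$ as $|(\mu^{-2})'|\lesssim\delta$ and integrating out to $t=\infty$ (where $\mu^{-2}\to 0$) yields
\[
\mu(t_0)^{-2}\lesssim\int_{t_0}^\infty\delta(s)\,ds\qquad\text{for all large }t_0.
\]

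The hard part is closing the loop into a contradiction. Using the monotone decrease of $V_R$, the explicit asymptotics of $W_{[\mu(t)]}^a$, and mass conservation $\|u(t)\|_{L^2}^2=M(u_0)$, I would show $V_R(t)\to 0$ as $t\to\infty$ for every fixed large $R$, so $V_R(0)=\int_0^\infty|I_R(t)|\,dt$. Expanding $I_R$ via the modulation decomposition, exploiting the orthogonality $\langle \sqrt{\mathcal L_a}g_2,\sqrt{\mathcal L_a}W^a_{[\mu]}\rangle=0$ from Lemma~\ref{ExistenceF} and the tail decay of $W_a$, one bounds $\int_0^\infty|I_R(t)|\,dt$ by a controlled multiple of $\int_0^\infty\delta(t)\,dt$; feeding the resulting scale estimate back into the conservation of $M(u_0)$ produces the contradiction $M(u_0)=0$, which is incompatible with $E_a(u_0)=E_a^c(W_a)>0$. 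The principal technical obstacle is that $W_a\notin L^2(\mathbb{R}^d)$ in the low-dimensional cases $d=3,4$ (and in $d=5$ for small $\beta$), so naive $L^2$ comparison between the rescaled orbit $\lambda(t)^{-(d-2)/2}u(t,\cdot/\lambda(t))$ and the ground state must be replaced by weighted or truncated-mass estimates that exploit the explicit polynomial decay of $W_a$; handling this carefully is exactly where the delicate interplay between the profile concentration scale $\mu(t)$, the truncated mass $V_R(t)$, and the dispersive tail of $u$ has to be controlled.
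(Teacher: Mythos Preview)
Your setup through the first two paragraphs is sound and largely parallels the paper: the virial lower bound $\tfrac{d}{dt}I_R[u(t)]\geq c\,\delta(t)$ for a large fixed $R$ follows from Lemma~\ref{Compa}, the modulation error control of Lemma~\ref{VirialModulate}, and the compactness~\eqref{CompacNew} together with $\lambda(t)\gtrsim 1$. From this you correctly deduce $\delta\in L^1(0,\infty)$, $\delta(t)\to 0$, $\mu(t)\to\infty$, and $\mu(t_0)^{-2}\lesssim\int_{t_0}^\infty\delta$.

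The genuine gap is your closing step. The mechanism you propose (deducing $M(u_0)=0$ from $V_R(t)\to 0$ and $V_R(0)=\int_0^\infty|I_R|$) does not close. Using $|I_R|\lesssim R^2\delta$ (Lemma~\ref{Lemma11}) you obtain only $V_R(0)\lesssim R^2\int_0^\infty\delta<\infty$, which is a finite bound, not zero; and the finite-time trick $M_R(t_1)\lesssim|T^*-t_1|$ is unavailable since $T^*=\infty$. More structurally, your Gronwall loop $\int_{t_0}^\infty\delta\lesssim R^2\delta(t_0)$ only yields exponential decay of $\int_t^\infty\delta$ and $\mu(t)^{-2}$, which is perfectly consistent with the compact non-scattering scenario and produces no contradiction. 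The obstacle you flag about $W_a\notin L^2$ is a symptom: the $L^2$ route simply does not reach $M(u_0)=0$ here.

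What the paper does differently is to let the virial radius \emph{depend on the interval}, choosing $R=\rho_\epsilon\sup_{[t_1,t_2]}\lambda(t)^{-1}$. This converts the bound $|I_R|\lesssim R^2\delta$ into
\[
\int_{t_1}^{t_2}\delta(t)\,dt\;\lesssim\;\Bigl(\sup_{[t_1,t_2]}\tfrac{1}{\lambda(t)^2}\Bigr)\bigl(\delta(t_1)+\delta(t_2)\bigr)
\]
(Lemma~\ref{Lemma111}), and couples it with the scale-variation estimate $\bigl|\lambda(t_2)^{-2}-\lambda(t_1)^{-2}\bigr|\lesssim\int_{t_1}^{t_2}\delta$ (Proposition~\ref{Spatialcenter}, which also requires the equivalence $\lambda(t_n)\to\infty\Leftrightarrow\delta(t_n)\to 0$ of Lemma~\ref{DeltaZero}). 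Feeding one into the other along a sequence $t_n\to\infty$ with $\delta(t_n)\to 0$ gives
\[
\sup_{t\geq t_0}\tfrac{1}{\lambda(t)^2}\;\leq\;C^*\,\delta(t_0)\,\sup_{t\geq t_0}\tfrac{1}{\lambda(t)^2},
\]
which for $\delta(t_0)$ small forces $\sup_{t\geq t_0}\lambda(t)^{-2}=0$, an immediate contradiction. The crucial idea you are missing is precisely this: the $R^2$ prefactor must be tied to $\sup\lambda^{-2}$ rather than left as a fixed constant, so that the two estimates feed back on each other.
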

		
		We prove Theorem~\ref{ITfinito22} by contradiction. We suppose that $u:[0, \infty)\times \R^{d}\to \C$ is a solution to \eqref{NLS} as in Proposition~\ref{Compacness11} with $T^*=\infty$. In particular,  $u$ obeys
		\begin{align}\label{Eequal}
			E_a^c(u_{0})=E_a^{c}(W_a), \quad \| u_{0}\|_{\dot{H}_a^1}<\| W_a\|_{\dot{H}_a^1},\quad \text{and}\quad  \|u \|_{L_{t,x}^{\frac{2(d+2)}{d-2}}([0, \infty)\times\R^{d})}=\infty.
		\end{align}
		Moreover,  there exists compactness parameter  $\lambda_{0}:[0, \infty) \to (0,\infty)$ such that the set
		\begin{equation}\label{New0Compact}
			\left\{\lambda_{0}(t)^{-\frac{d-2}{2}}u(t, \tfrac{x}{\lambda_{0}(t)}): t\in [0, +\infty)\right\} \quad
			\text{is pre-compact in $\dot{H}_a^{1}(\R^{d})$}.
		\end{equation}
		
		We first show that the  parameter $\lambda_0(t)$ is equivalent to the modulation parameter $\mu(t)$ when $\delta(t)$ is sufficently small.
		\begin{lemma}\label{Parametrization}
			If $\delta_{0}$ is sufficiently small, then there exists $C>0$ so that
			\begin{align}\label{limia}
				\tfrac{\lambda_{0}(t)}{\mu(t)}+\tfrac{\mu(t)}{\lambda_{0}(t)}\leq C \quad \text{for  $t\in I_{0}$},
			\end{align}
			where the parameter $\mu(t)$ is given in Proposition~\ref{Modilation11}.
		\end{lemma}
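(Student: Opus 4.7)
I will argue by contradiction. Suppose \eqref{limia} fails; then we can find a sequence $\{t_n\}\subset I_0$ such that $\nu_n:=\mu(t_n)/\lambda_0(t_n)$ satisfies either $\nu_n\to 0$ or $\nu_n\to\infty$. The plan is to use the compactness property \eqref{New0Compact} together with the modulation decomposition from Proposition~\ref{Modilation11} to extract a limit that must simultaneously have large and small $\dot H_a^1$-norm.

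First, by \eqref{New0Compact}, after passing to a subsequence, there exists $V\in\dot H_a^1$ such that
\begin{equation*}
\lambda_0(t_n)^{-\frac{d-2}{2}}u\bigl(t_n,\tfrac{x}{\lambda_0(t_n)}\bigr)\longrightarrow V\quad\text{strongly in }\dot H_a^1(\R^d).
\end{equation*}
Since $t_n\in I_0$, Proposition~\ref{Modilation11} applies and provides the decomposition
\begin{equation*}
u(t_n,x)=e^{i\theta(t_n)}\bigl[g(t_n,x)+\mu(t_n)^{\frac{d-2}{2}}W_a(\mu(t_n)x)\bigr],\qquad \|g(t_n)\|_{\dot H_a^1}\lesssim\delta(t_n)\leq\delta_0.
\end{equation*}
Applying the $\dot H_a^1$-invariant rescaling $f(x)\mapsto \lambda_0(t_n)^{-(d-2)/2}f(x/\lambda_0(t_n))$, which preserves $\dot H_a^1$-norms since $\mathcal{L}_a$ scales covariantly, I obtain
\begin{equation*}
\lambda_0(t_n)^{-\frac{d-2}{2}}u\bigl(t_n,\tfrac{x}{\lambda_0(t_n)}\bigr)=e^{i\theta(t_n)}\bigl[\tilde g_n(x)+\nu_n^{\frac{d-2}{2}}W_a(\nu_n x)\bigr],
\end{equation*}
where $\|\tilde g_n\|_{\dot H_a^1}=\|g(t_n)\|_{\dot H_a^1}\lesssim\delta_0$.

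Passing to further subsequences I may assume $\theta(t_n)\to\theta_\infty$ and $\tilde g_n\rightharpoonup \tilde g_\infty$ weakly in $\dot H_a^1$. Consequently the profiles $\nu_n^{(d-2)/2}W_a(\nu_n\cdot)$ converge weakly in $\dot H_a^1$ to $e^{-i\theta_\infty}V-\tilde g_\infty$. The key standard fact I will invoke (proved by testing against $C_c^\infty(\R^d\setminus\{0\})$ and using the equivalence $\|\cdot\|_{\dot H_a^1}\sim\|\cdot\|_{\dot H^1}$) is that if $\nu_n\to 0$ or $\nu_n\to\infty$ then $\nu_n^{(d-2)/2}W_a(\nu_n\cdot)\rightharpoonup 0$ in $\dot H_a^1$. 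This yields $V=e^{i\theta_\infty}\tilde g_\infty$, and hence by weak lower semicontinuity
\begin{equation*}
\|V\|_{\dot H_a^1}\leq \liminf_{n\to\infty}\|\tilde g_n\|_{\dot H_a^1}\lesssim \delta_0.
\end{equation*}
On the other hand, strong convergence and $t_n\in I_0$ give
\begin{equation*}
\|V\|_{\dot H_a^1}^2=\lim_{n\to\infty}\|u(t_n)\|_{\dot H_a^1}^2\geq \|W_a\|_{\dot H_a^1}^2-\delta_0.
\end{equation*}
Choosing $\delta_0>0$ sufficiently small produces the desired contradiction.

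The only delicate point is justifying the weak vanishing of $\nu_n^{(d-2)/2}W_a(\nu_n\cdot)$ when $\nu_n\to 0$ or $\infty$ in the inverse-square setting; I expect this to be the main technical step, but it follows from Sobolev-norm equivalence \eqref{equiNorms}, density of $C_c^\infty(\R^d\setminus\{0\})$ in $\dot H^1$ (which is standard for $d\geq 3$), and the scale-covariance of $\mathcal{L}_a$ established above.
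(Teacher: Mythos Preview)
Your proof is correct but takes a different route from the paper. The paper argues directly: using the modulation decomposition, it observes that for $t\in I_0$ the gradient of $u(t)$ carries a fixed positive amount of $L^2$-mass on the annulus $\{1/\mu(t)\le |x|\le 2/\mu(t)\}$, namely
\[
\int_{\frac{1}{\mu(t)}\le|x|\le\frac{2}{\mu(t)}}|\nabla u(t,x)|^{2}\,dx\ \ge\ \tfrac{1}{2}\int_{1\le|x|\le 2}|\nabla W_a|^{2}\,dx - C\delta(t)^{2}\ \ge\ c>0.
\]
Rescaling by $\lambda_0(t)$, this becomes a uniform lower bound for $\int_{\lambda_0(t)/\mu(t)\le|x|\le 2\lambda_0(t)/\mu(t)}\lambda_0(t)^{-d}|\nabla u(t,x/\lambda_0(t))|^2\,dx$, and then precompactness of the rescaled family in $\dot H_a^1$ immediately forces $\lambda_0(t)/\mu(t)$ to stay in a compact subset of $(0,\infty)$. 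Your argument instead extracts a strong $\dot H_a^1$-limit $V$ along a hypothetical bad sequence, shows the rescaled ground states $\nu_n^{(d-2)/2}W_a(\nu_n\cdot)$ vanish weakly, and derives a contradiction between $\|V\|_{\dot H_a^1}\lesssim\delta_0$ and $\|V\|_{\dot H_a^1}^2\ge\|W_a\|_{\dot H_a^1}^2-\delta_0$. The paper's approach is shorter and avoids subsequence extraction and the weak-vanishing lemma; your approach is perhaps more conceptual in that it isolates exactly which limit object would have to exist and why it cannot. Both are standard ways to compare two scale functions attached to a precompact orbit.
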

		
		\begin{proof}
			By \eqref{DecomU} and \eqref{Estimatemodu},  the following holds true 
			\[
			\int_{\frac{1}{\mu(t)}\leq|x|\leq\frac{2}{\mu(t)}}|\nabla u(t,x)|^{2}dx
			\geq \tfrac{1}{2}\int_{1\leq|x|\leq 2}|\nabla W(x)|^2dx-C\delta^{2}(t)\geq c
			\]
			for  $\delta_{0}\ll 1$, $t\in I_{0}$ and some positive constant $c$. By a change of variable, we deduce
			\begin{align}\label{PoW}
				\int_{\frac{\lambda_{0}(t)}{\mu(t)}\leq|x|\leq\frac{2\lambda_{0}(t)}{\mu(t)}}\tfrac{1}{\lambda_{0}(t)^{d}}
				\left|\nabla u\(t,\tfrac{x}{\lambda_{0}(t)}\)\right|^{2}dx\geq c\qtq{for all}t\in I_0.
			\end{align}
			Thus \eqref{limia} follows from  \eqref{New0Compact}.
		\end{proof}
		
		As a consequence of the above lemma, we deduce that
		\begin{equation}\label{CompacNew}
			\left\{\lambda(t)^{-\frac{d-2}{2}}u(t, \tfrac{x}{\lambda(t)}): t\in [0, \infty)\right\} \quad
			\text{is pre-compact in $\dot{H}_a^{1}(\R^{d})$},
		\end{equation}
		where
		\[
		\lambda(t)=
		\begin{cases}
			\lambda_{0}(t)& \quad t\in [0, \infty)\setminus I_{0},\\
			\mu(t)&\quad t\in I_{0}.
		\end{cases}
		\]
		We note that $\lambda(t)\geqslant1$ for $t\geqslant0$.
		
		Next we will prove some essential lemmas which are the key to proving Theorem~\ref{ITfinito22}.

		\begin{lemma}\label{SequenceInf}
			For any sequence $\left\{t_{n}\right\}\subset [0, \infty)$,  the following holds
			\begin{equation}\label{ZeroPoten}
				\lambda(t_{n})\to \infty \quad \text{if and only if}\quad \int_{\R^{d}}|u(t_{n},x)|^{\frac{2d+2}{d-1}}dx\to 0.
			\end{equation}
		\end{lemma}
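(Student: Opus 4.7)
Write $v(t,x) := \lambda(t)^{-\frac{d-2}{2}} u(t, x/\lambda(t))$, so that by hypothesis $\{v(t):t\in [0,\infty)\}$ is pre-compact in $\dot H_a^1(\R^d)$. The key scaling computation is that for any $q \in (2, \tfrac{2d}{d-2})$,
\[
\| u(t)\|_{L^q}^q = \lambda(t)^{\frac{q(d-2)}{2}-d} \| v(t)\|_{L^q}^q,
\]
and in particular for $q = \tfrac{2d+2}{d-1}$ the exponent equals $-\tfrac{2}{d-1}$, while for $q=\tfrac{2d}{d-2}$ the exponent is $0$ (so $\| u(t)\|_{L^{2d/(d-2)}}=\| v(t)\|_{L^{2d/(d-2)}}$ is uniformly bounded by pre-compactness and Sobolev embedding). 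Note also that $\| v(t)\|_{L^2}^2 = \lambda(t)^2 M(u_0)$, which is \emph{not} uniformly bounded as $\lambda(t)\to\infty$, so a naive interpolation through $L^2$ is not strong enough.

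For the forward direction $(\lambda(t_n)\to\infty \Rightarrow \| u(t_n)\|_{L^{(2d+2)/(d-1)}}\to 0)$, the plan is to split the integral for $\|u(t_n)\|_{L^{(2d+2)/(d-1)}}^{(2d+2)/(d-1)}$ at the radius $R/\lambda(t_n)$. On the inner ball $|y|\leq R/\lambda(t_n)$, a simple H\"older inequality against the $L^{2d/(d-2)}$ bound gives a factor of $|B|^{2/(d(d-1))}\sim (R/\lambda(t_n))^{2/(d-1)} \to 0$. On the outer region, I would change variables to $v(t_n)$-coordinates (on $|x|>R$) and interpolate
\[
\| v(t_n)\|_{L^{(2d+2)/(d-1)}(|x|>R)}^{(2d+2)/(d-1)} \leq \| v(t_n)\|_{L^2(|x|>R)}^{2/(d-1)} \| v(t_n)\|_{L^{2d/(d-2)}(|x|>R)}^{2d/(d-1)}.
\]
The $L^2$ factor contributes $\lambda(t_n)^{2/(d-1)} M(u_0)^{1/(d-1)}$, which exactly cancels the $\lambda(t_n)^{-2/(d-1)}$ scaling prefactor, while pre-compactness (applied to $\{v(t)\}$ in $L^{2d/(d-2)}$) makes the $L^{2d/(d-2)}$ factor arbitrarily small by taking $R = R(\eps)$ large. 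This is the subtle step: one must recognize that the $\lambda$-growth from $\|v(t_n)\|_{L^2}$ on the outside region is precisely the one that cancels, so that the pre-compactness tail bound survives.

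For the reverse direction $(\| u(t_n)\|_{L^{(2d+2)/(d-1)}}\to 0 \Rightarrow \lambda(t_n)\to\infty)$, I argue by contradiction. Suppose along a subsequence $\lambda(t_n)\to \lambda_\infty \in [1,\infty)$. Then $\|v(t_n)\|_{L^2}=\lambda(t_n)\sqrt{M(u_0)}$ is \emph{bounded}, so by pre-compactness in $\dot H_a^1$ and weak compactness in $L^2$ we may extract $v(t_n)\to v_\infty$ strongly in $\dot H_a^1$ (hence in $L^{2d/(d-2)}$) and weakly in $L^2$, with $v_\infty\in H_a^1$. Setting $u_\infty = \lambda_\infty^{(d-2)/2} v_\infty(\lambda_\infty\cdot)$ and using the uniform $L^2$ bound from mass conservation combined with strong $L^{2d/(d-2)}$ convergence, an interpolation
\[
\| u(t_n)-u_\infty\|_{L^{(2d+2)/(d-1)}} \lesssim \| u(t_n)-u_\infty\|_{L^2}^{1/(d+1)} \| u(t_n)-u_\infty\|_{L^{2d/(d-2)}}^{d/(d+1)}
\]
yields strong convergence $u(t_n)\to u_\infty$ in $L^{(2d+2)/(d-1)}$. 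The hypothesis then forces $u_\infty = 0$, hence $v_\infty = 0$ and $\|u(t_n)\|_{\dot H_a^1}=\|v(t_n)\|_{\dot H_a^1}\to 0$ and $\| u(t_n)\|_{L^{2d/(d-2)}}=\|v(t_n)\|_{L^{2d/(d-2)}}\to 0$. But then all three terms in $E_a(u(t_n))$ tend to $0$, contradicting $E_a(u(t_n))=E_a^c(W_a)>0$.

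The main technical obstacle is the outer-region estimate in the forward direction: one must leverage the exact cancellation between the non-invariant scaling of $L^{(2d+2)/(d-1)}$ and the $L^2$-mass inflation of $v(t_n)$, while extracting smallness from the pre-compactness tail of $v(t_n)$ in $L^{2d/(d-2)}$. Once the correct interpolation exponent $\theta = 1/(d+1)$ between $L^2$ and $L^{2d/(d-2)}$ is identified, the $\lambda$-powers match exactly, which is the crucial nontrivial feature of the sub-critical exponent $\tfrac{2d+2}{d-1}$.
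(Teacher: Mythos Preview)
Your proof is correct and follows essentially the same approach as the paper. In both directions the paper argues in $u$-coordinates (rather than passing to $v=\lambda^{-(d-2)/2}u(\cdot/\lambda)$), but the content is identical: for the forward direction the paper uses compactness to make $\|u(t_n)\|_{L^{2d/(d-2)}(|x|>\rho_\epsilon/\lambda(t_n))}$ small and interpolates against the conserved $L^2$ mass---which is exactly your scaling cancellation rewritten---and for the reverse direction it extracts a nonzero $\dot H_a^1$ limit and uses the same $L^2$--$L^{2d/(d-2)}$ interpolation to reach a contradiction with $E_a^c(\phi)=E_a^c(W_a)>0$.
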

		\begin{proof}
			Suppose  $\lambda(t_{n})\to \infty$ and $\epsilon>0$. Using \eqref{CompacNew}, there exists $\rho_{\epsilon}>0$ so that
			\[
			\int_{{|x|\geq \tfrac{\rho_{\epsilon}}{\lambda(t_{n})}}}\bigl[|\nabla u(t_{n},x)|^{2}+|u(t_{n},x)|^{\frac{2d}{d-2}}\bigr]dx\ll\epsilon \qtq{for all $n\in \N$.}
			\]
			we obtain by using interpolation and the mass conservation $M(u(t_n))=M(u_0)$,
			\begin{align}\label{PNe}
				\int_{{|x|\geq \tfrac{\rho_{\epsilon}}{\lambda(t_{n})}}}| u(t_{n},x)|^{\frac{2d+2}{d-1}}dx <\epsilon \qtq{for all $n\in \N$.}
			\end{align}
			Next, since $\tfrac{1}{\lambda(t_{n})}\to 0$ as $n\to\infty$, we deduce from holder inequality that
			\begin{align}\label{Elar}
				\int_{{|x|\leq \tfrac{\rho_{\epsilon}}{\lambda(t_{n})}}}| u(t_{n},x)|^{\frac{2d+2}{d-1}}dx  \to 0 \qtq{as $n\to \infty$.}
			\end{align}
			Combining \eqref{PNe} and \eqref{Elar} we have  $$\int_{\R^{d}}|u(t_{n},x)|^{\frac{2d+2}{d-1}}dx\to0,\quad n\to+\infty.$$
			
			On the other hand, we assume that
			\begin{align}\label{Otra}
				\int_{\R^{d}}|u(t_{n},x)|^{\frac{2d+2}{d-1}}dx\to 0,
			\end{align}
			but   $\left\{\lambda(t_{n})\right\}_{n\in\N}$ converges.  From \eqref{CompacNew}, we see that there exists $\phi\in \dot{H}_a^{1}(\R^{d})$ so that
			\begin{equation}\label{contra11}
				u(t_{n})\to \phi\quad \text{in $\dot{H}_a^{1}(\R^{d})$}.
			\end{equation}
			Combining \eqref{Otra} and \eqref{contra11} we have $E_a^{c}(W_a)=E_a^{c}(\phi)$, so that $\phi\neq 0$. Moreover, since $M(u(t_{n}))=M(u_{0})$, we deduce that $\phi\in L^{2}(\R^{d})$. 
			
			However, by the sharp Sobolev inequality and \eqref{contra11} we get
			\[
			\|u(t_{n})-\phi\|_{L^{\frac{2d+2}{d-1}}}\lesssim \|u(t_{n})-\phi\|^{\frac{1}{d+1}}_{L^{2}}\|u(t_{n})-\phi\|^{\frac{d}{d+1}}_{\dot{H}_a^{1}}
			\to 0 \qtq{as $n\to \infty$.}
			\]
			Thus, by \eqref{Otra} we have that $\|\phi\|_{L^{\frac{2d+2}{d-1}}}=0$, which is contradict to $\phi\neq0$. \end{proof}

		\begin{lemma}\label{DeltaZero}
			If $t_{n}\to \infty$, then
			\begin{equation}\label{equivalence}
				\lambda(t_{n})\to \infty \quad \text{if and only if}\quad \delta(t_{n})\to 0.
			\end{equation}
		\end{lemma}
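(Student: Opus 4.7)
\textbf{Proof proposal for Lemma \ref{DeltaZero}.}

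The plan is to prove each implication directly, using Lemma \ref{SequenceInf} to control the $L^{(2d+2)/(d-1)}$-norm by $\lambda(t_n)$, the conservation of energy to relate $E_a^c(u(t_n))$ to $E_a^c(W_a)$, and Lemma \ref{lem.var} together with Proposition \ref{Modilation11} to convert between closeness of $\|u(t)\|_{\dot H^1_a}$ to $\|W_a\|_{\dot H^1_a}$ and the scaling parameter.

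\textbf{Direction $(\Rightarrow)$: $\lambda(t_n)\to\infty$ implies $\delta(t_n)\to 0$.} Assume $\lambda(t_n)\to\infty$. By Lemma \ref{SequenceInf}, $\|u(t_n)\|_{L^{(2d+2)/(d-1)}}^{(2d+2)/(d-1)}\to 0$. Writing
\[
E_a(u(t_n))=E_a^c(u(t_n))+\tfrac{d-1}{2(d+1)}\|u(t_n)\|_{L^{(2d+2)/(d-1)}}^{(2d+2)/(d-1)},
\]
and using conservation of energy together with $E_a(u_0)=E_a^c(W_a)$, we conclude that $E_a^c(u(t_n))\to E_a^c(W_a)$. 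Since $\|u(t)\|_{\dot H_a^1}<\|W_a\|_{\dot H_a^1}$ for all $t\geq 0$ by Lemma \ref{GlobalS}, Lemma \ref{lem.var} gives
\[
\frac{\|u(t_n)\|_{\dot H_a^1}^2}{\|W_a\|_{\dot H_a^1}^2}\;\le\;\frac{E_a^c(u(t_n))}{E_a^c(W_a)}\;\to\;1.
\]
Combined with the one-sided bound $\|u(t_n)\|_{\dot H_a^1}^2\le\|W_a\|_{\dot H_a^1}^2$, this forces $\|u(t_n)\|_{\dot H_a^1}^2\to\|W_a\|_{\dot H_a^1}^2$, i.e.\ $\delta(t_n)\to 0$.

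\textbf{Direction $(\Leftarrow)$: $\delta(t_n)\to 0$ implies $\lambda(t_n)\to\infty$.} Since $\delta(t_n)\to 0$, for $n$ sufficiently large we have $\delta(t_n)<\delta_0$, so $t_n\in I_0$ and by construction $\lambda(t_n)=\mu(t_n)$. Proposition \ref{Modilation11} yields
\[
\mu(t_n)^{-\frac{\beta(d-2)(d+1)}{d-1}}\;\lesssim\;\delta(t_n)^2\;\to\;0,
\]
so $\mu(t_n)\to\infty$, and therefore $\lambda(t_n)\to\infty$.

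The argument is essentially routine given the prior results, with no real obstacle: the two implications are decoupled, each uses one half of the toolkit built above (Lemma \ref{SequenceInf}+Lemma \ref{lem.var} for the forward direction, the modulation estimate $\eqref{Estimatemodu}$ for the reverse direction), and the hypothesis $t_n\to\infty$ is not actually needed in the argument (it simply reflects the intended application).
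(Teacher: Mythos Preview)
Your argument for the $(\Leftarrow)$ direction is correct (and in fact slightly more direct than the paper's, which routes through Lemma~\ref{SequenceInf} rather than invoking the bound $\mu(t)^{-\frac{\beta(d-2)(d+1)}{d-1}}\lesssim\delta(t)^2$ directly).

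The $(\Rightarrow)$ direction, however, has a genuine gap. The inequality from Lemma~\ref{lem.var},
\[
\frac{\|u(t_n)\|_{\dot H_a^1}^2}{\|W_a\|_{\dot H_a^1}^2}\le\frac{E_a^c(u(t_n))}{E_a^c(W_a)},
\]
is an \emph{upper} bound on $\|u(t_n)\|_{\dot H_a^1}^2$, and the other bound you invoke, $\|u(t_n)\|_{\dot H_a^1}^2\le\|W_a\|_{\dot H_a^1}^2$, is again an upper bound. Two upper bounds cannot force convergence: you are missing any lower bound on $\|u(t_n)\|_{\dot H_a^1}^2$. Concretely, $E_a^c(f)=E_a^c(W_a)$ together with $\|f\|_{\dot H_a^1}<\|W_a\|_{\dot H_a^1}$ does \emph{not} imply $\|f\|_{\dot H_a^1}=\|W_a\|_{\dot H_a^1}$; one can have functions with $\|f\|_{\dot H_a^1}^2$ as small as $\tfrac{2}{d}\|W_a\|_{\dot H_a^1}^2$ and $\|f\|_{L^{2d/(d-2)}}$ tiny (far from Sobolev extremizers), for which $E_a^c(f)\approx\tfrac12\|f\|_{\dot H_a^1}^2=E_a^c(W_a)$. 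So $E_a^c(u(t_n))\to E_a^c(W_a)$ alone cannot rule out $\delta(t_n)\ge c>0$.

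The paper closes this gap with a substantially deeper argument that uses both the compactness \eqref{CompacNew} and the non-scattering hypothesis \eqref{SN}. Assuming $\lambda(t_n)\to\infty$ but $\delta(t_n)\ge c>0$, compactness produces a limit $v_0\in\dot H_a^1$ with $E_a^c(v_0)=E_a^c(W_a)$ and $\|v_0\|_{\dot H_a^1}<\|W_a\|_{\dot H_a^1}$. The threshold classification for \eqref{NLS-Potential} (Theorem~\ref{th.classif}) then guarantees that the solution $v$ of \eqref{NLS-Potential} with data $v_0$ has finite $L_{t,x}^{\frac{2(d+2)}{d-2}}$-norm on at least one half-line. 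Rescaling $v$ to the scale $\lambda(t_n)$ gives approximate solutions of \eqref{NLS} (the subcritical nonlinearity is killed by $\lambda(t_n)\to\infty$, exactly as in Scenario~I of Proposition~\ref{embedding-nonlinear}), and stability (Lemma~\ref{StabilityNLS}) transfers the scattering bound to $u$ on $[t_n,\infty)$, contradicting \eqref{SN}. This is where the hypothesis $t_n\to\infty$ and, crucially, the assumption that $u$ does not scatter are actually used; your purely variational argument ignores both and cannot succeed.
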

		\begin{proof} If $\delta(t_{n})\to 0$,  then Proposition \ref{Modilation11} and 
			Lemma \ref{SequenceInf} imply $\lambda(t_{n})\to \infty$.
			
			Next, by contradiction we assume that $\lambda(t_{n})\to \infty$ but 
			\begin{equation}\label{Zeroplus}
				\delta(u(t_{n}))\geq c>0.
			\end{equation}
			Notice that 
			\[
			\left\{\lambda(t)^{-\frac{d-2}{2}}u(t, \tfrac{x}{\lambda(t)}): t\in [0, \infty)\right\} \quad
			\text{is pre-compact in $\dot{H}_a^{1}(\R^{d})$},
			\] 
			there exists $v_{0}\in \dot{H}_a^{1}(\R^{d})$ such that 
			\begin{equation}\label{CompactV}
				\lambda(t_{n})^{-\frac{d-2}{2}}u(t_{n}, \tfrac{x}{\lambda(t_{n})}) \to v_{0} \quad \text{in}\quad \dot{H}_a^{1}(\R^{d}).
			\end{equation}
			In particular, from \eqref{Eequal}, \eqref{ZeroPoten} and \eqref{Zeroplus}  we have
			\begin{align}\label{ScR}
				E_a^{c}(v_{0})=E_a^{c}(W_a) \quad \text{and}\quad \| v_{0}\|_{\dot{H}_a^1}<\| W_a\|_{\dot{H}_a^1}.
			\end{align}
			
			Now let $v$ be the solution of the energy-critical NLS  \eqref{NLS-Potential} with$v|_{t=0}=v_{0}$. By Theorem \ref{th.classifa}, $v$ is global and its scattering size is finite 
			\[
			\|v\|_{L^{\frac{2(d+2)}{d-2}}_{t,x}([0, \infty)\times \R^{d})}<\infty\qtq{or}\|v\|_{L^{\frac{2(d+2)}{d-2}}_{t,x}((-\infty,0]\times \R^{d})}<\infty.
			\]
			In either case, we will use stability theory to deduce that 
			$u\in L^{\frac{2(d+2)}{d-2}}_{t,x}([0, \infty)\times \R^{d})$, contradicting \eqref{Eequal}.
			
			The argument is similar to the proof of Proposition~\ref{embedding-nonlinear}. For simpleness, we suppose that $\|v\|_{L^{\frac{2(d+2)}{d-2}}_{t,x}([0, \infty)\times \R^{d})}<+\infty$. For  the case of  $\|v\|_{L^{\frac{2(d+2)}{d-2}}_{t,x}((-\infty,0]\times \R^{d})}<+\infty$, it can be handled similarly. 
			
			Let $w_{n}$ be the solution to the energy-critical NLS with inverse-square potential \eqref{NLS-Potential} with initial data $w_{n}(0)=P^a_{\geq (\lambda_{n})^{-\theta}}v_{0}$ for some $0<\theta \ll1$, where
			$\lambda_{n}:=\lambda(t_{n})$.  Since $\lambda_{n}\to \infty$ as $n\to\infty$, 
			\begin{align}\label{cvv}
				\| P^a_{\geq (\lambda_{n})^{-\theta}}v_{0}-v_{0} \|_{\dot{H}^{1}_{x}}\rightarrow 0 \quad \text{as $n\rightarrow \infty$}.
			\end{align}
			By \eqref{cvv} and Lemma \ref{StabilityNLS}, we deduce that
			\begin{align*}
				\| w_{n}   \|_{S_a^{1}([0, \infty)\times \R^{d})}\lesssim 1\qtq{for $n$ large.}
			\end{align*}
			Using Benstein's estimate, we obtain
			\[
			\| P^a_{\geq (\lambda_{n})^{-\theta}}v_{0}\|_{{L}^{2}}\lesssim\lambda^{\theta}_{n}\|  v_{0}\|_{\dot{H}_a^1},
			\]
			which implies 
			\begin{align}\label{wpersi}
				\|w_{n}\|_{S_a^{0}([0, \infty)\times \R^{d})}\leq C(\|v_{0}\|_{\dot{H}_a^{1}})\lambda^{\theta}_{n}
			\end{align}
			by persistence of regularity.
			Arguing as in  Proposition~\ref{embedding-nonlinear} one can show that
			\[
			\tilde{u}_{n}(t,x)=\lambda^{\frac{d-2}{2}}_{n}w_{n}(\lambda^{2}_{n}t, \lambda_{n}x)
			\]
			are approximate solutions to equation \eqref{NLS} on $[0, \infty)$. Indeed, following the same argument as in \eqref{estimate33}, we obtain that
			\[
			\lim_{n\to \infty}\|\sqrt{\mathcal{L}_a}\bigl( |\tilde{u}_{n}|^{\frac{4}{d-1}}\tilde{u}_{n}\bigr) \|_{L^{\frac{2(d+2)}{d+4}}_{t, x}(\R\times \R^{d})}=0.
			\]
			Then combining \eqref{CompactV} and \eqref{cvv}  we get
			\[
			\| \tilde{u}_{n}(0)-u(t_{n}) \|_{\dot{H}_a^{1}}\rightarrow 0 \quad \text{as $n\rightarrow \infty$}.
			\]
			Applying Lemma \ref{StabilityNLS}, we deduce that for $n$ large,
			\[\|u(t_{n}+t)\|_{L^{\frac{2(d+2)}{d-2}}_{t,x}([0, +\infty)\times \R^{d})}
			=\|u\|_{L^{\frac{2(d+2)}{d-2}}_{t,x}([t_{n}, +\infty))}\lesssim 1,
			\]
			which is	contradict to \eqref{Eequal}.\end{proof}

		\begin{lemma}\label{Compa}
			There exists $c>0$ so that
			\begin{equation}\label{InequeN}
				{{F^{c}_{\infty}}}[u(t)]=8\|\nabla u(t)\|^{2}_{L^{2}}+8a\int_{\R^d}\frac{|u|^2}{|x|^2}dx-8\|u(t)\|^{\frac{2d}{d-2}}_{L^{\frac{2d}{d-2}}}\geq c\delta(t)
				\qtq{for all $t\geq 0$.}
			\end{equation}
		\end{lemma}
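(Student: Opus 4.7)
The strategy is to split into a small-$\delta$ regime, where the modulation analysis of Section~\ref{sec:Modulation} gives everything, and a large-$\delta$ regime, where scale-invariance plus the precompactness \eqref{CompacNew} is exploited via a contradiction argument.

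\textbf{Step 1: an exact identity for $G[u(t)]$.}  Combining the definitions of $E_a$, $E_a^c$ with the Pohozaev relation $E_a^c(W_a)=\tfrac1d\|W_a\|_{\dot H_a^1}^2$ and the sub-threshold normalization $\delta(t)=\|W_a\|_{\dot H_a^1}^2-\|u(t)\|_{\dot H_a^1}^2$, one verifies the algebraic identity
\[
G[u]=\tfrac{2d}{d-2}E_a^c(u)-\tfrac{2}{d-2}\|u\|_{\dot H_a^1}^2
=\tfrac{2d}{d-2}\bigl[E_a^c(u)-E_a^c(W_a)\bigr]+\tfrac{2}{d-2}\delta.
\]
Using $E_a(u(t))=E_a^c(W_a)$ and $E_a=E_a^c+\tfrac{d-1}{2(d+1)}\|\cdot\|_{L^{(2d+2)/(d-1)}}^{(2d+2)/(d-1)}$, this becomes
\[
F_\infty^c[u(t)]=8G[u(t)]=\tfrac{16}{d-2}\delta(t)-\tfrac{8d(d-1)}{(d-2)(d+1)}\|u(t)\|_{L^{\frac{2d+2}{d-1}}}^{\frac{2d+2}{d-1}}.
\]

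\textbf{Step 2: small $\delta$.}  If $\delta(t)<\delta_0$ with $\delta_0$ as in Proposition~\ref{Modilation11}, then \eqref{Estimatemodu} gives $\|u(t)\|_{L^{(2d+2)/(d-1)}}^{(2d+2)/(d-1)}\lesssim \delta(t)^2$.  Inserting into the identity above and choosing $\delta_0$ small enough absorbs the second term into a fraction of the first, yielding $F_\infty^c[u(t)]\geq \tfrac{8}{d-2}\delta(t)$.

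\textbf{Step 3: large $\delta$.}  It suffices to show $\inf\{G[u(t)]:\delta(t)\geq\delta_0\}=:c_2>0$, since then $G[u(t)]\geq \tfrac{c_2}{\|W_a\|_{\dot H_a^1}^2}\delta(t)$ on this set.  Suppose not; extract $t_n$ with $\delta(t_n)\geq\delta_0$ and $G[u(t_n)]\to 0$.  Since $G$, $\|\cdot\|_{\dot H_a^1}$, $\|\cdot\|_{L^{2d/(d-2)}}$, and hence $\delta$ are invariant under the scaling $w\mapsto \lambda^{-(d-2)/2}w(\cdot/\lambda)$, set $v_n(x)=\lambda(t_n)^{-(d-2)/2}u(t_n,x/\lambda(t_n))$.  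By \eqref{CompacNew}, along a subsequence $v_n\to v^*$ in $\dot H_a^1$, and by continuity $G[v^*]=0$, $\delta(v^*)\geq\delta_0$, and $\|v^*\|_{\dot H_a^1}\leq \|W_a\|_{\dot H_a^1}$ (by passing to the limit in \eqref{PositiveP}).  The equation $\|v^*\|_{\dot H_a^1}^2=\|v^*\|_{L^{2d/(d-2)}}^{2d/(d-2)}$ together with the sharp embedding \eqref{Sharp-Sobolev} forces either $v^*=0$ or $\|v^*\|_{\dot H_a^1}=\|W_a\|_{\dot H_a^1}$; the second possibility contradicts $\delta(v^*)\geq\delta_0$.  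To rule out $v^*=0$, observe that then $\|u(t_n)\|_{\dot H_a^1}=\|v_n\|_{\dot H_a^1}\to 0$, so $E_a^c(u(t_n))\to 0$ and the threshold equality forces $\|u(t_n)\|_{L^{(2d+2)/(d-1)}}^{(2d+2)/(d-1)}\to \tfrac{2(d+1)}{d-1}E_a^c(W_a)>0$; but the interpolation $\|u\|_{L^{(2d+2)/(d-1)}}\leq \|u\|_{L^2}^{1/(d+1)}\|u\|_{L^{2d/(d-2)}}^{d/(d+1)}$, combined with mass conservation and Sobolev, gives $\|u(t_n)\|_{L^{(2d+2)/(d-1)}}^{(2d+2)/(d-1)}\lesssim M(u_0)^{1/(d-1)}\|u(t_n)\|_{\dot H_a^1}^{2d/(d-1)}\to 0$, the desired contradiction.

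\textbf{Main obstacle.}  The delicate step is ruling out $v^*=0$ in Step~3: compactness modulo scaling only controls $\dot H_a^1$, not $L^2$, so one must bring in the full energy identity together with mass conservation to convert $\|u(t_n)\|_{\dot H_a^1}\to 0$ into $\|u(t_n)\|_{L^{(2d+2)/(d-1)}}\to 0$ and clash with the threshold energy.  Everything else is bookkeeping.
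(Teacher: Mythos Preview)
Your proof is correct, but it takes a different route from the paper in the large-$\delta$ regime.  The paper's argument runs a single contradiction on the full inequality: assume $F_\infty^c[u(t_n)]\le \tfrac1n\delta(t_n)$, note $\delta(t_n)$ is bounded so $G[u(t_n)]\to 0$, and then conclude \emph{directly from the sharp Sobolev embedding} that $\|u(t_n)\|_{\dot H_a^1}\to\|W_a\|_{\dot H_a^1}$, i.e.\ $\delta(t_n)\to 0$; at that point the modulation bound \eqref{Estimatemodu} feeds back into the identity \eqref{Pohi22} and gives the contradiction.  In particular the paper never invokes the precompactness \eqref{CompacNew}.

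Your Step~3 instead extracts a rescaled $\dot H_a^1$-limit $v^*$ via compactness and analyzes it.  This works, but it is more machinery than needed: the same conclusion ($\|u(t_n)\|_{\dot H_a^1}\to\|W_a\|_{\dot H_a^1}$ or $\|u(t_n)\|_{\dot H_a^1}\to 0$) follows immediately from the pointwise inequality $G[u]\ge \|u\|_{\dot H_a^1}^2\bigl[1-(\|u\|_{\dot H_a^1}/\|W_a\|_{\dot H_a^1})^{4/(d-2)}\bigr]$, without passing to any limit object.  On the other hand, your argument makes the elimination of the degenerate branch $\|u(t_n)\|_{\dot H_a^1}\to 0$ fully explicit (via mass conservation and the energy identity), a point the paper glosses over.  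Either way, the compactness hypothesis is not actually needed for this lemma.
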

		\begin{proof}
			We recall that $E_a^{c}(W_a)=\frac{1}{d}\| W_a\|^{2}_{\dot{H}_a^1}$.  By using $E_a(u)=E_a^{c}(W_a)$ we have 
			\begin{equation}\label{Pohi22}
				{{F^{c}_{\infty}}}[u(t)]=\tfrac{16}{d-2}\delta(t)-\tfrac{8d^2-8d}{d^2-d-2}\|u(t)\|^{\frac{2d+2}{d-1}}_{L^{\frac{2d+2}{d-1}}}.
			\end{equation}
			We claim that ${{F^{c}_{\infty}}}[u(t)]>0$ for all $t\geq 0$. Indeed, by using sharp Sobolev inequality \eqref{Sharp-Sobolev} and \eqref{PositiveP} 
			we deduce 
			\begin{align}\nonumber
				\|u\|^{\frac{2d}{d-2}}_{L^{\frac{2d}{d-2}}}&\leq  C^{\frac{2d}{d-2}}_{GN}\| u\|_{\dot{H}_a^1}^{\frac{2d}{d-2}}
				= \frac{1}{\| W_a\|^{\frac{4}{d-2}}_{\dot{H}^1_a}}\|  u\|^{\frac{2d}{d-2}}_{\dot{H}^1_a}\\ \label{IGU}
				&=\( \frac{\| u\|_{\dot{H}_a^1}}{\| W_a\|_{\dot{H}_a^1}} \)^{\frac{4}{d-2}}\| u\|^{2}_{\dot{H}_a^1}
				<\| u\|^{2}_{\dot{H}_a^1}.
			\end{align}
			
			Suppose  that \eqref{InequeN} were false. Then there exists  $\left\{t_{n}\right\}$ such that
			\begin{equation}\label{Contra22}
				{{F^{c}_{\infty}}}[u(t_{n})]\leq \tfrac{1}{n}\delta(t_{n}).
			\end{equation}
			We first observe that $\left\{\delta(t_{n})\right\}$ is bounded (cf. Lemma~\ref{GlobalS}). Now, we show that
			\[
			\delta(t_{n})\to 0 \quad \text{as $n\to\infty$}.
			\]
			In fact ${{F^{c}_{\infty}}}[u(t_{n})]\to 0$ as $n\to \infty$ by \eqref{Contra22}, so that \eqref{IGU} implies
			\begin{align*}
				\| u(t_n)\|_{\dot{H}_a^1}
				\rightarrow \| W_a\|_{\dot{H}_a^1},
			\end{align*}
			i.e.  $\delta(t_{n})\to 0$ as $n\to\infty$.  By Proposition \ref{Modilation11} we have
			\[
			\| u(t_n)\|^{\frac{2d+2}{d-1}}_{L^\frac{2d+2}{d-1}}\lesssim \delta(t_{n})^{2}\leq \tfrac{d^2-d-2}{8d^2-8d}\delta(t_{n})\quad \text{for $n$ large}.
			\]
			Thus, by using \eqref{Pohi22} and \eqref{Contra22} we get
			\[
			\tfrac{18-d}{d-2}\delta(t_{n})\leq \tfrac{1}{n}\delta(t_{n})\quad \text{for $n$ large},
			\]
			which is a contradiction because $\delta(t)>0$ for all $t\geq 0$.
		\end{proof}
		
		
		\begin{lemma}\label{Lemma111}
			For $\delta_{1}\in (0, \delta_{0})$ sufficiently small, there exists a constant $C=C(\delta_{1})>0$ such that for any interval $[t_{1}, t_{2}]\subset [0, \infty)$,
			\begin{equation}\label{BoundT11}
				\int^{t_{2}}_{t_{1}}\delta(t)\,dt\leq C\sup_{t\in[t_{1},t_{2}]}\tfrac{1}{\lambda(t)^{2}}
				\left\{\delta(t_{1})+\delta(t_{2})\right\}.
			\end{equation}
		\end{lemma}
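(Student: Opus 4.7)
The plan is to combine the localized virial identity with the coercivity bound $F_\infty^c[u(t)] \geq c\,\delta(t)$ from Lemma \ref{Compa} and a modulation-based upper bound $|I_R[u(t)]| \lesssim R^2\,\delta(t)$ (the analogue of Lemma \ref{Lemma11}). Given the interval $[t_1,t_2]$, I would first choose the truncation radius
\[
R \;=\; K\,\sup_{t\in[t_1,t_2]}\frac{1}{\lambda(t)},
\]
where $K = K(\delta_1)$ is a large constant to be fixed below. This supremum is finite because $\lambda(t)\geq 1$ on $[0,\infty)$ by Proposition~\ref{Yinfinity}, Lemma~\ref{Parametrization}, and the compactness property \eqref{CompacNew}.

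\textbf{Control of the virial derivative.} By Lemma~\ref{VirialIden} one may write
\[
\frac{d}{dt}I_R[u(t)] \;=\; F_\infty^c[u(t)] + \mathcal{E}(t),
\]
where $\mathcal{E}(t)$ collects the contributions from the spatial truncation $w_R-|x|^2$ and the subcritical nonlinearity $\tfrac{4}{d+1}\int(\Delta w_R)|u|^{\frac{2(d+1)}{d-1}}\,dx$. The central step is to show that, for $K$ large enough,
\begin{equation}\label{eq:plan-error}
|\mathcal{E}(t)| \;\leq\; \tfrac{c}{2}\,\delta(t), \qquad t\in[t_1,t_2],
\end{equation}
with $c$ as in Lemma~\ref{Compa}. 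I would split into two regimes. When $\delta(t) < \delta_1$, the modulation decomposition from Proposition~\ref{Modilation11} together with Lemma~\ref{Parametrization} ($\mu(t)\sim\lambda(t)$) lets me expand $\mathcal{E}(t)$ as differences between $u$ and the scaled ground state $W^a_{[\lambda(t)]}$ supported in $|x|\geq R$; by Cauchy--Schwarz, the algebraic decay of $W_a$, and $\|g(t)\|_{\dot H^1_a}\sim \delta(t)$, one obtains, as in the proof of Lemma~\ref{LemmaB22},
\[
|\mathcal{E}(t)| \;\lesssim\; \bigl\{(R\lambda(t))^{-\alpha} + \delta(t)\bigr\}\delta(t) + \|u(t)\|_{L^{\frac{2(d+1)}{d-1}}}^{\frac{2(d+1)}{d-1}}
\;\lesssim\; (\eta + \delta_1)\,\delta(t),
\]
using also the modulation estimate $\|u(t)\|^{\frac{2(d+1)}{d-1}}_{L^{\frac{2(d+1)}{d-1}}}\lesssim \delta(t)^2 \leq \delta_1\,\delta(t)$. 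When instead $\delta(t)\geq \delta_1$, the compactness \eqref{CompacNew} yields a \emph{uniform} tail bound $|\mathcal{E}(t)|\leq \eta$ for $R\geq K_\eta\sup 1/\lambda$; choosing $\eta\leq \tfrac{c}{2}\delta_1$ gives $|\mathcal{E}(t)|\leq \tfrac{c}{2}\delta_1\leq \tfrac{c}{2}\delta(t)$. Both cases produce \eqref{eq:plan-error} provided $\delta_1$ is small and $K$ is large.

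\textbf{Integration and conclusion.} Combining \eqref{eq:plan-error} with Lemma~\ref{Compa} gives $\tfrac{d}{dt}I_R[u(t)]\geq \tfrac{c}{2}\delta(t)$ on $[t_1,t_2]$, hence
\[
\tfrac{c}{2}\int_{t_1}^{t_2}\delta(t)\,dt \;\leq\; I_R[u(t_2)] - I_R[u(t_1)] \;\leq\; |I_R[u(t_1)]|+|I_R[u(t_2)]|.
\]
The same argument as in Lemma~\ref{Lemma11}, based on Proposition~\ref{Modilation11} when $\delta(t_j)<\delta_1$ and on the trivial bound $|I_R[u]|\lesssim R^2\|u\|_{\dot H^1_a}^2 \leq R^2\|W_a\|_{\dot H^1_a}^2\delta_1^{-1}\delta(t_j)$ otherwise, yields $|I_R[u(t_j)]|\leq CR^2\delta(t_j)$. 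Since $R^2 = K^2\sup_{[t_1,t_2]}\lambda(t)^{-2}$, this produces the desired bound \eqref{BoundT11} with constant $C = C(\delta_1) = 4CK^2/c$. The principal obstacle is the proof of \eqref{eq:plan-error} in the regime $\delta(t)\geq \delta_1$ where the modulation decomposition fails; this is overcome by exploiting compactness modulo scaling to gain a $t$-uniform tail estimate and then absorbing it into $\tfrac{c}{2}\delta(t)$ using the lower bound $\delta(t)\geq\delta_1$.
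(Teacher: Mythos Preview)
Your approach is essentially identical to the paper's: choose $R$ proportional to $\sup_{[t_1,t_2]}\lambda(t)^{-1}$, split the error analysis according to whether $\delta(t)<\delta_1$ (use modulation) or $\delta(t)\geq\delta_1$ (use compactness), combine with the coercivity $F_\infty^c[u]\geq c\,\delta(t)$ and the endpoint bound $|I_R[u(t)]|\lesssim R^2\delta(t)$, and integrate. The paper organizes the small-$\delta$ case via the cutoff $\chi(t)$ in Lemma~\ref{VirialModulate}, but since $F_R^c[W^a_{[\lambda(t)]}]=F_\infty^c[W^a_{[\lambda(t)]}]=0$ this is purely bookkeeping and amounts to the same ``subtract the ground state'' expansion you describe.

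One technical imprecision to flag: in the regime $\delta(t)\geq\delta_1$ you assert a two-sided bound $|\mathcal{E}(t)|\leq\eta$, but this is not available. Your error term $\mathcal{E}(t)$ contains the full subcritical contribution $\tfrac{4}{d+1}\int(\Delta w_R)|u|^{\frac{2(d+1)}{d-1}}\,dx$, whose inner part $\tfrac{8d}{d+1}\int_{|x|\leq R}|u|^{\frac{2(d+1)}{d-1}}\,dx$ is positive and, when $\delta(t)\geq\delta_1$, is \emph{not} small (no modulation bound $\|u\|_{L^{\frac{2(d+1)}{d-1}}}^{\frac{2(d+1)}{d-1}}\lesssim\delta^2$ here). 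What compactness does give is smallness of the tail $\int_{|x|\geq R}$, so that one obtains only the one-sided estimate $\mathcal{E}(t)\geq -\eta$, exactly as in the paper's \eqref{EstimateE111}. This suffices, since you only need the lower bound $\tfrac{d}{dt}I_R[u]\geq\tfrac{c}{2}\delta(t)$; just replace \eqref{eq:plan-error} by $\mathcal{E}(t)\geq -\tfrac{c}{2}\delta(t)$ in that regime and the rest of your argument goes through unchanged.
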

		\begin{proof}
			Let   $\delta_{1}\in (0, \delta_{0})$ and $R>1$  be two parameters to be determined below. We use the localized virial identities  with $\chi(t)$ satisfying
			\[
			\chi(t)=
			\begin{cases}
				1& \quad \delta(t)<\delta_{1} \\
				0& \quad \delta(t)\geq \delta_{1}.
			\end{cases}
			\]
			From  Lemmas~\ref{VirialModulate} and Lemma \ref{Compa} we  deduce
			\begin{equation}\label{VirilaX11}
				\tfrac{d}{dt}I_{R}[u(t)]=F^{c}_{\infty}[u(t)]+\EE(t)\geq c\delta(t)+\EE(t)
			\end{equation}
			with
			\begin{equation}\label{Error111}
				\EE(t)=
				\begin{cases}
					F_{R}[u(t)]-F^{c}_{\infty}[u(t)]& \quad  \text{if $\delta(t)\geq \delta_{1}$}, \\
					F_{R}[u(t)]-F^{c}_{\infty}[u(t)]-\M[t]& \quad \text{if $\delta(t)< \delta_{1}$},
				\end{cases}
			\end{equation}
			where
			\begin{equation}\label{Error222}
				\M(t)=F^{c}_{R}[e^{ i \theta(t)}\lambda(t)^{\frac{d-2}{2}}W_a(\lambda(t)x)]
				-F^{c}_{\infty}[e^{ i \theta(t)}\lambda(t)^{\frac{d-2}{2}}W_a(\lambda(t)x)].
			\end{equation}
			
			On the one hand, it has been proven that 
			\begin{align}\label{EstimateV111}
				|I_{R}[u(t)]|\lesssim R^{2} \delta(t).
			\end{align}
			for any $R>1$.
			
			On the other hand, we shall prove that for any $\epsilon>0$, there exists $\rho_{\epsilon}=\rho(\epsilon)>0$ so that if
			\begin{equation}
				R:=\rho_{\epsilon}\sup_{t\in [t_{1}, t_{2}]}\tfrac{1}{\lambda(t)},
			\end{equation}
			then
			\begin{align}\label{EstimateE111}
				\EE(t)\geq-\epsilon\geq- \tfrac{\epsilon}{\delta_{1}}\delta(t) \quad &\text{uniformly for $t\in [t_{1}, t_{2}]$ such that
					$\delta(t)\geq \delta_{1}$},\\
				\label{EstimateE221}
				|\EE(t)|\leq (\epsilon+\delta_{1}) \delta(t)\quad &\text{uniformly for $t\in [t_{1}, t_{2}]$ such that $\delta(t)< \delta_{1}$}.
			\end{align}
			In fact,for $\delta(t)<\delta_1$,
			\begin{align}\label{F11}
				\EE(t)=F_{R}[u(t)]-F^{c}_{\infty}[{u}(t)]&=\int_{|x|\geq R}(- \Delta \Delta w_{R})|u|^{2}
				+4\RE \overline{u_{j}} u_{k} \partial_{jk}[w_{R}]-8|\nabla u|^{2}dx\\ \label{F22}
				&+\int_{|x|>R}(%
				\frac{4a}{|x|^{4}}x\nabla \phi _{R}|u(t)|^{2}-\frac{8a|u(t)|^{2}}{|x|^{2}}%
				)dx.\\ \label{F33}
				&-\frac{4}{d}\int_{|x|\geq R}\Delta[w_{R}(x)]|u|^{\frac{2d}{d-2}}dx
				+8\int_{|x|\geq R}|u|^{\frac{2d}{d-2}}dx
				\\ \label{F44}
				&+\frac{4}{d+1}\int_{\R^{d}}\Delta[w_{R}(x)]|u|^{\frac{2d+2}{d-1}}dx.
			\end{align}
			By compactness  \eqref{CompacNew} and mass conservation, we show that
			\begin{align}\label{epeq}
				\sup_{t\in \R}\int_{|x|>\tfrac{\rho_{\epsilon}}{\lambda(t)}}\big[|\nabla u|^{2} +|u|^{\frac{2d}{d-2}}+|u|^{\frac{2d+2}{d-1}} +\tfrac{|u|^{2}}{|x|^{2}}\big](t,x)dx  \ll \epsilon
			\end{align}
			for large enough $\rho_\eps>0$.  Given $T>0$, we now choose
			\[
			R:=\sup_{t\in [t_1,t_2]}\frac{\rho_{\epsilon}}{\lambda(t)},\qtq{so that}
			\left\{|x|\geq R\right\}\subset \left\{|x|\geq \tfrac{\rho_{\epsilon}}{\lambda(t)}\right\}\qtq{for all}t\in[t_1,t_2].
			\]
			It follows that
			\begin{equation}\label{smallI}
				|\eqref{F11}|+|\eqref{F22}|+|\eqref{F33}|<\epsilon \qtq{for all $t\in [0, T]$.}
			\end{equation}
			Next,  we use \eqref{epeq} and  $|\Delta[w_{R}](x)|\lesssim 1$ to get
			\[
			\eqref{F44}
			=2d\int_{|x|\leq R}|u|^{\frac{2d+2}{d-1}}dx+\int_{|x|\geq R}\Delta[w_{R}(x)]|u|^{\frac{2d+2}{d-1}}dx\geq -\epsilon\]
			So \eqref{EstimateE111} holds.
			For $\delta<\delta_1$
			
			For  $\delta(t)< \delta_{1}$. We recall the notation $\tilde{W}_a(t)=e^{i\theta(t)}\lambda(t)^{\frac{d-2}{2}}W_a(\lambda(t)x)$. Then
			\begin{align}\label{Decomp111}
				\EE(t)&=-8\int_{|x|\geq R}[(|\nabla u|^{2}-(|\nabla \tilde{W}_a(t)|^{2})]\,dx
				+8\int_{|x|\geq R}[|u|^{\frac{2d}{d-2}}-|\tilde{W}_a(t)|^{\frac{2d}{d-2}}]\,dx\\\label{Decomp222}
				&\quad+\int_{R\leq |x| \leq 2R}(-\Delta \Delta w_{R})[|u|^{2}-|\tilde{W}_a(t)|^{2}]\,dx \\\label{Decomp333}
				&\quad+4\int_{|x|>R}(%
				\frac{a}{|x|^{4}}x\nabla w _{R}|u(t)|^{2}-\frac{a}{|x|^{4}}x\nabla w _{R}|\tilde{W}_a(t)|^{2} %
				)dx.\\ \label{Decomp444}
				&\quad-4\int_{|x|>R}(%
				\frac{2a|u(t)|^{2}}{|x|^{2}}-\frac{2a|\tilde{W}_a(t)|^{2}}{|x|^{2}}%
				)dx.\\ \label{Decomp555}
				&\quad-\tfrac{4}{d}\int_{|x|\geq R}\Delta[w_{R}(x)](|u|^{\frac{2d}{d-2}}-|\tilde{W}_a(t)|^{\frac{2d}{d-2}})\,dx\\\label{Decomp666}
				&\quad+4\sum_{j, k}\RE\int_{|x|\geq R}[\overline{u_{j}} u_{k}-\overline{\partial_{j}W(t)} \partial_{k}\tilde{W}_a(t)]\partial_{jk}[w_{R}(x)]\,dx\\ \label{Decomp777}
				&\quad+\frac{4}{d+1}\int_{\R^{d}}\Delta[w_{R}(x)]|u|^{\frac{2d+2}{d-1}}\,dx
			\end{align}
			for all $t\geq 0$ such that $\delta(t)<\delta_{1}$.
			
			As $|\Delta \Delta w_{R}|\lesssim 1/|x|^{2}$,  $|\nabla w_R|\lesssim |x|$ , $|\partial_{jk}[w_{R}]|\lesssim 1$ and $|\Delta[w_{R}]|\lesssim 1$, \eqref{Decomp111}--\eqref{Decomp666} can be bounded by the following terms
			\begin{align}
				&\bigg\{\|u(t)\|_{\dot{H}_{x}^{1}(|x|\geq R)}+\bigg\|\frac{u(t)}{x}\bigg\|_{L^2(|x|\geq R)}+\|\tilde{W}_a(t)\|_{\dot{H}_{x}^{1}(|x|\geq R)}+ \bigg\|\frac{\tilde{W}_a(t)}{x}\bigg\|_{L^2(|x|\geq R)}\notag\\
				&+
				\|u(t)\|^{\frac{d+2}{d-2}}_{L_{x}^{\frac{2d}{d-2}}(|x|\geq R)}+\big\|\tilde{W}_a(t)\big\|^{\frac{d+2}{d-2}}_{L_{x}^{\frac{2d}{d-2}}(|x|\geq R)}
				\bigg\}\|g(t)\|_{\dot{H}_{a}^{1}},
			\end{align}
			where $ g(t)=e^{-i\theta(t)}[{u}(t)-\tilde{W}_a(t)]$. Moreover, since 
			\begin{gather*}
				\|W_a\|_{\dot{H}_{x}^{1}(|x|\geq R)}=O(R^{-\frac{\beta(d-2)}{2}}),\quad\|W_a\|_{L_{x}^{\frac{2d}{d-2}}(|x|\geq R)} = O(R^{-\frac{\beta(d-2)}{2}}),\\
				\bigg\|\frac{W_a}{x}\bigg\|_{L_{x}^{2}(|x|\geq R)} = O(R^{-\frac{\beta(d-2)}{2}}),
			\end{gather*} 
			we have that
			\begin{align*}
				&\|u(t)\|_{\dot{H}_{x}^{1}(|x|\geq R)}+\bigg\|\frac{u(t)}{x}\bigg\|_{L^2(|x|\geq R)}+\|\tilde{W}_a(t)\|_{\dot{H}_{x}^{1}(|x|\geq R)}+ \bigg\|\frac{\tilde{W}_a(t)}{x}\bigg\|_{L^2(|x|\geq R)}\notag\\
				&+
				\|u(t)\|^{\frac{d+2}{d-2}}_{L_{x}^{\frac{2d}{d-2}}(|x|\geq R)}+\big\|\tilde{W}_a(t)\big\|^{\frac{d+2}{d-2}}_{L_{x}^{\frac{2d}{d-2}}(|x|\geq R)}\\
				&\leq \delta(t)+\delta(t)^{\frac{\beta(d+2)}{2}}+O((\lambda(t)R)^{-\frac{\beta(d+2)}{2}})+O\big(((\lambda(t)R)^{-\frac{\beta(d+2)}{2}}\big).
			\end{align*}
			Thus, for $\delta_{1}\in (0, \delta_{0})$ sufficiently small and $R$ sufficiently large,
			\[
			|\eqref{Decomp111}|+|\eqref{Decomp222}|+|\eqref{Decomp333}|+|\eqref{Decomp444}|+|\eqref{Decomp555}|+|\eqref{Decomp666}|\leq \delta(t) \qtq{when $\delta(t)<\delta_{1}$.}
			\]
			Taking $\delta_{1}$ small if necessary, Proposition~\ref{Modilation11} implies
			\[
			\frac{4}{d+1}\left|\int_{\R^{d}}\Delta[w_{R}(x)]|u|^{\frac{2d+2}{d-1}}dx\right|\lesssim \delta(t)^{2}\lesssim {\delta}_{1} \delta(t)\leq \delta(t),
			\]
			Putting together the estimates above yields \eqref{EstimateE221}.
			
			Integrating inequality \eqref{VirilaX11} on $[t_{1}, t_{2}]$ and using  the estimates \eqref{EstimateV111}, \eqref{EstimateE111} and \eqref{EstimateE221}, we deduce
			\[
			\int^{t_{2}}_{t_{1}}\delta(t)\,dt\lesssim 
			\rho^{2}_{\epsilon}\sup_{t\in [t_{1}, t_{2}]}\tfrac{1}{\lambda(t)^{2}}(\delta(t_{1})+\delta(t_{2}))
			+(\tfrac{\epsilon}{\delta_{1}}+\epsilon+\delta_{1})\int^{t_{2}}_{t_{1}}\delta(t)\,dt.
			\]
			For $\delta_{1}\in (0, \delta_{0})$ small, and  choosing $\epsilon=\epsilon(\delta_{1})$ sufficiently small yields \eqref{BoundT11}. 
		\end{proof}

		As a direct consequence of the lemma, we can show the existence of a sequence $t_n\to \infty$ so that $\delta(t_n)\to 0$:
		
		\begin{proposition}\label{BoundedUN}
			There exists a sequence $t_{n}\to\infty$ such that
			\[
			\lim_{n\to \infty}\delta(t_{n})=0.
			\]
		\end{proposition}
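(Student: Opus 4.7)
\textbf{Proof proposal for Proposition \ref{BoundedUN}.}

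The plan is to argue by contradiction and extract the conclusion essentially for free from the virial-type bound in Lemma~\ref{Lemma111}. Suppose no such sequence exists; then $\liminf_{t\to\infty}\delta(t)>0$, so there exist $T_{0}>0$ and $c>0$ such that $\delta(t)\geq c$ for every $t\geq T_{0}$. The goal is to show this uniform lower bound is incompatible with the integral estimate \eqref{BoundT11}.

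First I would record two uniform bounds. Since $\|u(t)\|_{\dot H_a^1}<\|W_a\|_{\dot H_a^1}$ for all $t\geq 0$ by Lemma~\ref{GlobalS}, we have the uniform upper bound
\[
\delta(t)=\|W_a\|_{\dot H_a^1}^{2}-\|u(t)\|_{\dot H_a^1}^{2}\leq \|W_a\|_{\dot H_a^1}^{2}\qquad\text{for all }t\geq 0.
\]
Next, as noted right after the definition of $\lambda(t)$ (coming from the combination of $\lambda_{0}(t)$ on $[0,\infty)\setminus I_{0}$ and $\mu(t)$ on $I_{0}$), one has $\lambda(t)\geq 1$ throughout $[0,\infty)$; therefore
\[
\sup_{t\in [T_{0},t_{2}]}\tfrac{1}{\lambda(t)^{2}}\leq 1\qquad\text{for every }t_{2}>T_{0}.
\]

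Now I would plug these bounds into Lemma~\ref{Lemma111} applied on the interval $[T_{0},t_{2}]$ for arbitrary $t_{2}>T_{0}$. The lower bound on $\delta$ gives
\[
c\,(t_{2}-T_{0})\leq\int_{T_{0}}^{t_{2}}\delta(t)\,dt,
\]
while the upper bounds just recorded give
\[
\int_{T_{0}}^{t_{2}}\delta(t)\,dt\leq C\sup_{t\in [T_{0},t_{2}]}\tfrac{1}{\lambda(t)^{2}}\bigl\{\delta(T_{0})+\delta(t_{2})\bigr\}\leq 2C\,\|W_a\|_{\dot H_a^1}^{2}.
\]
Combining the two inequalities,
\[
c\,(t_{2}-T_{0})\leq 2C\,\|W_a\|_{\dot H_a^1}^{2}
\]
uniformly in $t_{2}$, which is absurd on letting $t_{2}\to\infty$. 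This contradiction forces $\liminf_{t\to\infty}\delta(t)=0$, and any sequence $t_{n}\to\infty$ realizing this liminf yields the claim.

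There is essentially no remaining obstacle: the entire analytical content is buried in Lemma~\ref{Lemma111}, where the modulation analysis, localized virial identity, and coercivity estimate \eqref{InequeN} have already been combined to produce the crucial bound. The only items I would double-check when writing this up are the two structural facts used above, namely $\lambda(t)\geq 1$ on $[0,\infty)$ (which we have from the construction of $\lambda$ after Lemma~\ref{Parametrization}) and the a priori bound $\delta(t)\leq\|W_a\|_{\dot H_a^{1}}^{2}$ coming from sub-threshold trapping.
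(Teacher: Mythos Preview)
Your proof is correct and follows essentially the same route as the paper: both arguments use $\lambda(t)\gtrsim 1$ and the boundedness of $\delta(t)$ to turn the virial estimate \eqref{BoundT11} into a uniform bound $\int_{0}^{T}\delta(t)\,dt\lesssim 1$, from which $\liminf_{t\to\infty}\delta(t)=0$ is immediate. The paper's write-up is terser (it states the uniform integral bound and concludes directly), while you spell out the contradiction explicitly, but the content is the same.
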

		\begin{proof} First we recall that
			\begin{align}\label{Bla}
				\lambda(t)\gtrsim 1\qtq{for all $t\geq 0$.}
			\end{align}
			By \eqref{BoundT11},we have 
			$$\int_0^{T}\delta(t) \lesssim 1 \qtq{for all $T> 0$.}$$
			So we can find a such sequence ${t_n}$.
		\end{proof}
		We next show that we may use $\delta$ to control the variation of the spatial scale.

		\begin{proposition}\label{Spatialcenter}
			Let $[t_{1}, t_{2}]\subset(0, \infty)$ with $t_{1}+\tfrac{1}{\lambda(t_{1})^{2}}< t_{2}$. Then
			\begin{equation}\label{BoundCenter1}
				\left|\tfrac{1}{\lambda(t_{2})^{2}}-\tfrac{1}{\lambda(t_{1})^{2}}\right|\leq \widetilde{C}\int^{t_{2}}_{t_{1}}\delta(t)\,dt.
			\end{equation}
		\end{proposition}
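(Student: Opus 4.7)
The plan is to split the interval $[t_1,t_2]$ according to the modulation regime and combine the modulation bound \eqref{EstimLaD} with a compactness argument. Writing $\Lambda(t):=\lambda(t)^{-2}$, the target inequality becomes $|\Lambda(t_2)-\Lambda(t_1)|\lesssim \int_{t_1}^{t_2}\delta(t)\,dt$. Throughout, I will use the lower bound $\lambda(t)\geq 1$ (recorded after the definition of $\lambda$) and Lemma \ref{Parametrization}, which says that on $I_0=\{t:\delta(t)<\delta_0\}$ we have $\lambda(t)\sim\mu(t)$ with the modulation parameter $\mu(t)$ of Proposition \ref{Modilation11}.

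First, on any sub-interval of $I_0$ the estimate is immediate from the modulation analysis. Indeed, \eqref{EstimLaD} gives $|\mu'(t)|\lesssim \mu(t)^3\delta(t)$, so
\begin{equation*}
\Bigl|\tfrac{d}{dt}\mu(t)^{-2}\Bigr|=\tfrac{2|\mu'(t)|}{\mu(t)^3}\lesssim \delta(t),
\end{equation*}
and integrating together with $\lambda\sim\mu$ on $I_0$ yields $|\Lambda(b)-\Lambda(a)|\lesssim \int_a^b\delta(t)\,dt$ whenever $[a,b]\subset I_0$.

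Next, on the complement $[t_1,t_2]\setminus I_0$, where $\delta(t)\geq \delta_0$, I would combine the pre-compactness \eqref{CompacNew} with Lemma \ref{SequenceInf} and Lemma \ref{DeltaZero} to produce a constant $\Lambda_{\max}=\Lambda_{\max}(\delta_0)$ with $1\leq \lambda(t)\leq \Lambda_{\max}$ on this set; otherwise one could extract a sequence on which $\lambda\to\infty$ while $\delta$ stays bounded below, contradicting the equivalence in Lemma \ref{DeltaZero}. Consequently $\Lambda(t)\in[\Lambda_{\max}^{-2},1]$ on $\{\delta\geq\delta_0\}$, so its variation there is at most $1-\Lambda_{\max}^{-2}$, which can be absorbed into $\delta_0^{-1}\int_{\{\delta\geq\delta_0\}}\delta(t)\,dt$ provided the Lebesgue measure of $\{\delta\geq\delta_0\}\cap[t_1,t_2]$ is not too small; the time-scale hypothesis $t_1+\lambda(t_1)^{-2}<t_2$ together with a continuity/exhaustion argument ensures this. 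Finally, I would stitch the estimates together by decomposing $[t_1,t_2]$ into the (at most countably many) maximal subintervals of $I_0$ and of its complement, applying the two bounds on each piece and summing.

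The main obstacle is the stitching step: one must check that the piecewise choice $\lambda=\mu$ on $I_0$ and $\lambda=\lambda_0$ on the complement does not accumulate uncontrolled jumps at the transition points $\{\delta=\delta_0\}$. Lemma \ref{Parametrization} controls the ratio $\lambda_0/\mu$ near such boundary points, so that $\Lambda$ remains comparable across transitions; handling this carefully (and showing that the constant $\widetilde C$ depends only on $\delta_0$, the ground state $W_a$, and the compact set from \eqref{CompacNew}) is where the bulk of the bookkeeping goes. Once this is done, the combination of the two regimes yields the claimed bound.
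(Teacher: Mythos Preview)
Your approach differs from the paper's, and in its current form has a genuine gap on the ``bad'' set $\{\delta\geq\delta_0\}$. The bound $1\le\lambda(t)\le\Lambda_{\max}$ on this set only gives $|\Lambda(b)-\Lambda(a)|\le 1-\Lambda_{\max}^{-2}$ on each connected component $[a,b]$ of $\{\delta\geq\delta_0\}\cap[t_1,t_2]$, a fixed constant. To absorb this into $\int_a^b\delta(t)\,dt\geq\delta_0(b-a)$ you would need a uniform lower bound on the length $b-a$ of every such component, and nothing in your argument supplies one: the hypothesis $t_1+\lambda(t_1)^{-2}<t_2$ is a statement about the total interval, not about individual components, and a ``continuity/exhaustion argument'' cannot manufacture such a lower bound. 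When you sum over the (possibly many) components, the constants pile up uncontrollably.

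The paper avoids this by working at the intrinsic time scale $\lambda^{-2}$ rather than decomposing by the level set of $\delta$. Its Step~1 is a local almost-continuity statement for $\lambda$: whenever $|t-s|\le\lambda(s)^{-2}$, the ratio $\lambda(t)/\lambda(s)$ is bounded by a universal constant $C_1$ (proved by contradiction from compactness and Lemma~\ref{DeltaZero}). Its Step~2 is a dichotomy: on any interval of this natural length, either $\delta<\delta_0$ throughout or $\delta\geq\delta_1$ throughout. Step~3 then proves the bound on a single interval $[t_1,t_1+\tfrac{1}{C_1^2\lambda(t_1)^2}]$: in the small-$\delta$ case your modulation argument applies, while in the large-$\delta$ case Step~1 gives $|\Lambda(t_2')-\Lambda(t_1')|\lesssim\lambda(t_1)^{-2}\sim|t_2-t_1|\lesssim\delta_1^{-1}\int_{t_1}^{t_2}\delta(t)\,dt$. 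The missing ingredients in your sketch are exactly Steps~1 and~2: local comparability of $\lambda$ at scale $\lambda^{-2}$, and the no-mixing dichotomy that replaces your attempted lower bound on component lengths.
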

		
		\begin{proof}
			\textsl{Step 1.}  First we show that there exists a constant $C_{1}$ such that
			\begin{equation}\label{step11}
				\tfrac{\lambda(s)}{\lambda(t)}+\tfrac{\lambda(t)}{\lambda(s)}\leq C_{1} \quad \text{for all $t$, $s\geq 0$ such that $|t-s|\leq \tfrac{1}{\lambda(s)^{2}} $}.
			\end{equation}
			By contradiction, we	suppose  that there exist two sequences $s_{n}$ and  $t_{n}$  such that
			\begin{align}\label{CBound}
				|t_{{n}}-s_{n}|\leq \tfrac{1}{\lambda(s_{n})^{2}} \qtq{but}
				\tfrac{\lambda(s_{n})}{\lambda(t_{n})}+\tfrac{\lambda(t_{n})}{\lambda(s_{n})}\to \infty.
			\end{align}
			Passing to a subsequence, one can suppose that
			\[
			\lim_{n\to \infty}\lambda(s_{n})^{2}(t_{n}-s_{n})=\tau_{0}\in[-1, 1].
			\]

			Recall that $\lambda(t)\gtrsim 1$ for all $t\geq 0$, by \eqref{CBound} we observe that
			\[
			\lambda(s_{n})+\lambda(t_{n}) \to \infty \qtq{as $n\to \infty$.}
			\]
			
			\textbf{Case I.}   $\lambda(s_{n}) \to \infty$ as $n\to\infty$. From \eqref{CBound} we deduce $|t_{n}-s_{n}|\to 0$ as $n\to \infty$. Then by the local 
			theory for \eqref{NLS} and $H_a^1$-boundedness we may obtain that 
			\[
			\|u(t_{n})-u(s_{n})\|_{\dot{H}^{1}}\lesssim \|u(t_{n})-u(s_{n})\|_{\dot{H}_a^{1}}\to 0
			\qtq{as $n\to \infty$. }
			\]
			
			Using the change of variables, we deduce that			\[
			\|\lambda(s_{n})^{-{\frac{d-2}{2}}}u(t_{n}, \tfrac{\cdot}{\lambda(s_{n})})-\lambda(s_{n})^{-{\frac{d-2}{2}}}u(s_{n}, \tfrac{\cdot}{\lambda(s_{n})})\|_{\dot{H}_a^{1}}\to 0
			\qtq{as $n\to \infty$.}
			\]
			Now, by compactness, $\psi\neq 0$, thus we have
			\begin{align}\label{CvC}
				\lambda(s_{n})^{-{\frac{d-2}{2}}}u(t_{n}, \tfrac{\cdot}{\lambda(s_{n})}) \to \psi 
				\qtq{in $\dot{H}_a^{1}$ as $n\to \infty$}
			\end{align}
			along some subsequence of $\{s_n\}$. Writing $w_{n}(x):=\lambda(s_{n})^{-{\frac{d-2}{2}}}u(t_{n}, \tfrac{\cdot}{\lambda(s_{n})})$, we can use the compactness condition again to deduce 
			\begin{align}\label{2dce}
				\bigl(\tfrac{\lambda(s_{n})}{\lambda(t_{n})}\bigr)^{\frac{d-2}{2}} w_{n}(\tfrac{\lambda(s_{n})}{\lambda(t_{n})}x)=\lambda(t_{n})^{-{\frac{d-2}{2}}}u(t_{n}, \tfrac{\cdot}{\lambda(t_{n})})\to \zeta \qtq{in $\dot{H}_a^{1}$ as $n\to \infty$}
			\end{align}
			for some $\zeta$, which is necessarily nonzero due to conservation of energy. Finally, by \eqref{2dce} we see that there exists $C_0,C_1>0$ so that
			\begin{align}\label{contradict}
				\int_{C_0\,\tfrac{\lambda(s_{n})}{\lambda(t_{n})}\leq |x|\leq 2C_0\,\tfrac{\lambda(s_{n})}{\lambda(t_{n})}}
				|\nabla w_{n}(x)|^{2}dx\geq C_1\mbox{ for all large }n.
			\end{align}
			However, using \eqref{CvC}, \eqref{CBound} and Lebesgue's dominated convergence  theorem to deduce that
			\[
			\int_{C_0\,\tfrac{\lambda(s_{n})}{\lambda(t_{n})}\leq |x|\leq 2C_0\,\tfrac{\lambda(s_{n})}{\lambda(t_{n})}}
			|\nabla w_{n}(x)|^{2}dx\to 0
			\]
			as $n\to \infty$, which is  contradict to \eqref{contradict}.
			
			\textbf{Case II.}  $\lambda(t_{n})\to \infty$. We will show that $\lambda(s_{n})\to \infty$.
			
			By contradiction, we assume that $\lambda(s_{n})$  is bounded.  By \eqref{CBound} we suppose that there exists $\tau\in \R$ such that
			$t_{n}-s_{n}\to \tau$.  Since  $\lambda(s_{n})$  is bounded, by compactness \eqref{CompacNew}, it is not difficult to verify that up to subsequence,  $u(s_{n})\to v_{0}$ in $\dot{H}_a^{1}$ as $n\to +\infty$ for some $v_{0}\in H_a^{1}$. By conservation of mass, we also have that $u(s_{n})\to v_{0}$ in $L^{\frac{2d+2}{d-1}}$. Especially, $E_a(v_{0})=E_a^{c}(W_a)$. Furthermore, we have   $\delta(v_{0})\geq c>0$ by \eqref{GlobalS} .
			
			From Corollary~\ref{Remark} and Lemma~\ref{GlobalS} we deduce that the solution $v$ to \eqref{NLS} with initial data $v_{0}$ is global and satisfies $\delta(v(t))>0$ for all $t\geq 0$. In addition, Lemma~\ref{StabilityNLS} implies that $u(s_{n}+\tau)\to v(\tau)$ in $\dot{H}_a^{1}$ and so 
			$\delta(u(s_{n}+\tau))\geq c_{0}$ for $n$ large. From the local 
			theory for \eqref{NLS} we have $\|u(t_{n})-u(s_{n}+\tau)\|_{\dot{H}_a^{1}}\to 0$
			as $n\to +\infty$. This implies $\delta(t_{n})\geq c_{0}$ for some $c_{0}>0$ and all $n$ large.
			
			Since $\delta(t_{n})\geq c_{0}$,   we deduce that $t_{n} \to t_{*}\in [0, \infty)$ along some subsequence from Lemma~\ref{DeltaZero}.  By continuity  and compactness, we obtain that
			\begin{align}\label{Cqe}
				& u(t_{n}) \to u(t_{*})\neq 0 \qtq{ strongly in $H_a^{1}$,}\\  \label{lmn}
				& \lambda(t_{n})^{-{\frac{d-2}{2}}}u(t_{n}, \tfrac{\cdot}{\lambda(t_{n})}) \to \zeta\neq 0
				\qtq{ strongly in $\dot{H}_a^{1}$.}
			\end{align}
			As $\lambda(t_{n})\to \infty$, \eqref{Cqe} and \eqref{lmn} together yield a contradiction. Thus $\lambda(s_{n})\to \infty$ as $n\to \infty$ and hence Scenario I implies \eqref{step11}.

			\textsl{Step 2.}
			There exists $\delta_{1}>0$,  so that for any $T\geq 0$, either
			\begin{equation}\label{MinMax}
				\inf_{t\in [T, T+\tfrac{1}{\lambda(T)^{2}}]}\delta(t)\geq \delta_{1} \quad \text{or}\quad
				\sup_{t\in [T,T+\tfrac{1}{\lambda(T)^{2}}]}\delta(t)<\delta_{0}.
			\end{equation}
			Suppose instead that there exist $t_{n}^{*}\geq 0$ and
			$t_{n}$, $t^{\prime}_{n}\in  [t_{n}^{*}, t_{n}^{*}+\tfrac{1}{\lambda(t_{n}^{*})^{2}}]$ so that
			\begin{align}\label{ContraStep2}
				&\delta(t_{n})\to 0 \quad \text{and}\quad \delta(t^{\prime}_{n})\geq \delta_{0}.
			\end{align}
			
			Notice that $t_{n}\to \infty$. Indeed, if $t_{n}\leq C$ for all $n\in \N$, then   $t_{n}\to a\in [0, \infty)$ up to a  subsequence. By continuity and \eqref{ContraStep2}, we derive that $\delta(a)=0$, which contradicts the fact that  $\delta(t)>0$ for all $t\geq 0$.  Thus, as $t_{n}\to \infty$ and $\delta(t_{n})\to 0$, Lemma~\ref{DeltaZero} yields that
			\begin{align}\label{Inll}
				\lambda(t_{n})\to \infty  \quad\text{as\ } {n\to \infty}.
			\end{align}
			
			On the other hand, we claim that $\lambda(t^{\prime}_{n})$ is bounded. In fact, this follows from Lemma~\ref{DeltaZero} and \eqref{ContraStep2} if $t^{\prime}_{n}$ is unbounded, and by Step 1 above if $t^{\prime}_{n}$ is bounded. 
			
			The boundedness of $\lambda(t_n^\prime)$   implies  that $\lambda(t_{n})$ is bounded, which is a contradiction.

			\textsl{Step 3.} In this step, we will prove that  there exists a constant $C>0$ such that
			\begin{align}\label{BoundCenter}
				0\leq t_{1}\leq t_1'\leq t_2'\leq t_{2}=t_{1}+\tfrac{1}{C^{2}_{1}\lambda(t_{1})^{2}}
				\Rightarrow 
				\left|\tfrac{1}{\lambda(t_2')^{2}}-\tfrac{1}{\lambda(t_1')^{2}}\right|\leq C\int^{t_{2}}_{t_{1}}\delta(t)\,dt,
			\end{align}
			for some $C>0$. Here $C_{1}\geq 1$ is the constant given in Step 1.
			
			By Step 2, we can suppose that $\sup_{t\in [t_{1}, t_{2}]}\delta(t)<\delta_{0}$ or
			$\inf_{t\in [t_{1}, t_{2}]}\delta(t)\geq \delta_{1}$. In the former case, we deduce \eqref{BoundCenter} via the estimate  
			\[
			\left|\tfrac{\lambda^{\prime}(t)}{\lambda(t)^{3}}\right|\lesssim \delta(t)\qtq{for}\delta(t)<\delta_{0}.
			\]
			In the last case, we note that $\int^{t_{2}}_{t_{1}}\delta_{1}\,dt\geq \int^{t_{2}}_{t_{1}}\delta(t)\,dt$ and
			\[
			|t_1'-t_2'|\leq \tfrac{1}{C^{2}_{1}\lambda(t_{1})^{2}}\leq \tfrac{1}{\lambda(t_1')^{2}}.
			\]
			Thus, by Step 1 we obtain
			\[
			\left|\tfrac{1}{\lambda(t_2')^{2}}-\tfrac{1}{\lambda(t_1')^{2}}\right|
			\leq\tfrac{2 C^{2}_{1}}{\lambda(t_1')^{2}}
			\leq \tfrac{2 C^{4}_{1}}{\lambda({t_{1}})^{2}}
			=2C^{5}_{1}|t_{2}-t_{1}|\leq \tfrac{2C^{5}_{1}}{\delta_{1}}\int^{t_{2}}_{t_{1}}\delta(t)dt.
			\]
			Combining the inequalities above yields \eqref{BoundCenter}.
		\end{proof}
		
		We now rule out the possibility of infinite-time blowup:

		\begin{proof}[{Proof of Theorem~\ref{ITfinito22}}] Suppose $u$ is a solution as in Proposition~\ref{Compacness11} with $T^{*}=\infty$.  In particular, $E_a(u_{0})=E_a^{c}(W_a)$, $\| u_{0}\|^{2}_{\dot{H}_a^{1}}<\| W_a\|^{2}_{\dot{H}_a^{1}}$ and 
			there exists a function  $0<\lambda(t)<\infty$ for all $t\geq0$, such that the set
			\begin{equation*}
				\left\{\lambda(t)^{-{\frac{d-2}{2}}}u(t, \tfrac{x}{\lambda(t)}): t\in [0, \infty)\right\} \quad
				\text{is pre-compact in $\dot{H}_a^{1}(\R^{d})$}.
			\end{equation*}
			
			From Proposition~\ref{BoundedUN}, there exists an increasing sequence $t_{n}\to\infty$ with $\delta(t_{n})\to 0$. In particular, we can choose $t_{n}$ so that
			\[
			\delta(t_{0})\leq \tfrac{1}{2C^{*}} \qtq{and} \delta(t_{n})\to 0,
			\]
			where $C^{*}:=C\cdot\widetilde{C}$, with $C$ and $\widetilde{C}$ the constants in \eqref{BoundT11} and \eqref{BoundCenter1}. 
			
			Consider $0\leq a\leq b$.  For $n$ large we see that $b+\tfrac{1}{\lambda(b)^{2}}<t_{n}$. By estimates \eqref{BoundT11} and \eqref{BoundCenter1}, we find
			\[
			\left|\tfrac{1}{\lambda(b)^{2}}-\tfrac{1}{\lambda(t_{n})^{2}}\right|\leq C^{*}
			\sup_{a\leq t\leq t_{n}}\bigl(\tfrac{1}{\lambda(t)^{2}}\bigr)\left\{\delta(a)+\delta(t_{n})  \right\}.
			\]
			Sending $n\to\infty$, it follows that 
			\[
			\sup_{t\geq a}\tfrac{1}{\lambda(t)^{2}}\leq C^{*}\delta(a)\sup_{t\geq a}\tfrac{1}{\lambda(t)^{2}}.
			\]
			Choosing $a=t_{0}$, we get
			\[
			\sup_{t\geq t_{0}}\tfrac{1}{\lambda(t)^{2}}=0.
			\]
			Lemma~\ref{Lemma111} then implies that $\delta(t)=0$ for all $t\geq t_{0}$, a contradiction.\end{proof}
		
		Proposition~\ref{Compacness11} and Theorems~\ref{Tfinito}--\ref{ITfinito22} now yield Theorem~\ref{Threshold}(i).




	\end{document}